\documentclass[a4paper,10pt]{article}
\makeatletter
\newcommand*{\rom}[1]{\expandafter\@slowromancap\romannumeral #1@}
\makeatother

\usepackage{lipsum,relsize,graphicx,amsmath,amsthm,amssymb,mathtools}
\usepackage[margin=0.84in]{geometry}
\usepackage{hyperref}
\usepackage{authblk}

\DeclarePairedDelimiter\floor{\lfloor}{\rfloor}
\usepackage{tocloft}

\newtheorem{theorem}{Theorem}[section]
\newtheorem{lemma}[theorem]{Lemma}
\newtheorem{corollary}[theorem]{Corollary}
\newtheorem{definition}{Definition}[section]
\newtheorem{proposition}[theorem]{Proposition}

\begin{document}

\setcounter{tocdepth}{1}

\title{Completed Group Algebras of Abelian-by-procyclic Groups}

\author{Adam Jones}

\date{\today}

\maketitle

\begin{abstract}

\noindent Let $p$ be prime, let $G$ be a $p$-valuable group, isomorphic to $\mathbb{Z}_p^d\rtimes\mathbb{Z}_p$, and let $k$ be a field of characteristic $p$. We will prove that all faithful prime ideals of the completed group algebra $kG$ are controlled by $Z(G)$, and a complete decomposition for $Spec(kG)$ will follow. The principal technique we employ will be to study the convergence of Mahler expansions for inner automorphisms.

\end{abstract}

\tableofcontents

\setcounter{section}{0}
\section{Introduction}

Let $p$ be a prime, $k$ be a field of characteristic $p$, and let $G$ be a compact $p$-adic Lie group. The \emph{completed group algebra}, or \emph{Iwasawa algebra} of $G$ with respect to $k$ is defined as:

\begin{equation}
kG:=\underset{N\trianglelefteq_o G}{\varprojlim} k[G/N].
\end{equation}

\noindent We aim to improve our understanding of the prime ideal structure of completed group algebras, which would have profound consequences for the representation theory of compact $p$-adic Lie groups.\\

\subsection{Background} 

In \cite[Section 6]{survey}, Ardakov and Brown ask a number of questions regarding the two-sided ideal structure of $kG$. Several of these have now been answered or partially answered, but a number of them remain open. We hope that this paper will take important steps towards providing an answer to these questions.\\

\noindent First recall some important definitions: 

\begin{definition}

Let $I$ be a right ideal of $kG$:\\

\noindent 1. We say that $I$ is \emph{faithful} if for all $g\in G$, $g-1\in I$ if and only if $g=1$, i.e. $G\to\frac{kG}{I},g\mapsto g+I$ is injective.\\

\noindent 2. We say that $H\leq_c G$ \emph{controls} $I$ if $I=(I\cap kH)kG$.\\

\noindent Define the \emph{controller subgroup} of $I$ by $I^{\chi}:=\bigcap\{U\leq_o G: U$ controls $I\}$, and denote by $Spec^f(kG)$ the set of all faithful prime ideals of $kG$.

\end{definition}

\noindent Also recall the following useful result \cite[Theorem A]{controller}:

\begin{theorem}

Let $I$ be a right ideal of $kG$. Then $I^{\chi}\trianglelefteq_c G$, and $H\leq_c G$ controls $I$ if and only if $I^{\chi}\subseteq H$.

\end{theorem}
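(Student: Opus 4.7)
The plan is to reduce both conclusions to a single finite-intersection lemma: if open subgroups $U_1, U_2 \leq_o G$ both control $I$, then $U_1 \cap U_2$ also controls $I$. To prove the lemma I would fix a left transversal $T$ for $U_1 \cap U_2$ in $U_1$ and a left transversal $S$ for $U_1$ in $G$, yielding the free-module decomposition $kG = \bigoplus_{s \in S,\, t \in T} st \cdot k(U_1 \cap U_2)$ over $k(U_1 \cap U_2)$. Starting from $I = (I \cap kU_1)kG$ and focusing on the factor $I \cap kU_1 \subseteq \bigoplus_{t \in T} t \cdot k(U_1 \cap U_2)$, invoking $U_2$-control on each element and tracking which cosets of $U_2$ its support meets should pin every nonzero $T$-coefficient into $k(U_1 \cap U_2)$, giving $I \cap kU_1 = (I \cap k(U_1 \cap U_2))kU_1$ and hence control by $U_1 \cap U_2$.

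With the lemma in hand I would pass to the arbitrary intersection $I^\chi$. The family $\mathcal{F}$ of open controlling subgroups is directed downwards under intersection, and since $G$ is profinite, $I^\chi = \bigcap_{U \in \mathcal{F}} U$ is closed. A compactness argument using $kG = \varprojlim_{U} k[G/U]$ together with stability of the controller condition under refinement yields $I = (I \cap kI^\chi)kG$, so $I^\chi$ itself controls $I$. The ``if and only if'' in the second claim is then immediate: if $I^\chi \subseteq H$ for closed $H$, then $(I \cap kI^\chi)kG \subseteq (I \cap kH)kG \subseteq I$; conversely, any closed controlling $H$ is an intersection of open $U \supseteq H$, each of which controls $I$ by the lemma, forcing $I^\chi \subseteq H$.

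For normality I would argue that for every $g \in G$ the inner automorphism $c_g$ of $kG$ sends each open controller $U$ to $gUg^{-1}$, and that $gUg^{-1}$ again controls $I$: when $I$ is two-sided this is immediate from $c_g(I) = I$, and in the right-ideal generality stated it reduces to a direct calculation on the decomposition $I = (I \cap kU)kG$. Conjugation-invariance of $\mathcal{F}$ then gives $gI^\chi g^{-1} = I^\chi$ for all $g \in G$, as required.

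The main obstacle is the finite-intersection lemma. The bookkeeping needed to show that $U_2$-control forces the $T$-indexed coefficients of elements of $I \cap kU_1$ into $k(U_1 \cap U_2)$ is delicate, since one must simultaneously exploit the $kU_1$- and $kU_2$-module structures on $kG$ through a pair of transversals that need not be compatible; this is the essential combinatorial content of the theorem. Once this lemma is secured, the inverse-limit passage and the normality argument are essentially formal consequences.
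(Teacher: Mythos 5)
This result is quoted from \cite[Theorem A]{controller} and is not proved in the present paper, so there is no internal proof to match your reconstruction against. Your outline --- directedness of the open controllers, a compactness passage to $I^{\chi}$, and normality via conjugation --- has the right shape, but note that the paper already records the tool that makes your finite-intersection lemma a one-liner: by Proposition \ref{control}, $V\leq_o G$ controls $I$ iff $\gamma(\delta_{Vg})(I)\subseteq I$ for all $g$, and since $U_1g\cap U_2h$ is either empty or a single $(U_1\cap U_2)$-coset while conversely $(U_1\cap U_2)x=U_1x\cap U_2x$, each $\gamma(\delta_{(U_1\cap U_2)x})=\gamma(\delta_{U_1x})\gamma(\delta_{U_2x})$ preserves $I$. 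Your transversal bookkeeping is re-deriving exactly this, and the phrase about pinning the $T$-indexed coefficients ``into $k(U_1\cap U_2)$'' misstates the goal: what you want is that $I\cap kU_1$ splits along the $(U_1\cap U_2)$-cosets of $U_1$, not that its elements lie in $k(U_1\cap U_2)$.

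The genuine gap is normality, and the ``direct calculation on $I=(I\cap kU)kG$'' you gesture at does not exist, because the normality conclusion itself fails for one-sided ideals. Conjugation by $g$ carries $C^{\infty U}$ to $C^{\infty gUg^{-1}}$, but it carries $I$ to $gI$, which equals $I$ only when $I$ is two-sided. For $p$ odd, take $G=\langle a\rangle\rtimes\langle b\rangle\cong\mathbb{Z}_p\rtimes\mathbb{Z}_p$ with $bab^{-1}=a^{1+p}$ and the proper right ideal $I=(b-1)kG$. With the ordered basis $\{b,a\}$, $kG$ is free as a left $k\langle b\rangle$-module on $\{(a-1)^n\}_{n\geq 0}$, so $I\cap k\langle b\rangle=(b-1)k\langle b\rangle$ already generates $I$; hence $\langle b\rangle$ controls $I$, every open $U\supseteq\langle b\rangle$ controls $I$, and $I^{\chi}\subseteq\bigcap\{U\leq_o G:\,U\supseteq\langle b\rangle\}=\langle b\rangle$. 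But $aba^{-1}=a^{-p}b$ gives $ab^{p^k}a^{-1}=a^{-p\sum_{j=0}^{p^k-1}(1+p)^j}b^{p^k}\notin\langle b\rangle$ for every $k$, so $\langle b\rangle$ contains no nontrivial closed normal subgroup of $G$; normality of $I^{\chi}$ would force $I^{\chi}=1$, and then the asserted equivalence with $H=1$ gives $I=(I\cap k)kG=0$, a contradiction. The normality clause therefore needs $I$ two-sided --- which is the only case this paper ever invokes, since it applies the result to prime ideals $P$ --- and your normality argument must be restricted to that case, where $gIg^{-1}=I$ does the work immediately.
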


\noindent We will assume throughout that $G$ is $p$-valuable, i.e. carries a complete $p$-valuation $\omega:G\to\mathbb{R}\cup\{\infty\}$ in the sense of \cite[\rom{3} 2.1.2]{Lazard}.\\

\noindent\textbf{\underline{Notation:}} we say $N\trianglelefteq_c^i G$ to mean that $N$ is a closed, isolated, normal subgroup of $G$.\\

\noindent One of the main open problems in the subject concerns proving a control theorem for faithful prime ideals in $kG$. To illustrate the importance of this problem, consider the following result (\cite[Theorem A]{nilpotent}):

\begin{theorem}\label{decomposition}

Let $G$ be a $p$-valuable group such that for every $N\trianglelefteq_c^i G$, $P\in Spec^f(k\frac{G}{N})$, $P$ is controlled by the centre of $\frac{G}{N}$, i.e. $P^{\chi}\subseteq Z(\frac{G}{N})$.\\

\noindent Then every prime ideal $P$ of $kG$ is completely prime, i.e. $\frac{kG}{P}$ is a domain, and we have a bijection:

\begin{center}
$Spec(kG)\leftrightarrow \underset{N\trianglelefteq_c^i G}{\coprod}{Spec^f(kZ(\frac{G}{N}))}$
\end{center}

\end{theorem}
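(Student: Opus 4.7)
The plan is to set up an explicit bijection $\Phi$ with an explicit inverse $\Psi$, and to read off complete primeness as a byproduct.  Given $P\in Spec(kG)$, I would define $N(P):=\{g\in G:g-1\in P\}$, a closed normal subgroup of $G$, and first check that $N(P)\trianglelefteq_c^i G$.  Closure and normality are immediate; for isolatedness, write $m=p^{a}\ell$ with $(\ell,p)=1$: the identity $g^{p^{a}}-1=(g-1)^{p^{a}}$ together with primeness of $P$ handles the $p^{a}$ factor, while $1+g+\cdots+g^{\ell-1}$ has nonzero augmentation and is therefore a unit in $kG$, handling the $\ell$ factor.  Hence $P$ descends to a faithful prime $\bar P\in Spec^f(k(G/N(P)))$, which by the standing hypothesis is controlled by $Z(G/N(P))$, giving $\bar P=Q(P)\cdot k(G/N(P))$ with $Q(P):=\bar P\cap kZ(G/N(P))$.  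A short check that $Q(P)$ is a faithful prime ideal of the commutative domain $kZ(G/N(P))$ then defines
\[
\Phi:Spec(kG)\longrightarrow\coprod_{N\trianglelefteq_c^i G}Spec^f(kZ(G/N)),\quad P\mapsto(N(P),Q(P)).
\]

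For the inverse, given a pair $(N,Q)$, I would let $\Psi(N,Q)$ be the preimage in $kG$ of $\bar P:=Q\cdot k(G/N)$ and verify that $\bar P$ is itself a faithful prime of $k(G/N)$.  Choosing ordered coset representatives of $Z(G/N)$ in $G/N$ using Lazard's $p$-valuation theory makes $k(G/N)$ a topologically free $kZ(G/N)$-module on those representatives; this freeness immediately gives $\bar P\cap kZ(G/N)=Q$, while the resulting direct-sum decomposition forces faithfulness, since no $g\neq 1$ in $G/N$ can have $g-1\in\bar P$.  For primeness, I would identify
\[
k(G/N)/\bar P\;\cong\;\bigl(kZ(G/N)/Q\bigr)*\bigl((G/N)/Z(G/N)\bigr)
\]
as a completed crossed product of the Noetherian complete local commutative domain $kZ(G/N)/Q$ with the $p$-valuable quotient $(G/N)/Z(G/N)$, and deduce that it is a domain by passing to the associated graded with respect to the $p$-valuation filtration, which should be a polynomial ring over $kZ(G/N)/Q$.

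The verifications $\Phi\circ\Psi=\mathrm{id}$ and $\Psi\circ\Phi=\mathrm{id}$ are then bookkeeping: both $N$ and $Q$ are intrinsically recoverable from the ideal, as the group-kernel and the contraction to $kZ$ respectively.  Complete primeness of an arbitrary $P\in Spec(kG)$ is then free, since $kG/P\cong k(G/N(P))/\bar P$ has already been shown to be a domain.  The main obstacle, in my view, is the domain property of the crossed product above: it rests on confirming that $Z(G/N)$ (or more safely its isolator) is closed and normal enough that $(G/N)/Z(G/N)$ inherits a suitable $p$-valuation, and on a filtered-to-graded lifting argument over a non-field base.  Both ingredients are technical rather than conceptual and should be extractable from the $p$-valuation theory of Lazard together with the control results cited from \cite{controller,nilpotent}.
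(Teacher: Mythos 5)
The overall architecture of your proposal (factor $P$ as a pair $(N(P),Q(P))$, reduce to faithful primes via the control hypothesis, reconstruct $P$ from the pair) is the right idea and matches the strategy in \cite{nilpotent}. However, two of the load-bearing steps do not hold up as you have stated them.

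First, the isolatedness of $N(P)$. You write that ``the identity $g^{p^{a}}-1=(g-1)^{p^{a}}$ together with primeness of $P$ handles the $p^{a}$ factor.'' This is not valid: in a two-sided prime that is not known to be completely prime, $x^n\in P$ does not imply $x\in P$ unless $x$ is normal, and $g-1$ is not a normal element of $kG$. (The whole point of the theorem is to \emph{conclude} complete primeness, so you cannot assume it here.) The conclusion is correct, but the argument needs to be replaced: let $N^\sharp$ be the isolator of $N(P)$; then $N^\sharp/N(P)$ is a finite normal $p$-subgroup of $G/N(P)$, so its augmentation ideal generates a \emph{nilpotent} two-sided ideal $I$ of $k[G/N(P)]$. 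Since $\bar{P}$ is prime it must contain $I$, which forces $N^\sharp\subseteq N(P)$ and hence $N(P)=N^\sharp$. This uses nilpotence of ideals rather than ``extracting a root'' of a single element.

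Second, and more seriously, the argument that $k(G/N)/(Q\cdot k(G/N))$ is a domain by ``passing to the associated graded with respect to the $p$-valuation filtration, which should be a polynomial ring over $kZ(G/N)/Q$'' does not work. If you use the Lazard filtration on $k(G/N)$ and take the quotient filtration, the associated graded is a polynomial ring over $\operatorname{gr}(kZ/Q)$, not over $kZ/Q$ itself, and $\operatorname{gr}(kZ/Q)\cong k[X_1,\dots,X_c]/\operatorname{gr}Q$ need not be a domain even though $kZ/Q$ is; this failure is precisely why Section~3 of the present paper constructs non-commutative valuations as a substitute for naive associated-graded arguments. If instead you try to grade only by total degree in the coset-representative variables $g_i-1$ (so that $kZ/Q$ sits in degree $0$), this is not a ring filtration: e.g.\ in a Heisenberg-type $H$ with $[x,y]=z$ central, $(y-1)(x-1)-(x-1)(y-1)=xy(z^{-1}-1)$ has a nonzero degree-$0$ component $(z^{-1}-1)$ in $kZ/Q$, so the product of two ``degree $\geq 1$'' elements is not ``degree $\geq 2$''. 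Proving that $Q\cdot k(G/N)$ is prime (indeed completely prime) is the genuinely hard content of the extension half of the bijection, and it requires a different argument — for instance a base change to the fraction field of $kZ/Q$ and an analysis of the resulting iterated skew power series ring, or the crossed-product prime-extension machinery developed in \cite{nilpotent}. As written, this step is a gap rather than a technicality.
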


\noindent\textbf{\underline{Note:}} This is stronger than the statement given in \cite{nilpotent}, but it has an identical proof.\\

This Theorem is the strongest result we have to date concerning ideal classification for $kG$. Using it, we reduce the problem of classifying ideals in the non-commutative ring $kG$ to the simpler problem of classifying ideals in the \emph{commutative strata} $kZ(\frac{G}{N})$.\\

\noindent The result gives a positive answer to \cite[Question N]{survey} for all groups $G$ for which we have the appropriate control theorem for faithful primes in $kG$, i.e. for all groups $G$ such that $P^{\chi}\subseteq Z(\frac{G}{N})$ for $P$ faithful. 

It follows from the following result (\cite[Theorem 8.4]{nilpotent}) that this is true if $G$ is nilpotent.

\begin{theorem}\label{nilpotence}

Let $G$ be a $p$-valuable group, $P\in Spec^f(kG)$, and suppose that $P$ is controlled by a nilpotent subgroup of $G$. Then $P$ is controlled by the centre of $G$.

\end{theorem}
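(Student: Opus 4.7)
My plan is to induct on the nilpotency class $c$ of the controller subgroup $P^{\chi}$. This makes sense because $P^{\chi}$ is a closed subgroup of the nilpotent controlling subgroup, hence nilpotent itself, and it is closed and normal in $G$ by the previously cited control theorem. In particular, all characteristic subgroups of $P^{\chi}$ --- notably the terms $\gamma_i(P^{\chi})$ of the lower central series --- are closed normal in $G$.

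The base case is $c = 1$, where $A := P^{\chi}$ is abelian. Since $A$ is $G$-normal and $P$ is faithful, $J := P \cap kA$ is a faithful and $G$-invariant ideal of the commutative Iwasawa algebra $kA$. The goal is to show that the conjugation action of $G$ on $A$ is trivial, i.e.\ $A \subseteq Z(G)$. My strategy is to expand each inner automorphism $c_g|_A$ as a Mahler series in a topological $\mathbb{Z}_p$-basis of $A$: because $A$ is $p$-valuable, $c_g$ extends to a continuous ring automorphism of $kA$ given by a convergent power series, and I would argue from $G$-invariance of $J$ that this induced automorphism of $kA/J$ is trivial. Faithfulness then supplies an injection $A \hookrightarrow kA/J$, forcing $gag^{-1} = a$ for all $g \in G$, $a \in A$.

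For the inductive step, assume the result for classes $< c$ and let $N := P^{\chi}$ have class $c \ge 2$. Take $M := \gamma_c(N)$, which is nontrivial, abelian, central in $N$, characteristic in $N$, and closed normal in $G$. The restriction $P \cap kM$ is a faithful $G$-invariant ideal of the commutative ring $kM$, so the same Mahler-expansion argument used in the base case, applied directly to $M$ in place of $A$, yields $M \subseteq Z(G)$. I then pass to the quotient $\bar{G} := G/M$ (replacing $M$ by its isolator first, to preserve $p$-valuability) and study the induced ideal $\bar{P}$ in $k\bar{G}$, whose controller subgroup is $N/M$ of nilpotency class $c - 1$. The inductive hypothesis gives $N/M \subseteq Z(\bar{G})$, and combined with $M \subseteq Z(G)$ this forces $N \subseteq Z(G)$.

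The main obstacle is unambiguously the base case --- translating faithfulness plus $G$-invariance of $J$ into a purely group-theoretic conclusion about the $G$-action on $A$. This is precisely the role of the Mahler-expansion machinery advertised in the abstract: a priori one only controls the analytic behavior of inner automorphisms on $A$, and the task is to convert this into the algebraic statement $gag^{-1} - a \in J$. Ensuring that the relevant series converge in $kG$ (not merely in $kA$) and interact correctly with the $p$-valuation on $G$ is the technical heart of the argument; once this is in place, the inductive step reduces to formal bookkeeping with the lower central series and standard passages to $p$-valuable quotients.
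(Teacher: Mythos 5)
The theorem you are proving is imported from \cite[Theorem 8.4]{nilpotent}; the present paper cites it but does not reprove it, so there is no in-paper argument to compare against. Evaluated on its own terms, your inductive step has two concrete gaps. First, the ``induced ideal $\bar{P}$ in $k\bar{G}$'' does not exist: the kernel of the natural map $kG \to k(G/M)$ is the closed two-sided ideal generated by $\{m-1 : m \in M\}$, and since $P$ is faithful it contains no element $m-1$ with $m \neq 1$, so this kernel is not contained in $P$. A faithful prime ideal of $kG$ therefore has no well-defined image in $k(G/M)$ when $M \neq 1$, and the reduction to $\bar{G}$ cannot even get started.

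Second, the claim that ``the same Mahler-expansion argument applied directly to $M$ yields $M \subseteq Z(G)$'' does not follow from your base case. Your base case hypothesis is that $A$ equals the full controller subgroup $P^{\chi}$, and whatever Mahler-type analysis one runs there must exploit this minimality: a nontrivial $G$-action on $A$ should produce control by a proper open subgroup of $A$, contradicting $A = P^{\chi}$. For $M = \gamma_c(N)$ with $c \geq 2$ you only have $M \subsetneq N = P^{\chi}$, so $M$ does \emph{not} control $P$, and that contradiction is unavailable. The weaker hypotheses you actually have at hand --- $M$ abelian, closed, normal in $G$, with $P \cap kM$ a faithful $G$-invariant ideal of $kM$ --- genuinely do not imply $M \subseteq Z(G)$: the principal subgroup $H$ of any non-abelian $p$-valuable abelian-by-procyclic group satisfies all of them with $H \not\subseteq Z(G)$. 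Repairing the induction requires a mechanism that does not pass to the quotient by $\gamma_c(P^{\chi})$ and that uses the controller property throughout, not just in the abelian base case.
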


\noindent We want to extend this result to more general groups, in particular when $G$ is solvable, and thus give a positive answer to \cite[Question O]{survey}.

\subsection{Aims}

In this paper, we will consider certain classes of metabelian groups, i.e. groups whose commutator subgroup is abelian.\\

\noindent A $p$-valuable group $G$ is \emph{abelian-by-procyclic} if it is isomorphic to $\mathbb{Z}_p^d\rtimes\mathbb{Z}_p$ for some $d\in\mathbb{N}$. The completed group algebras of these groups have the form of skew-power series rings $R[[x;\sigma,\delta]]$, for $R$ a commutative power series ring.\\ 

\noindent Our main result establishes the control condition in Theorem \ref{decomposition} for groups of this form:

\begin{theorem}\label{A}

Let $G$ be a $p$-valuable, abelian-by-procyclic group. Then every faithful prime ideal of $kG$ is controlled by $Z(G)$.

\end{theorem}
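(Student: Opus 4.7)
The plan is to decompose $G = A \rtimes T$, where $A \cong \mathbb{Z}_p^d$ is abelian and normal, and $T = \overline{\langle t\rangle} \cong \mathbb{Z}_p$ is procyclic with $t$ acting on $A$ via a continuous automorphism $\sigma$. A direct computation gives $Z(G) \cap A = A^\sigma$, the $\sigma$-fixed points. Identifying $kG$ with the skew power series ring $kA[[t-1;\sigma,\delta]]$, I fix $P \in Spec^f(kG)$ and set $H := P^\chi$, aiming to show $H \subseteq Z(G)$.

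By the controller-subgroup theorem recalled above, $H$ is a closed, isolated, normal subgroup of $G$. Since pro-$p$ extensions by $\mathbb{Z}_p$ split, $H = (H \cap A) \rtimes T''$ for some closed $T'' \leq H$ mapping isomorphically onto its image in $T$. Crucially, if one can establish that $H \cap A \subseteq A^\sigma$, then $T''$ centralises $H \cap A$, so $H$ is abelian; Theorem \ref{nilpotence} then immediately delivers $P^\chi \subseteq Z(G)$. The whole argument therefore reduces to the claim: \emph{for every $a \in H \cap A$ one has $\sigma(a) = a$}.

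This claim is where the Mahler expansion enters. For $b \in \mathbb{Z}_p$, inner conjugation by $t^b$ acts on $kA$ as the $b$-th iterate of $\sigma$, and Mahler's theorem formally yields
\[
\sigma^b \;=\; \sum_{n \geq 0} \binom{b}{n}(\sigma-1)^n.
\]
After passing to a sufficiently deep open subgroup of $G$ (replacing $t$ by $t^{p^m}$, which leaves the control-theoretic conclusion unchanged since $Z(G)$ is isolated in $G$), I expect $\sigma - 1$ to become topologically nilpotent on $kA$ with respect to the augmentation filtration, so that this series converges on $kA$. The main obstacle I foresee is lifting the convergence from $kA$ to the whole skew power series ring $kG$: one must track how iterated applications of $\sigma - 1$ interact with the coefficients of $(t-1)^n$ in the skew expansion of a general element, and secure uniform valuation estimates tied to the $p$-valuation $\omega$ on $G$.

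Once this convergent Mahler expansion is in place, the contradiction should be extracted by a projection argument. Suppose some $a \in H \cap A$ satisfies $\sigma(a) \neq a$. Using the Mahler expansion, an averaging/projection operator built out of conjugation by $t^b$ as $b$ varies over $\mathbb{Z}_p$ should send $P \cap kH$ into $P \cap kH'$, where $H' \subsetneq H$ is the proper isolated subgroup whose abelian part is $(H \cap A)^\sigma$. Verifying (using faithfulness of $P$) that this projection is non-trivial on $P$ would then show that $H'$ controls $P$, contradicting the minimality of $H = P^\chi$. The abelian-by-procyclic hypothesis is exactly what makes the Mahler expansion converge, and it is this convergence that makes the reduction to the centre possible.
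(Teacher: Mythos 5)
Your high-level reduction is sound and mirrors the spirit of the paper's own: you want to show that $H=P^\chi$ is abelian, invoke Theorem \ref{nilpotence}, and conclude $P^\chi\subseteq Z(G)$. The paper does essentially this, except it packages the hard step as Theorem \ref{B} (control by a proper open subgroup) and then applies it to $Q=P\cap kP^\chi$ by a minimality argument, handling the splitting/non-splitting bookkeeping with \cite[Theorem 5.8]{nilpotent}. So far, so good; and your structural claims ($Z(G)\cap A=A^\sigma$, $H=(H\cap A)\rtimes T''$, and the reduction to showing $H\cap A\subseteq A^\sigma$) are all correct.

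The gap is that the one claim on which your whole argument rests --- for every $a\in H\cap A$, $\sigma(a)=a$ --- is not proved, only announced, and the two ingredients you rely on to prove it do not work as stated. First, your ``averaging/projection operator built out of conjugation by $t^b$'' cannot be the naive average over $\mathbb{Z}_p$: $k$ has characteristic $p$, so the Haar-measure normalisation is not invertible, and there is no $\frac{G}{A}$-equivariant projection $kA\to kA^\sigma$ in this setting. What the paper uses instead is not a projector but the individual quantised divided powers $\partial_{\underline g}^{(\alpha)}$, and the conclusion one aims for is $\tau\partial_i(P)=0$ for specific $i$, whereupon Proposition \ref{Frattini} produces the control theorem. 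Second, and more fundamentally, you misidentify where the convergence difficulty lives. It is not ``lifting from $kA$ to $kG$''; after passing mod $P$ to $Q(kG/P)$ one must compare the $m$-growth of the leading Mahler approximations $q_i^{p^m}$ against the error term $O(q^{p^m})$, and the obstruction is that the $q_i$ need not be regular (non-zero-divisors) in the associated graded of $Q(kG/P)$, so one cannot simply divide out the slowest-growing term as the argument requires. Circumventing this is what forces the paper to construct non-commutative valuations (Section 3), prove the comparison theorem for growth rates (Theorem \ref{comparison}), introduce growth-preserving polynomials, build the bespoke filtration via the extended commutator subgroup (Section 5), and deal separately with the case where $Q(kG/P)$ is a CSA (Section 4). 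None of this is touched on in your sketch, and without it the claim $\sigma(a)=a$ for $a\in H\cap A$ remains unsupported.
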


\noindent Using Lemma \ref{Lie-invariant} below, we see that if $G$ is an abelian-by-procyclic group, and $N$ is a closed, isolated normal subgroup, then $\frac{G}{N}$ is also abelian-by-procyclic, and hence by the theorem, every faithful prime ideal of $k\frac{G}{N}$ is controlled by the centre. Hence we can apply Theorem \ref{decomposition} to get a complete decomposition for $Spec(kG)$.\\

\noindent In particular, if $G$ is any solvable, $p$-valuable group of rank 2 or 3, then $G$ is abelian-by-procyclic, so again, we can completely determine $Spec(kG)$.\\

\noindent Most of the work in this paper will go towards proving the following theorem.

\begin{theorem}\label{B}

Let $G$ be a non-abelian, $p$-valuable, abelian-by-procyclic group. If $P\in Spec^f(kG)$ then $P$ is controlled by a proper, open subgroup of $G$.

\end{theorem}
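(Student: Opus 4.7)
The plan is to exploit the semidirect-product structure $G = A \rtimes \langle t \rangle$, with $A \cong \mathbb{Z}_p^d$ abelian and, because $G$ is non-abelian, the conjugation automorphism $\sigma = c_t|_A$ non-trivial. This realises $kG$ as a skew power series ring $kA[[T;\sigma,\delta]]$ with $T = t - 1$; conveniently, in characteristic $p$ we have $t^{p^n} - 1 = T^{p^n}$, so the proper open normal subgroups $H_n := A \rtimes \langle t^{p^n}\rangle$ correspond to skew sub-power-series-rings $kH_n = kA[[T^{p^n}; \sigma^{p^n}, \delta_n]]$. My aim is to show that $H_n$ controls $P$ for $n$ sufficiently large.

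First I would analyse the inner automorphism $\alpha_n = c_{t^{p^n}}$ of $kG$ via Mahler expansions. On $kA$ it restricts to $\sigma^{p^n}$, and since $\sigma - \mathrm{id}$ strictly raises the filtration on $kA$ coming from the $p$-valuation on $A$, the Mahler series
\[
\sigma^{p^n} = \sum_{k \geq 0} \binom{p^n}{k}\,(\sigma - \mathrm{id})^k
\]
converges, with $\sigma^{p^n} - \mathrm{id}$ raising the filtration by an amount growing linearly in $n$. Extending this analysis to all of $kG$ (using $\alpha_n(T) = T$ together with the twisted action on $kA$) should yield an analogous quantitative bound for $\alpha_n - \mathrm{id}$ as an operator on $kG$.

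I would then use these estimates to decompose any $r \in P$. Writing $r = \sum_{j \geq 0} a_j T^j$ and grouping by residues $j \bmod p^n$ yields a formal expression $r = \sum_{i=0}^{p^n-1} T^i r_i$ with $r_i \in kH_n$. The key claim is that when $r \in P$ the $r_i$ may be chosen inside $P \cap kH_n$; this I would establish by constructing the $r_i$ as limits of iterated ``projections'' of $r$, using the stability of $P$ under conjugation by $t^{p^n}$ together with the Mahler bound from step one to force the error terms to have strictly higher filtration at each stage. Completeness of $kG$ under the valuation then places the limit inside $P \cap kH_n$, giving $r \in (P \cap kH_n)\,kG$.

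The hardest part I expect is this last step. While on $kA$ the Mahler argument is clean, combining the twist $\sigma$ and the derivation $\delta$ on the full $kG$ is subtle, and ensuring that the iterated projections converge to elements of $P$ (rather than merely of $kG$) is delicate. This is where the faithfulness of $P$ enters crucially: it prevents degenerate cases where the Mahler expansion collapses, and together with Theorem~1.1 forces $P^\chi \subseteq H_n \subsetneq G$, as required.
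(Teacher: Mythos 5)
Your proposal, as written, has two serious gaps and ultimately does not recover the paper's argument or any correct substitute for it.

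First, the central ``key claim'' is circular. You write that for $r\in P$ with decomposition $r=\sum_{i}T^{i}r_{i}$, the goal is to show ``the $r_i$ may be chosen inside $P\cap kH_n$.'' But $kG$ is free over $kH_n$ on coset representatives, so for each $r\in kG$ the $r_i$ are uniquely determined, and the assertion that $r\in P$ forces $r_i\in P$ for all $i$ is precisely the definition of ``$H_n$ controls $P$.'' The plan to establish this by ``iterated projections'' whose errors decay is not an argument: the operator $\alpha_n-\mathrm{id}$ that you want to iterate tends to $0$ as $n\to\infty$ in the filtration topology, so the constraint $(\alpha_n-\mathrm{id})(r)\in P$ supplies \emph{less} information as $n$ grows, not more. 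The whole difficulty the paper confronts is that the leading Mahler coefficients $q_i^{p^m}$ in the expansion $\varphi^{p^m}-\mathrm{id}=q_1^{p^m}\partial_1+\cdots+q_d^{p^m}\partial_d+O(q^{p^m})$ all converge to zero, and to extract a nonzero relation $\tau\partial_j(P)=0$ one must divide out by some leading coefficient \emph{after} checking it is regular, commutes correctly, and has strictly smaller growth rate than the error. Nothing in your sketch addresses this regularity problem; it is the entire content of Sections 3--6, including the diagonalisation argument in the CSA case, the construction of a special filtration via the extended commutator subgroup, and the explicit construction of the growth-preserving polynomials $L_i$.

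Second, the subgroup you target is the wrong one. The paper's controlling subgroup is obtained from Proposition~\ref{Frattini}: if $\partial_j(P)\subseteq P$ for a derivation $\partial_j$ dual to one of the generators $h_j$ of $H$, then $P$ is controlled by $U=\langle h_1,\dots,h_j^p,\dots,h_d,X\rangle$, i.e.~an open subgroup that \emph{retains} the procyclic element $X$ and shrinks a direction in $H$. Your proposed $H_n=A\rtimes\langle t^{p^n}\rangle$ instead shrinks the $\langle X\rangle$-direction. The Mahler expansion of $\varphi=c_X$ has no $\partial_{d+1}$ component (since $\varphi(X)=X$), so the whole analysis produces information about the $H$-directions only; there is no route from that expansion to control by $H_n$. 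Absent a separate argument, it is not clear that $H_n$ controls $P$ at all, and even granting it, faithfulness together with Theorem~1.1 does not ``force $P^\chi\subseteq H_n$'' --- that is again what you would be trying to prove. In short, the strategy needs to be reoriented: fix the automorphism $\varphi=c_X$, expand $\varphi^{p^m}$ in quantized divided powers of $H$-variables, and do the hard work of finding a regular scaling (the paper's special GPP) before taking the limit.
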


\vspace{0.1in}

\noindent This result is actually all we need to deduce Theorem \ref{A}:\\

\noindent\emph{Proof of Theorem \ref{A}.} Recall from \cite[Definition 5.5]{nilpotent} that a prime ideal $P$ of $kG$ is \emph{non-splitting} if for all $U\leq_o G$ controlling $P$, $P\cap kU$ is prime in $kU$.\\

\noindent Fix $G$ a $p$-valuable, abelian-by-procyclic group, and suppose that $P\in Spec^f(G)$ is non-splitting.\\

\noindent Consider the controller subgroup $P^{\chi}$ of $P$. Using \cite[Proposition 5.5]{nilpotent} and the non-splitting property, we see that $Q:=P\cap kP^{\chi}$ is a faithful prime ideal of $kP^{\chi}$. We know that $P^{\chi}$ is a closed, normal subgroup of $G$, so it follows from Lemma \ref{Lie-invariant} below that $P^{\chi}$ is abelian-by-procyclic.\\

\noindent If $P^{\chi}$ is abelian, then it follows from Theorem \ref{nilpotence} that $P^{\chi}$ is central in $G$, i.e. $P^{\chi}\subseteq Z(G)$ and $P$ is controlled by $Z(G)$ as required. So assume for contradiction that $P^{\chi}$ is non-abelian:\\

\noindent Applying Theorem \ref{B} gives that $Q$ is controlled by a proper open subgroup $U$ of $kP^{\chi}$, i.e. $Q=(Q\cap kU)kP^{\chi}$. But $P$ is controlled by $P^{\chi}$ by \cite[Theorem A]{controller}, so: 

\begin{center}
$P=(P\cap kP^{\chi})kG=QkG=(Q\cap kU)kP^{\chi}kG=(P\cap kU)kG$
\end{center} 

\noindent Hence $P$ is controlled by $U$, which is a proper subgroup of $P^{\chi}$ -- contradiction.\\

\noindent So, we conclude that any faithful, non-splitting prime ideal of $kG$ is controlled by $Z(G)$. Now suppose that $I$ is a faithful, \emph{virtually non-splitting} right ideal of $kG$, i.e. $I=PkG$ for some open subgroup $U$ of $G$, $P$ a faithful, non-splitting prime ideal of $kU$.\\

\noindent Using Lemma \ref{Lie-invariant} below, we see that $U$ is $p$-valuable, abelian-by-procyclic, so by the above discussion, $P$ is controlled by $Z(U)$, and in fact, $Z(U)=Z(G)\cap U$ by \cite[Lemma 8.4]{nilpotent}. Therefore, since $I\cap kU=P$ by \cite[Lemma 5.1(ii)]{nilpotent}: 

\begin{center}
$I=PkG=(P\cap kZ(U))kUkG=(I\cap kU\cap kZ(G))kG=(I\cap kZ(G))kG$
\end{center}

\noindent So since $G$ is $p$-valuable and every faithful, virtually non-splitting right ideal of $kG$ is controlled by $Z(G)$, it follows from \cite[Theorem 5.8, Corollary 5.8]{nilpotent} that every faithful prime ideal of $kG$ is controlled by $Z(G)$ as required.\qed\\

\noindent So the remainder of our work will be to prove Theorem \ref{B}, this proof will be given at the end of section 6.

\subsection{Outline}

Throughout, we will fix $G=\mathbb{Z}_p^{d}\rtimes\mathbb{Z}_p$ a non-abelian, $p$-valuable, abelian-by-procyclic group, $P$ a faithful prime ideal of $kG$, and let $\tau:kG\to Q(\frac{kG}{P})$ be the natural map. To prove a control theorem for $P$, we will follow a similar approach to the method used in \cite{nilpotent}.\\ 

\noindent The most important notion we will need is the concept of the \emph{Mahler expansion} of an automorphism $\varphi$ of $G$, introduced in \cite[Chapter 6]{nilpotent}, and which we will recap in section 2.

In our case, we will take $\varphi$ to be the automorphism defining the action of $\mathbb{Z}_p$ on $\mathbb{Z}_p^d$.\\

\noindent In section 2, we will see how we can use the Mahler expansion of $\varphi^{p^m}$ to deduce an expansion of the form:

\begin{center}
$0=q_1^{p^m}\tau\partial_1(y)+....+q_{d}^{p^m}\tau\partial_{d}(y)+O(q^{p^m})$
\end{center}

\noindent Where $y\in P$ is arbitrary, $\partial_i:kG\to kG$ is a derivation, $q_i,q\in Q(\frac{kG}{P})$, we call the $q$'s \emph{Mahler approximations}. We want to examine convergence of this expression as $m\rightarrow\infty$ to deduce that $\tau\partial_i(P)=0$ for some $i\leq d$, from which a control theorem should follow.\\

\noindent Also, for an appropriate polynomial $f$, we will see that we have a second useful expansion:

\begin{center}
$0=f(q_1)^{p^m}\tau\partial_1(y)+....+f(q_{d})^{p^m}\tau\partial_{d}(y)+O(f(q)^{p^m})$
\end{center}

\noindent The idea is to compare the growth of the Mahler approximations with $m$ so that we can scale this expression and get that the higher order terms tend to zero, and the lower order terms converge to something non-zero.

One possiblity would be to divide out the lower order terms, which was the approach used in \cite{nilpotent}. But an important difference in our situation is that we cannot be sure that the elements $f(q_i)$ are \emph{regular}, i.e. that dividing them out of the expression will not affect convergence of the higher order terms.\\

\noindent In section 3, we will recap how to use a filtration on $kG$ to construct an appropriate filtration $v$ on $Q(\frac{kG}{P})$, and introduce the notion of a \emph{growth rate function}, which allows us to examine the growth with $m$ of our elements $f(q)^{p^m}$ with respect to $v$.\\

\noindent We will also introduce the notion of a \emph{growth preserving polynomial} (GPP), and show how a control theorem follows from the existence of such a polynomial $f$ satisfying certain $v$-regularity conditions.

To prove that a GPP $f$ satisfies these conditions, it is only really necessary to prove that $f(q)$ is central, non-nilpotent of minimal degree inside gr $\frac{kG}{P}$.\\

\noindent Ensuring the elements $f(q)$ are central and non-nilpotent in gr $\frac{kG}{P}$ can usually be done, provided $Q(\frac{kG}{P})$ is not a CSA -- a case we deal with separately in section 4 using a technique involving diagonalisation of the Mahler approximations. 

Ensuring these elements have minimal degree, however, is more of a problem when using the standard filtration on $kG$.\\

\noindent In section 5, we define a new filtration on $kG$ which ensures that we can find a GPP satisfying the required conditions, and we construct such a polynomial in section 6. This will allow us to complete our analysis and prove Theorem \ref{B}.\\

\noindent\textbf{\underline{Acknowledgements:}} I would like to thank Konstantin Ardakov for supervising me in my research, and for many helpful conversations and suggestions. I would also like to thank EPSRC for funding this research.

\section{Preliminaries}

\noindent Throughout, we will use the notation $(.,.)$ to denote the group commutator, i.e. $(g,h)=ghg^{-1}h^{-1}$.

\subsection{Abelian-by-procyclic groups}

Fix $G$ a compact $p$-adic Lie group carrying a $p$-valuation $\omega$. Since $G$ is compact, it follows that $G$ has finite rank $d\in\mathbb{N}$ by \cite[\rom{3} 2.2.6]{Lazard}, i.e. there exists a finite \emph{ordered basis} $\underline{g}=\{g_1,....,g_d\}$ such that for all $g\in G$, $g=g_1^{\alpha_1}....g_d^{\alpha_d}$ for some $\alpha_i\in\mathbb{Z}_p$ unique, and $\omega(g)=\min\{v_p(\alpha_i)+\omega(g_i):i=1,....,d\}$.\\

\noindent Hence $G$ is homeomorphic to $\mathbb{Z}_p^d$, and furthermore, it follows from \cite[\rom{1} 2.3.17]{Lazard} that $kG\cong k[[b_1,\cdots,b_d]]$ as $k$-vector spaces, where $b_i=g_i-1$ for each $i$.\\

\noindent Recall the definition of a $p$-saturated group \cite[\rom{3}.2.1.6]{Lazard}, and recall that the category of $p$-saturated groups is isomorphic to the category of saturated $\mathbb{Z}_p$-Lie algebras via the $\log$ and $\exp$ functors \cite[\rom{4}.3.2.6]{Lazard}. 

This means that for $G$ a $p$-saturated group, $\log(G)$ is a free $\mathbb{Z}_p$-Lie subalgebra of the $\mathbb{Q}_p$-Lie algebra $\mathfrak{L}(G)$ of $G$.\\

\noindent Also, recall that any $p$-valuable group $G$ can be embedded as an open subgroup into a $p$-saturable group $Sat(G)$, and hence $Sat(G)^{p^t}\subseteq G$ for some $t\in\mathbb{N}$ -- see \cite[\rom{3}.3.3.1]{Lazard} for full details.

\begin{definition}

A compact $p$-adic Lie group $G$ is \emph{abelian-by-procyclic} if there exists $H\trianglelefteq_c^{i} G$ such that $H$ is abelian and $\frac{G}{H}\cong\mathbb{Z}_p$. For $G$ non-abelian, $H$ is unique, and we call it the \emph{principal subgroup} of $G$; it follows easily that $Z(G)\subseteq H$ and $(G,G)\subseteq H$, so $G$ is metabelian.

\end{definition}

\noindent If $G$ is non-abelian, abelian-by-procyclic with principal subgroup $H$, then $G\cong H\rtimes\mathbb{Z}_p$. So if we assume that $G$ is $p$-valuable, we have that $G\cong\mathbb{Z}_p^d\rtimes\mathbb{Z}_p$.\\

\noindent Moreover, we can choose an element $X\in G\backslash H$ such that for every $g\in G$, $g=hX^{\alpha}$ for some unique $h\in H$, $\alpha\in\mathbb{Z}_p$. We call $X$ a \emph{procyclic element} in $G$.

\begin{lemma}\label{Lie-invariant}

Let $G$ be a non-abelian $p$-valuable abelian-by-procyclic group with principal subgroup $H$, and let $C$ be a closed subgroup of $G$. Then $C$ is abelian-by-procyclic.

Furthermore, if $N$ is a closed, isolated normal subgroup of $G$, then $\frac{G}{N}$ is $p$-valuable, abelian-by-procyclic.

\end{lemma}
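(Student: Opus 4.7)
For the first assertion, my plan is to exploit the abelian principal subgroup $H$ of $G$ by intersecting with $C$. The subgroup $C\cap H$ is closed, normal in $C$ (because $H$ is normal in $G$), and abelian (as a subgroup of the abelian $H$). By the second isomorphism theorem $C/(C\cap H)\cong CH/H$ is a closed subgroup of $G/H\cong\mathbb{Z}_p$, hence either trivial or isomorphic to $\mathbb{Z}_p$. In the first case $C\subseteq H$ is abelian; in the second, $C/(C\cap H)\cong\mathbb{Z}_p$ is torsion-free, so $C\cap H$ is isolated in $C$, exhibiting $C$ as abelian-by-procyclic with principal subgroup $C\cap H$.

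For the second assertion, $G/N$ is $p$-valuable by the standard result of \cite{Lazard} that quotients of $p$-valuable groups by closed isolated normal subgroups inherit a $p$-valuation. To find a principal subgroup, my plan is to set $\overline{H}:=HN/N$ and then take $M:=\overline{H}^{\mathrm{iso}}$, the isolator of $\overline{H}$ in $G/N$. Since $H$ is normal in $G$, $HN$ is a subgroup, and it is closed as the continuous image of the compact space $H\times N$; hence $\overline{H}$ is a closed, normal, abelian subgroup of $G/N$. Consequently $M$ is closed, normal, and isolated in $G/N$, and $(G/N)/\overline{H}\cong G/HN$ is a quotient of the cyclic group $G/H\cong\mathbb{Z}_p$.

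The main obstacle is that $\overline{H}$ itself need not be isolated in $G/N$: the quotient $G/HN$ can be a non-trivial finite cyclic group, in which case the abelianness of $M$ requires separate justification. My plan is to pass to the $\mathbb{Q}_p$-Lie algebra. Since $M/\overline{H}$ is a torsion subgroup of the cyclic group $G/HN$ it is finite, forcing $M$ and $\overline{H}$ to have the same dimension as $p$-adic Lie groups, and hence $\mathfrak{L}(\overline{H})=\mathfrak{L}(M)$ inside $\mathfrak{L}(G/N)$. Since $\overline{H}$ is abelian, $\mathfrak{L}(\overline{H})$ is abelian, so $\mathfrak{L}(M)$ is abelian, and the Lazard correspondence then yields that $M$ is abelian. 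Finally $(G/N)/M$, being a torsion-free quotient of the cyclic group $G/HN$, is either $\mathbb{Z}_p$ (so that $M$ is the principal subgroup of $G/N$) or trivial (so that $G/N=M$ is itself abelian); in either situation $G/N$ is abelian-by-procyclic.
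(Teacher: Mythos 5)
Your proof is correct. For the first assertion it coincides with the paper's argument: intersect $C$ with the principal subgroup $H$, examine $C/(C\cap H)\cong CH/H\le G/H\cong\mathbb{Z}_p$, and split on whether this is trivial or $\mathbb{Z}_p$. For the second assertion you arrive at the same dichotomy by a slightly different packaging: the paper asks directly whether the surjection $\mathbb{Z}_p\cong G/H\twoheadrightarrow G/HN$ has trivial kernel, concluding in the nontrivial-kernel case that $HN$ is open in $G$ and hence $G/N$ is abelian; you instead pass to the isolator $M$ of $\overline H=HN/N$ and note that $(G/N)/M$ is a torsion-free quotient of $\mathbb{Z}_p$, hence trivial or $\mathbb{Z}_p$. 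Both reductions hinge on the same fact — a $p$-valuable group with an open abelian subgroup is abelian — which the paper leaves implicit (``it follows that $G/N$ is abelian'') and which you correctly justify by passing to the $\mathbb{Q}_p$-Lie algebra and using that $\mathfrak{L}(M)=\mathfrak{L}(\overline H)$ is abelian; so your write-up supplies the detail the paper elides, but the route is essentially the same.
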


\begin{proof}

Let $H':=C\cap H$, then clearly $H'$ is a closed, isolated, abelian normal subgroup of $C$.\\

\noindent Now, $\frac{C}{H'}=\frac{C}{H\cap C}\cong\frac{CH}{H}\leq\frac{G}{H}\cong\mathbb{Z}_p$, hence $\frac{C}{H'}$ is isomorphic to a closed subgroup of $\mathbb{Z}_p$, i.e. it is isomorphic to either 0 or $\mathbb{Z}_p$.\\

\noindent If $\frac{C}{H'}=0$ then $C=H'=H\cap C$ so $C\subseteq H$. Hence $C$ is abelian, and therefore abelian-by-procyclic.\\

\noindent If $\frac{C}{H'}\cong\mathbb{Z}_p$ then $C$ is abelian-by-procyclic by definition.\\

\noindent For $N\trianglelefteq_o^i G$, it follows from \cite[\rom{4}.3.4.2]{Lazard} that $\frac{G}{N}$ is $p$-valuable, and clearly $\frac{HN}{N}$ is a closed, abelian normal subgroup of $G$.\\

\noindent Consider the natural surjection of groups $\mathbb{Z}_p\cong\frac{G}{H}\twoheadrightarrow{}\frac{G}{HN}\cong\frac{G/N}{HN/N}$:\\ 

\noindent If the kernel of this map is zero, then it is an isomorphism, so $\frac{G/N}{HN/N}\cong\mathbb{Z}_p$, so $\frac{G}{N}$ is abelian-by-procyclic by definition.\\

\noindent If the kernel is non-zero, then $\frac{G}{HN}$ is a non-trivial quotient of $\mathbb{Z}_p$, and hence is finite, giving that $HN$ is open in $G$. So $\frac{HN}{N}$ is abelian and open in $\frac{G}{N}$, and it follows that $\frac{G}{N}$ is abelian, and hence abelian-by-procyclic.\end{proof}

\vspace{0.15in}

\noindent Now, fix $G$ a non-abelian $p$-valuable, abelian-by-procyclic group with principal subgroup $H$, procyclic element $X$. Let $\varphi\in Inn(G)$ be conjugation by $X$, then it is clear that $\varphi\neq id$.\\

\noindent Recall from \cite[Definition 4.8, Proposition 4.9]{nilpotent} the definition of $z(\varphi^{p^m}):H\to Sat(H)$, $h\mapsto \lim_{n\rightarrow\infty}{(\varphi^{p^{m+n}}(h)h^{-1})^{p^{-n}}}$.\\

\noindent Using \cite[\rom{4}. 3.2.3]{Lazard} it follows that $z(\varphi^{p^m})(h)=\exp([p^m\log(X),\log(h)])$, and hence $z(\varphi^{p^m})(h)=z(\varphi)(h)^{p^m}$ for all $m$.\\

\noindent For each $m\in\mathbb{N}$, define $u_m:=z(\varphi^{p^m})$, then it is clear that $u_m(h)=u_0(h)^{p^m}$ for all $g\in G$. So fix $m_1\in\mathbb{N}$ such that $u_0(h)^{p^{m_1}}\in H$ for all $g\in G$, and let $u:=u_{m_1}:H\to H$.\\

\noindent We call $m_1$ the \emph{initial power}, we may choose this to be as high as we need.

\subsection{Filtrations}

Let $R$ be any ring, we assume that all rings are unital.

\begin{definition}\label{filt}

A \emph{filtration} of $R$ is a map $w:R\to\mathbb{Z}\cup\{\infty\}$ such that for all $x,y\in R$, $w(x+y)\geq\min\{w(x),w(y)\}$, $w(xy)\geq w(x)+w(y)$, $w(1)=0$, $w(0)=\infty$. The filtration is \emph{separated} if $w(x)=\infty$ implies that $x=0$ for all $x\in R$.

\end{definition}

\noindent If $R$ carries a filtration $w$, then there is an induced topology on $R$ with the subgroups $F_nR:=\{r\in R:w(r)\geq n\}$ forming a basis for the neighbourhoods of the identity. This topology is Hausdorff if and only if the filtration is separated.\\

\noindent Recall from \cite[Ch.\rom{2} Definition 2.1.1]{LVO} that a filtration is \emph{Zariskian} if $F_1R\subseteq J(F_0R)$ and the \emph{Rees ring} $\widetilde{R}:=\underset{n\in\mathbb{Z}}{\oplus}{F_nR}$ is Noetherian.\\

\noindent Zariskian filtrations can only be defined on Noetherian rings, and by \cite[Ch.\rom{2} Theorem 2.1.2]{LVO} we see that a Zariskian filtration is separated.\\

\noindent If $R$ carries a filtration $w$, then define the \emph{associated graded ring} of $R$ to be 

\begin{center}
gr $R:=\underset{n\in\mathbb{Z}}{\oplus}{\frac{F_nR}{F_{n+1}R}}$
\end{center}

\noindent This is a graded ring with multiplication given by $(r+F_{n+1}R)\cdot (s+F_{m+1}R)=(rs+F_{n+m+1}R)$.\\

\noindent\textbf{\underline{Notation:}} For $r\in R$ with $w(r)=n$ we denote by gr$(r):=r+F_{n+1}R\in$ gr $R$.\\

\noindent We say that a filtration $w$ is \emph{positive} if $w(r)\geq 0$ for all $r\in R$.

\begin{definition}\label{valuation}

If $R$ carries a filtration $v$ and $x\in R\backslash\{0\}$, we say that $x$ is \emph{$v$-regular} if $v(xy)=v(x)+v(y)$ for all $y\in R$, i.e. gr$(x)$ is not a zero divisor in gr $R$. If all non-zero $x$ are $v$-regular we say that $v$ is a \emph{valuation}.

\end{definition}

\noindent Note that if $x$ is $v$-regular and a unit then $x^{-1}$ is $v$-regular and $v(x^{-1})=-v(x)$.\\

\noindent\textbf{\underline{Examples:}} 1. If $I$ is an ideal of $R$, the \emph{$I$-adic} filtration on $R$ is given by $F_0R=R$ and $F_nR=I^n$ for all $n>0$. If $I=\pi R$ for some normal element $\pi\in R$, we call this the \emph{$\pi$-adic filtration}.\\

\noindent 2. If $R$ carries a filtration $w$, then $w$ extends naturally to $M_k(R)$ via $w((a_{i,j}))=\min\{w(a_{i,j}):i,j=1,\cdots,k\}$ -- the \emph{standard matrix filtration}.\\

\noindent 3. If $R$ carries a filtration $w$ and $I\trianglelefteq R$, we define the \emph{quotient filtration} $\overline{w}:\frac{R}{I}\to\mathbb{Z}\cup \{\infty\}, r+I\mapsto\sup\{w(r+y):y\in I\}$. In this case, gr$_{\overline{w}}$ $\frac{R}{I}=\frac{gr R}{gr I}$.\\

\noindent 4. Let $(G,\omega)$ be a $p$-valuable group with ordered basis $\underline{g}=\{g_1,\cdots,g_d\}$. We say that $\omega$ is an \emph{abelian} $p$-valuation if $\omega(G)\subseteq\frac{1}{e}\mathbb{Z}$ for some $e\in\mathbb{Z}$, and $\omega((g,h))>\omega(g)+\omega(h)$ for all $g,h\in G$.

If $\omega$ is abelian, then by \cite[Corollary 6.2]{nilpotent}, $kG\cong k[[b_1,\cdots,b_d]]$ carries an associated Zariskian filtration given by 

\begin{center}
$w(\underset{\alpha\in\mathbb{N}^d}{\sum}{\lambda_{\alpha}b_1^{\alpha_1}\cdots b_d^{\alpha_d}})=\inf\{\sum_{i=1}^{d}{e\alpha_i\omega(g_i)}:\lambda_{\alpha}\neq 0\}\in\mathbb{Z}\cup\{\infty\}$.
\end{center}

\noindent We call $w$ the \emph{Lazard filtration} associated with $w$. It induces the natural complete, Hausdorff topology on $kG$, and gr $kG\cong k[X_1,\cdots,X_d]$, where $X_i=$ gr$(b_i)=$ gr$(g_i-1)$. 

Since gr $kG$ is a domain, it follows that $w$ is a valuation.\\

\noindent Now, recall from \cite[Definition 1.5.8]{McConnell} the definition of a \emph{crossed product}, $R\ast F$, of a ring $R$ with a group $F$. That is $R\ast F$ is a ring containing $R$, free as an $R$-module with basis $\{\overline{g}:g\in F\}\subseteq R^{\times}$, where $\overline{1_F}=1_R$, such that $R\overline{g_1}=\overline{g_1}R$ and $(\overline{g_1}R)(\overline{g_2}R)=\overline{g_1g_2}R$ for all $g_1,g_2\in F$.\\

\noindent Also recall from \cite{Passman} that given a crossed product $S=R\ast F$, we can define the \emph{action} $\sigma:F\to Aut(R)$ and the \emph{twist} $\gamma:F\times F\to R^{\times}$ such that for all $g,g_1,g_2\in F$, $r\in R$:\\ 

\noindent $\overline{g}r=\sigma(g)(r)\overline{g}$ and $\overline{g_1}$ $\overline{g_2}=\gamma(g_1,g_2)\overline{g_1g_2}$.

\begin{proposition}\label{crossed product}

Let $R$ be a ring with a complete, positive, Zariskian valuation $w:R\to\mathbb{N}\cup\{\infty\}$, let $F$ be a finite group, and let $S=R\ast F$ be a crossed product with action $\sigma$ and twist $\gamma$. Suppose that $w(\sigma(g)(r))=w(r)$ for all $g\in F$, $r\in R$.\\

\noindent Then $w$ extends to a complete, positive, Zariskian filtration $w':S\to \mathbb{N}\cup\{\infty\}$ defined by $w'(\sum_{g\in F}{r_g \overline{g}})=\min\{w(r_g):g\in G\}$, and gr$_{w'}$ $S \cong ($gr$_w$ $R)\ast F$.

\end{proposition}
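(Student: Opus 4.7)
The plan is to check the filtration axioms, compute the associated graded, verify completeness, and then deduce the Zariskian property.

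First I would verify that $w'$ is a well-defined positive filtration extending $w$. Positivity, $w'(0)=\infty$, and $w'(1)=0$ are immediate from the definition and the fact that $\overline{1_F}=1_R$. Subadditivity under addition is termwise. For submultiplicativity, expanding
\[
\Bigl(\sum_{g_1} r_{g_1}\overline{g_1}\Bigr)\Bigl(\sum_{g_2} s_{g_2}\overline{g_2}\Bigr)=\sum_{g_1,g_2} r_{g_1}\sigma(g_1)(s_{g_2})\gamma(g_1,g_2)\,\overline{g_1g_2}
\]
and using the hypothesis $w(\sigma(g_1)(s_{g_2}))=w(s_{g_2})$ together with positivity $w(\gamma(g_1,g_2))\geq 0$ yields $w'(xy)\geq w'(x)+w'(y)$. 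A key observation for later is that, since $w$ is a valuation on $R$ and $\gamma(g_1,g_2)\in R^{\times}$, the relation $0=w(1)=w(\gamma)+w(\gamma^{-1})$ combined with positivity forces $w(\gamma(g_1,g_2))=0$.

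Next I would establish completeness. A Cauchy sequence $x_n=\sum_{g\in F} r_{g,n}\overline{g}$ in $(S,w')$ yields, for each $g\in F$, a Cauchy sequence $(r_{g,n})_n$ in $(R,w)$, which converges to some $r_g$ by completeness of $R$. Since $F$ is finite, the sum $\sum_g r_g\overline{g}$ is the $w'$-limit of $(x_n)$.

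For the associated graded, note $F_nS=\bigoplus_{g\in F} F_nR\cdot\overline{g}$, so as graded abelian groups $\mathrm{gr}_{w'}S\cong\bigoplus_{g\in F}(\mathrm{gr}_w R)\,\overline{g}$. Because $\sigma(g)$ preserves $w$, it induces an action on $\mathrm{gr}_w R$, and because $w(\gamma(g_1,g_2))=0$, the twist lifts to a map $F\times F\to(\mathrm{gr}_w R)^{\times}$. Transporting the multiplication from $S$ then identifies $\mathrm{gr}_{w'}S$ with the crossed product $(\mathrm{gr}_w R)*F$ built from these induced data.

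Finally, for the Zariskian property, the Rees ring decomposes as $\widetilde{S}=\bigoplus_n F_nS\cong\bigoplus_{g\in F}\widetilde{R}\,\overline{g}\cong \widetilde{R}*F$. Since $\widetilde{R}$ is Noetherian by hypothesis and $\widetilde{R}*F$ is finitely generated as a left/right $\widetilde{R}$-module (of rank $|F|$), $\widetilde{S}$ is Noetherian. For $F_1S\subseteq J(F_0S)$, given $x\in F_1S$ and $y\in F_0S$, the partial sums $\sum_{k=0}^{n}(xy)^k$ are $w'$-Cauchy because $w'((xy)^k)\geq k$; by the completeness proved above they converge in $F_0S$ to a two-sided inverse of $1-xy$, and similarly for $1-yx$, so $x\in J(F_0S)$. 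There is no serious obstacle here: the only subtlety is remembering to use that $w$ is a genuine valuation (not merely a filtration) to pin down $w(\gamma(g_1,g_2))=0$, which is what makes the graded identification a crossed product with values in units of $\mathrm{gr}_w R$ rather than in some larger graded module.
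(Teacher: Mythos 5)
Your proof is correct, and most of it parallels the paper's argument (the same expansion of the product, the same observation that the twist lands in value $0$, the same identification of the associated graded). The genuine difference is in how you establish the Zariskian property: the paper computes gr$_{w'}S$ directly as a free module over $\theta(\mathrm{gr}\,R)$, concludes gr$_{w'}S$ is Noetherian, observes $S$ is complete, and then invokes \cite[Ch.II Theorem 2.1.2]{LVO} to deduce Zariskianity. You instead verify the definition head-on: you show the Rees ring $\widetilde{S}$ is Noetherian by noting $\widetilde{S}\cong\bigoplus_{g\in F}\widetilde{R}\,\overline{g}$ is finitely generated over the Noetherian ring $\widetilde{R}$, and you prove $F_1S\subseteq J(F_0S)$ by a geometric-series argument using the completeness you already established. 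This is somewhat more elementary (no appeal to the LVO equivalence), and it has the pleasant side effect that Noetherianity of gr$_{w'}S$ falls out for free as $\widetilde{S}/t\widetilde{S}$; the paper's route is shorter once the LVO theorem is available. One small correction to your closing remark: the valuation hypothesis is not what pins down $w(\gamma(g_1,g_2))=0$ --- that follows from positivity alone (for any unit $u$, $0=w(1)\geq w(u)+w(u^{-1})\geq 0$). Where the valuation property genuinely enters is in guaranteeing that $\mathrm{gr}(r)\cdot\mathrm{gr}(s)=\mathrm{gr}(rs)$ never degenerates to zero, so that $\mathrm{gr}(r\overline{g})\cdot\mathrm{gr}(s\overline{h})=\mathrm{gr}(r\overline{g}s\overline{h})$ holds with equality of degrees and the transported multiplication on $\bigoplus_g(\mathrm{gr}_w R)\,\overline{g}$ actually reproduces the crossed-product structure rather than collapsing.
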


\begin{proof}

From the definition it is clear that $w'(s_1+s_2)\geq\min\{w'(s_1),w'(s_2)\}$. So to prove that $w$ defines a ring filtration, it remains to check that $w'(s_1s_2)\geq w'(s_1)+w'(s_2)$.\\

\noindent In fact, using the additive property, we only need to prove that $w'(r\overline{g}s\overline{h})\geq w'(r\overline{g})+w'(s\overline{h})$ for all $r,s\in R$, $g,h\in F$. We will in fact show that equality holds here:\\

\noindent $w'(r\overline{g}s\overline{h})=w'(r\sigma(g)(s)\overline{g} \overline{h})=w'(r\sigma(g)(s)\gamma(g,h)\overline{gh})=w(r\sigma(g)(s)\gamma(g,h))$\\

\noindent $=w(r)+w(\sigma(g)(s))+w(\gamma(g,h))$ (by the valuation property)\\

\noindent $=w(r)+w(s)$.\\

\noindent The last equality follows because $w(\sigma(g)(s))=w(s)$ by assumption, and since $R$ is positively filtered and $\gamma(g,h)$ is a unit in $R$, it must have value zero. Clearly $w(r)+w(s)=w'(r\overline{g})+w'(s\overline{h})$ so we are done.\\

\noindent Hence $w'$ is a well-defined ring filtration, clearly $w'(r)=w(r)$ for all $r\in R$, and $w'(\overline{g})=0$ for all $g\in F$. We can define $\theta:$ gr$_w$ $R\to $ gr$_{w'}$ $S, r+F_{n+1}R\mapsto r+F_{n+1}S$, which is a well defined, injective ring homomorphism.\\

\noindent Given $s\in S$, $s=\underset{g\in F}{\sum}{r_g \overline{g}}$, so let $A_s:=\{g\in F:w(r_g)=w'(s)\}$. Then: \\

gr$(s)=\underset{g\in A_s}{\sum}{r_g \overline{g}}+F_{w'(s)+1}S=\underset{g\in A_s}{\sum}{(r_g+F_{w'(s)+1}S)(\overline{g}+F_1S)}=\underset{g\in A_s}{\sum}\theta($gr$(r_g))$gr$(\overline{g})$.\\

\noindent Hence gr$_{w'}$ $S$ is finitely generated over $\theta($gr $R)$ by $\{$gr$(\overline{g}):g\in F\}$. This set forms a basis, hence gr$_{w'}$ $S$ is free over $\theta($gr $R)$, and it is clear that each gr$(\overline{g})$ is a unit in gr $S$, and they are in bijection with the elements of $F$.\\

\noindent Therefore gr $S$ is Noetherian, and clearly $R\ast F$ is complete with respect to $w'$. Hence $w'$ is Zariskian by \cite[Chapter \rom{2}, Theorem 2.1.2]{LVO}.\\

\noindent Finally, gr$(r\overline{g})$gr$(s\overline{h})=$gr$(r\overline{g}s\overline{h})$ since $w'(r\overline{g}s\overline{h})=w'(r\overline{g})+w'(s\overline{h})$, so it is readily checked that $($gr $R)$gr$(\overline{g})$gr$(\overline{h})=(($gr $R)($gr$(\overline{g}))(($gr $R)($gr$(\overline{h}))$, and clearly $($gr $R)($gr$(\overline{g}))=($gr$(\overline{g}))($gr $R)$.\\

\noindent Therefore gr$_{w'}$ $S=($gr$_w$ $R)\ast F$.\end{proof}

\noindent This result will be useful to us later, because for any $p$-valuable group $G$, $U\trianglelefteq_o G$, $kG\cong kU\ast\frac{G}{U}$.

\subsection{Mahler expansions}

\noindent We will now recap the notion of the Mahler expansion of an automorphism.\\

\noindent Let $G$ be a compact $p$-adic Lie group. Define $C^{\infty}=C^{\infty}(G,k)=\{f:G\to k:f$ locally constant$\}$, and for each $U\leq_o G$, define $C^{\infty U}:=\{f\in C^{\infty}:f$ constant on the cosets of $U\}$.\\

\noindent Clearly $C^{\infty}$ is a $k$-algebra, $C^{\infty U}$ is a subalgebra, and recall from \cite[Lemma 2.9]{controller} that there is an action $\gamma:C^{\infty}\to End_k(kG)$ of $C^{\infty}$ on $kG$. Also recall the following result (\cite[Proposition 2.8]{controller}):

\begin{proposition}\label{control}

Let $I$ be a right ideal of $kG$, $U$ an open subgroup of $kG$. Then $I$ is controlled by $U$ if and only if $I$ is a $C^{\infty U}$-submodule of $kG$, i.e. if and only if for all $g\in G$, $\gamma(\delta_{Ug})(I)\subseteq I$, where $\delta_{Ug}$ is the characteristic function of the coset $Ug$.

\end{proposition}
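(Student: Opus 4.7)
The plan is to exploit the observation that because $G$ is compact and $U$ is open, the index $[G:U]$ is finite, so we may pick a finite set of coset representatives $\{\overline{g}_1,\dots,\overline{g}_n\}\subseteq G$ for $G/U$ and obtain a decomposition
\begin{equation*}
kG=\bigoplus_{i=1}^{n}kU\cdot\overline{g}_i
\end{equation*}
as a left $kU$-module. The whole proof then rests on identifying the operator $\gamma(\delta_{Ug})$ with the projection onto the component $kU\cdot\overline{g}$. From the definition of $\gamma$ in \cite{controller}, one checks that $\gamma(\delta_{Ug})$ acts on a finite element $h\in G$ as multiplication by $\delta_{Ug}(h)$, and extends by $k$-linearity and continuity to pick out exactly the $kU\cdot\overline{g}$-component of any $x\in kG$. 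This is the one computational step I would carry out carefully, since everything else reduces to it.

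For the forward direction, suppose $I=(I\cap kU)kG$. Any element of $I$ is then a finite $k$-linear combination of expressions $y\overline{g}$ with $y\in I\cap kU$ and $\overline{g}$ a coset representative, and $\gamma(\delta_{Uh})(y\overline{g})$ equals $y\overline{g}$ if $Uh=U\overline{g}$ and $0$ otherwise. In both cases the result lies in $I$, so $\gamma(\delta_{Uh})(I)\subseteq I$ for every $h\in G$, i.e. $I$ is a $C^{\infty U}$-submodule.

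For the reverse direction, take $x\in I$. Since $\sum_{i=1}^{n}\delta_{U\overline{g}_i}$ is the constant function $1$ and $\gamma$ is a $k$-algebra action, we obtain $x=\sum_{i=1}^{n}\gamma(\delta_{U\overline{g}_i})(x)$. By hypothesis each summand $\gamma(\delta_{U\overline{g}_i})(x)$ lies in $I$, and by the projection identification there exists $y_i\in kU$ with $\gamma(\delta_{U\overline{g}_i})(x)=y_i\overline{g}_i$. Multiplying on the right by $\overline{g}_i^{-1}\in G\subseteq kG$ and using that $I$ is a right ideal gives $y_i\in I\cap kU$, hence $y_i\overline{g}_i\in(I\cap kU)kG$. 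Summing over $i$ shows $x\in(I\cap kU)kG$, yielding $I\subseteq (I\cap kU)kG$; the reverse inclusion is automatic as $I$ is a right ideal. Therefore $I=(I\cap kU)kG$, i.e.\ $I$ is controlled by $U$.

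The main obstacle, as noted, is the identification of $\gamma(\delta_{Ug})$ with coset projection: one must verify both that $\gamma(\delta_{Ug})(\overline{g}_i)=\delta_{i,g}\overline{g}_i$ on group elements and that this extends continuously to the completion, so that the finite decomposition formula $x=\sum_{i}\gamma(\delta_{U\overline{g}_i})(x)$ is valid for every $x\in kG$. Once this is in place, both implications are direct manipulations with the decomposition and the right-ideal property.
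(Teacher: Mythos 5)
Your proof is correct. Note that this proposition is cited in the paper from \cite[Proposition 2.8]{controller} and not proved there, so there is no in-paper argument to compare against; the route you take --- decompose $kG=\bigoplus_i kU\,\overline{g}_i$ as a free left $kU$-module over right coset representatives, identify $\gamma(\delta_{U\overline{g}_i})$ with the projection onto the $kU\,\overline{g}_i$ summand (which follows from the coefficientwise definition of $\gamma$ together with continuity), and then use $\sum_i\delta_{U\overline{g}_i}=1$ and the right-ideal property to peel off $y_i=\gamma(\delta_{U\overline{g}_i})(x)\,\overline{g}_i^{-1}\in I\cap kU$ --- is the natural and standard one, and all the individual steps hold up, including the observation in the forward direction that for $y\in I\cap kU$ and $u\in kU$ one has $yu\in I\cap kU$, so elements of $(I\cap kU)kG$ really do lie in $\sum_i(I\cap kU)\overline{g}_i$.
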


\noindent Now assume that $G$ is $p$-valuable, with $p$-valuation $\omega$, and let $\underline{g}=\{g_1,\cdots,g_d\}$ be an ordered basis for $(G,\omega)$. For each $\alpha\in\mathbb{N}^d$ define $i_{\underline{g}}^{(\alpha)}:G\to k,\underline{g}^{\beta}\mapsto\binom{\beta}{\alpha}$. Then $i_{\underline{g}}^{(\alpha)}\in C^{\infty}$, so let $\partial_{\underline{g}}^{(\alpha)}:=\gamma(i_{\underline{g}}^{(\alpha)})$ -- the \emph{$\alpha$-quantized divided power} with respect to $\underline{g}$.

Also, for each $i=1,\cdots,d$, we define $\partial_i:=\partial_{\underline{g}}^{(e_i)}$, where $e_i$ is the standard $i$'th basis vector, these are $k$-linear derivations of $kG$.

\begin{proposition}\label{Frattini}

Suppose that $I$ is a right ideal of $kG$ and $\partial_j(I)\subseteq I$ for some $j\in\{1,\cdots,d\}$. Then $I$ is controlled by a proper open subgroup of $G$.

\end{proposition}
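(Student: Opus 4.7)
The plan is to use Proposition~\ref{control} by exhibiting a proper open subgroup $V\leq_o G$ for which every characteristic function $\delta_{Vg}$ lies in the $\gamma$-stabiliser of $I$. To this end, I would first consider the set $A:=\{f\in C^{\infty}:\gamma(f)(I)\subseteq I\}$; since $\gamma$ is a unital $k$-algebra homomorphism, $A$ is a unital $k$-subalgebra of $C^{\infty}$, and the hypothesis $\partial_j(I)\subseteq I$ is exactly the statement $i:=i_{\underline g}^{(e_j)}\in A$, so $k[i]\subseteq A$. Since $i$ takes values in the prime subfield $\mathbb{F}_p\subseteq k$, Fermat's little theorem gives $i^p=i$, and by Lagrange interpolation over $\mathbb{F}_p$ the characteristic function of each clopen set
\[
V_a:=\{g\in G:\alpha_j(g)\equiv a\pmod p\},\quad a\in\mathbb{F}_p,
\]
is a polynomial in $i$; hence $\chi_{V_a}\in A$ for every $a\in\mathbb{F}_p$.

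The crucial structural step is to identify $V_0$ as a proper open subgroup of $G$. Here I would exploit that $\partial_j$ is a $k$-linear derivation of $kG$: on a group element $g\in G\subseteq kG$ one has $\partial_j(g)=\alpha_j(g)\,g$, and so applying the Leibniz identity to a product $gh$ of group elements gives $\alpha_j(gh)\equiv\alpha_j(g)+\alpha_j(h)\pmod p$. Equivalently, $g\mapsto\alpha_j(g)\bmod p$ is a continuous surjective group homomorphism $G\twoheadrightarrow\mathbb{F}_p$, whose kernel $V_0$ is a closed normal subgroup of index $p$ in $G$, hence open and proper; in particular $V_0$ contains the Frattini subgroup of $G$, which fits the label of the proposition.

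Finally, the cosets of $V_0$ in $G$ are precisely $V_0,V_1,\ldots,V_{p-1}$, so $\delta_{V_0 g}=\chi_{V_{\alpha_j(g)\bmod p}}\in A$ for every $g\in G$, and Proposition~\ref{control} then yields that $V_0$ controls $I$. The step I expect to require the most care is the derivation/homomorphism identification --- the rest is formal bookkeeping on the algebra of locally constant functions --- but pinning down that the $j$-th ordered-basis coordinate reduces mod $p$ to a genuine group homomorphism is exactly what guarantees that a proper open controlling subgroup is there to be found.
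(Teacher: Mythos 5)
Your proof is correct and takes a more self-contained route than the paper's. The paper leans on a quoted result, \cite[Lemma 7.13]{nilpotent}, which says that for an open normal subgroup $V$ with ordered basis $\{g_1,\ldots,g_{s-1},g_s^p,\ldots,g_r^p,g_{r+1},\ldots,g_d\}$ each $\gamma(\delta_{Vg})$ is a polynomial in $\partial_s,\ldots,\partial_r$; it then reduces the problem to exhibiting a proper open normal $U$ with ordered basis $\{g_1,\ldots,g_{j-1},g_j^p,g_{j+1},\ldots,g_d\}$, and produces this $U$ as the topological closure of $\underline{g}_{j,p}$, proving by a Frattini-quotient contradiction that the given set really is an ordered basis. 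You bypass both ingredients: since $i:=i_{\underline g}^{(e_j)}$ is $\mathbb{F}_p$-valued, Lagrange interpolation over $\mathbb{F}_p$ puts $\delta_{V_0g}=\chi_{V_a}$ in the unital subalgebra $k[i]\subseteq A$, which re-derives on the spot exactly the instance of Lemma 7.13 that is needed; and you identify the controlling subgroup directly as the kernel of the continuous surjective homomorphism $g\mapsto\alpha_j(g)\bmod p$, whose multiplicativity you read off the derivation property of $\partial_j$ (one could equally obtain it from the Frattini quotient $G/\phi(G)$ being elementary abelian with basis the $g_i\phi(G)$, which is the structural fact underlying both arguments). Both proofs land on the same index-$p$ subgroup; yours is shorter, dispenses with the external citation and the by-contradiction ordered-basis check, and makes the role of Proposition~\ref{control} more transparent, while the paper's has the advantage of reusing machinery ([nilpotent, Lemma 7.13]) that is already available in the reference chain.
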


\begin{proof}

Recall from \cite[Lemma 7.13]{nilpotent} that if $V$ is an open normal subgroup of $G$ with ordered basis 

\noindent $\{g_1,\cdots,g_{s-1},g_s^p,\cdots,g_r^p,g_{r+1},\cdots,g_d\}$ for some $1\leq s\leq r\leq d$, then for each $g\in G$, $\gamma(\delta_{Vg})$ can be expressed as a polynomial in $\partial_s,\cdots,\partial_r$.

So it follows from Proposition \ref{control} that if $\partial_i(I)\subseteq I$ for all $i=s,\cdots,r$, then $I$ is controlled by $V$.\\

\noindent Since we know that $\partial_j(I)\subseteq I$, it remains to show that we can find a proper, open normal subgroup $U$ of $G$ with ordered basis $\{g_1,\cdots,g_{j-1},g_j^p,g_{j+1},\cdots,g_d\}$, and it will follow that $U$ controls $I$.\\

\noindent\textbf{\underline{Notation:}} Given $d$ variables $x_1,\cdots,x_d$, we will write $\underline{x}$ to denote the set $\{x_1,\cdots,x_d\}$, $\underline{x}_{j,p}$ to denote the same set, but with $x_j$ replaced by $x_j^p$, and $\underline{x}_{j}$ to denote the set with $x_j$ removed altogether. We write $\underline{x}^{\alpha}$ to denote $x_1^{\alpha_1}\cdots x_d^{\alpha_d}$.\\

\noindent Let $U$ be the subgroup of $G$ generated topologically by the set $\underline{g}_{j,p}$. It is clear that this subgroup contains $G^p$, and hence it is open in $G$. Let us suppose, for contradiction, that it contains an element $u=\underline{g}^{\alpha}$ where $\alpha\in\mathbb{Z}_p^d$ and $p\nmid\alpha_j$.\\

\noindent Recall from \cite[Definition 1.8]{DDMS} that the \emph{Frattini subgroup} $\phi(G)$ of $G$ is defined as the intersection of all maximal open subgroups of $G$, and since $G$ is a pro-$p$ group, it follows from \cite[Proposition 1.13]{DDMS} that $\phi(G)$ contains $G^p$ and $[G,G]$.

Hence $\phi(G)$ is an open normal subgroup of $G$, and $\frac{G}{\phi(G)}$ is abelian.\\

\noindent Since $\underline{g}_{j,p}$ generates $U$, it is clear that $\underline{g\phi(G)}_{p,j}$ generates $\frac{U\phi(G)}{\phi(G)}$. Therefore $u\phi(G)=\underline{g\phi(G)}_{j,p}^{\beta}$ for some $\beta\in\mathbb{Z}_p^d$.

But we know that $u=\underline{g}^{\alpha}$, so $u\phi(G)=\underline{g\phi(G)}^{\alpha}$. So since $\frac{G}{\phi(G)}$ is abelian, it follows that $g_j^{\alpha_j-p\beta_j}\phi(G)=\underline{g\phi(G)}_j^{\gamma}$ for some $\gamma\in\mathbb{Z}_p^d$.\\

\noindent But since $p\nmid\alpha_j$, $\alpha_j-p\beta_j$ is a $p$-adic unit, and hence $g_j\phi(G)=\underline{g\phi(G)}_j^{\delta}$, where $\delta_i=\gamma_i(\alpha_i-p\beta_i)^{-1}$. Therefore $\frac{G}{\phi(G)}$ is generated by $\underline{g\phi(G)}_j=\frac{\underline{g}_j\phi(G)}{\phi(G)}$.\\

\noindent It follows from \cite[Proposition 1.9]{DDMS} that $G$ is generated by $\underline{g}_j$, which has size $d-1$, and this is a contradiction since the rank $d$ of $G$ is the minimal cardinality of a generating set.\\

\noindent Therefore, every $u\in U$ has the form $\underline{g}_{j,p}^{\beta}$ for some $\beta\in\mathbb{Z}_p^d$, i.e $\{g_1,\cdots,g_j^p,\cdots,g_d\}$ is an ordered basis for $U$.\\

\noindent Finally, $U$ is maximal, so it contains $\phi(G)\supseteq [G,G]$, and hence it is normal in $G$ as required.\end{proof}

\noindent Now, given $\varphi\in Inn(G)$, clearly $\varphi$ extends to a $k$-linear endomorphism of $kG$, and using Mahler's theorem, we can express $\varphi^{p^m}$ as:

\begin{equation}\label{Mahler1}
\varphi^{p^m}=\sum_{\alpha\in\mathbb{N^d}}{\big\langle \varphi^{p^m},\partial_{\underline{g}}^{(\alpha)}}\big\rangle\partial_{\underline{g}}^{(\alpha)}=id+c_{1,m}\partial_1+....+c_{d,m}\partial_d+....
\end{equation}

\noindent Where $\big\langle \varphi^{p^m},\partial_{\underline{g}}^{(\alpha)}\big\rangle\in kG$ is the $\alpha$-\emph{Mahler coefficient} of $\varphi^{p^m}$ (see \cite[Corollary 6.6]{nilpotent} for full details).\\

\noindent Given $P\in Spec^f(kG)$, our general approach is to choose $\varphi\neq id$, and use analysis of (\ref{Mahler1}) to obtain a sequence of endomorphisms preserving $P$, converging pointwise in $m$ to an expression involving only $\partial_1,\cdots,\partial_d$, which will imply a control theorem using Proposition \ref{Frattini}.

However, it is more convenient for us to reduce modulo $P$, whence we can pass to the ring of quotients $Q(\frac{kG}{P})$ and divide out by anything regular mod $P$.\\ 

\noindent So let $\tau:kG\to Q(\frac{kG}{P})$ be the natural map, then inside End$_k(kG,Q(\frac{kG}{P}))$ our expression becomes:

\begin{equation}\label{Mahler2}
\tau\varphi^{p^m}-\tau=\sum_{\alpha\in\mathbb{N^d}}{\tau(\big\langle \varphi^{p^m},\partial_{\underline{g}}^{(\alpha)}}\big\rangle)\tau\partial_{\underline{g}}^{(\alpha)}
\end{equation}

\noindent Since we want to analyse convergence of this expression as $m\rightarrow\infty$, we need to define a certain well behaved filtration $v$ on $Q(\frac{kG}{P})$, which we call a \emph{non-commutative valuation}. We will describe the construction of $v$ in the next section.\\

\noindent In our case, we take $G$ to be non-abelian, $p$-valuable, abelian-by-procyclic group, with principal subgroup $H$, procyclic element $X$, and we take $\varphi$ to be conjugation by $X$, which is a non-trivial inner automorphism.\\

\noindent It is clear that $\varphi|_H$ is trivial modulo centre since $H$ is abelian. Therefore it follows from the proof of \cite[Lemma 6.7]{nilpotent} that if $\underline{g}=\{h_1,\cdots,h_d,X\}$ is an ordered basis for $(G,\omega)$, where $\{h_1,\cdots,h_d\}$ is some ordered basis for $H$, then for any $m\in\mathbb{N}$ $\alpha\in\mathbb{N}^{d+1}$, we have:

\begin{center}
$\big\langle \varphi^{p^m},\partial_{\underline{g}}^{(\alpha)}\big\rangle=(\varphi^{p^m}(h_1)h_1^{-1}-1)^{\alpha_1}\cdots(\varphi^{p^m}(h_d)h_d^{-1}-1)^{\alpha_d}(\varphi^{p^m}(X)X^{-1}-1)^{\alpha_{d+1}}$
\end{center}

\noindent And since $\varphi(X)=X$, it is clear that this is 0 if $\alpha_{d+1}\neq 0$. Hence we may assume that $\alpha\in\mathbb{N}^d$, and:

\begin{center}
$\big\langle \varphi^{p^m},\partial_{\underline{g}}^{(\alpha)}\big\rangle=(\varphi^{p^m}(h_1)h_1^{-1}-1)^{\alpha_1}\cdots(\varphi^{p^m}(h_d)h_d^{-1}-1)^{\alpha_d}$
\end{center}

\noindent Recall our function $u:H\to H$ defined in section 2.1, we can now use $u$ to approximate the Mahler coefficients inside $Q(\frac{kG}{P})$. Setting $q_i=\tau(u(h_i)-1)$ for $i=1,...,d$, it follows from the proof of \cite[Proposition 7.7]{nilpotent} that we can derive the following expression from (\ref{Mahler2}):

\begin{equation}\label{Mahler3}
\tau\varphi^{p^m}-\tau=q_1^{p^m}\tau\partial_1+....+q_{d}^{p^m}\tau\partial_{d}+\sum_{\vert\alpha\vert\geq 2}{\underline{q}^{\alpha p^m}\tau\partial_{\underline{g}}^{(\alpha)}}+\varepsilon_m
\end{equation}

\noindent Where $\underline{q}^{\alpha p^m}=q_1^{\alpha_1}\cdots q_d^{\alpha_d}$, $\varepsilon_m=\underset{\alpha\in\mathbb{N}^d}{\sum}{(\big\langle \varphi^{p^m},\partial_{\underline{g}}^{(\alpha)}\big\rangle-\underline{q}^{\alpha p^m})\tau\partial_{\underline{g}}^{(\alpha)}}\in$ End$_k(kG,Q)$, and there exists $t\in\mathbb{N}$ such that $v(\varepsilon_m(r))>p^{2m-t}$ for all $r\in kG$.\\

\noindent Since $\varphi(P)=P$ it is clear that the left hand side of this expression annihilates $P$. So take any $y\in P$ and apply it to both sides of (\ref{Mahler3}) and we obtain:

\begin{equation}\label{Mahler}
0=q_1^{p^m}\tau\partial_1(y)+....+q_{d}^{p^m}\tau\partial_{d}(y)+O(q^{p^m})
\end{equation}

\noindent Where $q\in Q$ with $v(q^{p^m})\geq \underset{i\leq d}{\min}\{2v(q_i^{p^m})\}$ for all $m$.\\

\noindent Furthermore, let $f(x)=a_0x+a_1x^p+a_2x^{p^2}+\cdots+a_nx^{p^n}$ be a polynomial, where $a_i\in\tau(kH)$ for each $i$.\\

\noindent Then for each $m\in\mathbb{N}$, $i=0,\cdots,n$, consider expression (\ref{Mahler}) above, with $m$ replaced by $m+i$, and multiply by $a_i^{p^m}$ to obtain:

\begin{center}
$0=(a_iq_1^{p^i})^{p^m}\tau\partial_1(y)+....+(a_iq_{d}^{p^i})^{p^m}\tau\partial_{d}(y)+O((a_iq^{p^i})^{p^m})$
\end{center}

\noindent Sum all these expressions as $i$ ranges from $0$ to $n$ to obtain:

\begin{equation}\label{MahlerPol}
0=f(q_1)^{p^m}\tau\partial_1(y)+....+f(q_{d})^{p^m}\tau\partial_{d}(y)+O(f(q)^{p^m})
\end{equation}

\noindent In the next section, we will see how after ensuring certain conditions on $f$, we can use this expression to deduce a control theorem.

\section{Non-commutative Valuations}

In this section, we fix $Q$ a simple Artinian ring. First, recall from \cite{nilpotent} the definition of a non-commutative valuation.

\begin{definition}
A \emph{non-commutative valuation} on $Q$ is a Zariskian filtration $v:Q\to\mathbb{Z}\cup\{\infty\}$ such that if $\widehat{Q}$ is the completion of $Q$ with respect to $v$, then $\widehat{Q}\cong M_k(Q(D))$ for some complete non-commutative DVR $D$, and $v$ is induced by the natural $J(D)$-adic filtration.
\end{definition}

\noindent It follows from this definition that if $q\in Q$ with $v(q)\geq 0$, then $q$ is $v$-regular if and only if $q$ is normal in $F_0\widehat{Q}$.\\

\noindent We want to construct a non-commutative valuation $v$ on $Q(\frac{kG}{P})$ which we can use to analyse our Mahler expansion (\ref{MahlerPol}).

\subsection{Construction}

Recall that \cite[Theorem C]{nilpotent} gives that if $R$ is a prime ring carrying a Zariskian filtration such that gr $R$ is commutative, then we can construct a non-commutative valuation on $Q(R)$.\\

\noindent The main theorem of this section generalises this result.\\

\noindent Let $R$ be a prime ring with a positive Zariskian filtration $w:R\to\mathbb{N}$ $\cup$ $\{\infty\}$ such that gr$_w R$ is finitely generated over a central, graded, Noetherian subring $A$, and we will assume that the positive part $A_{>0}$ of $A$ is not nilpotent, and hence we may fix a minimal prime ideal $\mathfrak{q}$ of $A$ with $\mathfrak{q}\not\supseteq A_{> 0}$. Define:

\begin{center}
$T=\{X\in A\backslash\mathfrak{q}: X$ is homogeneous $\}$.
\end{center} 

\noindent Then $T$ is central, and hence localisable in gr $R$, and the left and right localisations agree.

\begin{lemma}\label{homogeneous localsiation}

Let $\mathfrak{q}':=T^{-1}\mathfrak{q}$, then $\mathfrak{q}'$ is a nilpotent ideal of $T^{-1}A$ and:\\

\noindent i. There exists $Z\in T$, homogeneous of positive degree, such that $\frac{T^{-1}A}{\mathfrak{q}'}\cong (\frac{T^{-1}A}{\mathfrak{q}'})_0[\bar{Z},\bar{Z}^{-1}]$, where $\bar{Z}:=Z+\mathfrak{q}$.\\

\noindent ii. The quotient $\frac{(T^{-1}A)_{\geq 0}}{Z(T^{-1}A)_{\geq 0}}$ is Artinian, and $T^{-1}A$ is gr-Artinian, i.e. every descending chain of graded ideals terminates.

\end{lemma}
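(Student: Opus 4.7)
The plan is to exploit the homogeneity of $\mathfrak{q}$ throughout: minimal primes of a Noetherian graded commutative ring are themselves graded (the graded ideal $\mathfrak{p}^{*}$ spanned by the homogeneous components of any prime $\mathfrak{p}$ is again prime, hence by minimality coincides with $\mathfrak{p}$ when $\mathfrak{p}$ is minimal). So $\mathfrak{q}$ is homogeneous, $T$ is a central multiplicative set, and $B:=T^{-1}A$ is a well-defined Noetherian graded ring containing the graded ideal $\mathfrak{q}'=T^{-1}\mathfrak{q}$.

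First I would show $\mathfrak{q}'$ is the unique minimal prime of $B$, so nilpotent. For any prime $\mathfrak{P}\trianglelefteq B$, its contraction $\mathfrak{p}\trianglelefteq A$ is disjoint from $T$, so every homogeneous element of $\mathfrak{p}$ lies in $\mathfrak{q}$. Thus $\mathfrak{p}^{*}\subseteq\mathfrak{q}$, and $\mathfrak{p}^{*}$ is a graded prime; minimality of $\mathfrak{q}$ forces $\mathfrak{p}^{*}=\mathfrak{q}$, hence $\mathfrak{q}\subseteq\mathfrak{p}$ and $\mathfrak{q}'\subseteq\mathfrak{P}$. Therefore $\mathfrak{q}'$ equals the nilradical of $B$, which is nilpotent because $B$ is Noetherian.

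For (i): every nonzero homogeneous element of $B/\mathfrak{q}'$ has a representative $a/t$ with $a,t\in T$, and such an $a/t$ is already a unit of $B$; so $B/\mathfrak{q}'$ is a \emph{graded field}, and the set $\Gamma\subseteq\mathbb{Z}$ of degrees of its nonzero homogeneous elements is a subgroup. The hypothesis $A_{>0}\not\subseteq\mathfrak{q}$ supplies an element of $T$ of positive degree, so $\Gamma\neq 0$; write $\Gamma=d\mathbb{Z}$ with $d\geq 1$. I would then pick $Z\in T$ whose image $\bar Z$ realizes the minimal positive degree $d$ (using Bézout on the positive degrees appearing in $T$, together with $1\in T$). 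Given such $Z$, any homogeneous element of degree $nd$ in $B/\mathfrak{q}'$ becomes an element of $(B/\mathfrak{q}')_{0}$ upon multiplication by $\bar Z^{-n}$, so $B/\mathfrak{q}'=(B/\mathfrak{q}')_{0}[\bar Z,\bar Z^{-1}]$.

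For (ii): since $B/\mathfrak{q}'$ is a graded field, $\mathbb{K}:=(B/\mathfrak{q}')_{0}$ is an honest field and $(B/\mathfrak{q}')_{\geq 0}=\mathbb{K}[\bar Z]$, whence $(B/\mathfrak{q}')_{\geq 0}/\bar Z (B/\mathfrak{q}')_{\geq 0}=\mathbb{K}$. To transfer this to $B_{\geq 0}/ZB_{\geq 0}$, I would filter $B_{\geq 0}$ by powers of the nilpotent graded ideal $\mathfrak{q}'_{\geq 0}:=\mathfrak{q}'\cap B_{\geq 0}$: each successive quotient is a finitely generated $\mathbb{K}[\bar Z]$-module (by Noetherianness of $B$), and modulo $Z$ becomes a finitely generated $\mathbb{K}$-module, hence Artinian. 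Only finitely many such layers occur (nilpotence of $\mathfrak{q}'$), so $B_{\geq 0}/ZB_{\geq 0}$ is Artinian. For gr-Artinianness of $B$, any descending chain of graded ideals of $B$ maps modulo $\mathfrak{q}'$ to a chain of graded ideals of $\mathbb{K}[\bar Z,\bar Z^{-1}]$, which is a graded field (every nonzero graded ideal is the whole ring), so the chain terminates modulo $\mathfrak{q}'$; nilpotence of $\mathfrak{q}'$ then lifts termination back to $B$. The main obstacle is producing $Z\in T$ whose degree is exactly $d$: a priori the generator of $\Gamma$ may only be realized by a \emph{fraction} $a/t\in B\setminus A$, and one must argue either from additional features of $A$ inherited from the surrounding construction (e.g.\ that $A$ is essentially generated by a single homogeneous element coming from the filtration step) or that $A$ may be harmlessly refined to ensure such $Z$ exists.
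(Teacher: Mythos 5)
The paper delegates this lemma to a citation (Ardakov, Prop.\ 3.2 of \cite{nilpotent}), so there is no in-text proof to compare against word for word, but your argument follows exactly the expected route: minimal primes of a graded Noetherian ring are graded, $\mathfrak{q}'$ is the unique minimal prime hence the nilradical of $T^{-1}A$, the quotient by it is a graded field, and the Artinian/gr-Artinian claims are pushed through the finite $\mathfrak{q}'$-adic filtration. Parts (0), (ii), and the body of (i) are all sound.

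The one genuine issue is the one you flagged yourself, and I want to confirm that it is real and also to tell you how it should be resolved. If $A$ is positively graded with, say, homogeneous elements of $T$ only in degrees $2$ and $3$, then $\Gamma=\mathbb{Z}$ and $d=1$, but no element of $T$ has degree $1$: B\'ezout only produces $d$ as a $\mathbb{Z}$-linear combination of degrees, not as the degree of an actual element of $A$. So taken literally, ``$Z\in T$'' cannot always be achieved. The resolution is that the statement is slightly misprinted: the way the lemma is actually used in the proof of Lemma \ref{microlocalisation} makes clear that $Z$ is an element of $T^{-1}A$ (the phrase there is ``the element $Z\in T^{-1}A$ from Lemma \ref{homogeneous localsiation}($i$)'', and $\bar Z$ is written as $Z+\mathfrak{q}'$ rather than $Z+\mathfrak{q}$). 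Under that reading, you simply pick any nonzero homogeneous element of $(T^{-1}A)/\mathfrak{q}'$ of the minimal positive degree $d$ and take $Z$ to be a homogeneous lift in $T^{-1}A$; its positivity and invertibility (mod the nilpotent $\mathfrak{q}'$, hence in $T^{-1}A$ itself) are all that Lemma \ref{microlocalisation} needs. With this correction there is nothing left to ``argue from additional features of $A$'', and your proof goes through as written.

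One small point worth tightening in (ii): the sentence ``nilpotence of $\mathfrak{q}'$ then lifts termination back to $B$'' is compressing a standard but non-trivial step. You should filter a descending chain $I_1\supseteq I_2\supseteq\cdots$ of graded ideals by the finite $\mathfrak{q}'$-adic filtration, observe that each layer $(I_n\cap\mathfrak{q}'^{\,i}+\mathfrak{q}'^{\,i+1})/\mathfrak{q}'^{\,i+1}$ is a graded submodule of the finite free graded $(T^{-1}A)/\mathfrak{q}'$-module $\mathfrak{q}'^{\,i}/\mathfrak{q}'^{\,i+1}$, use DCC on graded submodules there, and then conclude by the usual sandwich argument across the finitely many layers. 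The same caution applies to the Artinianness of $(T^{-1}A)_{\geq 0}/Z(T^{-1}A)_{\geq 0}$: the layers $(\mathfrak{q}'^{\,i}\cap B_{\geq 0})/(\mathfrak{q}'^{\,i+1}\cap B_{\geq 0})\cong(\mathfrak{q}'^{\,i}/\mathfrak{q}'^{\,i+1})_{\geq 0}$ are finite over $K[\bar Z]$, and one should note that their reduction mod $Z$ dominates the corresponding layer of $B_{\geq 0}/ZB_{\geq 0}$. These are routine, but better made explicit.
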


\begin{proof}

Since $A$ is a graded, commutative, Noetherian ring, this is identical to the proof of \cite[Proposition 3.2]{nilpotent}.\end{proof}

\noindent Since gr $R$ is finitely generated over $A$, it follows that $T^{-1}$gr $R$ is finitely generated over $T^{-1}A$. So using this lemma, we see that $T^{-1}$gr $R$ is gr-Artinian.\\

\vspace{0.1in}

\noindent Let $S:=\{r\in R: $ gr$(r)\in T\}$, then since $w$ is Zariskian, $S$ is localisable by \cite[Corollary 2.2]{Ore sets}, and $S^{-1}R$ carries a Zariskian filtration $w'$ such that gr$_{w'}$ $S^{-1}R\cong T^{-1}$gr $R$, and if $r\in R$ then $w'(r)\geq w(r)$, and equality holds if $r\in S$.

Furthermore, $w'$ satisfies $w'(s^{-1}r)=w'(r)-w(s)$ for all $r\in $ gr $R$, $s\in S$.\\

\noindent Now, since $R$ is prime, the proof of \cite[Lemma 3.3]{nilpotent} shows that $S^{-1}R=Q(R)$, so let $Q'$ be the completion of $Q(R)$ with respect to $w'$.\\

\noindent Let $U:=F_0Q'$, which is Noetherian by \cite[Ch.\rom{2} Lemma 2.1.4]{LVO} then gr$_{w'}$ $U\cong (T^{-1}$gr $R)_{\geq 0}$, and since gr $Q'=T^{-1}$gr $R$ is gr-Artinian, $Q'$ is Artinian.

\begin{lemma}\label{microlocalisation}

There exists a regular, normal element $z\in J(U)\cap Q'^{\times}$ such that $\frac{U}{zU}$ has Krull dimension 1 on both sides, and for all $n\in\mathbb{Z}$, $F_{nw'(z)}Q'=z^nU$, hence the $z$-adic filtration on $Q'$ is topologically equivalent to $w'$.

\end{lemma}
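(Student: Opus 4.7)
The plan is to construct $z$ as a lift of the distinguished central element $Z$ supplied by Lemma \ref{homogeneous localsiation}, then verify each condition by transferring information between $U$ and its associated graded $\text{gr}_{w'}\,U \cong (T^{-1}\text{gr}\,R)_{\geq 0}$.

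First I would apply Lemma \ref{homogeneous localsiation}(i) to pick a homogeneous element $Z\in T$ of positive degree $d$ with $T^{-1}A/\mathfrak{q}' \cong (T^{-1}A/\mathfrak{q}')_0[\bar Z,\bar Z^{-1}]$. Since $Z\in T$, it is a homogeneous unit in $T^{-1}\text{gr}\,R=\text{gr}\,Q'$, so in particular it is regular in $\text{gr}\,Q'$. (One may first need to replace $Z$ by a power: if $Z x=0$ in $T^{-1}\text{gr}\,R$ then $x\in\mathfrak{q}'$, and $\mathfrak{q}'$ is nilpotent by Lemma \ref{homogeneous localsiation}, so a sufficiently high power of $Z$ has no nonzero annihilator.) Next, lift $Z$ to some $z\in F_d U$ with $\text{gr}(z)=Z$, using surjectivity of $F_d U\to F_d U/F_{d+1}U$.

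Now I verify the claimed properties one by one. Since $\text{gr}(z)=Z$ is a unit in $\text{gr}\,Q'$ and $Q'$ is complete Zariskian, $z$ is a unit in $Q'$ by the standard lifting argument \cite[Ch.\rom{2}]{LVO}, so $z\in Q'^\times$. Because $d=w'(z)>0$ and $U$ is complete positively filtered, $z\in F_1U\subseteq J(U)$. For normality, invertibility of $z$ in $Q'$ means $w'(zuz^{-1})=w'(u)$ for every $u\in Q'$, so conjugation by $z$ preserves $U=F_0Q'$; hence $zUz^{-1}=U$, i.e.\ $zU=Uz$. For $w'$-regularity, $\text{gr}(z)=Z$ is a unit in $\text{gr}\,Q'$, so it is a non-zero-divisor. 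The identity $F_{nw'(z)}Q'=z^nU$ follows at once: the inclusion $z^nU\subseteq F_{nw'(z)}Q'$ is immediate, and conversely any $a\in F_{nw'(z)}Q'$ satisfies $w'(z^{-n}a)\geq 0$, so $z^{-n}a\in U$. Topological equivalence of the $z$-adic filtration with $w'$ is then immediate from cofinality of $\{z^n U\}$ in $\{F_kQ'\}$.

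The main obstacle is the Krull-dimension claim for $U/zU$. The natural route is to pass to $\text{gr}(U/zU)=\text{gr}\,U/Z\cdot\text{gr}\,U=(T^{-1}\text{gr}\,R)_{\geq 0}/Z(T^{-1}\text{gr}\,R)_{\geq 0}$, use that $(T^{-1}A)_{\geq 0}/Z(T^{-1}A)_{\geq 0}$ is Artinian by Lemma \ref{homogeneous localsiation}(ii), and then use finite generation of $T^{-1}\text{gr}\,R$ over $T^{-1}A$ to bound its Krull dimension, transferring the bound to $U/zU$ via \cite[Ch.\rom{3}]{LVO}. The subtlety is that finite generation of $\text{gr}\,R$ over $A$ does not automatically give finite generation of $(T^{-1}\text{gr}\,R)_{\geq 0}$ over $(T^{-1}A)_{\geq 0}$ because localising at $T$ introduces elements of negative degree; the gr-Artinian property in Lemma \ref{homogeneous localsiation}(ii), combined with the existence of the homogeneous unit $Z$ controlling shifts between graded pieces, should be what resolves this and delivers the sharp bound of $1$ on both sides.
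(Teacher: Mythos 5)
Your proposal follows essentially the same route as the paper: lift the homogeneous unit $Z$ supplied by Lemma \ref{homogeneous localsiation}(i) to a regular normal $z\in J(U)\cap Q'^{\times}$, establish the filtration comparison from $w'$-regularity, and reduce the Krull-dimension claim to the Artinian property of $\text{gr}\,U/Z\,\text{gr}\,U$ via Lemma \ref{homogeneous localsiation}(ii). (For what it is worth, the paper's own proof actually establishes that $U/zU$ is Artinian and that $U$ has Krull dimension at most $1$; the wording of the statement appears to transpose those two conclusions.) Your direct verification of $F_{nw'(z)}Q'=z^nU$ from $w'$-regularity of $z$ is fine and is slightly more self-contained than the paper, which cites the proof of \cite[Proposition 3.4]{nilpotent} for that step.

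One small misattribution: you justify $w'(zuz^{-1})=w'(u)$ by "invertibility of $z$ in $Q'$". Invertibility alone does not give this. What does is that $\text{gr}(z)=Z$ is a unit, hence regular, in $\text{gr}\,Q'$, so $z$ is $w'$-regular and $w'(z^{-1})=-w'(z)$; then $w'(zuz^{-1})\geq w'(z)+w'(u)+w'(z^{-1})=w'(u)$ and symmetrically $w'(u)\geq w'(zuz^{-1})$, forcing equality. Alternatively, as in the paper, centrality of $Z$ in $\text{gr}\,Q'$ gives $w'(zuz^{-1}-u)>w'(u)$ directly. You have all the needed facts; the causal claim just needs rewiring.

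The genuine gap is the step you yourself flag: that $(T^{-1}\text{gr}\,R)_{\geq 0}$ is finitely generated over $(T^{-1}A)_{\geq 0}$. You are right that this is not automatic, since localising at $T$ introduces elements of negative degree; and you are right that the homogeneous unit $Z$ of positive degree is what rescues it. But your proposal stops at "should be what resolves this" without carrying it out, and the paper too asserts this finite generation without proof. To close the argument you need to actually show it — for instance, by using $Z$ to shift the finitely many homogeneous $T^{-1}A$-module generators of $T^{-1}\text{gr}\,R$ into a fixed bounded range of degrees, and then checking that the finitely many relevant graded pieces are finitely generated over $(T^{-1}A)_{\geq 0}$ — rather than leave it speculative.
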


\begin{proof}

Recall the element $Z\in T^{-1}A$ from Lemma \ref{homogeneous localsiation}($i$), then we can choose an element $z\in U$ such that gr$_{w'}(z)=Z$. Since $Z$ is a unit in $T^{-1}A$, we can in fact choose $z$ to be regular and normal in $U$. Then since $w'$ is Zariskian and $Z$ has positive degree, $z\in F_1Q'\subseteq J(U)$.\\
 
\noindent Furthermore, since $\bar{Z}=Z+\mathfrak{q}'$ is a unit in $\frac{T^{-1}A}{\mathfrak{q}'}$ and $\mathfrak{q}'$ is nilpotent, it follows that $Z$ is a unit in $T^{-1}A$, and hence in $T^{-1}$gr $R=$ gr $Q'$. 

So it follows that $z$ is not a zero divisor in $Q'$, and hence is a unit since $Q'$ is artinian. Also, for all $u\in Q'$, $w'(zuz^{-1})=w'(u)$ since $Z$ is central in gr $Q'$, hence $z$ is normal in $U$.\\ 

\noindent Since $(T^{-1}$gr $R)_{\geq 0}$ is finitely generated over $(T^{-1}A)_{\geq 0}$, it follows that $\frac{\text{gr }U}{Z \text{gr }U}$ is finitely generated over the image of $\frac{(T^{-1}A)_{\geq 0}}{Z(T^{-1}A)_{\geq 0}}\to \frac{\text{gr }U}{Z \text{gr }U}$.

This image is gr-Artinian by Lemma \ref{homogeneous localsiation}($ii$) and hence $\frac{\text{gr }U}{Z \text{gr }U}$ it is also gr-Artinian.\\

\noindent Therefore $\frac{U}{zU}$ is Artinian, and the proof of \cite[Proposition 3.4]{nilpotent} gives us that $U$ has Krull dimension at most 1 on both sides, and that $F_{nw'(z)} Q'=z^{n} U$ for all $n\in\mathbb{Z}$.\end{proof}

\vspace{0.1in}

\noindent So, after passing to a simple quotient $\widehat{Q}$ of $Q'$, and letting $V:=\widehat{Q}_{\geq 0}$ be the image of $U$ in $\widehat{Q}$, then since $Q(R)$ is simple, it follows that the map $Q(R)\to\widehat{Q}$ is injective, and the image is dense with respect to the quotient filtration.\\

\noindent Now, choose a maximal order $\mathcal{O}$ in $\widehat{Q}$, which is equivalent to $V$ in the sense of \cite[Definition 1.9]{McConnell}. Such an order exists by \cite[Theorem 3.11]{nilpotent}, and it is Noetherian.

Furthermore, let $z\in J(U)$ be the regular, normal element from Lemma \ref{microlocalisation}, and let $\overline{z}\in J(V)$ be the image of $z$ in $V$, then $\mathcal{O}\subseteq \overline{z}^{-r} V$ for some $r\in\mathbb{N}$ by \cite[Proposition 3.7]{nilpotent}.\\

\noindent It follows from \cite[Theorem 3.6]{nilpotent} that $\mathcal{O}\cong M_n(D)$ for some complete non-commutative DVR $D$, and hence $\widehat{Q}\cong M_n(Q(D))$. So let $v$ be the $J(\mathcal{O})$-adic filtration, i.e. the filtration induced from the valuation on $D$. Then $v$ is topologically equivalent to the $\overline{z}$-adic filtration on $\widehat{Q}$.\\

\noindent It is clear from the definition that the restriction of $v$ to $Q(R)$ is a non-commutative valuation, and the proof of \cite[Theorem C]{nilpotent} shows that $(R,w)\to (Q(R),v)$ is continuous.\\

\noindent Note that our construction depends on a choice of minimal prime ideal $\mathfrak{q}$ of $A$. So altogether, we have proved the following:

\begin{theorem}\label{filtration}

Let $R$ be a prime ring with a Zariskian filtration $w:R\to\mathbb{N}\cup\{\infty\}$ such that gr$_w$ $R$ is finitely generated over a central, graded, Noetherian subring $A$, and the positive part $A_{> 0}$ of $A$ is not nilpotent.\\

\noindent Then for every minimal prime ideal $\mathfrak{q}$ of $A$ with $q\not\supseteq A_{> 0}$, there exists a corresponding non-commutative valuation $v_{\mathfrak{q}}$ on $Q(R)$ such that the inclusion $(R,w)\to (Q(R),v_{\mathfrak{q}})$ is continuous.

\end{theorem}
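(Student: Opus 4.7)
The plan is to follow the construction already laid out in the preceding discussion, stitching the auxiliary lemmas together in order. Given a minimal prime $\mathfrak{q}$ of $A$ with $\mathfrak{q}\not\supseteq A_{>0}$, I will build $v_{\mathfrak{q}}$ by successively passing to increasingly specialised rings: first localising at the homogeneous complement of $\mathfrak{q}$, then completing, then passing to a simple quotient, and finally to a maximal order inside it.

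First, I would apply Lemma \ref{homogeneous localsiation} to $A$ and the set $T=\{X\in A\setminus\mathfrak{q}:X \text{ homogeneous}\}$, which produces a homogeneous unit $Z\in T$ of positive degree such that $T^{-1}A/\mathfrak{q}'$ is Laurent in $\bar{Z}$ and $T^{-1}A$ is gr-Artinian. Second, I would lift the localisation from $A$ to $R$ by considering $S=\{r\in R:\mathrm{gr}(r)\in T\}$. Since $w$ is Zariskian and $T$ is central and homogeneous, \cite[Corollary 2.2]{Ore sets} gives that $S$ is an Ore set in $R$, and the localised ring $S^{-1}R$ carries a Zariskian filtration $w'$ with $\mathrm{gr}_{w'}S^{-1}R\cong T^{-1}\mathrm{gr}\,R$, satisfying $w'(s^{-1}r)=w(r)-w(s)$. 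Primeness of $R$ (via the argument of \cite[Lemma 3.3]{nilpotent}) gives $S^{-1}R=Q(R)$.

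Third, I would complete $Q(R)$ with respect to $w'$ to obtain $Q'$. Since $T^{-1}\mathrm{gr}\,R$ is finitely generated over the gr-Artinian ring $T^{-1}A$, it is itself gr-Artinian, so $Q'$ is Artinian. Then Lemma \ref{microlocalisation} supplies a regular normal element $z\in J(U)$, where $U=F_0Q'$, whose powers $z^nU$ exhaust the filtration $F_{nw'(z)}Q'$, making the $z$-adic filtration topologically equivalent to $w'$. Fourth, I would pass to a simple quotient $\widehat Q$ of $Q'$, set $V=\widehat Q_{\geq 0}$ and take a maximal order $\mathcal{O}$ in $\widehat Q$ equivalent to $V$, available by \cite[Theorem 3.11]{nilpotent}. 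The image $\bar z\in J(V)$ still satisfies $\mathcal{O}\subseteq \bar z^{-r}V$ for some $r$ by \cite[Proposition 3.7]{nilpotent}, so $\mathcal{O}$ is squeezed between two $\bar z$-adic neighbourhoods of $V$. Finally, \cite[Theorem 3.6]{nilpotent} identifies $\mathcal{O}\cong M_n(D)$ for a complete non-commutative DVR $D$, so $\widehat Q\cong M_n(Q(D))$ and the $J(\mathcal{O})$-adic filtration is a non-commutative valuation $v_{\mathfrak{q}}$; restricting along the dense embedding $Q(R)\hookrightarrow\widehat Q$ yields the required valuation on $Q(R)$. Continuity of $(R,w)\to(Q(R),v_{\mathfrak{q}})$ then follows as in the proof of \cite[Theorem C]{nilpotent}, since the $\bar z$-adic filtration on $\widehat Q$ restricts to something topologically equivalent to the initial filtration on $R$.

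The main obstacle, compared to the commutative-graded setting of \cite[Theorem C]{nilpotent}, is that $\mathrm{gr}\,R$ itself need not be commutative; only the subring $A$ is. What makes the argument still go through is that inverting the central homogeneous set $T$ is enough to collapse $\mathrm{gr}\,R$ modulo $\mathfrak{q}'\,\mathrm{gr}\,R$ to a ring finitely generated over $(\mathrm{gr}\,R)_0[\bar Z,\bar Z^{-1}]$, which suffices for the Krull-dimension-one estimate in Lemma \ref{microlocalisation}. Once we produce the normal regular element $z$ controlling the filtration, the remaining passage to $M_n(D)$ is formal and identical to the commutative-graded case, so the bulk of the work is simply checking that each cited lemma applies under these relaxed hypotheses.
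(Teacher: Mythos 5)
Your proposal reproduces the paper's own construction step for step: the same choice of $T$ and $S$, the same appeal to Lemma \ref{homogeneous localsiation}, \cite[Corollary 2.2]{Ore sets}, \cite[Lemma 3.3]{nilpotent}, Lemma \ref{microlocalisation}, and then \cite[Theorem 3.11, Proposition 3.7, Theorem 3.6]{nilpotent} to produce $\mathcal{O}\cong M_n(D)$ and the $J(\mathcal{O})$-adic filtration, with continuity imported from the proof of \cite[Theorem C]{nilpotent}. The only slip is cosmetic: the formula for the localised filtration should read $w'(s^{-1}r)=w'(r)-w(s)$ rather than $w(r)-w(s)$, since for $r\notin S$ one only has $w'(r)\geq w(r)$; this does not affect the argument.
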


\noindent In particular, if $P$ is a prime ideal of $kG$, then $R=\frac{kG}{P}$ carries a natural Zariskian filtration, given by the quotient of the Lazard filtration on $kG$, and gr $R\cong\frac{\text{gr }kG}{\text{gr }P}$ is commutative, and if $P\neq J(kG)$ then $($gr $R)_{\geq 0}$ is not nilpotent by \cite[Lemma 7.2]{nilpotent}. 

Hence we may apply Theorem \ref{filtration} to obtain a non-commutative valuation $v$ on $Q(\frac{kG}{P})$ such that the natural map $\tau:(kG,w)\to(Q(\frac{kG}{P}),v)$ is continuous.

\subsection{Properties}

We will now explore some important properties of the non-commutative valuation $v_{\mathfrak{q}}$ on $Q(R)$ that we have constructed.\\

\noindent So again, we have that gr $R$ is finitely generated over $A$, and $\mathfrak{q}$ is a minimal prime ideal of $A$, not containing $A_{>0}$. Recall first the data that we used in the construction of $v_{\mathfrak{q}}$:

\begin{itemize}

\item $w'$ -- a Zariskian filtration on $Q(R)$ such that $w'(r)\geq w(r)$ for all $r\in R$, with equality if gr$_w(r)\in A\backslash\mathfrak{q}$. Moreover, if gr$_w(r)\in A\backslash\mathfrak{q}$ then $r$ is $w'$-regular.

\item $Q'$ -- the completion of $Q(R)$ with respect to $w'$.

\item $U$ -- the positive part of $Q'$, a Noetherian ring.

\item $z$ -- a regular, normal element of $J(U)$ such that $z^nU=F_{nw'(z)}Q'$ for all $n\in\mathbb{Z}$.

\item $v_{z,U}$ -- the $z$-adic filtration on $Q'$, topologically equivalent to $w'$.

\item $\widehat{Q}$ -- a simple quotient of $Q'$.

\item $V$ -- the positive part of $\widehat{Q}$, which is the image of $U$ in $\widehat{Q}$.

\item $\overline{z}$ -- the image of $z$ in $V$.

\item $v_{\overline{z},V}$ -- the $\overline{z}$-adic filtration on $\widehat{Q}$, topologically equivalent to the quotient filtration.

\item $\mathcal{O}$ -- a maximal order in $\widehat{Q}$, equivalent to $V$, satisfying $\mathcal{O}\subseteq \overline{z}^{-r}V$ for some $r\geq 0$.

\item $v_{\overline{z},\mathcal{O}}$ -- the $\overline{z}$-adic filtration on $\mathcal{O}$.

\item $v_{\mathfrak{q}}$ -- the $J(\mathcal{O})$-adic filtration on $\widehat{Q}$, topologically equivalent to $v_{\overline{z},\mathcal{O}}$.

\end{itemize}

\noindent From now on, we will assume further that $R$ is an $\mathbb{F}_p$-algebra.

\begin{lemma}\label{normal}

Given $r\in R$ such that gr$(r)\in A\backslash\mathfrak{q}$, we have:\\

\noindent i. $r$ is normal in $U$, a unit in $Q'$ and for any $u\in U$, $w'(rur^{-1}-u)>w'(u)$.\\

\noindent ii $v_{\overline{z},V}(r)= v_{z,U}(r)$.

\end{lemma}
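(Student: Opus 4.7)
The plan is to exploit the centrality of gr$(r)$ in gr $Q'$. Since gr$(r)$ is a homogeneous element of $A\backslash\mathfrak{q}$, we have gr$(r)\in T$, so $r\in S$, giving that $r$ is a unit in $S^{-1}R = Q(R)\subseteq Q'$, and $w'(r) = w(r) \geq 0$, placing $r\in U$. Moreover gr$(r)$ is invertible in $T^{-1}A$, hence a unit in gr $Q' = T^{-1}$gr $R$; in particular both $r$ and $r^{-1}$ are $w'$-regular, and since $A$ is central in gr $R$, gr$(r)$ is central in gr $Q'$.

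For part (i), centrality of gr$(r)$ gives gr$(ru) = $ gr$(ur)$ for every $u\in Q'$, i.e.\ $w'(ru - ur) > w'(ru) = w'(r) + w'(u)$ by $w'$-regularity of $r$. Multiplying on the right by $r^{-1}$ and using regularity of $r^{-1}$ yields $w'(rur^{-1} - u) > w'(u)$, which by the ultrametric inequality forces $w'(rur^{-1}) = w'(u)$. Hence $rUr^{-1}\subseteq U$; by the symmetric argument $r^{-1}Ur\subseteq U$, so $r$ is normal in $U$.

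For part (ii), the key step is to prove $\overline{w'}(\overline{r}) = w'(r)$ and $\overline{w'}(\overline{z}) = w'(z)$, where $\overline{w'}$ is the quotient filtration on $\widehat{Q}$. Writing $K = \ker(Q'\twoheadrightarrow \widehat{Q})$, we have gr $\widehat{Q} = $ gr $Q'/$gr $K$ by the standard property of quotient filtrations; since $\widehat{Q}\neq 0$, gr $K$ is a proper graded ideal and contains no units of gr $Q'$. Both gr$(r)$ and gr$(z)$ are units in gr $Q'$ (the former as above, the latter since $z$ is a unit in $Q'$), so neither lies in gr $K$, forcing the claimed equalities. Projecting the identity $F_{nw'(z)}Q' = z^nU$ of Lemma \ref{microlocalisation} through $Q'\twoheadrightarrow\widehat{Q}$ gives $F_{n\overline{w'}(\overline{z})}\widehat{Q} = \overline{z}^nV$, so $v_{\overline{z},V}(\overline{r}) = \lfloor \overline{w'}(\overline{r})/\overline{w'}(\overline{z})\rfloor = \lfloor w'(r)/w'(z)\rfloor = v_{z,U}(r)$.

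The main obstacle is the filtration bookkeeping in part (ii): one must verify that passing from $Q'$ to $\widehat{Q}$ preserves the valuations of $r$ and $z$, and that the $\overline{z}$-adic filtration on $\widehat{Q}$ coincides with the projection of the $z$-adic filtration on $Q'$ at the indices of interest. Both steps reduce to the unit argument for gr$(r)$ and gr$(z)$ modulo gr $K$.
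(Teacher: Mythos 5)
Your proof is correct. For part (i) it follows essentially the same route as the paper: both arguments rest on $r\in S$ being a unit of $Q(R)\subseteq Q'$, on $w'$-regularity of $r$, and on centrality of gr$(r)\in A$ in gr $Q'$. The paper establishes normality by writing an arbitrary $u$ in the dense subring $Q(R)$ as $s^{-1}t$ with $s\in S$, $t\in R$ and computing $w'(r^{-1}ur)$ directly; you obtain the same estimate in one step from regularity of $r$ and $r^{-1}$ and centrality of gr$(r)$. These are the same calculation in different packaging.

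For part (ii) your route is genuinely different and arguably cleaner. The paper argues by contradiction: if $\overline{r}\in\overline{z}^{t+1}V$, then $z^{-t}r - zu$ maps to zero in $\widehat{Q}$ for some $u\in U$, and an explicit geometric-series computation shows this element is a unit of $Q'$, which is impossible. You instead work at the level of associated graded rings: gr $K$ is a proper graded ideal of gr $Q'$ (here you are implicitly using $F_1Q'\subseteq J(U)$, i.e.\ the Zariskian hypothesis, to lift a degree-zero unit of gr $Q'$ to a unit of $Q'$), so the unit principal symbols gr$(r)$ and gr$(z)$ do not lie in gr $K$, forcing $\overline{w'}(\overline{r})=w'(r)$ and $\overline{w'}(\overline{z})=w'(z)$. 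Combined with the floor-function formula $v_{z,U}(\cdot)=\lfloor w'(\cdot)/w'(z)\rfloor$ and its projection to $\widehat{Q}$, this gives the result without any series manipulation and exposes the structural reason the lemma holds: the quotient map $Q'\to\widehat{Q}$ cannot ``raise'' the filtration value of an element whose principal symbol is a unit.

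One small imprecision: you justify that gr$(z)$ is a unit of gr $Q'$ ``since $z$ is a unit in $Q'$.'' That inference is not valid in general; on a ring that is not positively filtered, a unit $z$ may satisfy $w'(z)+w'(z^{-1})<0$, in which case gr$(z)\cdot$gr$(z^{-1})=0$ and gr$(z)$ is a zero divisor. What actually grants the conclusion is that $z$ is chosen in the proof of Lemma \ref{microlocalisation} so that gr$_{w'}(z)=Z$, and $Z$ is shown there to be a unit of $T^{-1}A\subseteq$ gr $Q'$. Citing that choice instead of the invertibility of $z$ repairs the step, and the rest of your argument goes through unchanged.
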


\begin{proof}

$i$. Since $r\in S=\{s\in R:$ gr$(s)\in A\backslash\mathfrak{q}\}$ and $Q(R)=S^{-1}R$, $r$ is a unit in $Q'$, and we know that $w'(r)=w(r)$. Given $u\in U$, we want to prove that $rur^{-1}\in U$, thus showing that $r$ is normal in $U$.\\ 

\noindent We know that $U=F_0Q'$ is the completion of the positive part $F_0Q(R)$ of $Q(R)$ by definition, and we may assume that $u$ lies in $Q(R)$, i.e. $u=s^{-1}t$ for some $s\in S$, $t\in R$, and $w'(u)=w'(t)-w(s)\geq 0$.\\

\noindent But gr$(r),$ gr$(s)\notin\mathfrak{q}$, and hence gr$(r)$gr$(s)\neq 0$, which means that $w(rs)=w(r)+w(s)$. Therefore $w'(r^{-1}ur)=w'((sr)^{-1}tr)=w'(tr)-w(rs)\geq w'(t)+w'(r)-w(r)-w(s)=w'(t)-w'(s)\geq 0$, and so $r^{-1}ur\in U$ as required.\\

\noindent Furthermore, since gr$(r)\in A$ is central in $gr$ $R$, $w'(ru-ur)>w'(u)+w'(r)$, and thus $w'(rur^{-1}-u)=w'((ru-ur)r^{-1})\geq w'(ru-ur)-w'(r)>w'(u)+w'(r)-w'(r)=w'(u)$.\\

\noindent $ii$. Let $t:=v_{z,U}(r)$.\\

\noindent So $r\in z^tU\backslash z^{t+1}U=F_{tw'(z)}Q'\backslash F_{(t+1)w'(z)}Q'$, and hence $w'(r)=tw'(z)+j$ for some $0\leq j<w'(z)$.

Since gr$_{w'}(r)\in A\backslash\mathfrak{q}$, we have that $w'(r^{-1})=-w'(r)=-tw'(z)-j$.\\

\noindent Let $\overline{r}$ be the image of $r$ in $\widehat{Q}$. Then since $r\in z^tU$, it is clear that $\overline{r}\in\overline{z}^tV$, hence $v_{\overline{z},V}(r)\geq t$, so it remains to prove that $v_{\overline{z},V}(r)\leq t$.\\

\noindent Suppose that $\overline{r}\in\overline{z}^{t+1}V$, i.e. $r-z^{t+1}u$ maps to zero in $\widehat{Q}$ for some $u\in U$, and hence $z^{-t}r-zu=z^{-t}(r-z^{t+1}u)$ also maps to zero.\\

\noindent Let $a=z^{-t}r$, $b=-zu$. Then $w'(b)\geq w'(u)+w'(z)\geq w'(z)$, $w'(a^{-1})=w'(r^{-1})+tw'(z)=-tw'(z)-j+tw'(z)=-j$, so $w'(a^{-1}b)\geq w'(z)-j>w'(z)-w'(z)=0$, and therefore $(a^{-1}b)^n\rightarrow 0$ as $n\rightarrow\infty$.\\

\noindent So by completeness of $Q'$, the series $\underset{n\geq 0}{\sum}{(-1)^{n}(a^{-1}b)^na^{-1}}$ converges in $Q'$, and the limit is the inverse of $a+b$, hence $a+b=z^{-t}r-zu$ is a unit in $Q'$.\\

\noindent Therefore a unit in $Q'$ maps to zero in $\widehat{Q}$ -- contradiction.\\

\noindent Hence $\overline{r}\notin\overline{z}^{t+1}V$, so $v_{\overline{z},V}(r)\leq t$ as required.\end{proof}

\begin{proposition}\label{regular}

Let $u\in U$ be regular and normal, then $u$ is a unit in $Q'$. Furthermore, if $w'(uau^{-1}-a)>w'(a)$ for all $a\in Q'$, then setting $\overline{u}$ as the image of $u$ in $V$, we have that for sufficiently high $m\in\mathbb{N}$, $\overline{u}^{p^m}$ is $v_{\mathfrak{q}}$-regular.

\end{proposition}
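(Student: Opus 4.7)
The plan splits the statement into two assertions. For the first, to show $u\in U$ is a unit in $Q'$ it suffices to show $u$ is regular in $Q'$, since $Q'$ is Artinian. Because $z$ is normal in $U$, conjugation by $z$ is an automorphism of $U$ preserving regular elements, and every $v\in Q'$ lies in $z^{-n}U$ for some $n\geq 0$ (using $z^nU=F_{nw'(z)}Q'$). Writing $v=z^{-n}u'$, the equation $uv=0$ rearranges as $z^{-n}(z^nuz^{-n})u'=0$, i.e.\ $\widetilde{u}u'=0$ with $\widetilde{u}=z^nuz^{-n}\in U$ still regular, forcing $u'=0$ and hence $v=0$.

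The main work is for the second assertion, and the key observation is a Frobenius identity in an operator ring. Setting $\sigma(a)=uau^{-1}$ and $\delta=\sigma-\mathrm{id}$ as $\mathbb{F}_p$-linear endomorphisms of $Q'$ (available since $R$, and hence $Q'$, is an $\mathbb{F}_p$-algebra), the hypothesis translates to $w'(\delta(a))\geq w'(a)+1$, and iteration yields $w'(\delta^n(a))\geq w'(a)+n$. Since $\mathrm{id}$ and $\delta$ commute in $\mathrm{End}_{\mathbb{F}_p}(Q')$ and everything is $\mathbb{F}_p$-linear, the freshman's dream gives
\[
\sigma^{p^m}=(\mathrm{id}+\delta)^{p^m}=\mathrm{id}+\delta^{p^m},
\]
so uniformly in $a$, $w'(u^{p^m}au^{-p^m}-a)\geq w'(a)+p^m$.

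Next I transfer this to $\widehat{Q}$ using the topological equivalences between $\overline{w'}$, $v_{\overline{z},V}$ and $v_{\mathfrak{q}}$. The stated inclusion $\mathcal{O}\subseteq\overline{z}^{-r}V$ bounds how far elements of $\mathcal{O}$ can sit in the $\overline{z}$-filtration from above; conversely, equivalence of the topologies forces $\overline{z}^{r'}V\subseteq\mathcal{O}$ for some $r'\geq 0$. With both inclusions in hand, for $m$ large enough that $p^m$ dominates $(r+r')w'(z)$, the previous bound places $\overline{u}^{p^m}x\overline{u}^{-p^m}-x\in\overline{z}^{r'}V\subseteq\mathcal{O}$ for every $x\in\mathcal{O}$ simultaneously, so $\overline{u}^{p^m}\mathcal{O}\overline{u}^{-p^m}\subseteq\mathcal{O}$. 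Substituting $a\mapsto u^{-1}au$ in the hypothesis shows that $u^{-1}$ satisfies the same estimate, giving the reverse inclusion and hence normality of $\overline{u}^{p^m}$ in $\mathcal{O}$.

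Finally, to upgrade normality to $v_{\mathfrak{q}}$-regularity I use $\mathcal{O}\cong M_k(D)$ for a complete non-commutative DVR $D$: every two-sided fractional $\mathcal{O}$-ideal is a power of $J(\mathcal{O})$, so normality gives $\overline{u}^{p^m}\mathcal{O}=J(\mathcal{O})^N$ for $N=v_{\mathfrak{q}}(\overline{u}^{p^m})$. Combined with $\overline{u}^{p^m}J(\mathcal{O})=J(\mathcal{O})\overline{u}^{p^m}$, this forces left multiplication by $\overline{u}^{p^m}$ to send $J(\mathcal{O})^k/J(\mathcal{O})^{k+1}$ isomorphically onto $J(\mathcal{O})^{N+k}/J(\mathcal{O})^{N+k+1}$, so $\mathrm{gr}(\overline{u}^{p^m})$ is a non-zero-divisor in $\mathrm{gr}_{v_{\mathfrak{q}}}\widehat{Q}$. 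I expect the main obstacle to be the bookkeeping of the third paragraph: turning a pointwise estimate into uniform normalization of all of $\mathcal{O}$ at once requires careful tracking of the three filtrations through the commensurability inclusions.
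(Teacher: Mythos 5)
Your proof is correct, but for the crucial second assertion it takes a genuinely different and, in fact, more economical route than the paper. The paper obtains the conjugation estimate
$v_{\overline{z},V}\bigl(\psi^{p^m}(a)-a\bigr)\geq v_{\overline{z},V}(a)+p^m$
only for $a\in V$, which is not enough to normalize $\mathcal{O}$ directly; it therefore introduces the conductor ideal $I=\{v\in V:qv\in V\ \forall q\in\mathcal{O}\}$, shows $bIb^{-1}=I$ via Noetherianity, and then appeals to McConnell--Robson's theory of maximal orders ($\mathcal{O}_l(I)$ is a maximal order equivalent to $V$ containing both $\mathcal{O}$ and $b^{-1}\mathcal{O}b$, forcing all three to coincide). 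Your key observation is that the Frobenius estimate $w'(u^{p^m}au^{-p^m}-a)\geq w'(a)+p^m$ holds for \emph{every} $a\in Q'$, including those of negative value; lifting $x\in\mathcal{O}\subseteq\overline z^{-r}V$ to $a\in z^{-r}U=F_{-rw'(z)}Q'$ and taking $p^m\geq rw'(z)$ then lands $u^{p^m}au^{-p^m}-a$ in $U$, whence $\overline u^{p^m}x\overline u^{-p^m}\in\mathcal{O}$ directly, bypassing the conductor and maximal-order machinery entirely. This is a real simplification. Two small remarks: your auxiliary $r'$ is superfluous, since $V\subseteq\mathcal{O}$ already gives $\overline z^{0}V\subseteq\mathcal{O}$; and you should make explicit the short verification that $w'(u^{-1}au-a)>w'(a)$ follows from the hypothesis applied to $a'=u^{-1}au$ together with the resulting equality $w'(u^{-1}au)=w'(a)$, which you use for the reverse inclusion. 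Your first paragraph is the same $z$-conjugation argument the paper compresses into one line, and your last paragraph reproves the paper's earlier remark that a normal element of $\mathcal{O}\cong M_n(D)$ has $v_{\mathfrak{q}}$-regular associated graded; both are fine.
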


\begin{proof}

Since $u$ is regular in $U$, it is not a zero divisor, so it follows that $u$ is not a zero divisor in $Q'$, and hence a unit since $Q'$ is artinian.\\

\noindent Since $u$ is normal in $U$, i.e. $uU=Uu$, it follows that $\overline{u}V=V\overline{u}$, so $\overline{u}$ is normal in $V$. We want to prove that for $m$ sufficiently high, $\overline{u}^{p^m}$ is normal in $\mathcal{O}=F_0\widehat{Q}$, and it will follow that it is $v_{\mathfrak{q}}$-regular.\\

\noindent We know that $w'(uau^{-1}-a)>w'(a)$ for all $a\in Q'$, so let $\theta:Q'\to Q'$ be the conjugation action of $u$, then $(\theta-id)(F_nQ')\subseteq F_{n+1}Q'$ for all $n\in\mathbb{Z}$.

Therefore, for all $k\in\mathbb{N}$, $(\theta-id)^k(F_nQ')\subseteq F_{n+k}Q'$.\\ 

\noindent Since $Q'$ is an $\mathbb{F}_p$-algebra, it follows that $(\theta^{p^m}-id)(F_nQ')=(\theta-id)^{p^m}(F_nQ')\subseteq F_{n+p^m}Q'$, and clearly $\theta^{p^m}$ is conjugation by $\overline{u}^{p^m}$.\\

\noindent So fix $k\in\mathbb{N}$ such that $p^k\geq w'(z)$. Then we know that $z^nU=F_{nw'(z)}U$, so $(\theta^{p^k}-id)(z^nU)\subseteq F_{nw'(z)+p^k}U\subseteq F_{(n+1)w'(z)}U=z^{n+1}U$.\\

\noindent Hence we have that for all $a\in Q'$, $v_{z,U}(u^{p^k}au^{-p^k}-a)>v_{z,U}(a)$, and it follows immediately that $v_{\overline{z},V}(\overline{u}^{p^k}a\overline{u}^{-p^k}-a)>v_{\overline{z},V}(a)$ for all $a\in\widehat{Q}$.\\

\noindent For convenience, let $v:=\overline{u}^{p^k}\in V$. We know that $v_{\overline{z},V}(vav^{-1}-a)>v_{\overline{z},V}(a)$ for all $a\in\widehat{Q}$, and we want to prove that $v^{p^m}$ is normal in $\mathcal{O}$ for $m$ sufficently high.\\

\noindent Let $I=\{v\in V:qv\in V$ for all $q\in \mathcal{O}\}$, then $I$ is a two-sided ideal of $V$, and since $\mathcal{O}\subseteq \overline{z}^{-r}V$, we have that $\overline{z}^rV\subseteq I$.\\

\noindent Let $\psi:\widehat{Q}\to\widehat{Q}$ be conjugation by $v$, then we know that $(\psi-id)(\overline{z}^nV)\subseteq \overline{z}^{n+1}V$, and hence $(\psi-id)^s(V)\subseteq \overline{z}^sV$ for all $s$.

Choose $m\in\mathbb{N}$ such that $p^m\geq r$, then $(\psi^{p^m}-id)(V)=(\psi-id)^{p^m}(V)\subseteq \overline{z}^{p^m}V\subseteq \overline{z}^rV\subseteq I$.\\

\noindent Therefore, for all $a\in V$, $v^{p^m}av^{-p^m}-a\in I$, and in particular, for all $a\in I$, $v^{p^m}av^{-p^m}\in I$, so $v^{p^m}Iv^{-p^m}\subseteq I$.

So set $b:=v^{p^m}$, then it follows from Noetherianity of $V$ that $bIb^{-1}=I$.\\

\noindent Finally, consider the subring $\mathcal{O}':=b^{-1}\mathcal{O}b$ of $\widehat{Q}$ containing $V$, then since $\mathcal{O}$ is a maximal order equivalent to $V$, it follows immediately that $\mathcal{O}'$ is equivalent to $V$, and that $\mathcal{O}'$ is also maximal.\\

\noindent Given $c\in \mathcal{O}'$, $c=b^{-1}qb$ for some $q\in \mathcal{O}$. So given $x\in I$, $cx=b^{-1}qbx=b^{-1}qbxb^{-1}b\in b^{-1}Ib=I$, so $c\in\mathcal{O}_l(I)=\{q\in\widehat{Q}:qI\subseteq I\}$, and hence $\mathcal{O}'\subseteq \mathcal{O}_l(I)$.\\

\noindent But $\mathcal{O}_l(I)$ is a maximal order in $\widehat{Q}$, equivalent to $V$ by \cite[Lemma 1.12]{McConnell}, and this order contains $\mathcal{O}$ by the definition of $I$.

So since $\mathcal{O}$ and $\mathcal{O}'$ are maximal orders and are both contained in $\mathcal{O}_l(I)$, it follows that $\mathcal{O}_l(I)=\mathcal{O}=\mathcal{O}'=b\mathcal{O}b^{-1}$.\\

\noindent Therefore  $b=v^{p^m}=\overline{u}^{p^{m+k}}$ is normal in $\mathcal{O}$ as required.\end{proof}

\noindent In particular, it is clear that $z\in U$ satisfies the property that $w'(zaz^{-1}-a)>w'(a)$ for all $a\in Q'$, thus $\overline{z}^{p^m}$ is normal in $\mathcal{O}$ for large $m$.\\

\noindent The next result will be very useful to us later when we want to compare values of elements in $Q(\frac{kG}{P})$ based on their values in $kG$.

\begin{theorem}\label{comparison}

Given $r\in R$ such that gr$_w(r)\in A\backslash\mathfrak{q}$, there exists $m\in\mathbb{N}$ such that $r^{p^m}$ is $v_{\mathfrak{q}}$-regular inside $\widehat{Q}$. Also, if $s\in R$ with $w(s)>w(r)$ then for sufficiently high $m$, $v_{\mathfrak{q}}(s^{p^m})>v_{\mathfrak{q}}(r^{p^m})$.

Moreover, if $w(s)=w(r)$ and gr$_w(s)\in\mathfrak{q}$ then we also have that $v_{\mathfrak{q}}(s^{p^m})>v_{\mathfrak{q}}(r^{p^m})$ for sufficiently high $m$.

\end{theorem}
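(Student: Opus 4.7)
My plan is to establish the regularity assertion first and then use it to reduce both comparison inequalities to a single computation of $w'$-values in $Q'$.

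For regularity, I apply Proposition \ref{regular} with $u=r$. Lemma \ref{normal}($i$) shows that $r$ is a unit in $Q'$ and is normal in $U$, and that $w'(rur^{-1}-u)>w'(u)$ for every $u\in U$. This last inequality in fact holds for all $a\in Q'$: since $\text{gr}_w(r)\in A$ remains central in $\text{gr}_{w'} Q'=T^{-1}\text{gr}_w R$, one has $w'(ra-ar)>w'(r)+w'(a)$, and multiplying by the $w'$-regular element $r^{-1}$ on the right gives $w'(rar^{-1}-a)>w'(a)$. Proposition \ref{regular} then produces an $m_0$ such that $\overline{r}^{p^m}$ is $v_{\mathfrak{q}}$-regular in $\widehat{Q}$ for all $m\geq m_0$.

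Next, fix $m\geq m_0$. Writing $s^{p^m}=(s^{p^m}r^{-p^m})r^{p^m}$ and invoking the $v_{\mathfrak{q}}$-regularity of $r^{p^m}$, the target inequality $v_{\mathfrak{q}}(s^{p^m})>v_{\mathfrak{q}}(r^{p^m})$ becomes $v_{\mathfrak{q}}(s^{p^m}r^{-p^m})>0$. On $\widehat{Q}$, $v_{\mathfrak{q}}$ is topologically equivalent to $v_{\overline{z},\mathcal{O}}$, and the inclusions $V\subseteq\mathcal{O}\subseteq\overline{z}^{-r_0}V$ force $v_{\overline{z},V}(x)\leq v_{\overline{z},\mathcal{O}}(x)\leq v_{\overline{z},V}(x)+r_0$, so it suffices to show $v_{\overline{z},V}(\overline{s^{p^m}r^{-p^m}})\to\infty$. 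For any $x\in Q'$, the image $\overline{x}\in\widehat{Q}$ satisfies $v_{\overline{z},V}(\overline{x})\geq v_{z,U}(x)=\lfloor w'(x)/w'(z)\rfloor$, reducing the problem to showing that $w'(s^{p^m}r^{-p^m})\to\infty$.

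For this last step, $w'(s^{p^m}r^{-p^m})\geq w'(s^{p^m})-p^m w(r)$ because $r$ is $w'$-regular with $w'(r)=w(r)$. In the case $w(s)>w(r)$, the trivial bound $w'(s^{p^m})\geq p^m w(s)$ already gives $w'(s^{p^m}r^{-p^m})\geq p^m(w(s)-w(r))\to\infty$. In the case $w(s)=w(r)$ with $\text{gr}_w(s)\in\mathfrak{q}$, the key input is that $\mathfrak{q}'=T^{-1}\mathfrak{q}$ is nilpotent in $T^{-1}A$ by Lemma \ref{homogeneous localsiation}, and the canonical map $T^{-1}A\to T^{-1}\text{gr}_w R=\text{gr}_{w'}Q'$ is injective; hence the image of $\text{gr}_w(s)$ in $\text{gr}_{w'}Q'$ is nilpotent, giving $w'(s^N)>Nw(s)$ for some fixed $N\geq 1$. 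Decomposing $p^m=Nj+e$ with $0\leq e<N$ and using $s^{p^m}=(s^N)^js^e$ then gives $w'(s^{p^m})\geq p^m w(s)+j$, so $w'(s^{p^m}r^{-p^m})\geq\lfloor p^m/N\rfloor\to\infty$. The main technical obstacle is securing the transfer of nilpotency from $T^{-1}A$ to $\text{gr}_{w'}Q'$; once this is in hand, the rest amounts to bookkeeping across the chain of filtrations $w,w',v_{z,U},v_{\overline{z},V},v_{\overline{z},\mathcal{O}},v_{\mathfrak{q}}$.
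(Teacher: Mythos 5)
Your proof is correct, and it takes a genuinely different route from the paper's. The paper tracks $v(s^{p^m})$ and $v(r^{p^m})$ \emph{separately} through the chain of filtrations $w'\to v_{z,U}\to v_{\overline{z},V}\to v_{\overline{z},\mathcal{O}}\to v_{\mathfrak{q}}$, producing an explicit gap $v(s^{p^m})\geq v(r^{p^m})+K$ at each stage. To pass from $v_{z,U}$ to $v_{\overline{z},V}$ the paper must invoke Lemma \ref{normal}($ii$) (the equality $v_{\overline{z},V}(r^{p^m})=v_{z,U}(r^{p^m})$), because a priori the $r$-side could jump by an unbounded amount under the quotient $Q'\twoheadrightarrow\widehat{Q}$ and swamp the gap. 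Your reformulation --- using $v_{\mathfrak{q}}$-regularity of $r^{p^m}$ to pass to the single element $s^{p^m}r^{-p^m}$ and then showing $w'(s^{p^m}r^{-p^m})\to\infty$ --- neatly sidesteps this: once everything is packaged into one element, the only facts needed are the one-sided inequalities $v_{\overline{z},V}(\overline{x})\geq v_{z,U}(x)=\lfloor w'(x)/w'(z)\rfloor$ and $v_{\overline{z},\mathcal{O}}\geq v_{\overline{z},V}$, and then $J(\mathcal{O})^{am}=\overline{z}^{p^km}\mathcal{O}$ to convert to $v_{\mathfrak{q}}$, all of which are elementary; Lemma \ref{normal}($ii$) is never used. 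This buys a shorter argument at the cost of leaning harder on the regularity clause, which you need to establish first anyway. A minor remark: the nilpotency transfer you worry about at the end is actually immediate --- if $\mathrm{gr}_w(s)^N=0$ already in $T^{-1}A$, it certainly maps to $0$ in $T^{-1}\mathrm{gr}_w R$, no injectivity required; the subtler point (which you handle correctly anyway) is that either $w'(s)=w(s)$ with $\mathrm{gr}_{w'}(s)$ nilpotent, or $w'(s)>w(s)$ outright, and both cases give $w'(s^N)>Nw(s)$. You also correctly noticed and filled the small gap in the cited hypotheses: Proposition \ref{regular} requires $w'(rar^{-1}-a)>w'(a)$ for all $a\in Q'$, whereas Lemma \ref{normal}($i$) is only stated for $u\in U$; your extension via centrality of $\mathrm{gr}_w(r)$ in $\mathrm{gr}_{w'}Q'$ and $w'$-regularity of $r$ is the right fix.
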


\begin{proof}

Since gr$_w(r)\in A\backslash\mathfrak{q}$, it follows from Lemma \ref{normal}($i$) that $r$ is normal and regular in $U$, and $w'(rur^{-1}-u)>w'(u)$ for all $u\in U$. So for $m\in\mathbb{N}$ sufficiently high, $r^{p^m}$ is $v_{\mathfrak{q}}$-regular by Proposition \ref{regular}.\\

\noindent Note that since gr$_w(r)\in A\backslash\mathfrak{q}$, we have that $w'(r)=w(r)$. In fact, since gr$_w(r)$ is not nilpotent, we actually have that $w'(r^n)=w(r^n)=nw(r)$ for all $n\in\mathbb{N}$. So if $w(s)>w(r)$, then for any $n$, $w'(s^n)\geq nw(s)>nw(r)=w'(r^n)$.

Moreover, if $w(s)=w(r)$ and gr$_w(s)\in\mathfrak{q}$, then since $\mathfrak{q}'=T^{-1}\mathfrak{q}$ is nilpotent by Lemma \ref{homogeneous localsiation}, it follows that for $n$ sufficiently high, $w'(s^n)>nw'(s)$, and hence $w'(s^n)> nw(s)=nw(r)=w(r^n)=w'(r^n)$.\\

\noindent So, in either case, after replacing $r$ and $s$ by high $p$'th powers of $r$ and $s$ if necessary, we may assume that $w'(s)>w'(r)$, i.e. $w'(s)\geq w'(r)+1$.\\

\noindent It follows that for every $K>0$, we can find $m\in\mathbb{N}$ such that $w'(s^{p^m})\geq w'(r^{p^m})+K$. First we will prove the same result for $v_{z,U}$:\\

\noindent Given $K>0$, let $N=w'(z)(K+1)$, so that $K=\frac{1}{w'(z)}N-1$, then choose $m$ such that $w'(s^{p^m})\geq w'(r^{p^m})+N$, and let $l:=v_{z,U}(s^{p^m})$, $t:=v_{z,U}(r^{p^m})$.\\

\noindent So $s^{p^m}\in z^lU\backslash z^{l+1}U=F_{lw'(z)}Q'\backslash F_{(l+1)w'(z)}Q'$, and $r^{p^m}\in z^tU\backslash z^{t+1}U=F_{lw'(z)}Q'\backslash F_{(l+1)w'(z)}Q'$.\\

\noindent Hence $(l+1)w'(z)\geq w'(s^{p^m})\geq lw'(z)$ and $(t+1)w'(z)\geq w'(r^{p^m})\geq tw'(z)$.\\

\noindent Therefore, $v_{z,U}(s^{p^m})=l=\frac{1}{w'(z)}((l+1)w'(z))-1\geq\frac{1}{w'(z)}w'(s^{p^m})-1$\\

\noindent $\geq\frac{1}{w'(z)}(w'(r^{p^m})+N)-1\geq\frac{1}{w'(z)}(tw'(z)+N)-1=t+\frac{1}{w'(z)}N-1=t+K=v_{z,U}(r^{p^m})+K$ as required.\\

\noindent Now, since gr$_w(r)\in A\backslash\mathfrak{q}$, we have that $v_{\overline{z},V}(r^{p^m})=v_{z,U}(r^{p^m})$ for all $m$ by Lemma \ref{normal}($ii$).\\

\noindent Therefore, since $v_{\overline{z},V}(s^{p^m})\geq v_{z,U}(s^{p^m})$ for all $m$, it follows that for every $K>0$, there exists $m\in\mathbb{N}$ such that $v_{\overline{z},V}(s^{p^m})\geq v_{\overline{z},V}(r^{p^m})+K$.\\

\noindent Now we will consider $v_{\overline{z},\mathcal{O}}$, the $\overline{z}$-adic filtration on $\widehat{Q}$.\\ 

\noindent Recall that $V\subseteq\mathcal{O}\subseteq\overline{z}^{-r}V$, and thus $z^nV\subseteq z^n\mathcal{O}\subseteq z^{n-r}V$ for all $n$. Hence 

\noindent $v_{\overline{z},V}(v)-r\leq v_{\overline{z},\mathcal{O}}(v)\leq v_{\overline{z},V}(v)$ for all $v\in V$.\\

\noindent For any $K>0$, choose $m$ such that $v_{\overline{z},V}(s^{p^m})\geq v_{\overline{z},V}(r^{p^m})+K+r$. Then:\\

$v_{\overline{z},\mathcal{O}}(s^{p^m})\geq v_{\overline{z},V}(s^{p^m})-r\geq v_{\overline{z},V}(r^{p^m})+K+r-r\geq v_{\overline{z},\mathcal{O}}(r^{p^m})+K$.\\

\noindent Now, using Proposition \ref{regular}, we know that we can find $k\in\mathbb{N}$ such that $x:=\overline{z}^{p^k}$ is normal in $\mathcal{O}$, i.e. $x\mathcal{O}=\mathcal{O}x$ is a two-sided ideal of $\mathcal{O}$.

Then since $\mathcal{O}\cong M_n(D)$ for some non-commutative DVR $D$, it follows that $x\mathcal{O}=J(\mathcal{O})^a$ for some $a\in\mathbb{N}$, and $x^m\mathcal{O}=J(\mathcal{O})^{am}$ for all $m$.\\

\noindent So, choose $m\in\mathbb{N}$ such that $r^{p^m}$ is $v_{\mathfrak{q}}$-regular, $v_{\mathfrak{q}}(r^{p^m})\geq a$ and $v_{\overline{z},\mathcal{O}}(s^{p^m})\geq v_{\overline{z},\mathcal{O}}(r^{p^m})+p^k$.\\

\noindent Then suppose that $v_{\mathfrak{q}}(r^{p^m})=n$, i.e. $r^{p^m}\in J(\mathcal{O})^n\backslash J(\mathcal{O})^{n+1}$ and $n\geq a$.\\

\noindent We have that $n=qa+t$ for some $q,t\in\mathbb{N}$, $0\leq t<a$, so $q\geq 1$ and $qa\leq n<n+1\leq (q+1)a$. Therefore:\\

$r^{p^m}\in J(\mathcal{O})^n\subseteq J(\mathcal{O})^{qa}=x^q\mathcal{O}=z^{p^kq}\mathcal{O}$, and so $v_{\overline{z},\mathcal{O}}(r^{p^m})\geq p^kq$.\\

\noindent Hence $v_{\overline{z},\mathcal{O}}(s^{p^m})\geq v_{\overline{z},\mathcal{O}}(r^{p^m})+p^k\geq p^kq+p^k=p^k(q+1)$, so $s^{p^m}\in z^{p^k(q+1)}\mathcal{O}=x^{q+1}\mathcal{O}=J(\mathcal{O})^{a(q+1)}\subseteq J(\mathcal{O})^{n+1}$.\\

\noindent Therefore $v_{\mathfrak{q}}(s^{p^m})\geq n+1>n=v_{\mathfrak{q}}(r^{p^m})$.\\

\noindent Furthermore, for all $l\in\mathbb{N}$, $v_{\mathfrak{q}}(s^{p^{m+l
}})\geq p^lv_{\mathfrak{q}}(s^{p^m})>p^lv_{\mathfrak{q}}(r^{p^m})=v_{\mathfrak{q}}(r^{p^{m+l}})$ -- the last equality holds since $r^{p^m}$ is $v_{\mathfrak{q}}$-regular.

Hence $v_{\mathfrak{q}}(s^{p^n})>v_{\mathfrak{q}}(r^{p^n})$ for all sufficiently high $n$ as required.\end{proof}

\subsection{Growth Rates}

Since we are usually only interested in convergence, it makes sense to consider the growth of elements values as they are raised to high powers.

\begin{definition}\label{growth rate}

Let $Q$ be a ring with a filtration $v:Q\to\mathbb{Z}\cup\{\infty\}$. Define $\rho:Q\to\mathbb{R}\cup\{\infty\},x\to\underset{n\rightarrow\infty}{\lim}{\frac{v(x^n)}{n}}$. This is the \emph{growth rate function} of $Q$ with respect to $v$, the proof of \cite[Lemma 1]{Bergmann} shows that this is well defined.

\end{definition}

\begin{lemma}\label{growth properties}

Let $Q$ be a ring with a filtration $v:Q\to\mathbb{Z}\cup\{\infty\}$, and let $\rho$ be the corresponding growth rate function. Then for all $x,y\in Q$:\\

\noindent i. $\rho(x^n)=n\rho(x)$ for all $n\in\mathbb{N}$.

\noindent ii. If $x$ and $y$ commute then $\rho(x+y)\geq\min\{\rho(x),\rho(y)\}$ and $\rho(xy)\geq\rho(x)+\rho(y)$.

\noindent iii. $\rho(x)\geq v(x)$ and $\rho$ is invariant under conjugation.

\noindent iv. If $Q$ is simple and artinian and $v$ is separated, then $\rho(x)=\infty$ if and only if $x$ is nilpotent.

\noindent v. If $x$ is $v$-regular and commutes with $y$, then $\rho(x)=v(x)$ and $\rho(xy)=v(x)+\rho(y)$.

\end{lemma}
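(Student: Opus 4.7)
The plan is to dispatch parts (i)--(iii) and (v) by direct manipulation of the filtration axioms and the definition of $\rho$, and to handle part (iv), which will be the main obstacle, via a Fitting decomposition in the simple artinian ring $Q$.

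For (i), the definition immediately yields $\rho(x^n)=\lim_m v(x^{nm})/m=n\rho(x)$, since $\{v(x^{nm})/(nm)\}_m$ is a subsequence of the sequence defining $\rho(x)$. For (ii), commutativity of $x$ and $y$ lets me binomially expand $(x+y)^n$; combining the filtration inequality with $v(\binom{n}{k})\geq 0$ reduces matters to estimating $v(x^k y^{n-k})\geq v(x^k)+v(y^{n-k})$, and the convergence $v(x^k)/k\to\rho(x)$ together with $v(y^l)/l\to\rho(y)$ yields, after uniform bounds in $k$, the inequality $\rho(x+y)\geq\min\{\rho(x),\rho(y)\}$; the product bound then follows immediately from $(xy)^n=x^n y^n$ and the filtration axiom. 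For (iii), Fekete's lemma applied to the superadditive sequence $v(x^n)$ shows $\rho(x)=\sup_n v(x^n)/n\geq v(x)$, while conjugation invariance follows because $v(g)+v(g^{-1})\leq v(1)=0$ for any unit $g$, so $v(gx^n g^{-1})\geq v(x^n)-(v(g)+v(g^{-1}))$, and dividing by $n$ gives $\rho(gxg^{-1})\geq\rho(x)$, with the reverse by symmetry.

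Part (iv) is the crux. The forward direction is immediate, as any nilpotent $x$ has $v(x^n)=\infty$ for large $n$. For the converse, the hard part is to rule out a non-nilpotent $x$ having $v(x^n)/n\to\infty$. My plan is to exploit the simple artinian-ness of $Q$: the ascending chain $\mathrm{ann}_r(x^n)$ stabilizes at some $N$, and Fitting's lemma produces an idempotent $f\in Q$ such that $fQ=x^N Q$, $(1-f)Q=\mathrm{ann}_r(x^N)$, and $f$ commutes with $x$, with $x|_{(1-f)Q}$ nilpotent and $x|_{fQ}$ bijective. In the simple artinian corner ring $fQf$, the element $xf=fxf$ is then a unit with inverse $y\in fQf$, so $(xf)y=y(xf)=f$ and hence $(xf)^n y^n=f$ for every $n$. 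The estimate $v(f)\geq v((xf)^n)+nv(y)$ therefore bounds $v((xf)^n)/n\leq v(f)/n-v(y)\to -v(y)$, so $\rho(xf)<\infty$. Since $x(1-f)$ is nilpotent and commutes with $xf$, we have $x^n=(xf)^n$ for all sufficiently large $n$, hence $\rho(x)=\rho(xf)<\infty$. The main technical subtlety is verifying the existence and $x$-commutativity of $f$ in this noncommutative setting, which will follow from the stabilization of annihilator chains together with the explicit descriptions $fQ=x^N Q$ and $(1-f)Q=\mathrm{ann}_r(x^N)$.

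Finally, (v) drops out quickly from the definitions: $v$-regularity of $x$ gives $v(x^n)=nv(x)$ by induction, so $\rho(x)=v(x)$; since $x^n$ is also $v$-regular and commutes with $y^n$, we have $v((xy)^n)=v(x^n y^n)=nv(x)+v(y^n)$, and dividing by $n$ yields $\rho(xy)=v(x)+\rho(y)$.
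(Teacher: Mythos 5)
Your argument is correct and uses the same essential ingredients as the paper, but part~(iv) is organized differently in a way worth noting. The paper proves the unit case first, then passes to $Q\cong M_l(K)$ and conjugates $x$ to Fitting block form $\left(\begin{smallmatrix}A&0\\0&B\end{smallmatrix}\right)$ with $A$ invertible and $B$ nilpotent, concluding $\rho(x)=\rho(A)<\infty$ --- a step left somewhat informal there, since $\left(\begin{smallmatrix}A&0\\0&0\end{smallmatrix}\right)$ is not itself a unit of $Q$. You instead extract the Fitting idempotent $f$ commuting with $x$ from the stabilized image and annihilator chains, observe that $xf$ and $x(1-f)$ commute with zero product (so $x^n=(xf)^n+(x(1-f))^n=(xf)^n$ for $n$ large), and run the unit estimate $\rho(xf)\leq -v(y)$ inside the corner ring $fQf$. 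This makes explicit exactly the step the paper glosses over, and avoids choosing a matrix representation. Two small slips to fix. In (iii) the displayed bound $v(gx^ng^{-1})\geq v(x^n)-(v(g)+v(g^{-1}))$ has the wrong sign; the filtration axiom gives $v(gx^ng^{-1})\geq v(g)+v(x^n)+v(g^{-1})=v(x^n)+(v(g)+v(g^{-1}))$, and the conclusion survives because the constant $v(g)+v(g^{-1})\leq 0$ vanishes after dividing by $n$. In (ii), the phrase ``after uniform bounds in $k$'' is hiding the real work: $v(x^k)/k\to\rho(x)$ gives no control over small $k$, so one should fix $m$ with $v(x^m)/m>\rho(x)-\epsilon$, write $k=qm+r$ with $0\leq r<m$, and use superadditivity to get $v(x^k)\geq qv(x^m)+v(x^r)\geq k(\rho(x)-\epsilon)-C_m$ for a constant $C_m$ depending only on $m$ (with the $\rho=\infty$ cases handled separately); the paper sidesteps all of this by citing Bergmann for parts (i) and (ii).
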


\begin{proof}

\noindent $i$ and $ii$ are given by the proof of \cite[Lemma 1]{Bergmann}.\\

\noindent $iii$. For each $n\in\mathbb{N}$, $\frac{v(x^n)}{n}\geq\frac{nv(x)}{n}=v(x)$, and so $\rho(x)\geq v(x)$.\\

\noindent Given $u\in Q^{\times}$, $\rho(uxu^{-1})=\underset{n\rightarrow\infty}{\lim}{\frac{v((uxu^{-1})^n)}{n}}=\underset{n\rightarrow\infty}{\lim}{\frac{v(u(x^n)u^{-1})}{n}}\geq\underset{n\rightarrow\infty}{\lim}{\frac{v(u)+v(x^n)+v(u^{-1})}{n}}=\underset{n\rightarrow\infty}{\lim}{\frac{v(x^n)}{n}}+\underset{n\rightarrow\infty}{\lim}{\frac{v(u)}{n}}+\underset{n\rightarrow\infty}{\lim}{\frac{v(u^{-1})}{n}}=\rho(x)$.\\

\noindent Hence $\rho(x)=\rho(u^{-1}uxu^{-1}u)\geq\rho(uxu^{-1})\geq\rho(x)$ -- forcing equality. Therefore $\rho$ is invariant under conjugation.\\

\noindent $iv$. Clearly if $x$ is nilpotent then $\rho(x)=\infty$.\\

\noindent First suppose that $x$ is a unit, then for any $y\in Q$, $v(y)=v(x^{-1}xy)\geq v(x^{-1})+v(xy)$, and so $v(xy)\leq v(y)-v(x^{-1})$. It follows using induction that for all $n\in\mathbb{N}$, $v(x^ny)\leq v(y)-nv(x^{-1})$, and hence $\frac{v(x^ny)}{n}\leq \frac{v(y)}{n}-v(x^{-1})$.\\

\noindent Taking $y=1$, it follows easily that $\rho(x)\leq -v(x^{-1})$, and since $v$ is separated, this is less than $\infty$.\\

\noindent Now, since $Q$ is simple and artinian, we have that $Q\cong M_l(K)$ for some division ring $K$, $l\in\mathbb{N}$. So applying Fitting's Lemma \cite[section 3.4]{Fitting}, we can find a unit $u\in Q^{\times}$ such that $uxu^{-1}$ has standard Fitting block form $\left(\begin{array}{cc} A & 0\\
0 & B\end{array}\right)$, where $A$ and $B$ are square matrices over $K$, possibly empty, $A$ is invertible and $B$ is nilpotent.\\

\noindent If $x$ is not nilpotent then $uxu^{-1}$ is not nilpotent, and hence $A$ is non-empty. Therefore, $\rho(uxu^{-1})=\rho(A)$, and since $A$ is invertible, $\rho(A)<\infty$. So by part $iii$, $\rho(x)=\rho(uxu^{-1})<\infty$.\\

\noindent $v$. Since $x$ is $v$-regular, $v(x^n)=nv(x)$ for all $n$, so clearly $\rho(x)=v(x)$. Also, $\rho(xy)=\underset{n\rightarrow\infty}{\lim}{\frac{v((xy)^n)}{n}}=\underset{n\rightarrow\infty}{\lim}{\frac{v(x^ny^n)}{n}}=\underset{n\rightarrow\infty}{\lim}{\frac{v(x^n)+v(y^n)}{n}}=\underset{n\rightarrow\infty}{\lim}{\frac{v((x^n)}{n}}+\underset{n\rightarrow\infty}{\lim}{\frac{v(y^n)}{n}}=\rho(x)+\rho(y)=v(x)+v(y)$.\end{proof}

\vspace{0.1in}

\noindent So let $G$ be a non-abelian $p$-valuable, abelian-by-procyclic group with principal subgroup $H$, procyclic element $X$, and let $P$ be a faithful prime ideal of $kG$. Fix $v$ a non-commutative valuation on $Q(\frac{kG}{P})$ such that the natural map $\tau:kG\to Q(\frac{kG}{P})$ is continuous, and let $\rho$ be the growth rate function of $v$.\\

Recall the function $u=z(\varphi^{p^{m_1}}):H\to H$ defined in section 2.1, where $\varphi$ is conjugation by $X$. Define $\lambda:=\inf\{\rho(\tau(u(h)-1)):h\in G\}$. Since we know that for all $h\in H$, $u(h)=u_0(h)^{p^{m_1}}$ and $v(\tau(h-1)^{p^m}))\geq 1$ for sufficiently high $m$, we may choose $m_1$ such that $v(\tau(u(h)-1))\geq 1$, and hence $\lambda\geq 1$.

\begin{lemma}\label{faithful}

$\lambda<\infty$ and $\lambda$ is attained as the growth rate of $\tau(u(h)-1)$ for some $h\in H$.

\end{lemma}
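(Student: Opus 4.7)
I would split the proof into two parts: finiteness ($\lambda < \infty$) and attainment. Finiteness will follow from a characteristic-$p$ identity combined with the faithfulness of $P$; attainment will rely on compactness of $H$ and Theorem \ref{comparison} to exhibit a $v$-regular witness realizing $\lambda$.

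\textbf{Finiteness.} Since $G$ is non-abelian, $\log X$ does not centralize $\log H$ in the $\mathbb{Q}_p$-Lie algebra of $G$, so using $u_0(h)=\exp([\log X,\log h])$ from Section 2.1, the homomorphism $u_0$ is non-trivial; since $H$ is torsion-free, so is $u=u_0^{p^{m_1}}$. Picking any $h\in H$ with $u(h)\neq 1$, the characteristic-$p$ identity combined with the fact that $u$ is a group homomorphism on the abelian group $H$ yields
\[
(u(h)-1)^{p^m}=u(h)^{p^m}-1=u(h^{p^m})-1 \in kH
\]
for every $m\in\mathbb{N}$. Since $u(h)\in H$ and $H$ is torsion-free, $u(h)^{p^m}\neq 1$, and faithfulness of $P$ gives $u(h^{p^m})-1\notin P$. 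Thus $(u(h)-1)^{p^m}\notin P$ for every $m$, so $\overline{u(h)-1}$ is not nilpotent in $R=kG/P$; since $R\hookrightarrow Q(kG/P)$, $\tau(u(h)-1)$ is not nilpotent in $Q$. By Lemma \ref{growth properties}(iv), $\rho(\tau(u(h)-1))<\infty$, and so $\lambda<\infty$.

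\textbf{Attainment.} Set $\nu(h):=v(\tau(u(h)-1))\in\mathbb{Z}_{\geq 1}\cup\{\infty\}$. By continuity of $u$ and $\tau$, $\nu$ is lower semi-continuous on $H$; on the compact group $H$, integer-valuedness then forces $\nu$ to attain its minimum $\nu_0\geq 1$ on the open set $L:=\{h:u(h)\neq 1\}$ at some $h^\ast$. By Lemma \ref{growth properties}(iii), $\rho(\tau(u(h)-1))\geq\nu(h)\geq\nu_0$ for all $h\in L$, so $\lambda\geq\nu_0$. I would then apply Theorem \ref{comparison} to $r=\overline{u(h^\ast)-1}$ (possibly after refining the choice of $h^\ast$ within the set of $\nu$-minimizers to ensure $\mathrm{gr}_w(r)\in A\setminus\mathfrak{q}$) to produce $m\in\mathbb{N}$ such that $\tau(u(h^\ast)-1)^{p^m}$ is $v$-regular in $\widehat{Q}$. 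Lemma \ref{growth properties}(i),(v) then gives
\[
\rho(\tau(u(h^\ast)-1))=\tfrac{1}{p^m}\,v(\tau(u(h^\ast)-1)^{p^m}),
\]
and a direct valuation computation, using $v$-regularity and the relation $v(\tau(u(h^\ast)-1)^{p^m})=v(\tau(u((h^\ast)^{p^m})-1))$, should identify the right-hand side with $\nu_0$, yielding $\lambda=\nu_0=\rho(\tau(u(h^\ast)-1))$ attained at $h^\ast$.

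\textbf{Main obstacle.} The principal difficulty is ensuring compatibility between the $\nu$-minimizer $h^\ast$ and the hypothesis of Theorem \ref{comparison}, namely that $\mathrm{gr}_w(\overline{u(h^\ast)-1})\in A\setminus\mathfrak{q}$. If this fails at a naive minimizer, the plan is to work with the minimum of $\nu$ over the (non-empty) sub-collection of $h\in L$ whose symbol lies outside $\mathfrak{q}$, and verify this restricted minimum still coincides with $\lambda$. Once the regularity hypothesis is secured, the passage from the limit-defined function $\rho$ to the integer-valued valuation $v$ via $v$-regularity delivers attainment cleanly; managing this interplay between the Zariskian filtration $w$ on $R$, the non-commutative valuation $v$ on $Q$, and the growth-rate function $\rho$ is the technical heart of the argument.
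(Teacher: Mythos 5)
Your argument is essentially equivalent to the paper's, just phrased directly instead of by contradiction: you exhibit an $h$ with $u(h)\neq 1$ (exists since $G$ is non-abelian, via the $\exp/\log$ description of $u_0$), show $\tau(u(h)-1)$ is non-nilpotent using faithfulness and torsion-freeness, and invoke Lemma~\ref{growth properties}($iv$). The paper runs exactly the same chain of deductions in contrapositive form. This part is correct.

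\textbf{On the attainment part.} Here you take a genuinely different route, and it has a real gap. You define $\nu(h)=v(\tau(u(h)-1))$, minimize it to get $\nu_0$ at some $h^\ast$, observe $\lambda\geq\nu_0$, and then try to close the loop by producing a $v$-regular power of $\tau(u(h^\ast)-1)$ via Theorem~\ref{comparison}. But that theorem requires $\mathrm{gr}_w(\overline{u(h^\ast)-1})\in A\setminus\mathfrak q$, and nothing forces a $\nu$-minimizer to land outside $\mathfrak q$; if $\mathrm{gr}_w(\overline{u(h^\ast)-1})\in\mathfrak q$ for \emph{every} minimizer, your step (2) breaks down and $\lambda\leq\nu_0$ is not established. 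Your proposed fix --- restricting to the subcollection of $h$ whose symbol avoids $\mathfrak q$ and ``verifying this restricted minimum still coincides with $\lambda$'' --- is precisely the unproven point; it is entirely possible that the restricted minimum is strictly larger than $\nu_0$. Moreover, even granting regularity, your identity $v(\tau(u(h^\ast)-1)^{p^m})=v(\tau(u((h^\ast)^{p^m})-1))$ gives $p^m\nu_0$ only via the $v$-regularity you are trying to establish, so the argument is somewhat circular in its bookkeeping. The paper sidesteps all of this: since $\tau(kH)$ is commutative and $H$ has a finite ordered basis $\{h_1,\ldots,h_d\}$, every $\tau(u(h)-1)$ lies in the closed ideal generated by the $\tau(u(h_i)-1)$, so by Lemma~\ref{growth properties}($ii$) one gets $\rho(\tau(u(h)-1))\geq\min_i\rho(\tau(u(h_i)-1))$ for all $h$; hence $\lambda$ equals a minimum over a finite set and is automatically attained at one of the $h_i$. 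That argument uses no non-commutative valuation machinery at all. I would recommend replacing your attainment step with this finite-basis observation; it is both shorter and avoids the regularity compatibility problem you correctly flagged as the weak point.
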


\begin{proof}

Suppose that $\lambda=\infty$, i.e. $\rho(\tau(u(h)-1))=\infty$ for all $h\in G$.\\

\noindent By Lemma \ref{growth properties}($iv$), it follows that $\tau(u(h)-1)$ is nilpotent for all $h\in H$, and hence $u(h)^{p^m}-1\in P$ for sufficiently high $m$.

So since $P$ is faithful, it follows that $u(h)^{p^m}=u_0(h)^{p^{m_1+m}}=1$, and hence $u_0(h)=1$ by the torsionfree property, and this holds for all $h\in H$.\\

\noindent But in this case, $1=u_0(h)=\exp([\log(X),\log(h)])$, so $[\log(X),\log(h)]=0$ in $\log(Sat(G))$, and it follows that $\log(Sat(G))$ is abelian, and hence $G$ is abelian -- contradiction.\\

\noindent Therefore $\lambda$ is finite, and it is clear that for any basis $\{h_1,\cdots,h_d\}$ for $H$, $\lambda=\min\{\rho(\tau(u(h_i)-1)):i=1,\cdots,d\}$, hence it is attained.\end{proof}

\subsection{Growth Preserving Polynomials}

We will now see the first example of proving a control theorem using our Mahler expansions.\\

\noindent Consider a polynomial of the form $f(t)=a_0t+a_1t^p+\cdots+a_rt^{p^r}$, where $a_i\in\tau(kH)$ and $r\geq 0$, we call $r$ the \emph{$p$-degree} of $f$.

Then $f:\tau(kH)\to\tau(kH)$ is an $\mathbb{F}_p$-linear map.\\

\noindent Recall the definition of $\lambda\geq 1$ from section 3.3, which is implicit in the following definition.

\begin{definition}\label{GPP}

We say that $f(t)=a_0t+a_1t^p+\cdots+a_rt^{p^r}$ is a \emph{growth preserving polynomial}, or \emph{GPP}, if:\\ 

\noindent i. $\rho(f(q))\geq p^{r}\lambda$ for all $q\in\tau(kH)$ with $\rho(q)\geq\lambda$.\\

\noindent ii. $\rho(f(q))> p^{r}\lambda$ for all $q\in\tau(kH)$ with $\rho(q)>\lambda$.\\

\noindent We say that a GPP $f$ is \emph{trivial} if for all $q=\tau(u(h)-1)$, $\rho(f(q))>p^r\lambda$.\\

\noindent Furthermore, $f$ is a \emph{special} GPP if $f$ is not trivial, and for any $q=\tau(u(h)-1)$ with $\rho(f(q))=p^{r}\lambda$, we have that $f(q)^{p^k}$ is $v$-regular for sufficiently high $k$.

\end{definition}

\noindent\textbf{\underline{Example:}} $f(t)=t$ is clearly a GPP. In general it need not be special, and it is not trivial, because if $\rho(q)=\lambda$ then $\rho(f(q))=\rho(q)=\lambda$.

\vspace{0.2in}

\noindent For any growth preserving polynomial $f(t)$, define $K_f:=\{h\in H:\rho(f(\tau(u(h)-1)))>p^{r}\lambda\}$.

\begin{lemma}\label{sub}

If $f(t)$ is a GPP, then $K_f$ is an open subgroup of $H$ containing $H^p$. Moreover, $K_f=H$ if and only if $f$ is trivial.

\end{lemma}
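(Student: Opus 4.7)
The plan rests on two structural observations. First, the map $u: H \to H$ is a group homomorphism: because $H$ is abelian, $\log(h_1 h_2) = \log(h_1) + \log(h_2)$, and since the exponents $[\log X, \log h_i]$ lie in the abelian ideal $\log H$ they commute, so $u_0(h_1 h_2) = \exp([\log X, \log h_1])\exp([\log X, \log h_2]) = u_0(h_1) u_0(h_2)$; raising to $p^{m_1}$ preserves this in $H$, so $u$ is a homomorphism. Second, since each term $a_j t^{p^j}$ of $f$ is Frobenius-linear over elements that commute with $a_j$, and $\tau(kH)$ is commutative, we have $f(x+y) = f(x) + f(y)$ for all $x, y \in \tau(kH)$.

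Next I would verify the subgroup axioms. Writing $q_i := \tau(u(h_i) - 1)$, the homomorphism property gives $\tau(u(h_1 h_2) - 1) = q_1 + q_2 + q_1 q_2$ inside the commutative ring $\tau(kH)$, so additivity of $f$ on commuting elements yields
\begin{equation*}
f\bigl(\tau(u(h_1 h_2) - 1)\bigr) = f(q_1) + f(q_2) + f(q_1 q_2).
\end{equation*}
By the definition of $\lambda$, each $\rho(q_i) \geq \lambda$, so $\rho(q_1 q_2) \geq 2\lambda > \lambda$ by Lemma \ref{growth properties}(ii), and GPP condition (ii) forces $\rho(f(q_1 q_2)) > p^r \lambda$. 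If $h_1, h_2 \in K_f$ then both $\rho(f(q_i)) > p^r \lambda$; applying Lemma \ref{growth properties}(ii) a second time (all terms commute) gives $h_1 h_2 \in K_f$. For inverses, apply the same additivity to the identity $0 = \tau(u(h h^{-1}) - 1) = q + q' + q q'$ with $q' := \tau(u(h^{-1}) - 1)$ to get $f(q') = -f(q) - f(q q')$; since $\rho(q q') \geq 2\lambda > \lambda$ and $h \in K_f$, both summands on the right have $\rho > p^r \lambda$, so $h^{-1} \in K_f$.

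For $H^p \subseteq K_f$, use $u(h^p) = u(h)^p$ and Frobenius in characteristic $p$ to compute $\tau(u(h^p) - 1) = (1 + q)^p - 1 = q^p$; then $\rho(q^p) = p\rho(q) \geq p\lambda > \lambda$ because $\lambda \geq 1$, and GPP (ii) gives $h^p \in K_f$. Openness is then immediate, since $H^p$ is open in $H \cong \mathbb{Z}_p^d$ and any subgroup containing an open subgroup is open. Finally, $K_f = H$ unpacks to $\rho(f(\tau(u(h) - 1))) > p^r \lambda$ for every $h \in H$, which is precisely the definition of $f$ being trivial.

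The only moderately subtle point is verifying that $u$ is a group homomorphism, which reduces to the abelianness of $H$; everything else is mechanical, since Frobenius turns $f$ into an $\mathbb{F}_p$-linear operator on commuting arguments and the growth-rate bookkeeping only invokes the standard inequalities of Lemma \ref{growth properties}.
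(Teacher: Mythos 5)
Your proof is correct and follows essentially the same approach as the paper's: the key steps (the multiplicative identity $\tau(u(hh')-1)=qq'+q+q'$, $\mathbb{F}_p$-linearity of $f$, the GPP conditions, and Lemma \ref{growth properties}) are identical. The one improvement is that you explicitly verify $u$ is a group homomorphism via the $\log$--$\exp$ correspondence, a fact the paper's proof uses implicitly without comment.
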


\begin{proof}

Firstly, it is clear from the definition that $K_f=H$ if and only if $\rho(f(\tau(u(h)-1)))>p^r\lambda$ for all $h\in H$, i.e. if and only if $f$ is trivial.\\

\noindent Given $h,h'\in H$, let $q=\tau(u(h)-1)$, $q'=\tau(u(h')-1)$. Then

\begin{center}
$\tau(u(hh')-1)=\tau((u(h)-1)(u(h')-1)+(u(h)-1)+(u(h')-1)))=qq'+q+q'$
\end{center} 

\noindent Therefore $f(\tau(u(hh')-1))=f(qq')+f(q)+f(q)$ using $\mathbb{F}_p$-linearity of $f$. But $\rho(q),\rho(q')\geq\lambda$ by the definition of $\lambda$, so $\rho(qq')\geq 2\lambda>\lambda$, so by the definition of a GPP, $\rho(f(qq'))>p^r\lambda$.\\

\noindent We know that $\rho(f(q)),\rho(f(q'))> p^r\lambda$, thus $\rho(f(q_1q_2)+f(q_1)+f(q_2))>p^r\lambda$. Therefore, $\rho(f(\tau(u(hh')-1)))>p^r\lambda$ and $hh'\in K_f$ as required.\\

\noindent Also, $\tau(u(h^{-1})-1)=-\tau(u(h^{-1}))\tau(u(h)-1)=-\tau(u(h^{-1})-1)\tau(u(h)-1)-\tau(u(h)-1)$, so by the same argument it follows that $h^{-1}$ in $K_f$, and $K_f$ is a subgroup of $H$.\\

\noindent Finally, for any $h\in H$ $\rho(\tau(u(h^p)-1))=\rho(\tau(u(h)-1)^p)\geq p\lambda>\lambda$, hence $\rho(f(\tau(u(h^p)-1)))>p^r\lambda$ and $h^p\in K_f$. Therefore $K_f$ contains $H^p$ and $K_f$ is an open subgroup of $H$.\end{proof}

\vspace{0.1in}

\noindent In particular, for $f(t)=t$, let $K:=K_f=\{h\in H:\rho(\tau(u(h)-1))>\lambda\}$. Then $K$ is a proper open subgroup of $H$ containing $H^p$.

\vspace{0.2in}

\noindent Given a polynomial $f(t)=a_0t+a_1t^p+\cdots+a_rt^{p^r}$, and a basis $\{h_1,\cdots,h_d\}$ for $H$, let $q_i=\tau(u(h_i)-1)$, and recall our expression (\ref{MahlerPol}) from section 2:

\begin{center}
$0=f(q_1)^{p^m}\tau\partial_1(y)+....+f(q_{d})^{p^m}\tau\partial_{d}(y)+O(f(q)^{p^m})$
\end{center}

\noindent Where $v(q^{p^m})\geq 2v(q_i^{m})$ for all $m$, and hence $\rho(q)>\rho(q_i)\geq\lambda$. Then if $f$ is a GPP, it follows that $\rho(f(q))>p^r\lambda$, and hence $v(f(q)^{p^m})>p^{m+r}\lambda$ for $m>>0$.\\

\noindent For the rest of this section, fix a non-trivial GPP $f$ of $p$ degree $r$. Then $K_f$ is a proper open subgroup of $H$ containing $H^p$ by Lemma \ref{sub}, so fix a basis $\{h_1,\cdots,h_d\}$ for $H$ such that $\{h_1^p,\cdots,h_t^p,h_{t+1},\cdots,h_d\}$ is an ordered basis for $K_f$.\\

\noindent Set $q_i:=\tau(u(h_i)-1)$ for each $i$, so that $\rho(f(q_i))=p^r\lambda$ for $i\leq t$ and $\rho(f(q_i))>p^r\lambda$ for $i>t$, and define:

\noindent $S_m:=\left(\begin{array}{cccccc} f(q_1)^{p^m} & f(q_2)^{p^m} & . & . & f(q_t)^{p^m}\\
f(q_1)^{p^{m+1}} & f(q_2)^{p^{m+1}} & . & . & f(q_t)^{p^{m+1}}\\
. & . & . & . & .\\
. & . & . & . & .\\
f(q_1)^{p^{m+t-1}} & f(q_2)^{p^{m+t-1}} & . & . & f(q_t)^{p^{m+t-1}}\end{array}\right)$, $\underline{\partial}:=\left(\begin{array}{c}\tau\partial_1(y)\\
\tau\partial_2(y)\\
.\\
.\\
.\\
\tau\partial_t(y)\end{array}\right)$\\

\noindent Then we can rewrite our expression as:

\begin{equation}
0=S_m\underline{\partial}+\left(\begin{array}{c} O(f(q)^{p^m})\\
O(f(q)^{p^{m+1}})\\
.\\
.\\
.\\
O(f(q)^{p^{m+t-1}})\end{array}\right)
\end{equation}

\noindent And multiplying by $adj(S_m)$ gives:

\begin{equation}\label{Mahler''}
0=det(S_m)\underline{\partial}+adj(S_m)\left(\begin{array}{c} O(f(q)^{p^m})\\
O(f(q)^{p^{m+1}})\\
.\\
.\\
.\\
O(f(q)^{p^{m+t-1}})\end{array}\right)
\end{equation}

\begin{lemma}\label{adjoint}

Suppose that $f$ is special. Then for each $i,j\leq t$, the $(i,j)$-entry of $adj(S_m)$ has value at least $\frac{p^t-1}{p-1}p^{m+r}\lambda-p^{m+r+j-1}\lambda$ for sufficiently high $m$.

\end{lemma}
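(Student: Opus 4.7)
My plan is to expand each entry of $adj(S_m)$ by the Leibniz formula and then bound the $v$-value of every resulting monomial using the specialty hypothesis on $f$.

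First I would write the $(i,j)$-entry of $adj(S_m)$, up to a sign, as the determinant of the $(t-1)\times(t-1)$ matrix obtained from $S_m$ by deleting row $j$ and column $i$. The Leibniz formula then expresses this determinant as a signed sum, over bijections $\sigma:\{1,\dots,t\}\setminus\{j\}\to\{1,\dots,t\}\setminus\{i\}$, of products $\prod_{a\in\{1,\dots,t\}\setminus\{j\}} f(q_{\sigma(a)})^{p^{m+a-1}}$. The crucial observation is that since the column index $i$ we delete lies in $\{1,\dots,t\}$, every $\sigma(a)$ also lies in $\{1,\dots,t\}$, so only factors $f(q_s)$ with $s\leq t$ ever appear in any monomial.

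Next I would invoke specialty. The choice of basis gives $h_s\notin K_f$ for $s\leq t$, which combined with $\rho(q_s)\geq\lambda$ and the GPP property forces $\rho(f(q_s))=p^r\lambda$. By the definition of a special GPP, there exists $k_s$ such that $f(q_s)^{p^{k_s}}$ is $v$-regular; setting $k:=\max_{s\leq t}k_s$ and applying Lemma \ref{growth properties}($v$) together with part ($i$), I obtain $v(f(q_s)^{p^m})=\rho(f(q_s)^{p^m})=p^{m+r}\lambda$ for all $m\geq k$ and all $s\leq t$.

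Combining these facts, for $m$ sufficiently large every monomial in the Leibniz expansion has $v$-value at least
\[
\sum_{a\in\{1,\dots,t\}\setminus\{j\}} p^{m+r+a-1}\lambda = p^{m+r}\lambda\sum_{a=1}^{t}p^{a-1}-p^{m+r+j-1}\lambda = \frac{p^t-1}{p-1}p^{m+r}\lambda-p^{m+r+j-1}\lambda,
\]
and since the $v$-value of a sum is bounded below by the minimum of the $v$-values of its summands, the same lower bound passes to the full $(i,j)$-entry of $adj(S_m)$. I do not anticipate any serious obstacle; the only care needed is the index book-keeping in the Leibniz expansion and the correct appeal to specialty, after which the estimate collapses to a routine geometric-series computation.
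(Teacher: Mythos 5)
Your proposal is correct and follows essentially the same route as the paper's own argument: both expand the $(i,j)$-entry of $adj(S_m)$ as the determinant of the minor obtained by deleting row $j$ and column $i$, apply the Leibniz formula so that every monomial involves only factors $f(q_s)^{p^{m+a-1}}$ with $s\leq t$, invoke specialty of $f$ to conclude each such factor is $v$-regular of value $p^{m+r+a-1}\lambda$ for $m$ large, and then sum the geometric series. The only cosmetic difference is that you phrase the Leibniz expansion via bijections $\sigma$ while the paper writes out the product with the hatted omitted term; the content is identical.
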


\begin{proof}

By definition, the $(i,j)$-entry of $adj(S_m)$ is (up to sign) the determinant of the matrix $(S_m)_{i,j}$ obtained by removing the $j$'th row and $i$'th column of $S_m$. This determinant is a sum of elements of the form:

\begin{center}
$f(q_{k_1})^{p^m}f(q_{k_2})^{p^{m+1}}\cdots \widehat{f(q_{k_j})^{p^{m+j-1}}}\cdots f(q_{k_t})^{p^{m+t-1}}$
\end{center}

\noindent For $k_i\leq t$, where the hat indicates that the $j$'th term in this product is omitted.\\

\noindent Since $f$ is special, $f(q_{k_s})^{p^m}$ is $v$-regular for $m>>0$. So since $\rho(f(q_{k_s}))=p^r\lambda$, it follows that $f(q_{k_s})$ is $v$-regular of value $p^{m+r}\lambda$.\\

\noindent Therefore, this $(i,j)$-entry has value at least $(1+p+\cdots+\widehat{p^{j-1}}+\cdots+p^{t-1})p^{m+r}\lambda$

\noindent $=\frac{p^t-1}{p-1}p^{m+r}\lambda-p^{m+r+j-1}\lambda$.\end{proof}

\vspace{0.1in}

\begin{lemma}\label{delta}

Let $\Delta:=\underset{\alpha\in\mathbb{P}^{t-1}\mathbb{F}_p}{\prod}{(\alpha_1f(q_1)+\cdots+\alpha_tf(q_t))}$.

\noindent Then there exists $\delta\in\tau(kH)$, which is a product of length $\frac{p^t-1}{p-1}$ in elements of the form $f(\tau(u(h)-1))$, $h\in H\backslash K_f$, such that $\rho(\Delta-\delta)>\frac{p^t-1}{p-1}p^r\lambda$.

\end{lemma}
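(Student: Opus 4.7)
The strategy is to realise each linear combination $L_\alpha := \alpha_1 f(q_1) + \cdots + \alpha_t f(q_t)$ as $f(q_{h_\alpha})$ for a suitable $h_\alpha \in H \setminus K_f$, modulo a term of growth rate strictly greater than $p^r \lambda$, and then to telescope. For each $\alpha \in \mathbb{P}^{t-1}\mathbb{F}_p$ I would fix a representative $(\alpha_1, \ldots, \alpha_t) \in \{0, 1, \ldots, p-1\}^t \setminus \{(0,\ldots,0)\}$ and set $h_\alpha := h_1^{\alpha_1} \cdots h_t^{\alpha_t}$. By our chosen basis for $K_f$ and the uniqueness of the expansion $g = \underline{g}^\beta$, the condition that some $\alpha_i \not\equiv 0 \pmod{p}$ forces $h_\alpha \in H \setminus K_f$. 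The candidate element is then $\delta := \prod_{\alpha \in \mathbb{P}^{t-1}\mathbb{F}_p} f(q_{h_\alpha})$, which is a product of length $N := \frac{p^t - 1}{p - 1}$ of the required form.

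\textbf{Linearisation.} The key step is to show $f(q_{h_\alpha}) = L_\alpha + \epsilon_\alpha$ with $\rho(\epsilon_\alpha) > p^r \lambda$. Since $H$ is abelian, Lazard's formula $u(h) = \exp([p^{m_1}\log X, \log h])$ shows that $u: H \to H$ is a group homomorphism, so $u(h_\alpha) = \prod_i u(h_i)^{\alpha_i}$. Expanding each factor as $(1 + (u(h_i) - 1))^{\alpha_i}$ by the binomial theorem and then multiplying out in the commutative ring $\tau(kH)$ gives $q_{h_\alpha} = \sum_{i=1}^t \alpha_i q_i + \mu_\alpha$, where $\mu_\alpha$ is a sum of commuting products of at least two of the $q_j$. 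Applying Lemma \ref{growth properties}(ii) to these commuting products yields $\rho(\mu_\alpha) \geq 2\lambda > \lambda$. Now $f$ is additive on $\tau(kH)$ because the characteristic is $p$ and $\tau(kH)$ is commutative, and $f(\alpha_i q_i) = \alpha_i f(q_i)$ because $\alpha_i^{p^j} = \alpha_i$ in $\mathbb{F}_p$; hence $f(q_{h_\alpha}) = L_\alpha + f(\mu_\alpha)$. The GPP property of $f$ applied to $\mu_\alpha$ (with $\rho(\mu_\alpha) > \lambda$) gives $\rho(f(\mu_\alpha)) > p^r \lambda$, completing the linearisation.

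\textbf{Telescoping.} Expanding $\delta - \Delta = \prod_\alpha (L_\alpha + f(\mu_\alpha)) - \prod_\alpha L_\alpha$ yields $\sum_{\emptyset \neq S} \prod_{\beta \in S} f(\mu_\beta) \cdot \prod_{\alpha \notin S} L_\alpha$, where the order of factors is immaterial because everything lies in $\tau(kH)$. For every $\alpha$, some $\alpha_i$ is nonzero, and the GPP condition gives $\rho(f(q_i)) \geq p^r \lambda$ for each $i \leq t$, hence $\rho(L_\alpha) \geq p^r \lambda$. Combined with $\rho(f(\mu_\beta)) > p^r \lambda$ and $|S| \geq 1$, iterating Lemma \ref{growth properties}(ii) on the commuting factors makes each summand have growth rate strictly greater than $N p^r \lambda$, and hence so does the full sum. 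I expect the main obstacle to be the linearisation step: specifically, the interplay between the multiplicativity of $u$ (which requires the abelianness of $H$ through Lazard's machinery) and the $\mathbb{F}_p$-linearity of $f$ (which requires commutativity of $\tau(kH)$ and the characteristic being $p$). Once this identity is established, the telescoping at the end is purely formal.
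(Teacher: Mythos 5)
Your proposal is correct and follows essentially the same route as the paper: linearise $f(q_{h_\alpha})$ against $L_\alpha$ using the multiplicativity of $u$ on the abelian group $H$ together with the $\mathbb{F}_p$-linearity of $f$ on the commutative ring $\tau(kH)$, then expand the product and bound the error using Lemma \ref{growth properties}($ii$). You are somewhat more explicit than the paper about the mechanism making $u$ a homomorphism (via Lazard's exponential formula) and about why $f$ commutes with $\mathbb{F}_p$-scalars, but these are the same facts the paper uses implicitly; nothing is missing.
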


\begin{proof}

For each $\alpha\in\mathbb{F}_p^t\backslash\{0\}$, we have that $\alpha_1f(q_1)+\cdots+\alpha_tf(q_t)=f(\alpha_1q_1+\cdots+\alpha_tq_t)$ using linearity of $f$.\\

\noindent Using expansions inside $kH$, we see that $\alpha_1q_1+\cdots+\alpha_{t}q_{t}=\tau(u(h_1^{\alpha_1}\cdots h_{t}^{\alpha_{t}})-1)+O(q_jq_k)$, and hence:\\

$f(\alpha_1q_1+\cdots+\alpha_{t}q_t)=f(\tau(u(h_1^{\alpha_1}\cdots h_t^{\alpha_t})-1))+O(f(q_jq_k))$.\\

\noindent So setting $h_{i,\alpha}:=u(h_1^{\alpha_1}\cdots h_{t}^{\alpha_{t}})$. So since $\alpha_i\neq 0$ for some $i$, it follows from $\mathbb{F}_p$-linear independence of $h_1,\cdots,h_t$ modulo $K_f$, that $\rho(f(\tau(u(h_{i,\alpha})-1)))=p^r\lambda$, and hence $h_{i,\alpha}\in H\backslash K_f$.\\

\noindent Set $\delta:=\underset{\alpha\in\mathbb{P}^{t-1}\mathbb{F}_p}{\prod}{f(\tau(u(h_{i,\alpha})-1))}$. Then $\Delta=\underset{\alpha\in\mathbb{P}^{t-1}\mathbb{F}_p}{\prod}{(\alpha_1f(q_1)+\cdots+\alpha_tf(q_t))}$\\

\noindent $=\underset{\alpha\in\mathbb{P}^{t-1}\mathbb{F}_p}{\prod}{f(\tau(u(h_{i,\alpha})-1))}+O(f(q_jq_k))=\delta+\epsilon$\\

\noindent Where $\epsilon$ is a sum of products over all $i$, $\alpha$ in $f(\tau(u(h_{i,\alpha})-1))$ and $O(f(q_jq_k))$, with each product containing at least one $O(f(q_jq_k))$. \\

\noindent Since the length of each of these products is $\frac{p^{t}-1}{p-1}$, and each term has growth rate at least $p^r\lambda$, with one or more having growth rate strictly greater than $p^r\lambda$, it follows that $\rho(\epsilon)>\frac{p^t-1}{p-1}p^r\lambda$ as required.\end{proof}

\begin{theorem}\label{special-GPP}

Suppose that there exists a special growth preserving polynomial $f$. Then $P$ is controlled by a proper open subgroup of $G$.

\end{theorem}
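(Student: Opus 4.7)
The plan is to apply the matrix equation (\ref{Mahler''}) to deduce that $\tau\partial_1(y)=0$ for every $y\in P$; combined with Proposition \ref{Frattini} this will give the result. Since $f$ is non-trivial, Lemma \ref{sub} tells us $K_f$ is a proper open subgroup of $H$ containing $H^p$, so we fix the ordered basis $\{h_1,\cdots,h_d\}$ of $H$ and the integer $1\leq t\leq d$ as in the discussion preceding the theorem, setting $q_i=\tau(u(h_i)-1)$ with $\rho(f(q_i))=p^r\lambda$ for $i\leq t$ and $\rho(f(q_i))>p^r\lambda$ for $i>t$.

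The first step will be to show that $det(S_m)$ is $v$-regular of value $\frac{p^t-1}{p-1}p^{m+r}\lambda$ for all sufficiently large $m$. Since $\tau(kH)$ is commutative and of characteristic $p$, the Moore determinant formula yields $det(S_m)=\Delta^{p^m}$ with $\Delta$ as in Lemma \ref{delta}. Writing $\Delta=\delta+\epsilon$ from that lemma, where $\delta$ is a product of $\frac{p^t-1}{p-1}$ factors $f(\tau(u(h)-1))$ with $h\in H\setminus K_f$ and $\rho(\epsilon)>\frac{p^t-1}{p-1}p^r\lambda$, the special property of $f$ implies each such factor's $p^m$-power is $v$-regular of value $p^{m+r}\lambda$ for $m$ large, so $\delta^{p^m}$ is $v$-regular of value $\frac{p^t-1}{p-1}p^{m+r}\lambda$. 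Since $v(\epsilon^{p^m})/p^m\to\rho(\epsilon)$ eventually exceeds this, the Frobenius identity $det(S_m)=\delta^{p^m}+\epsilon^{p^m}$ in $\tau(kH)$ will force $det(S_m)$ to have the same gr-image as $\delta^{p^m}$, from which $v$-regularity of $det(S_m)$ follows.

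Next, I will compare the error vector $adj(S_m)\underline{E_m}$ with $det(S_m)\underline{\partial}$. Lemma \ref{adjoint} gives $v(adj(S_m)_{j,i})\geq\big(\frac{p^t-1}{p-1}-p^{i-1}\big)p^{m+r}\lambda$, while each entry $E_{m,i}$ is a sum of terms $f(q_k)^{p^{m+i-1}}\tau\partial_k(y)$ for $k>t$ together with the remainder error from (\ref{MahlerPol}); since $\rho(f(q_k))>p^r\lambda$ for $k>t$ and $\rho(f(q))>p^r\lambda$, we get $v(E_{m,i})\geq p^{m+i-1}C'$ for some fixed $C'>p^r\lambda$ and all large $m$. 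Combining these estimates,
\[
v(adj(S_m)_{j,i})+v(E_{m,i})\geq \tfrac{p^t-1}{p-1}p^{m+r}\lambda+p^{m+i-1}(C'-p^r\lambda),
\]
so $v\bigl((adj(S_m)\underline{E_m})_j\bigr)-v(det(S_m))\to\infty$ as $m\to\infty$.

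Finally, for any fixed $y\in P$ the first row of (\ref{Mahler''}) reads $det(S_m)\tau\partial_1(y)=-(adj(S_m)\underline{E_m})_1$. If $\tau\partial_1(y)\neq 0$, then $v$-regularity of $det(S_m)$ gives $v(det(S_m)\tau\partial_1(y))=v(det(S_m))+v(\tau\partial_1(y))$, a fixed finite excess over $v(det(S_m))$, while the right hand side has unbounded excess by the previous paragraph---contradiction. Hence $\tau\partial_1(y)=0$ for every $y\in P$, so $\partial_1(P)\subseteq P$, and Proposition \ref{Frattini} produces a proper open subgroup of $G$ controlling $P$. The main obstacle will be the $v$-regularity of $det(S_m)$: this rests on combining the characteristic-$p$ Frobenius identity in the commutative ring $\tau(kH)$, the special property of $f$ used through Lemma \ref{delta}, and the careful comparison of growth rates between $\delta^{p^m}$ and $\epsilon^{p^m}$ that ensures the leading term persists.
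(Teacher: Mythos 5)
Your proposal is correct and follows essentially the same route as the paper: same Mahler matrix setup, same use of Lemma \ref{adjoint} and Lemma \ref{delta}, and the same exploitation of the special-GPP property to get $v$-regularity of the leading piece. The only cosmetic difference is that you package the convergence argument as "establish $v$-regularity of $\det(S_m)$ via the Frobenius decomposition $\Delta^{p^m}=\delta^{p^m}+\epsilon^{p^m}$, then compare $v$-values of the two sides," whereas the paper divides out by the $v$-regular element $\delta^{p^m}$ and shows the error terms tend to zero; both arguments rest on exactly the same growth-rate estimates.
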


\begin{proof}

\noindent Since $f$ is special, we have that for some $k>0$ $f(q_i)^{p^k}$ is $v$-regular for each $i\leq t$, and thus $v(f(q_i)^{p^k})=\rho(f(q_i)^{p^k})=p^{r+k}\lambda$ and for all $m\geq k$, $v(f(q_i)^{p^m})=p^{r+m}\lambda$.\\

\noindent Also, since $\rho(f(q))>p^r\lambda$, we can choose $c>0$ such that $\rho(f(q))>p^r\lambda+c$, and hence $v(f(q)^{p^m})>p^{m+r}\lambda+p^mc$ for sufficiently high $m$.\\

\noindent Consider our Mahler expansion (\ref{Mahler''}):

\begin{center}
$0=det(S_m)\underline{\partial}+adj(S_m)\left(\begin{array}{c} O(f(q)^{p^m})\\
O(f(q)^{p^{m+1}})\\
.\\
.\\
.\\
O(f(q)^{p^{m+t-1}})\end{array}\right)$
\end{center}

\noindent We will analyse this expression to prove that $\tau\partial_i(P)=0$ for all $i\leq t$, and it will follow from Proposition \ref{Frattini} that $P$ is controlled by a proper open subgroup of $G$ as required.\\ 

\noindent Consider the $i$'th entry of the vector 

\begin{center}
$adj(S_m)\left(\begin{array}{c} O(f(q)^{p^m})\\
O(f(q)^{p^{m+1}})\\
.\\
.\\
.\\
O(f(q)^{p^{m+t-1}})\end{array}\right)$
\end{center} 

\noindent This has the form $adj(S_m)_{i,1}O(f(q)^{p^m})+adj(S_m)_{i,2}O(f(q)^{p^{m+1}})+\cdots+adj(S_m)_{i,t}O(f(q)^{p^{m+t-1}})$.

\vspace{0.2in}

\noindent By Lemma \ref{adjoint}, we know that $v(adj(S_m)_{i,j})\geq \frac{p^t-1}{p-1}p^{m+r}\lambda-p^{m+r+j-1}\lambda$ for $m>>0$, and hence:\\ 

\noindent $v(adj(S_m)_{i,j}O(f(q)^{p^{m+j-1}}))\geq v(adj(S_m)_{i,j})+v(O(f(q)^{p^{m+j-1}}))$\\

\noindent $\geq \frac{p^t-1}{p-1}p^{m+r}\lambda-p^{m+r+j-1}\lambda+p^{m+r+j-1}\lambda+p^{m+j-1}c=\frac{p^t-1}{p-1}p^{m+r}\lambda+p^{m+j-1}c$ for each $j$.\\

\noindent Hence this $i$'th entry has value at least $\frac{p^t-1}{p-1}p^{m+r}\lambda+p^mc$.\\

\noindent Therefore, the $i$'th entry of our expression (\ref{Mahler''}) has the form $0=det(S_m)\tau\partial_i(y)+\epsilon_{i,m}$, where $v(\epsilon_{i,m})\geq \frac{p^t-1}{p-1}p^{m+r}\lambda+p^mc$.\\

\noindent Now, take $\Delta:=det(S_0)$, and it follows that $det(S_m)=\Delta^{p^m}$ for each $m$. Also, using \cite[Lemma 1.1($ii$)]{chevalley} we see that:\\

$\Delta=\beta\cdot\underset{\alpha\in\mathbb{P}^{t-1}\mathbb{F}_p}{\prod}{(\alpha_1f(q_1)+\cdots+\alpha_tf(q_t))}$ for some $\beta\in\mathbb{F}_p$.\\

\noindent Therefore, by Lemma \ref{delta}, we can find an element $\delta\in\tau(kH)$, which up to scalar multiple is a product of length $\frac{p^t-1}{p-1}$ in elements of the form $f(\tau(u(h)-1))$, with $\rho(f(\tau(u(h)-1)))=p^r\lambda$ for each $h$, such that $\rho(\Delta-\delta)>\frac{p^t-1}{p-1}p^{r}\lambda$. 

Hence we can find $c'>0$ such that for all $m>>0$, $v((\Delta-\delta)^{p^m})\geq \frac{p^t-1}{p-1}p^{m+r}\lambda+p^mc'$.\\

\noindent Therefore, $0=det(S_m)\tau\partial_i(y)+\epsilon_{i,m}=\Delta^{p^m}\tau\partial_i(y)+\epsilon_{i,m}=\delta^{p^m}\tau\partial_i(y)+(\Delta-\delta)^{p^m}\tau\partial_i(y)+\epsilon_{i,m}$.\\

\noindent Again, since $f$ is a special GPP, if $h\in H$ and $\rho(f(\tau(u(h)-1)))=p^r\lambda$, it follows that $f(\tau(u(h)-1))^{p^k}$ is $v$-regular for $k>>0$.\\

\noindent So since $\delta$ is a product of $\frac{p^t-1}{p-1}$ elements of the form $f(\tau(u(h)-1))$ of growth rate $p^r\lambda$, it follows that for some $k\in\mathbb{N}$, $\delta^{p^k}$ is $v$-regular of value $\frac{p^t-1}{p-1}p^{r+k}\lambda$.\\

\noindent Therefore for all $m\geq k$, $\delta^{p^m}$ is $v$-regular of value $\frac{p^t-1}{p-1}p^{r+m}\lambda$, and dividing out by $\delta^{p^m}$ gives that for each $i=1,\cdots,t$:

\begin{center}
$0=\tau\partial_i(y)+\delta^{-p^m}(\Delta-\delta)^{p^m}\tau\partial_i(y)+\delta^{-p^m}\epsilon_{i,m}$.
\end{center}

\noindent But for $m>>0$, $v(\delta^{-p^m}(\Delta-\delta)^{p^m})\geq \frac{p^t-1}{p-1}p^{m+r}\lambda+p^mc'-\frac{p^t-1}{p-1}p^{m+r}\lambda=p^mc'$, and $v(\delta^{-p^m}\epsilon_{i,m})\geq \frac{p^t-1}{p-1}p^{m+r}\lambda+p^mc-\frac{p^t-1}{p-1}p^{m+r}\lambda=p^mc$, hence the right hand side of this expression converges to $\tau\partial_i(y)$.\\

\noindent Therefore, $\tau\partial_i(y)=0$, and since this holds for all $y\in P$, we have that $\tau\partial_i(P)=0$ for each $i=1,\cdots,t$.\end{proof}

\noindent So to prove Theorem \ref{B}, it remains only to prove the existence of a special growth preserving polynomial. We will construct such a polynomial in section 6.

\section{Central Simple algebras}

Suppose that $G$ is an abelian-by-procyclic group, and $P$ is a faithful prime ideal of $kG$. Since $Q:=Q(\frac{kG}{P})$ is simple, its centre is a field. Recall that the proof of Theorem \ref{B} will split into two parts, when $Q$ is finite dimensional over its centre, and when it is not.

In this section, we deal with the former case. So we will assume throughout that $Q$ is finitely generated over its centre, i.e. is a central simple algebra.

\subsection{Isometric embedding}

\noindent Fix a non-commutative valuation $v$ on $Q$. Then by definition, the completion $\widehat{Q}$ of $Q$ with respect to $v$ is isomorphic to $M_n(Q(D))$ for some complete, non-commutative DVR $D$.

But since $Q$ is finite dimensional over its centre, the same property holds for $\widehat{Q}$, and hence $Q(D)$ is finite dimensional over its centre.\\ 

\noindent Let $F:=Z(Q(D))$, $s:= $dim$_F(Q(D))$, $R:=F\cap D$. Then $F$ is a field, $R$ is a commutative DVR, and $Q(D)\cong F^s$. Let $\pi\in R$ be a uniformiser, and suppose that $v(\pi)=t>0$.

\begin{lemma}\label{technical'}

Let $\{y_1,\cdots,y_s\}$ be an $F$-basis for $Q(D)$ with $0\leq v(y_i)\leq t$ for all $i$. Then there exists $l\in\mathbb{N}$ such that if $v(r_1y_1+\cdots+r_sy_s)\geq l$ then $v(r_i)>0$ for some $i$ with $r_i\neq 0$.

\end{lemma}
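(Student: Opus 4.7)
The plan is to invoke the equivalence of non-Archimedean norms on a finite-dimensional vector space over a complete discretely valued field, applied to the restriction of $v$ to $Q(D)$ viewed as an $F$-vector space with basis $\{y_1,\dots,y_s\}$.

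First I would verify that $F$ is itself complete. The ring $R = F \cap D = Z(Q(D)) \cap D$ is closed in $D$, being an intersection of kernels of the continuous commutator maps $[\,\cdot\,,d]\colon D \to D$ for $d$ ranging over $D$; since $D$ is complete, $R$ is therefore a complete commutative DVR with uniformiser $\pi$, and $F = \mathrm{Frac}(R)$ is a complete discretely valued field. Moreover $v|_F$ is $t$ times the $\pi$-adic valuation, so $v(F^\times) = t\mathbb{Z}$.

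Next I would observe that $v$ restricts to an $F$-linear non-Archimedean norm on $Q(D)$, in the sense that $v(rx) = v(r) + v(x)$ for $r \in F$ and $x \in Q(D)$ (because $F \subseteq Z(Q(D))$); the same is true of the sup-norm
\[
v_{\sup}\Big(\sum_i r_i y_i\Big) := \min_{r_i \neq 0}\bigl(v(r_i) + v(y_i)\bigr)
\]
attached to the basis. Applying the standard equivalence-of-norms theorem on a finite-dimensional vector space over a complete valued field (proved by induction on dimension, exploiting that finite-dimensional subspaces of a complete normed space are closed), I would extract a constant $C \in \mathbb{Z}$ such that $v(x) \leq v_{\sup}(x) + C$ for every nonzero $x \in Q(D)$. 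Combined with the hypothesis $v(y_i) \leq t$ this yields
\[
v\Big(\sum_i r_i y_i\Big) \leq \min_{r_i \neq 0} v(r_i) + t + C.
\]

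Finally, I would take $l := t + C + 1$: for any nontrivial combination with $v(\sum r_i y_i) \geq l$, the above inequality forces $\min_{r_i \neq 0} v(r_i) \geq 1 > 0$, so in fact \emph{every} nonzero $r_i$ satisfies $v(r_i) > 0$, which is stronger than what is required. The main obstacle is justifying the equivalence-of-norms theorem in this discrete, non-Archimedean and possibly non-commutative setting; once the completeness of $F$ and the centrality of its action have been verified, the standard inductive proof goes through essentially verbatim.
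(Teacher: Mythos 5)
Your proof is correct and rests on the same key input as the paper's: the equivalence of norms on the finite-dimensional $F$-vector space $Q(D)$, where $F$ is complete with respect to $v$. The only difference is presentational: the paper extracts topological equivalence of $v$ and $v_0$ and then argues by contradiction using closedness of $\mathrm{Span}_F\{y_2,\dots,y_s\}$, whereas you invoke the quantitative form of the equivalence directly to produce an explicit bound $l=t+C+1$ (and in fact the stronger conclusion that \emph{every} nonzero $r_i$ has $v(r_i)>0$).
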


\begin{proof}

First, note that the field $F$ is complete with respect to the non-archimedian valuation $v$, and $Q(D)\cong F^s$ carries two filtrations as a $F$-vector space, which both restrict to $v$ on $F$. One is the natural valuation $v$ on $Q(D)$, the other is given by $v_0(r_1y_1+....+r_sy_s)=\min\{v(r_i):i=1,...,s\}$.\\

\noindent But it follows from \cite[Proposition 2.27]{norm} that any two norms on $F^s$ are topologically equivalent. Hence $v$ and $v_0$ induce the same topology on $Q(D)$.

It is easy to see that any subspace of $F^s$ is closed with respect to $v_0$, and hence also with respect to $v$.\\

\noindent So, suppose for contradiction that for each $m\in\mathbb{N}$, there exist $r_{i,m}\in K$, not all zero, with $v(r_{i,m})\leq 0$ if $r_{i,m}\neq 0$, and $v(r_{1,m}y_1+....+r_{s,m}y_s)\geq m$.\\

\noindent Then there exists $i$ such that $r_{i,m}\neq 0$ for infinitely many $m$, and we can assume without loss of generality that $i=1$. So from now on, we assume that $r_{1,m}\neq 0$ for all $m$.\\

\noindent Dividing out by $r_{1,m}$ gives that $v(y_1+t_{2,m}y_2+....+t_{s,m}y_s)\geq m-v(r_{1,m})\geq m$, where $t_{i,m}=r_{1,m}^{-1}r_{i,m}\in K$.\\

\noindent Hence $\underset{m\rightarrow\infty}{\lim}{(y_1+t_{2,m}y_2+....+t_{s,m}y_s)}=0$, and thus $\underset{n\rightarrow\infty}{\lim}{(t_{2,m}y_2+....+t_{s,m}y_s)}$ exists and equals $-y_1$. But since Span$_F\{y_2,....,y_s\}$ is closed in $F^n$, this means that $y_1\in$ Span$_F\{y_2,.....,y_s\}$ -- contradiction.\end{proof}

\begin{proposition}\label{integral basis}

There exists a basis $\{x_1,....,x_s\}\subseteq D$ for $Q(D)$ over $F$ such that $D=Rx_1\oplus....\oplus R x_s$.

\end{proposition}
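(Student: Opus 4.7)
The plan is to show that $D$ is a free $R$-module of rank $s$, since any $R$-basis of $D$ will simultaneously be an $F$-basis of $Q(D)$ and will yield the desired decomposition $D = Rx_1 \oplus \cdots \oplus Rx_s$. The argument proceeds in four steps: completeness of $R$, a dimension bound on $D/\pi D$, a Nakayama-style lifting via completeness, and a rank count.

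First I would verify that $R$ is a complete DVR. Since $D$ is complete and every element of $Q(D)$ lies in $\pi_D^{-N}D$ for some $N \geq 0$, the ring $Q(D)$ is itself complete with respect to $v$; the centre $F = Z(Q(D))$ is closed, being the intersection of the equalizers of the conjugation maps $x \mapsto qxq^{-1}$ for $q \in Q(D)$, so $R = F \cap D$ is a closed subring of the complete ring $D$ and hence is complete with respect to its $\pi$-adic topology.

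Next I would establish the bound $\dim_{\bar{R}}(D/\pi D) \leq s$, where $\bar{R} = R/\pi R$. The key observation is that any $d_1, \ldots, d_n \in D$ whose images in $D/\pi D$ are $\bar{R}$-linearly independent must be $F$-linearly independent in $Q(D)$: given a non-trivial $F$-relation $\sum b_i d_i = 0$, scale by an appropriate power of $\pi$ so that all $b_i \in R$ with at least one lying in $R^{\times}$, then reduce modulo $\pi D$ to obtain a non-trivial $\bar{R}$-relation among the $\bar{d}_i$, a contradiction. Hence $n \leq \dim_F Q(D) = s$. Now set $m := \dim_{\bar{R}}(D/\pi D)$ and choose $u_1, \ldots, u_m \in D$ lifting an $\bar{R}$-basis of $D/\pi D$. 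Given any $d \in D$, I would recursively construct $r_i^{(k)} \in R$ and $e_k \in D$ by $e_0 = d$ and $e_k = \sum_i r_i^{(k)} u_i + \pi e_{k+1}$; completeness of $R$ ensures that $r_i := \sum_{k \geq 0} \pi^k r_i^{(k)}$ converges in $R$, while $v(\pi^k e_k) \geq kt \to \infty$ forces the remainders to vanish in $D$, yielding $d = \sum_i r_i u_i$. Hence $\{u_1, \ldots, u_m\}$ generates $D$ as an $R$-module.

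Finally, $D$ is a finitely generated torsion-free module over the PID $R$, hence free of some rank $m$. Extending scalars gives $D \otimes_R F = D[\pi^{-1}]$, which equals $Q(D)$: since $\pi = u \pi_D^e$ for some unit $u \in D^{\times}$ and $\pi$ is central in $D$, any element $d \pi_D^{-M} \in Q(D)$ can be rewritten as $d' \pi^{-N}$ with $d' \in D$, $N \geq 0$. Comparing $F$-dimensions forces $m = s$, so $\{x_i := u_i\}_{i=1}^{s}$ is the required basis. The main subtle point is the closure of the centre $F$ in $Q(D)$, on which both the completeness of $R$ and the identity $Q(D) = D[\pi^{-1}]$ ultimately rest; Lemma \ref{technical'} supplies an alternative route to the completeness of $F$ via the topological equivalence of $v$ and the coefficient-wise valuation on $Q(D) \cong F^s$, though the present argument does not strictly require it.
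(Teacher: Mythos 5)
Your proof is correct, and it takes a genuinely different route from the paper's. The paper first rescales an $F$-basis $\{y_1,\dots,y_s\}\subseteq D$ so that $v(y_i)<t$, invokes Lemma \ref{technical'} to produce an integer $l$ with the stated property, chooses $m$ with $tm>l$, and deduces by an inductive argument that $\pi^{ms}D\subseteq Ry_1\oplus\cdots\oplus Ry_s$; the structure theorem for submodules of finitely generated free modules over the PID $R$ then gives a free $R$-basis of $\pi^{ms}D$, which after rescaling by $\pi^{-ms}$ yields the desired $R$-basis of $D$. You instead observe that $R$ is a complete DVR (since $D$, and hence $Q(D)$, is complete and $F=Z(Q(D))$ is closed in $Q(D)$ as an intersection of equalizers of the continuous conjugation maps), bound $\dim_{\bar{R}}(D/\pi D)$ above by $s$ by lifting $\bar{R}$-independent elements to $F$-independent ones, lift a basis of $D/\pi D$ to a finite $R$-generating set of $D$ by a Nakayama-style iteration that uses completeness of $R$, and conclude by comparing $F$-dimensions in $D\otimes_R F\cong Q(D)$. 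Your approach is the classical argument for orders over a complete DVR and, as you correctly note, bypasses Lemma \ref{technical'} entirely, at the cost of the extra step of establishing completeness of $R$ and the identification $Q(D)=D[\pi^{-1}]$ (which uses centrality of $\pi$); the paper's argument is shorter once Lemma \ref{technical'} is in hand and uses the lemma it has just proved for exactly this purpose. Both establish the same structural fact, that $D$ is a full $R$-lattice in $Q(D)$.
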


\begin{proof}

It is clear that we can find an $F$-basis $\{y_1,....,y_s\}\subseteq D$ for $Q(D)$ such that $v(y_i)<t$ for all $i$, just by rescaling elements of some arbitrary basis.

Therefore, by Lemma \ref{technical'}, there exists $l\in\mathbb{N}$ such that if $v(r_1y_1+\cdots+r_st_s)\geq l$ then $v(r_i)>0$ for some $i$ with $r_i\neq 0$.\\

\noindent Choose $m\in\mathbb{N}$ such that $tm>l$. Then given $x\in D\backslash \{0\}$, $v(x)\geq 0$ so $v(\pi^mx)\geq tm>l$. So if $\pi^mx=r_1y_1+....+r_sy_s$ then $v(r_i)\geq 0$ for some $r_i\neq 0$.\\

\noindent It follows from an easy inductive argument that $\pi^{ms}x\in Ry_1\oplus.....\oplus Ry_s$, and hence $\pi^{ms}D\subseteq Ry_1\oplus.....\oplus Ry_s$.\\

\noindent But $Ry_1\oplus.....\oplus Ry_s$ is a free $R$-module, and $R$ is a commutative PID, hence any $R$-submodule is also free. Hence $\pi^{ms}D=R(\pi^{ms}x_1)\oplus....\oplus R(\pi^{ms}x_e)$ for some $x_i\in D$.\\

\noindent It follows easily that $e=s$, $\{x_1,....,x_s\}$ is an $F$-basis for $Q(D)$, and $D=Rx_1\oplus....\oplus Rx_s$. \end{proof}

\vspace{0.1in}

\noindent Now, we restrict our non-commutative valuation $v$ on $\widehat{Q}\cong M_n(Q(D))$ to $Q(D)$, and by definition this is the natural $J(D)$-adic valuation.

Using Proposition \ref{integral basis}, we fix a basis $\{x_1,\cdots,x_s\}\subseteq D$ for $Q(D)$ over $F$, with $D=Rx_1\oplus Rx_2\oplus\cdots\oplus R x_s$.

\begin{proposition}\label{embedding}

Let $F'$ be any finite extension of $F$; then $v$ extends to $F'$. Let $v'$ be the standard matrix filtration of $M_s(F')$ with respect to $v$.

Then there is a continuous embedding of $F$-algebras $\phi: Q(D)\xhookrightarrow{} M_s(F')$ such that 

\begin{center}
$v'(\phi(x))\leq v(x)<v'(\phi(x))+2t$ for all $x\in Q(D)$.
\end{center}

\noindent Hence applying the functor $M_n$ to $\phi$ gives us a continuous embedding $M_n(\phi):\widehat{Q}\hookrightarrow{} M_{ns}(F')$ such that $v'(M_n(\phi)(x))\leq v(x)<v'(M_n(\phi)(x))+2t$ for all $x\in \widehat{Q}$.

\end{proposition}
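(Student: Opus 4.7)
The plan is to use the left regular representation of $Q(D)$ with respect to the $R$-basis $\{x_1,\ldots,x_s\}$ supplied by Proposition~\ref{integral basis}. Extension of $v$ from $F$ to any finite extension $F'$ is a standard fact for complete discretely valued fields. Define $\phi:Q(D)\to M_s(F)$ by $xx_j=\sum_i a_{ij}(x)x_i$ and $\phi(x):=(a_{ij}(x))$, then compose with the inclusion $M_s(F)\hookrightarrow M_s(F')$. This is evidently an $F$-algebra homomorphism, injective since $Q(D)$ is simple, and continuity will follow automatically from the filtration bounds.

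The central structural input is the identification $\pi D=J(D)^t$: since $\pi$ is central in $Q(D)$ with $v(\pi)=t$, writing $\pi=u\pi_D^t$ for a uniformiser $\pi_D$ of $D$ and a unit $u\in D^\times$ immediately gives $\pi D=\pi_D^tD=J(D)^t$. Combined with the decomposition $D=\bigoplus_i Rx_i$, this yields the key two-sided estimate: for any $y=\sum_i r_ix_i\in Q(D)$ with $r_i\in F$,
\[
\min_i v(r_i)\leq v(y)<\min_i v(r_i)+t,
\]
the integral case following from $y\in\pi^kD\iff r_i\in\pi^kR$ for all $i$, and the general case by clearing denominators with a power of $\pi$.

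For the lower bound, note that since $1\in D\setminus J(D)$, at least one basis element $x_{j_0}$ must be a unit of $D$; then $v(xx_{j_0})=v(x)+v(x_{j_0})=v(x)$ and the estimate applied to column $j_0$ gives $v'(\phi(x))\leq\min_i v(a_{ij_0}(x))\leq v(xx_{j_0})=v(x)$. For the upper bound, applying the estimate to each column $xx_j$ produces $\min_i v(a_{ij}(x))>v(xx_j)-t\geq v(x)+v(x_j)-t\geq v(x)-t$, using $v(x_j)\geq 0$. Taking the minimum over $j$ yields $v'(\phi(x))>v(x)-t$, i.e.\ $v(x)<v'(\phi(x))+t\leq v'(\phi(x))+2t$.

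The $M_n$ extension is immediate: $M_n(\phi)$ acts blockwise, and the matrix filtration on $M_{ns}(F')\cong M_n(M_s(F'))$ is the minimum of entry valuations, so the per-entry bounds propagate directly. The principal obstacle is nailing down the relation $\pi D=J(D)^t$, which is precisely what couples the $R$-module decomposition of $D$ to the $J(D)$-adic valuation; once it is in place the rest of the argument is a short, routine calculation.
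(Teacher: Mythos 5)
Your proof is correct and follows essentially the same route as the paper: both use the left regular representation of $Q(D)$ on the $F$-basis $\{x_1,\ldots,x_s\}$ coming from Proposition~\ref{integral basis}, and both hinge on the two-sided estimate $\min_i v(r_i)\leq v(\sum r_ix_i)<\min_i v(r_i)+t$, which is the paper's filtration $v_0$ in disguise. Your argument via the unit column $x_{j_0}$ and the direct column-by-column estimate is a small variant of the paper's use of $\phi(x)(1)$ and an intermediate comparison through $v_0(x)$, and in fact yields the slightly sharper constant $t$ in place of $2t$; the explicit observation $\pi D=J(D)^t$ is implicit in the paper but is a clean way to ground the key estimate.
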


\begin{proof}

It is clear that the embedding $F\to F'$ is an isometry, so it suffices to prove the result for $F'=F$.\\

\noindent Again, define $v_0:Q(D)\to\mathbb{Z}$ $\cup$ $\{\infty\},\sum_{i=1}^{s}{r_ix_i}\mapsto \min\{v(r_i): i=1,....,s\}$, it is readily checked that this is a separated filtration of $F$-vector spaces, and clearly $v(x)\geq v_0(x)$ for all $x\in Q(D)$.\\

\noindent Then if $x=r_1x_1+....+r_sx_s\in Q(D)$ with $0\leq v(x)<t$, then $v(r_i)\geq 0$ for all $i$ because $D$ is an $R$-lattice by Lemma \ref{integral basis}. Since $v(x_i)\geq 0$ for all $i$, $v(r_j)<t$ for some $j$, so since $r_j\in R$, this means that $v(r_j)=0$, and hence $v_0(x)=0$.\\

\noindent So if $x\in Q(D)$ with $v(x)=l$, then $at\leq l < (a+1)t$ where $a=\floor{\frac{l}{t}}$, and hence $0\leq v(\pi^{-a}x)<l$, so $v_0(\pi^{-a}x)=0$.

Thus $\pi^{-a}x=r_1x_1+....+r_sx_s$ with $v(r_i)\geq 0$ for all $i$, $v(r_j)=0$ for some $i$, and hence $v_0(x)=ta$, so $v(x)< v_0(x)+t$.\\

\noindent So it follows that $v_0(x)\leq v(x)< v_0(x)+t$ for all $x\in Q(D)$, in particular $0\leq v(x_i)<t$ for all $i$. Hence the identity map $(Q(D),v)\to (Q(D),v_0)$ is bounded.\\

\noindent Now, consider the map $\phi:Q(D)\to $ End$_F(Q(D)), x\mapsto (Q(D)\to Q(D), d\mapsto x\cdot d)$, this is an injective $F$-algebra homomorphism.\\

\noindent Also, End$_F(Q(D))$ carries a natural filtration of $F$-algebras given by 

\begin{center}
$v'(\psi):=\min\{v_0(\psi(x_i)):i=1,...,s\}$ for each $\psi\in $ End$_F(Q(D))$.
\end{center} 

\noindent Using the isomorphism End$_F(Q(D))\cong M_s(F)$, this is just the standard matrix filtration, and it is readily seen that

\begin{center}
$v'(\psi)=\inf\{v_0(\psi(d)):d\in Q(D), 0\leq v'(d)<t\}$ for all $\psi\in $ End$_F(Q(D))$.
\end{center}

\noindent So if $v_0(x)=r$ then since $v_0(1)=0$ and $v_0(\phi(x)(1))=v_0(x\cdot 1)=r$, it follows that $v'(\phi(x))\leq r$. But for all $i=1,...,s$:\\ 

$v_0(x\cdot x_i)>v(x\cdot x_i)-t\geq r+v(x_i)-t\geq r-t$, hence $v'(\phi(x))\geq r-t$.\\ 

\noindent Therefore $v'(\phi(x))\leq v_0(x)\leq v'(\phi(x))+t$ for all $x$, so $\phi$ is bounded, and hence continuous.\\

\noindent Finally, since for all $x\in Q(D)$, $v_0(x)\leq v(x)\leq v_0(x)+t$ and $v'(\phi(x))\leq v_0(x)\leq v'(\phi(x))+t$, it follows that $v'(\phi(x))\leq v(x)\leq v'(\phi(x))+2t$. \end{proof}

\vspace{0.1in}

\noindent Recall from Definition \ref{growth rate} the growth rate function $\rho'$ of $M_{ns}(F')$ with respect to $v'$. Then using Proposition \ref{embedding}, we see that for all $x\in\widehat{Q}$:\\ 

\noindent $\rho(x)=\underset{n\rightarrow\infty}{\lim}{\frac{v(x^n)}{n}}\leq \underset{n\rightarrow\infty}{\lim}{\frac{v'(x^n)+2t}{n}}=\rho'(x)\leq\rho(x)$ -- forcing equality.\\

\noindent Therefore $\rho'=\rho$ when restricted to $\widehat{Q}$.

\subsection{Diagonalisation}

\noindent Again, recall our Mahler expansion (\ref{Mahler}):

\begin{center}
$0=q_1^{p^m}\tau\partial_1(y)+....+q_{d}^{p^m}\tau\partial_{d}(y)+O(q^{p^m})$
\end{center}

\noindent Where each $q_i=\tau(u(h_i)-1)$ for some basis $\{h_1,\cdots,h_d\}$ for $H$, and $\rho(q)>\rho(q_i)$ for each $i$.\\

\noindent We may embed $Q(D)$ continuously into $M_s(F')$ for any finite extension $F'$ of $F=Z(Q(D))$ by Proposition \ref{embedding}, and since each $q_i$ is a square matrix over $Q(D)$, by choosing $F'$ appropriately, we may ensure that they can be reduced to Jordan normal form inside $M_{ns}(F')$.\\

\noindent But since $F'$ has characteristic $p$, after raising to sufficiently high $p$'th powers, a Jordan block becomes diagonal. So we may choose $m_0\in\mathbb{N}$ such that $q_i^{p^{m_0}}$ is diagonalisable for each $i$.\\

\noindent But $q_1,\cdots,q_d$ commute, and it is well known that commuting matrices can be simultaneously diagonalised. Hence there exists $a\in M_{ns}(F')$ invertible such that $aq_i^{p^{m_0}}a^{-1}$ is diagonal for each $i$.\\

\noindent So, let $t_i:=aq_ia^{-1}$, then after multiplying (\ref{Mahler}) on the left by $a$, we get:

\begin{center}
$0=t_1^{p^m}a\tau\partial_1(y)+....+t_{d}^{p^m}a\tau\partial_{d}(y)+O(aq^{p^m})$
\end{center}

\noindent Note that since $t_i^{p^{m_0}}$ is diagonal for each $i$, $\rho'(t_i^{p^{m_0}})=v'(t_i^{p^{m_0}})$, and $\rho'(t_i)=\rho'(q_i)$ since growth rates are invariant under conjugation by Lemma \ref{growth properties}($iii$). Since $\rho'=\rho$ on $\widehat{Q}$, it follows that $\rho(q_i^{p^{m_0}})=v'(t_i^{p^{m_0}})$.

Moreover, $v'(t_i^{p^m})=\rho(q_i^{p^m})=p^m\rho(q_i)$ for each $m\geq m_0$.\\

\noindent Also, $q_i=\tau(u(h_i)-1)=\tau(u_0(h_i)-1)^{p^{m_1}}$, so after replacing $m_1$ by $m_1+m_0$ we may ensure that each $t_i$ is diagonal, and hence $v'(t_i)=\rho(q_i)$.\\

\noindent Now, let $K:=\{h\in H:\rho(\tau(u(h)-1))>\lambda\}$. Then since $id$ is a non-trivial GPP and $K=K_{id}$, it follows from Lemma \ref{sub} that $K$ is a proper open subgroup of $H$ containing $H^p$.\\

\noindent For the rest of this section, fix a basis $\{h_1,\cdots,h_d\}$ for $H$ such that $\{h_1^p,\cdots,h_r^p,h_{r+1},\cdots,h_d\}$ is a basis for $K$, $q_i=\tau(u(h_i)-1)$, $t_i=aq_ia^{-1}$ as above.\\

\noindent Then it follows that for all $i\leq r$, $v'(t_i)=\rho(q_i)=\lambda$, and for $i>r$, $v'(t_i)>\lambda$, so we have:

\begin{equation}\label{Mahlerf}
0=t_1^{p^m}a\tau\partial_1(y)+....+t_{r}^{p^m}a\tau\partial_{r}(y)+O(aq^{p^m})
\end{equation}

\noindent Where $\rho(q)>\lambda$.\\

\begin{definition}\label{independent}

Given $c_1,\cdots,c_m\in M_{ns}(F')$ with $v'(c_i)=\mu$ for some $i$, we say that $c_1,\cdots,c_m$ are \emph{$\mathbb{F}_p$-linearly independent modulo $\mu^+$} if for any $\alpha_1,\cdots,\alpha_m\in\mathbb{F}_p$, $v'(\alpha_1c_1+\cdots+\alpha_mc_m)>\mu$ if and only if $\alpha_i=0$ for all $i$.

\end{definition}

\begin{lemma}\label{technical}

$t_1,\cdots,t_r$ are $\mathbb{F}_p$-linearly dependent modulo $\lambda^+$.

\end{lemma}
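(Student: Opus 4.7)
The plan is to prove this by contradiction: suppose $t_1,\dots,t_r$ are $\mathbb{F}_p$-linearly independent modulo $\lambda^+$, and derive that $\tau\partial_i(P)=0$ for every $i\le r$. By Proposition~\ref{Frattini} this would force $P$ to be controlled by the proper open subgroup of $G$ with ordered basis $\{h_1^p,\dots,h_r^p,h_{r+1},\dots,h_d,X\}$. Since we are working to prove Theorem~\ref{B} and may safely assume $P$ is not already known to be so controlled (otherwise there is nothing left to prove in the current section), this gives the required contradiction.

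To carry this out, I apply equation~(\ref{Mahlerf}) at shifts $m,m+1,\dots,m+r-1$ and assemble the $r\times r$ matrix $S_m$ with $(i,j)$-entry $t_j^{p^{m+i-1}}$, along with the column vector $\underline\partial$ of entries $a\tau\partial_i(y)$ for $y\in P$. Since the $t_j$'s are simultaneously diagonal in $M_{ns}(F')$ they commute, so the adjugate is well-defined in the commutative subalgebra they generate, yielding
$$0 \;=\; \det(S_m)\,\underline\partial \;+\; \mathrm{adj}(S_m)\vec\epsilon_m,$$
where $\vec\epsilon_m$ collects the $O(aq^{p^{m+i-1}})$ error terms.

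Writing $t_i=\mathrm{diag}(\mu_{i,1},\dots,\mu_{i,ns})$, both $\det(S_m)$ and $\mathrm{adj}(S_m)$ are diagonal matrices in $M_{ns}(F')$, and the Moore--Ore determinant formula in characteristic $p$ gives
$$\det(S_m)_{kk} \;=\; \prod_{\alpha\in\mathbb{P}^{r-1}\mathbb{F}_p}\Bigl(\sum_i\alpha_i\mu_{i,k}\Bigr)^{p^m}$$
at each diagonal position $k$. The independence hypothesis translates, via this formula, into the statement that $v'(\det(S_m))=\tfrac{p^r-1}{p-1}p^m\lambda$ for all sufficiently large $m$; a cofactor estimate like Lemma~\ref{adjoint}, combined with the uniform slack $\rho(q)>\lambda$, yields $v'\bigl((\mathrm{adj}(S_m)\vec\epsilon_m)_i\bigr)>\tfrac{p^r-1}{p-1}p^m\lambda+p^m c$ for some fixed $c>0$. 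Dividing the equation at those diagonal positions $k$ that realise $v'(\det(S_m))$ (where the Moore product is a product of nonzero factors of value exactly $\lambda$, hence a unit in $F'$), and letting $m\to\infty$, forces the corresponding rows of $a\tau\partial_i(y)$ to vanish; invertibility of $a$ then gives $\tau\partial_i(y)=0$ for all such $y\in P$ and $i\le r$.

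The main obstacle is the ``division by $\det(S_m)$'' step, since a diagonal matrix in $M_{ns}(F')$ is not generally $v'$-regular -- its nonzero entries may have varying $v$-values and some entries may even vanish. The resolution is to work position-by-position on the diagonal rather than as a global inversion: at each $k$ where the Moore product $\det(S_m)_{kk}$ is nonzero one can divide in the field $F'$ directly, and the Moore-determinant structure coming from the $\mathbb{F}_p$-linear independence ensures enough positions are ``good'' to recover every row of $a\tau\partial_i(y)$ after applying $a^{-1}$. Once this bookkeeping is in place, the rest of the argument mirrors the proof of Theorem~\ref{special-GPP}.
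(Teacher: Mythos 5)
There is a genuine problem with your proposal, on two levels.

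First, a reading issue that sent you in the wrong direction: the word ``dependent'' in the statement of Lemma~\ref{technical} is a typo. Corollary~\ref{C} reads ``We know that $t_1,\cdots,t_r$ are $\mathbb{F}_p$-linearly \emph{independent} modulo $\lambda^+$ by Lemma~\ref{technical}, so applying Theorem~\ref{linear-dep}\dots'', and Theorem~\ref{linear-dep} requires independence as a hypothesis. The lemma is therefore asserting independence; the paper's own proof begins ``Suppose, for contradiction, that $v'(\alpha_1t_1+\cdots+\alpha_rt_r)>\lambda$ for some $\alpha_i\in\mathbb{F}_p$, not all zero'' --- which is assuming dependence and refuting it. Because you read the statement at face value, you set out to prove dependence and hence assumed independence as your ``false'' hypothesis, which is exactly the opposite of what is needed.

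Second, and independently of the typo, the logical structure of your proof-by-contradiction does not work. Your claimed contradiction is that $\tau\partial_i(P)=0$ for all $i\le r$, hence $P$ is controlled by a proper open subgroup. But that is not a falsehood --- it is precisely the conclusion that Theorem~\ref{B}, and in particular this whole section via Corollary~\ref{C}, is trying to establish. You try to rescue this with ``we may safely assume $P$ is not already known to be so controlled,'' but that is not a hypothesis of the lemma, and the lemma is used later in the section to prove exactly that control statement. Assuming control fails inside the proof of a lemma that is subsequently used to prove control holds is circular. You would be proving the lemma only under a hypothesis that the later application immediately undermines. Moreover, your argument re-derives the machinery of Proposition~\ref{linear-indep}, Lemma~\ref{artinian}, and Theorem~\ref{linear-dep}, which appear after Lemma~\ref{technical} precisely because the lemma is one of their inputs, not a consequence.

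The paper's actual proof is short and does not touch the Mahler expansion or any determinant. Assume for contradiction $v'(\alpha_1 t_1+\cdots+\alpha_r t_r)>\lambda$ with $\alpha\in\mathbb{F}_p^r\setminus\{0\}$. Since the $t_i$ are diagonal and growth rates are conjugation-invariant (Lemma~\ref{growth properties}(iii)), $\rho(\alpha_1q_1+\cdots+\alpha_rq_r)=v'(\alpha_1t_1+\cdots+\alpha_rt_r)>\lambda$. Expanding in $kG$, $\alpha_1q_1+\cdots+\alpha_rq_r=\tau(u(h_1^{\alpha_1}\cdots h_r^{\alpha_r})-1)+O(q_iq_j)$ with the error of growth rate $>\lambda$, so $\rho(\tau(u(h_1^{\alpha_1}\cdots h_r^{\alpha_r})-1))>\lambda$, i.e.\ $h_1^{\alpha_1}\cdots h_r^{\alpha_r}\in K$. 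But the basis was chosen so that $h_1,\ldots,h_r$ are $\mathbb{F}_p$-independent modulo $K$, forcing $p\mid\alpha_i$ for all $i$ --- a contradiction.
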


\begin{proof}

Suppose, for contradiction, that $v'(\alpha_1t_1+\cdots+\alpha_rt_r)>\lambda$ for some $\alpha_i\in\mathbb{F}_p$, not all zero, then using Lemma \ref{growth properties}($iii$) we see that \\

\noindent $\rho(\alpha_1q_1+\cdots+\alpha_rq_r)=\rho(a(\alpha_1q_1+\cdots+\alpha_rq_r)a^{-1})=\rho(\alpha_1t_1+\cdots+\alpha_rt_r)=v'(\alpha_1t_1+\cdots+\alpha_rt_r)>\lambda$.\\

\noindent But since $q_i=\tau(u(h_i)-1)$ for each $i$, we can see using expansions in $kG$ that $\alpha_1q_1+\cdots+\alpha_rq_r=\tau(u(h_1^{\alpha_1}\cdots h_r^{\alpha_r})-1)+O(q_iq_j)$, and clearly $\rho(O(q_iq_j))>\lambda$, and hence $\rho(\tau(u(h_1^{\alpha_1}\cdots h_r^{\alpha_r})-1))>\lambda$.\\

\noindent But $K=\{g\in G:\rho(\tau(u(g)-1))>\lambda\}=\langle h_1^p,\cdots,h_r^p,h_{r+1},\cdots,h_d,X\rangle$, so since $p$ does not divide every $\alpha_i$, it follows that $h_1^{\alpha_1}\cdots h_r^{\alpha_r}\notin K$, and hence $\rho(\tau(u(h_1^{\alpha_1}\cdots h_r^{\alpha_r})-1))=\lambda$ -- contradiction.\end{proof}

\noindent\textbf{\underline{Notation:}} For each $i=1,\cdots,ns$, denote by $e_i$ the diagonal matrix with 1 in the $i$'th diagonal position, 0 elsewhere.

\begin{proposition}\label{linear-indep}

Suppose $d_1,\cdots,d_r\in M_{ns}(F')$ are diagonal, $v'(d_i)=\lambda$ for each $i$, and suppose that for all $m\in\mathbb{N}$ we have:

\begin{center}
$0=d_1^{p^m}a_1+\cdots+d_r^{p^m}a_r+O(aq^{p^m})$
\end{center}

\noindent Where $a_i,a,q\in M_{ns}(F')$, $\rho(q)>\lambda$.\\

\noindent Suppose further that for some $j\in\{1,\cdots,ns\}$, the $j$'th entries of $d_1,\cdots,d_r$ are $\mathbb{F}_p$-linearly independent modulo $\lambda^+$. Then $e_ja_i=0$ for all $i=1,\cdots,r$.

\end{proposition}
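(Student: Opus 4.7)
The plan is to mimic the adjoint/Moore-matrix strategy used in the proof of Theorem \ref{special-GPP}, but now inside the commutative field $F'$, where the argument becomes much cleaner because there are no regularity subtleties to worry about.

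First I would left-multiply the given identity by $e_j$. Because each $d_i$ is diagonal, $e_j d_i^{p^m} = \delta_i^{p^m} e_j$, where $\delta_i \in F'$ denotes the $j$th diagonal entry of $d_i$. The hypothesis becomes
\begin{equation*}
0 = \delta_1^{p^m}(e_j a_1) + \cdots + \delta_r^{p^m}(e_j a_r) + \epsilon_m,
\end{equation*}
where $\epsilon_m := e_j \cdot O(aq^{p^m})$ satisfies $v'(\epsilon_m) > p^m(\lambda + c)$ for some fixed $c > 0$ and all $m$ sufficiently large (using $\rho(q) > \lambda$ together with the fact that $v'(a)$ is a fixed constant). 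I would then stack these identities for the exponents $m, m+1, \ldots, m+r-1$, forming the Moore-type matrix
\begin{equation*}
N_m = \begin{pmatrix} \delta_1^{p^m} & \cdots & \delta_r^{p^m}\\ \vdots & & \vdots\\ \delta_1^{p^{m+r-1}} & \cdots & \delta_r^{p^{m+r-1}} \end{pmatrix},
\end{equation*}
and multiply through by $\mathrm{adj}(N_m)$ to isolate each $e_j a_i$ on the left-hand side. Since the entries of $N_m$ live in $F'$, everything genuinely commutes, so $\det(N_m) = \det(N_0)^{p^m}$ by the Frobenius.

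The key arithmetic input is the Moore determinant formula (the same one cited in the proof of Theorem \ref{special-GPP} via \cite[Lemma 1.1(ii)]{chevalley}): up to a nonzero scalar in $\mathbb{F}_p$,
\begin{equation*}
\det(N_0) = \prod_{(\alpha_1:\cdots:\alpha_r)\,\in\,\mathbb{P}^{r-1}\mathbb{F}_p}(\alpha_1\delta_1 + \cdots + \alpha_r\delta_r).
\end{equation*}
By the $\mathbb{F}_p$-linear independence of $\delta_1, \ldots, \delta_r$ modulo $\lambda^+$, every factor has value \emph{exactly} $\lambda$, so $v'(\det(N_0)) = \tfrac{p^r-1}{p-1}\lambda$ and hence $v'(\det(N_m)) = \tfrac{p^r-1}{p-1}p^m\lambda$. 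In particular $\det(N_m)$ is a nonzero element of the field $F'$, so it is central and invertible in $M_{ns}(F')$ — this is what licenses the cancellation at the end. The cofactor estimate proceeds exactly as in Lemma \ref{adjoint}: each $(i,k)$-entry of $\mathrm{adj}(N_m)$ is a signed sum of products omitting one factor from row $k$, hence has value at least $\tfrac{p^r-1}{p-1}p^m\lambda - p^{m+k-1}\lambda$. Combining with $v'(\epsilon_{m+k-1}) > p^{m+k-1}\lambda + p^{m+k-1}c$, each contribution $\mathrm{adj}(N_m)_{i,k}\,\epsilon_{m+k-1}$ has value $> \tfrac{p^r-1}{p-1}p^m\lambda + p^{m+k-1}c$, so the total error in the $i$th row exceeds $\tfrac{p^r-1}{p-1}p^m\lambda + p^m c$.

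Finally, dividing the $i$th row of the resulting identity by the central unit $\det(N_m)$ yields $v'(e_j a_i) > p^m c$ for every $m$ sufficiently large; letting $m\to\infty$ and invoking separatedness of $v'$ on $M_{ns}(F')$ forces $e_j a_i = 0$ for each $i \leq r$. The main obstacle I anticipate is purely bookkeeping: one must be careful that the $O(aq^{p^m})$ notation really does contribute an error of valuation growing like $p^m\lambda + p^m c$ after left-multiplying by $e_j$ (despite the unknown constant $v'(a)$), and that the Moore determinant identity applies in this characteristic-$p$ setting to produce exactly the factorisation above. Once these two points are nailed down, the rest is a direct transcription of the adjoint argument from Theorem \ref{special-GPP} into the commutative central-simple setting.
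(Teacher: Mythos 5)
Your argument is correct and follows essentially the same route as the paper: left-multiply by $e_j$, form the Moore matrix, apply the determinant factorisation from \cite[Lemma 1.1(ii)]{chevalley}, bound the adjugate entries, and cancel the central determinant. The only cosmetic difference is that you work directly with the scalar $j$th diagonal entries $\delta_i\in F'$ rather than with the matrices $e_jd_i\in M_{ns}(F')$ as the paper does; since $e_jd_i=\delta_i e_j$ with $e_j$ idempotent, these are the same computation, and your extraction of $\det(N_0)$ coincides with the paper's element $\delta$.
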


\begin{proof}

Firstly, since $d_{1,j},\cdots,d_{r,j}$ are $\mathbb{F}_p$-linearly independent modulo $\lambda^+$, it follows immediately that $e_jd_1,\cdots,e_jd_r$ are $\mathbb{F}_p$-linearly independent modulo $\lambda^+$. And:

\begin{center}
$0=(e_jd_1)^{p^m}a_1+\cdots+(e_jd_r)^{p^m}a_r+O(e_jaq^{p^m})$
\end{center}

\noindent For convenience, set $d_i':=e_jd_i$, and in a similar vein to the proof of Theorem \ref{special-GPP}, define the following matrices:

\noindent $D_m:=\left(\begin{array}{cccccc} d_1'^{p^m} & d_2'^{p^m} & . & . & d_r'^{p^m}\\
d_1'^{p^{m+1}} & d_2'^{p^{m+1}} & . & . & d_r'^{p^{m+1}}\\
. & . & . & . & .\\
. & . & . & . & .\\
d_1'^{p^{m+t-1}} & d_2'^{p^{m+t-1}} & . & . & d_r'^{p^{m+t-1}}\end{array}\right)$, $\underline{a}:=\left(\begin{array}{c}a_1\\
a_2\\
.\\
.\\
.\\
a_r\end{array}\right)$\\

\noindent Then we can rewrite our expression as:

\begin{center}
$0=D_m\underline{a}+\left(\begin{array}{c} O(e_jaq^{p^m})\\
O(e_jaq^{p^{m+1}})\\
.\\
.\\
.\\
O(e_jaq^{p^{m+t-1}})\end{array}\right)$
\end{center}

\noindent And multiplying by $adj(D_m)$ gives:

\begin{equation}\label{Mahlerf'}
0=det(D_m)\underline{a}+adj(D_m)\left(\begin{array}{c} O(e_jq^{p^m})\\
O(e_jaq^{p^{m+1}})\\
.\\
.\\
.\\
O(e_jaq^{p^{m+t-1}})\end{array}\right)
\end{equation}

\noindent And the proof of Lemma \ref{adjoint} shows that the $(i,j)$-entry of $adj(D_m)$ has value at least $\frac{p^r-1}{p-1}p^{m}\lambda-p^{m+j-1}\lambda$.\\

\noindent Since $\rho(q)>\lambda$, fix $c>0$ such that $\rho(q)>\lambda+c$, and hence $v'(e_jaq^{p^m})\geq p^m\lambda+p^mc+v(a)$ for all sufficiently high $m$. Then we see that the $i$'th entry of the vector 

\begin{center}
$adj(D_m)\left(\begin{array}{c} O(e_jaq^{p^m})\\
O(e_jaq^{p^{m+1}})\\
.\\
.\\
.\\
O(e_jaq^{p^{m+t-1}})\end{array}\right)$
\end{center} 

\noindent has value at least $\frac{p^r-1}{p-1}p^{m}\lambda+p^mc+v(a)$ for $m>>0$.\\

\noindent So examining the $i$'th entry of our expression (\ref{Mahlerf'}) gives that $0=det(D_m)a_i+\epsilon_{i,m}$, where $v'(\epsilon_{i,m})\geq \frac{p^t-1}{p-1}p^{m+r}\lambda+p^mc+v(a)$.\\

\noindent Let $\Delta:=det(D_0)$, then $det(D_m)=\Delta^{p^m}$ for all $m\in\mathbb{N}$, and using \cite[Lemma 1.1($ii$)]{chevalley} we see that 

\begin{center}
$\Delta=\beta\cdot\underset{\alpha\in\mathbb{P}^{r-1}\mathbb{F}_p}{\prod}{(\alpha_1d_1'+\cdots+\alpha_rd_r')}$ for some $\beta\in\mathbb{F}_p$
\end{center}

\noindent Since $d_1',\cdots,d_r'$ are $\mathbb{F}_p$-linearly independent modulo $\lambda^+$, each term in this product has value $\lambda$, and moreover is a diagonal matrix, with only the $j$'th diagonal entry non-zero.\\

\noindent Let $\delta$ be the $j$'th diagonal entry of $\Delta$. Then $\delta\in F'$, $\delta^{-1}\Delta=e_j$, and  $v(\delta)=\underset{\alpha\in\mathbb{P}^{r-1}\mathbb{F}_p}{\sum}\lambda=\frac{p^r-1}{p-1}\lambda$. So:

\begin{center}
$0=\delta^{-p^m}\Delta^{p^m}a_i+\delta^{-p^m}\epsilon_{i,m}=e_ja_i+\delta^{-p^m}\epsilon_{i,m}$
\end{center}

\noindent and $v'(\delta^{-p^m}\epsilon_{i,m})=v'(\epsilon_{i,m})-p^m\frac{p^r-1}{p-1}\lambda\geq\frac{p^r-1}{p-1}p^m\lambda+p^mc+v(a)-\frac{p^m-1}{p-1}p^m\lambda=v(a)+p^mc\rightarrow\infty$ as $m\rightarrow\infty$.

Hence $\delta^{-p^m}\epsilon_{i,m}\rightarrow 0$ and $e_ja_i=0$ as required.\end{proof}

\subsection{Linear Dependence}

Consider again the maps $\partial_1,\cdots,\partial_r:kG\to kG$. These are $k$-linear derivations of $kG$, and we want to prove that $\partial_i(P)=0$ for all $i$.

\begin{lemma}\label{artinian}

Let $\delta:kG\to kG$ be any $k$-linear derivation of $kG$. Then if $c\tau\delta(P)=0$ for some $0\neq c\in M_{ns}(F')$ then $\tau\delta(P)=0$

\end{lemma}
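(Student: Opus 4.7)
The plan is to reduce the statement to a straightforward application of the fact that $\widehat{Q}\cong M_n(Q(D))$ is a simple Artinian ring, combined with the unitality of the embedding $M_n(\phi):\widehat{Q}\hookrightarrow M_{ns}(F')$ supplied by Proposition \ref{embedding}. The derivation property of $\delta$ will play no role; only the containment $\tau\delta(P)\subseteq Q\subseteq\widehat{Q}$ matters.

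I would first introduce the right annihilator
\begin{center}
$R_c:=\{x\in M_{ns}(F'):cx=0\}$,
\end{center}
which is automatically a right ideal of $M_{ns}(F')$. The hypothesis $c\tau\delta(P)=0$ is exactly the statement that $\tau\delta(P)\subseteq R_c$, and since $\tau\delta(P)\subseteq Q\subseteq\widehat{Q}$, it suffices to prove that $R_c\cap\widehat{Q}=0$.

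Next, $R_c\cap\widehat{Q}$ is a right ideal of $\widehat{Q}$: for $x\in R_c\cap\widehat{Q}$ and $y\in\widehat{Q}$, we have $xy\in\widehat{Q}$ and $c(xy)=(cx)y=0$, so $xy\in R_c\cap\widehat{Q}$. Since $Q(D)$ is a division ring, $\widehat{Q}\cong M_n(Q(D))$ is simple Artinian, so this right ideal is either $0$ or all of $\widehat{Q}$. In the latter case $1_{\widehat{Q}}\in R_c$; but $M_n(\phi)$ is an $F$-algebra homomorphism, hence unital, so $1_{\widehat{Q}}=1_{M_{ns}(F')}$ and we would obtain $c=c\cdot 1=0$, contradicting $c\neq 0$. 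Therefore $R_c\cap\widehat{Q}=0$, and hence $\tau\delta(P)=0$ as required.

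I do not anticipate a genuine obstacle here: the only point requiring mild care is noting that the embedding of $\widehat{Q}$ into $M_{ns}(F')$ is unital, so that the simplicity of $\widehat{Q}$ can be invoked to force the right ideal $R_c\cap\widehat{Q}$ to be trivial. It is worth remarking that the lemma holds verbatim for any subset of $Q$ in place of $\tau\delta(P)$, so the hypothesis that $\delta$ is a derivation is not used.
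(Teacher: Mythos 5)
There is a genuine gap, and it is fatal: simplicity of $\widehat{Q}\cong M_n(Q(D))$ only forces \emph{two-sided} ideals to be $0$ or everything, but your $R_c\cap\widehat{Q}$ is only shown to be a \emph{right} ideal of $\widehat{Q}$. Simple Artinian rings have plenty of proper nonzero right ideals (the column spaces of $M_n(Q(D))$, for instance), so the step ``$R_c\cap\widehat{Q}$ is a right ideal, hence $0$ or $\widehat{Q}$'' is false. There is no reason $R_c\cap\widehat{Q}$ should be a left ideal: for $x\in R_c\cap\widehat{Q}$ and $y\in\widehat{Q}$, we would need $c(yx)=0$, and since $c$ and $y$ do not commute, this does not follow from $cx=0$.

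Your closing remark -- that the lemma holds verbatim for any subset of $Q$ in place of $\tau\delta(P)$, and that the derivation hypothesis is irrelevant -- is a useful sanity check, and it fails. Take $Q(D)=F$, so $\widehat{Q}=M_n(F)$ sits inside $M_n(F')$, and let $s=e_{11}\in\widehat{Q}$, $c=e_{22}\in M_n(F')$. Then $cs=0$ with both $c$ and $s$ nonzero, refuting the generalized claim. The derivation property is in fact what the paper uses to make the ideal two-sided: the paper sets $I=\{a\in M_{ns}(F'):a\tau\delta(P)=0\}$, a \emph{left} ideal, and then uses $\tau\delta(ry)=\tau(r)\tau\delta(y)$ for $y\in P$ (a consequence of $\delta$ being a derivation and $\tau\delta(r)\tau(y)=0$) to show $I$ is also right $\tau(kG)$-invariant, then right $Q$-invariant by an Artinian chain argument, then right $\widehat{Q}$-invariant by density, so that $I\cap\widehat{Q}$ really is a two-sided ideal of the simple ring $\widehat{Q}$. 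The paper then still has to show $I\cap\widehat{Q}\neq 0$, which it does by expanding $c$ along a $\widehat{Q}$-basis of $M_{ns}(F')$ (using freeness of $M_s(F')$ as a right $Q(D)$-module). Both the two-sidedness step and the nonvanishing step are missing from your argument, and neither can be sidestepped.
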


\begin{proof}

Let $I=\{a\in M_{ns}(F'):a\tau\delta(P)=0\}$, then it is clear that $I$ is a left ideal of $M_{ns}(F')$, and $I\neq 0$ since $0\neq c\in I$. We want to prove that $1\in I$, and hence $\tau\delta(P)=0$.\\ 

\noindent We will first prove that $I$ is right $\widehat{Q}$-invariant:\\

\noindent Given $r\in kG$, $y\in P$, $\delta(ry)=r\delta(y)+\delta(r)y$ since $\delta$ is a derivation. So $\tau\delta(ry)=\tau(r)\tau\delta(y)+\tau\delta(r)\tau(y)=\tau(r)\tau\delta(y)$.

Therefore, for any $a\in I$, $a\tau(r)\tau\delta(y)=a\tau\delta(ry)=0$ since $ry\in P$. Thus $a\tau(r)\in I$.\\

\noindent It follows that $I$ is right $\frac{kG}{P}$-invariant.\\

\noindent Given $s\in kG$, regular mod $P$ (i.e. $\tau(s)$ is a unit in $Q(\frac{kG}{P})$), we have that $I\tau(s)\subseteq I$. Hence we have a descending chain $I\supseteq I\tau(s)\supseteq I\tau(s)^2\supseteq\cdots$ of right ideals of $M_{ns}(F')$.

So since $M_{ns}(F')$ is artinian, it follows that $I\tau(s)^n=I\tau(s)^{n+1}$ for some $n\in\mathbb{N}$, so dividing out by $\tau(s)^{n+1}$ gives that $I\tau(s)^{-1}=I$.\\

\noindent Therefore, $I$ is right $Q(\frac{kG}{P})$-invariant, and passing to the completion gives that it is right $\widehat{Q}$-invariant as required.\\

\noindent This means that $I\cap\widehat{Q}$ is a two sided ideal of the simple ring $\widehat{Q}\cong M_n(Q(D))$. We will prove that $I\cap\widehat{Q}\neq 0$, and it will follow that $I\cap\widehat{Q}=\widehat{Q}$ and thus $1\in I$.\\

\noindent We know that $\widehat{Q}\cong M_n(Q(D))$ and $Q(D)\xhookrightarrow{} M_s(F')$. Since $Q(D)$ is a division ring, we must have that $M_s(F')$ is free as a right $Q(D)$-module, so let $\{x_1,\cdots,x_t\}$ be a basis for $M_{s}(F')$ over $Q(D)$.

It follows easily that $\{x_1I_{ns},\cdots,x_tI_{ns}\}$ is a basis for $M_{ns}(F')$ over $M_n(Q(D))=\widehat{Q}$.\\

\noindent Now, $c\in I$ and $c\neq 0$, so $c=x_1c_1+\cdots+x_tc_t$ for some $c_i\in\widehat{Q}$, not all zero, and $c\tau\delta(y)=0$ for all $y\in P$.\\

\noindent Therefore $0=c\tau\delta(y)=x_1(c_1\tau\delta(y))+x_2(c_2\tau\delta_2(y))+\cdots+x_t(c_t\tau\delta(y))$, so it follows from $\widehat{Q}$-linear independence of $x_1I_{ns},\cdots,x_tI_{ns}$ that $c_i\tau\delta(y)=0$ for all $i$, and hence $c_i\in I\cap\widehat{Q}$.\\

\noindent So choose $i$ such that $c_i\neq 0$, and since $c_i\in I\cap\widehat{Q}$, we have that $I\neq 0$ as required.\end{proof}

\begin{theorem}\label{linear-dep}

Let $\delta_1,\cdots,\delta_r:kG\to kG$ be $k$-linear derivations of $kG$, and suppose that there exist matrices $a,q,d_1,\cdots,d_r\in M_{ns}(F')$ such that $a$ is invertible, the $d_i$ are diagonal of value $\lambda$, $\rho(q)>\lambda$ and for all $y\in P$: 

\begin{center}
$0=d_1^{p^m}a\tau\delta_1(y)+d_2^{p^m}a\tau\delta_2(y)+\cdots+d_r^{p^m}a\tau\delta_r(y)+O(aq^{p^m})$
\end{center}

\noindent Suppose further that $d_1,\cdots,d_r$ are $\mathbb{F}_p$-linearly independent modulo $\lambda^+$, then $\tau\delta_i(P)=0$ for all $i$.

\end{theorem}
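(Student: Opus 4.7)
The strategy is to run the determinant argument of Proposition \ref{linear-indep} position-by-position, after suitable $\mathbb{F}_p$-changes of basis, and then to piece the local conclusions together by linear algebra. By $\mathbb{F}_p$-linearity of derivations, it suffices to prove $\tau\delta_\beta(P)=0$ for every $\beta=(\beta_1,\ldots,\beta_r)\in\mathbb{F}_p^r$, where $\delta_\beta:=\sum_i\beta_i\delta_i$. For each coordinate $j\in\{1,\ldots,ns\}$ I introduce the $\mathbb{F}_p$-subspace
\[V_j:=\bigl\{\beta\in\mathbb{F}_p^r : v\bigl(\textstyle\sum_i\beta_id_{i,j}\bigr)>\lambda\bigr\}.\]
The hypothesis that $d_1,\ldots,d_r$ are $\mathbb{F}_p$-linearly independent modulo $\lambda^+$ translates exactly to $\bigcap_jV_j=\{0\}$.

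Fix $j$ and set $r_j:=r-\dim V_j$. I would choose a basis $\gamma_1,\ldots,\gamma_{r_j}$ of the annihilator $V_j^\perp\subseteq\mathbb{F}_p^r$ under the standard bilinear pairing, extended to a basis $\gamma_1,\ldots,\gamma_r$ of $\mathbb{F}_p^r$. Let $L\in GL_r(\mathbb{F}_p)$ be the matrix with $k$-th row $\gamma_k$ and perform the dual change of basis
\[\delta^{(k)}:=\textstyle\sum_iL_{ki}\delta_i,\qquad d^{(k)}:=\textstyle\sum_i(L^{-1})_{ik}d_i.\]
Since $L^{-1}$ has entries in $\mathbb{F}_p$ and the diagonal $d_i$ commute, a direct computation gives $\sum_id_i^{p^m}a\tau\delta_i(y)=\sum_k(d^{(k)})^{p^m}a\tau\delta^{(k)}(y)$, so the hypothesised equation is preserved. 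The relation $LL^{-1}=I$ reads $\gamma_l\cdot(L^{-1})_{\bullet,k}=\delta_{lk}$; consequently, for $k>r_j$ the column $(L^{-1})_{\bullet,k}$ is orthogonal to $V_j^\perp$ and therefore lies in $V_j$, giving $v(d^{(k)}_j)\geq\lambda+1$. A dimension count shows that the first $r_j$ columns of $L^{-1}$ descend to a basis of $\mathbb{F}_p^r/V_j$, so the reductions $\overline{d^{(k)}_j}$ for $k\leq r_j$ are $\mathbb{F}_p$-linearly independent modulo $\lambda^+$.

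Left-multiplying the transformed equation by $e_j$ and splitting the sum at $k=r_j$, for each fixed $y\in P$ the tail contributions $(d^{(k)}_j)^{p^m}e_ja\tau\delta^{(k)}(y)$ with $k>r_j$ have value at least $p^m(\lambda+1)+v(e_ja\tau\delta^{(k)}(y))$, and the original error $e_jO(aq^{p^m})$ has value at least $p^m\lambda+p^mc+v(a)$; once $m$ is large both grow strictly faster than $p^m\lambda$. The resulting equation has exactly the shape required to run the Vandermonde--Moore computation from the proof of Proposition \ref{linear-indep}, applied now to the scalars $d^{(1)}_j,\ldots,d^{(r_j)}_j\in F'$, and this yields $e_ja\tau\delta^{(k)}(y)=0$ for every $k\leq r_j$ and every $y\in P$. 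Invertibility of $a$ makes $e_ja\neq 0$, so Lemma \ref{artinian} upgrades this vanishing to $\tau\delta^{(k)}(P)=0$ for $k=1,\ldots,r_j$; equivalently, $\tau\delta_\gamma(P)=0$ for every $\gamma\in V_j^\perp$.

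Finally, $W:=\{\gamma\in\mathbb{F}_p^r:\tau\delta_\gamma(P)=0\}$ is an $\mathbb{F}_p$-subspace of $\mathbb{F}_p^r$ containing $V_j^\perp$ for each $j$, so $W\supseteq\sum_jV_j^\perp=\bigl(\bigcap_jV_j\bigr)^\perp=\mathbb{F}_p^r$; taking $\gamma=e_i$ delivers $\tau\delta_i(P)=0$ for each $i$. The main technical obstacle is designing the change of basis so that (a) the transformed main terms at coordinate $j$ have $\mathbb{F}_p$-linearly independent reductions modulo $\lambda^+$, and (b) the tail terms acquire strictly higher $p$-power growth so they can be absorbed into the error; the duality between the row structure of $L$ and the column structure of $L^{-1}$ with respect to the pair $V_j,V_j^\perp$ is what forces both simultaneously, and (b) further uses that $v(\tau\delta^{(k)}(y))$ is finite for each fixed $y\in P$ with $\tau\delta^{(k)}(y)\neq 0$ (the vanishing case being trivial).
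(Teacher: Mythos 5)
Your proof is correct, and it takes a genuinely different route from the paper's. The paper argues by induction on $r$: if some coordinate $j$ already makes $d_{1,j},\ldots,d_{r,j}$ independent mod $\lambda^+$ one invokes Proposition \ref{linear-indep} directly; otherwise it fixes a coordinate $j$ carrying a dependence, replaces $\delta_i$ by $\delta_i-\beta_i\delta_r$ and $d_i$ by $e_jd_i$, applies the inductive hypothesis to recover $\tau\delta_i(y)=\beta_i\tau\delta_r(y)$ for $i<r$, and then substitutes back to kill $\tau\delta_r$. Your version removes the induction entirely: for each $j$ you perform a duality-guided change of $\mathbb{F}_p$-basis $L$ adapted to $V_j^\perp\subseteq\mathbb{F}_p^r$, so that at coordinate $j$ the first $r_j$ transformed matrices have linearly independent leading terms (and so can be fed into the Vandermonde calculation of Proposition \ref{linear-indep}) while the remaining ones have $j$-entry of value $\geq\lambda+1$ and are absorbed into the error; you then reassemble the local conclusions via $\sum_jV_j^\perp=(\bigcap_jV_j)^\perp=\mathbb{F}_p^r$. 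One concrete advantage of your route: the paper's inductive step hands $e_jd_1,\ldots,e_jd_{r-1}$ to the theorem, and these need not all have value exactly $\lambda$ nor remain independent mod $\lambda^+$, so the paper is implicitly reading its own hypotheses somewhat loosely; your change-of-basis construction produces exactly the system the Vandermonde argument requires, making the applicability transparent. The trade-off is a somewhat heavier linear-algebra setup (building $L$, $L^{-1}$, tracking $V_j$ and $V_j^\perp$), but the bookkeeping is mechanical. Both proofs ultimately rest on the same two ingredients: the determinant computation of Proposition \ref{linear-indep} (via Lemma \ref{adjoint}) and the promotion Lemma \ref{artinian}. One small cosmetic point: you write $v$ where the paper's convention for the extended filtration on $M_{ns}(F')$ is $v'$, and you should flag explicitly that $\lambda\in\mathbb{Z}$ (which holds here since $\lambda=v'(t_i)$) so that $v'(d^{(k)}_j)>\lambda$ genuinely gives $\geq\lambda+1$.
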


\begin{proof}

We will use induction on $r$. First suppose that $r=1$.\\

\noindent Then since $0=d_1^{p^m}a\tau\delta_1(y)+O(aq^{p^m})$, it follows immediately from Proposition \ref{linear-indep} that $e_ja\tau\delta_1(y)=0$ for any $j=1,\cdots,ns$ such that $v(d_{1,j})=\lambda$, and this holds for all $y\in P$.

Since $a$ is a unit, $e_ja\neq 0$, so using Lemma \ref{artinian}, we see that $\tau\delta_1(P)=0$ as required.\\

\noindent Now suppose, for induction, that the result holds for $r-1$:\\

\noindent Assume first that there exists $j=1,\cdots,ns$ such that $d_{1,j},\cdots,d_{r,j}$ are $\mathbb{F}_p$-linearly independent modulo $\lambda^+$. Then using Proposition \ref{linear-indep} and Lemma \ref{artinian} again, we see that $e_ja\tau\delta_i(y)=0$ for all $i=1,\cdots,r$, $y\in P$, and hence $\tau\delta_i(P)=0$ for all $i$ as required.\\

\noindent Hence we may assume that all the corresponding entries of $d_1,\cdots,d_r$ are $\mathbb{F}_p$-linearly dependent modulo $\lambda^+$, i.e. given $j=1,\cdots,ns$, we can find $\beta_1,\cdots,\beta_r\in\mathbb{F}_p$ such that $v(\beta_1d_{1,j}+\cdots+\beta_rd_{r,j})>\lambda$. We can of course choose $j$ such that $v(d_{i,j})=\lambda$ for some $i$.

Assume without loss of generality that $v(d_{r,j})=\lambda$ and that $\beta_r\neq 0$, so after rescaling we may assume that $\beta_r=-1$.\\

\noindent It follows immediately that $v'(e_j\beta_1d_1+\cdots+e_j\beta_{r-1}d_{r-1}-e_jd_r)>\lambda$, so set 

\noindent $\epsilon:=e_j\beta_1d_{1,j}+\cdots+e_j\beta_{r-1}d_{r-1,j}-e_jd_{r,j}$ for convenience.\\

\noindent Multiplying our expression by $e_j$ gives:\\

\noindent $0=e_jd_1^{p^m}a\tau\delta_1(y)+\cdots+e_jd_{r-1}^{p^m}a\tau\delta_{r-1}(y)+e_jd_r^{p^m}a\tau\delta_r(y)+O(e_jaq^{p^m})$\\

\noindent $=e_jd_1^{p^m}a\tau\delta_1(y)+\cdots+e_jd_{r-1}^{p^m}a\tau\delta_{r-1}(y)-e_j(\beta_1d_1+\cdots+\beta_{r-1}d_{r-1})^{p^m}a\tau\delta_r(y)+\epsilon^{p^m}a\tau\delta_r(y)+O(aq^{p^m})$\\

\noindent $=(e_jd_1)^{p^m}a(\tau\delta_1-\beta_1\tau\delta_r)(y)+\cdots+(e_jd_{r-1})^{p^m}a(\tau\delta_{r-1}-\beta_{r-1}\tau\delta_r)(y)+\epsilon^{p^m}a\tau\delta_r(y)+O(aq^{p^m})$.\\

\noindent Now, set $\delta_i':=\delta_i-\beta_i\delta_r$, $d_i':=e_jd_i$ for each $i=1,\cdots,r-1$. Then the $\delta_i'$ are $k$-linear derivations of $kG$, and since $\epsilon$ is diagonal and $v'(\epsilon)>\lambda$, it follows that $\rho(\epsilon)>\lambda$, and so $\epsilon^{p^m}a\tau\delta_r(y)+O(aq^{p^m})=O(aq'^{p^m})$ for some $q'$ with $\rho(q')>\lambda$. Hence:

\begin{center}
$0=d_1'^{p^m}a\tau\delta_1'(y)+d_2'^{p^m}a\tau\delta_2'(y)+\cdots+d_{r-1}'^{p^m}a\tau\delta_{r-1}'(y)+O(aq'^{p^m})$
\end{center}

\noindent So it follows from induction that $\tau\delta_i'(P)=0$ for all $i$, i.e. for all $y\in P$, $\tau\delta_i(y)=\beta_i\tau\delta_r(y)$, and:\\

$0=d_1^{p^m}a\tau\delta_1(y)+\cdots+d_r^{p^m}a\tau\delta_r(y)+O(aq^{p^m})$\\

$=d_1^{p^m}a(\beta_1\tau\delta_r)(y)+\cdots+d_{r-1}^{p^m}a(\beta_{r-1}\tau\delta_{r})(y)+d_r^{p^m}a\tau\delta_r(y)+O(aq^{p^m})$\\

$=(\beta_1d_1+\beta_2d_2+\cdots+\beta_{r-1}d_{r-1}+d_r)^{p^m}a\tau\delta_r(y)+O(aq^{p^m})$.\\

\noindent But since $d_1,\cdots,d_r$ are $\mathbb{F}_p$-linearly independent modulo $\lambda^+$, it follows that 

\noindent $v'(\beta_1q_1+\cdots+\beta_{r-1}d_{r-1}+d_r)=\lambda$, so just applying Proposition \ref{linear-indep} and Lemma \ref{artinian} again gives that $\tau\delta_r(P)=0$.\\

\noindent So, we have $0=d_1^{p^m}a\tau\delta_1(y)+\cdots+d_{r-1}^{p^m}a\tau\delta_{r-1}(y)+O(aq^{p^m})$. Therefore $\tau\delta_i(P)=0$ for all $i<r$ by induction.\end{proof}

\begin{corollary}\label{C}

Let $G$ be a $p$-valuable, abelian-by-procyclic group, $P\in Spec^f(kG)$ such that $Q(\frac{kG}{P})$ is a CSA. Then $P$ is controlled by a proper open subgroup of $G$.

\end{corollary}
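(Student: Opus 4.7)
The plan is to assemble the machinery developed in Section 4 in essentially the most direct way possible; the bulk of the work has already been done and this corollary simply harvests it. We may assume $G$ is non-abelian, since otherwise $kG$ is a commutative domain and the only faithful prime is $(0)$, which is trivially controlled by every subgroup.

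Since $Q:=Q(kG/P)$ is a CSA, all the setup of Section 4 is valid: the completion $\widehat{Q}\cong M_n(Q(D))$ embeds isometrically (up to an additive constant) into $M_{ns}(F')$ for a suitable finite extension $F'$ of $F=Z(Q(D))$ via Proposition \ref{embedding}. After performing the diagonalisation of Section 4.2 (enlarging the initial power $m_1$ if necessary), choosing a basis $\{h_1,\ldots,h_d\}$ of $H$ adapted to the proper open subgroup $K$ of $H$, and setting $q_i=\tau(u(h_i)-1)$ and $t_i=aq_ia^{-1}$, we obtain the key expansion \eqref{Mahlerf}:
\[
0=t_1^{p^m}a\tau\partial_1(y)+\cdots+t_{r}^{p^m}a\tau\partial_{r}(y)+O(aq^{p^m}), \qquad y\in P,
\]
in which each $t_i$ is diagonal over $F'$ with $v'(t_i)=\lambda$ for $i\leq r$, $a$ is invertible, and $\rho(q)>\lambda$.

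Next, Lemma \ref{technical} supplies the $\mathbb{F}_p$-linear independence of $t_1,\ldots,t_r$ modulo $\lambda^+$. This is exactly the hypothesis required by Theorem \ref{linear-dep}, applied with $\delta_i=\partial_i$ (each $\partial_i$ is a $k$-linear derivation of $kG$) and $d_i=t_i$. The theorem then yields $\tau\partial_i(P)=0$, equivalently $\partial_i(P)\subseteq P$, for every $i=1,\ldots,r$.

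Finally, the constant polynomial $f(t)=t$ is a non-trivial GPP, so Lemma \ref{sub} ensures $K=K_{id}$ is a \emph{proper} open subgroup of $H$; thus $r\geq 1$ and in particular $\partial_1(P)\subseteq P$. Proposition \ref{Frattini} immediately concludes that $P$ is controlled by a proper open subgroup of $G$. There is no real obstacle here: the entire content of the argument has been isolated into Theorem \ref{linear-dep}, which packages the diagonalised analogue of the special-GPP argument of Theorem \ref{special-GPP}, and Proposition \ref{Frattini}, which converts stability under a single quantised divided power into a control result.
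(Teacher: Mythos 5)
Your argument is the paper's own proof: Lemma \ref{technical} gives $\mathbb{F}_p$-linear independence of $t_1,\dots,t_r$ modulo $\lambda^+$, Theorem \ref{linear-dep} with $\delta_i=\partial_i$ and $d_i=t_i$ yields $\tau\partial_i(P)=0$ for $i\leq r$, and Proposition \ref{Frattini} concludes. Your opening aside on the abelian case is, however, false: for abelian $G$ of rank $\geq 2$, $kG\cong k[[x_1,\dots,x_d]]$ has many nonzero faithful primes (for example $(y-x^{p+1})$ when $d=2$, which is faithful because $(1+x)^a(1+x^{p+1})^b=1$ in $k[[x]]$ forces $a=b=0$), and these need not be controlled by any proper open subgroup; the corollary should be read under the standing non-abelian hypothesis, which is also precisely what makes the Section 4 machinery you invoke available.
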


\begin{proof}

We know that $t_1,\cdots,t_r$ are $\mathbb{F}_p$-linearly independent modulo $\lambda^+$ by Lemma \ref{technical}, so applying Theorem \ref{linear-dep} with $\delta_i=\partial_i$ and $d_i=t_i$, it follows that $\tau\partial_i(P)=0$ for all $i=1,\cdots,r$.

Hence $P$ is controlled by a proper open subgroup of $G$ by Proposition \ref{Frattini}.\end{proof}

\noindent So, from now on, we may assume that $Q(\frac{kG}{P})$ is not a CSA.

\section{The Extended Commutator Subgroup}

Again, fix a non-abelian, $p$-valuable, abelian-by-procyclic group $G$, with principal subgroup $H$, procyclic element $X$. We will assume further that $G$ has \emph{split-centre}, i.e. $1\to Z(G)\to G\to\frac{G}{Z(G)}\to 1$ is split exact.

\subsection{Uniform groups}

Assume for now that $G$ is \emph{uniform}, i.e. that $(G,G)\subseteq G^{p^\epsilon}$, where $\epsilon=2$ if $p=2$ and $\epsilon=1$ if $p>2$. Note that  uniform group $G$ is $p$-saturable using the $p$-valuation $\omega(g)=\max\{n\in\mathbb{N}:g\in G^{p^{n-\epsilon}}\}$ (see \cite[Chapter 4]{DDMS} for full details).\\

\noindent Recall from \cite[Chapter 9]{DDMS}, that a free $\mathbb{Z}_p$-Lie algebra $\mathfrak{g}$ of finite rank is \emph{powerful} if $[\mathfrak{g},\mathfrak{g}]\subseteq p^{\epsilon}\mathfrak{g}$. It follows from \cite[Theorem 9.10]{DDMS} that a $p$-saturable group $G$ is uniform if and only if $\log(G)$ is powerful.

Let $\mathfrak{g}=\log(G)$, then using Lie theory, we see that $\mathfrak{g}=\mathfrak{h}$ $\rtimes$ Span$_{\mathbb{Z}_p}\{x\}$, where $\mathfrak{h}=\log(H)$, $x=\log(X)$, and $[\mathfrak{g},\mathfrak{g}]=[x,\mathfrak{h}]\subseteq p\mathfrak{h}$.\\

\noindent Recall that a map $w:\mathfrak{g}\to\mathbb{R}\cup\{\infty\}$ is a \emph{valuation} if for all $u,v\in\mathfrak{g}$, $\alpha\in\mathbb{Z}_p$:

\begin{itemize}
\item $w(u+v)\geq\min\{w(u),w(v)\}$, \item $w([u,v])\geq w(u)+w(v)$, \item $w(\alpha u)=v_p(\alpha)+w(u)$, \item $w(u)=\infty$ if and only if $u=0$, \item $w(u)>\frac{1}{p-1}$.
\end{itemize}

\noindent Also recall from \cite[Proposition 32.6]{Schneider} that if $w$ is a valuation on $\mathfrak{g}$, then $w$ corresponds to a $p$-valuation $\omega$ on $G$ defined by $\omega(g):=w(\log(g))$.

\begin{proposition}\label{p-valuation}

Let $G$ be a non-abelian, uniform, abelian-by-procyclic group with split-centre, let $\mathfrak{g}:=\log(G)$, and let $V:=\exp([\mathfrak{g},\mathfrak{g}])\subseteq H^p$. Then there exists a basis $\{h_1,\cdots,h_d\}$ for $H$, $r\leq d$ such that $\{h_{r+1},\cdots,h_{d}\}$ is a basis for $Z(G)$ and $\{h_1^{p^{t_1}},\cdots,h_r^{p^{t_r}}\}$ is a basis for $V$ for some $t_i\geq 1$.\\

\noindent Moreover, there exists an abelian $p$-valuation $\omega$ on $G$ such that $(i)$ $\{h_1,\cdots,h_d,X\}$ is an ordered basis for $(G,\omega)$, and $(ii)$ $\omega(h_1^{p^{t_1}})=\omega(h_2^{p^{t_2}})=\cdots=\omega(h_r^{p^{t_r}})>\omega(X)$.

\end{proposition}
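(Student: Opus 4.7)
The plan is to construct the basis on the Lie algebra side via Smith normal form and the split-centre hypothesis, and then build the abelian $p$-valuation by hand. First the split-centre condition, together with centrality of $Z(G)$, forces $G = Z(G) \times S$ as a \emph{direct} product for some closed subgroup $S$; taking logarithms gives $\mathfrak{g} = \mathfrak{z} \oplus \mathfrak{s}$ with $\mathfrak{z} = \log Z(G)$ and $\mathfrak{s} = \log S$, and I may arrange $x = \log X \in \mathfrak{s}$. Setting $\mathfrak{h}' := \mathfrak{s} \cap \mathfrak{h}$ gives $\mathfrak{h} = \mathfrak{z} \oplus \mathfrak{h}'$. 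Centrality of $\mathfrak{z}$ yields $[\mathfrak{g},\mathfrak{g}] = [x,\mathfrak{h}] = [x,\mathfrak{h}']$, which lies in the Lie subalgebra $\mathfrak{s}$ and in the derived subalgebra $\mathfrak{h}$, hence in $\mathfrak{h}'$. The kernel of $\mathrm{ad}_x$ on $\mathfrak{h}$ is $Z(\mathfrak{g}) \cap \mathfrak{h} = \mathfrak{z}$, so $\mathrm{ad}_x : \mathfrak{h}' \to \mathfrak{h}'$ is injective and $[x,\mathfrak{h}']$ is a full-rank submodule of $\mathfrak{h}'$.

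Next I would apply the elementary divisors theorem over the PID $\mathbb{Z}_p$ to the inclusion $[x,\mathfrak{h}'] \hookrightarrow \mathfrak{h}'$, producing a $\mathbb{Z}_p$-basis $\{e_1,\ldots,e_r\}$ of $\mathfrak{h}'$ and integers $t_1 \leq \cdots \leq t_r$ with $\{p^{t_i}e_i\}_{i \leq r}$ a basis of $[x,\mathfrak{h}']$. The uniform hypothesis forces $[\mathfrak{g},\mathfrak{g}] \subseteq p^{\epsilon}\mathfrak{g} \subseteq p\mathfrak{h}$, so each $p^{t_i}e_i \in p\mathfrak{h}'$ and $t_i \geq 1$. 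I would then set $h_i := \exp(e_i)$ for $i \leq r$ and append $h_{r+1},\ldots,h_d := \exp$ of any $\mathbb{Z}_p$-basis of $\mathfrak{z}$; this yields the required basis of $H$, with $\{h_i^{p^{t_i}}\}_{i \leq r}$ automatically a basis of $V = \exp([x,\mathfrak{h}'])$.

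For the valuation, pick rationals $\alpha, \mu$ with $\tfrac{1}{p-1} < \alpha < t_1$ and $\mu > \max\{\alpha,\, t_r + \tfrac{1}{p-1}\}$, and define $w$ on the $\mathbb{Z}_p$-basis $\{x,e_1,\ldots,e_d\}$ by $w(x) = \alpha$, $w(e_i) = \mu - t_i$ for $i \leq r$, $w(e_i) = \mu$ for $i > r$, extended by the minimum formula. The only nonzero brackets are $[x,e_i] = \sum_{j \leq r} c_{ij}\, p^{t_j} e_j$ for $i \leq r$ (the $\mathfrak{z}$-components vanish since $[x,\mathfrak{h}'] \subseteq \mathfrak{h}'$); each has $w$-value at least $\mu$, which is strictly larger than $w(x) + w(e_i) = \alpha + \mu - t_i$ precisely because $\alpha < t_1 \leq t_i$. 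Thus $w$ is a $\mathbb{Q}$-valued abelian valuation on $\mathfrak{g}$, and \cite[Proposition 32.6]{Schneider} converts it into an abelian $p$-valuation $\omega(g) := w(\log g)$ on $G$ with $\{h_1,\ldots,h_d,X\}$ an ordered basis and $\omega(h_i^{p^{t_i}}) = \mu > \alpha = \omega(X)$. The only delicate point is ensuring that $(\tfrac{1}{p-1},\, t_1)$ is nonempty so a valid $\alpha$ exists: for $p \geq 3$ this is automatic from $t_1 \geq 1$, and for $p=2$ the stronger uniform bound $\epsilon = 2$ upgrades the elementary divisors to $t_1 \geq 2 > 1 = \tfrac{1}{p-1}$. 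Everything else is routine bookkeeping once the direct-product decomposition and the elementary divisors are in place.
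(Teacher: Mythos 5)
Your proof is correct, and the first half — decomposing $\mathfrak{g}$ via the split centre and extracting an adapted basis from the elementary divisors of $[x,\mathfrak{h}']\hookrightarrow\mathfrak{h}'$ — is exactly what the paper does. Where you diverge is in the valuation step: the paper first builds an \emph{integer}-valued $p$-valuation with weights $a_i:=a+t_1-t_i$ (normalizing $t_1$ to be the \emph{largest} divisor), which satisfies only the weak inequality $w([u,v])\geq w(u)+w(v)$, and then passes to the shifted valuation $\omega - c$ using the proof of Schneider's Lemma~26.13 to make the commutator inequality strict; you instead bake the strict inequality in directly with rational weights $w(e_i)=\mu-t_i$ and $w(x)=\alpha$, choosing $\tfrac{1}{p-1}<\alpha<t_1$ (now the \emph{smallest} divisor), so that $w([x,e_i])\geq\mu>\alpha+\mu-t_i$ falls out immediately. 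That is a slightly more direct construction and a perfectly good alternative. One small point: you do not actually need to replace $X$ by an element of $S$. Writing $x=z_0+x'$ with $z_0\in\mathfrak{z}$, $x'\in\mathfrak{s}$ already gives $[x,\mathfrak{h}']=[x',\mathfrak{h}']\subseteq\mathfrak{s}\cap\mathfrak{h}=\mathfrak{h}'$ for the \emph{original} $x$ (equivalently, $H'=H\cap S$ is normal in $G$, hence $\mathfrak{h}'$ is a Lie ideal); since the proposition's conclusion is phrased for the given procyclic element $X$, it is cleaner to keep $x=\log X$ untouched throughout rather than invoke a WLOG replacement.
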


\begin{proof}

\noindent First, note that since $G$ has split centre, we have that $G\cong Z(G)\times\frac{G}{Z(G)}$. In fact, since $Z(G)\subseteq H$, we have that $H\cong Z(G)\times H'$ for some $H'\leq H$, normal and isolated in $G$.

It follows that $\mathfrak{h}=Z(\mathfrak{g})\oplus\mathfrak{h}'$, where $\mathfrak{h}':=\log(H')$, and clearly $[\mathfrak{g},\mathfrak{g}]=[x,\mathfrak{h}]=[x,\mathfrak{h}']$.\\

\noindent By the Elementary Divisors Theorem, there exists a basis $\{v_1,\cdots,v_{r}\}$ for $\mathfrak{h}'$ such that $\{p^{t_1}v_1,....,p^{t_{r}}v_{r}\}$ is a basis for $[x,\mathfrak{h}']$ for some $t_i\geq 0$. And since $\mathfrak{g}$ is powerful, we have in fact that $t_i\geq\epsilon$ for each $i$.\\

\noindent Let $\{v_{r+1},\cdots,v_d\}$ be any basis for $Z(\mathfrak{g})$, and set $h_i:=\exp(v_i)$ for each $i=1,\cdots d$. It follows that $\{h_1^{p^{t_1}},.....,h_{r}^{p^{{t_r}}}\}$ is a basis for $V$, and that $\{h_{r+1},\cdots,h_d\}$ is a basis for $Z(G)$ as required.\\

\noindent Now, the proof of \cite[Lemma 26.13]{Schneider} shows that if $\omega$ is any $p$-valuation on $G$ and we choose $c>0$ with $\omega(g)>c+\frac{1}{p-1}$ for all $g\in G$, then $\omega_c(g):=\omega(g)-c$ defines a new $p$-valuation on $G$ satisfying $\omega_c((g,h))>\omega_c(g)+\omega_c(h)$, which preserves ordered bases.\\

\noindent So if $\omega$ is an \emph{integer valued} $p$-valuation satisfying $i$ and $ii$, then take $c:=\frac{1}{e}$ for any integer $e\geq 2$ and $\omega_c$ will also satisfy $i$ and $ii$. Also $\omega_c(G)\subseteq\frac{1}{e}\mathbb{Z}$ and $\omega_c((g,h))>\omega_c(g)+\omega_c(h)$ for all $g,h\in G$, i.e. $\omega_c$ is abelian.\\

\noindent Therefore, it remains to show that we can define an integer valued $p$-valuation on $G$ satisfying $i$ and $ii$.\\

\noindent Assume without loss of generality that $t_1\geq t_i$ for all $i=1,\cdots,r$. Choose $a\in\mathbb{Z}$ with $a>\epsilon$, and set $a_i:=a+t_1-t_i$ for each $i$, so that $a_i+t_i=a_j+t_j$ for all $i,j=1,\cdots,r$.\\

\noindent For convenience, set $v_{d+1}:=x$, and for each $i>r$, set $a_{i}=\epsilon$. Then define:

\begin{center}
$w:\mathfrak{g}\to\mathbb{Z}\cup\{\infty\}, \underset{i=1,...,d}{\sum}{\alpha_i v_i}\mapsto \min\{v_p(\alpha_i)+a_i:i=1,....,d\}$.
\end{center}

\noindent We will prove that $w$ is a valuation on $\mathfrak{g}$, and that $w(p^{t_i}v_i)=w(p^{t_j}v_j)>w(x)$ for all $i,j\leq r$. Then by defining $\omega$ on $G$ by $\omega(g)=w(\log(g))$, the result will follow.\\

\noindent Firstly, the property that $w(p^{t_i}u_i)=w(p^{t_j}u_j)>w(x)$ is clear, since $w(p^{t_i}u_i)=t_i+a_i=t_j+a_j=w(p^{t_j}u_j)$ for all $i,j<d$, and $a_i+t_i=a+t_1\geq a>1=w(x)$.\\

\noindent It is also clear from the definition of $w$ that $w(u+v)\geq\min\{w(u),w(v)\}$, $w(\alpha u)=v_p(\alpha)+w(u)$, $w(u)=\infty$ if and only if $u=0$, and $w(u)>\frac{1}{p-1}$ for all $u,v\in\mathfrak{g}$, $\alpha\in\mathbb{Z}_p$.\\

\noindent Therefore it remains to prove that $w([u,v])\geq w(u)+w(v)$, and it is straightforward to show that it suffices to prove this for basis elements. 

So since $v_{r+1},\cdots,v_d$ are central, we need only to show that $w([x,v_i])\geq w(x)+w(v_i)$ for all $i\leq r$.\\

\noindent We have that $[x,v_i]=\alpha_{i,1}p^{t_1}v_1+....+\alpha_{i,r}p^{t_{r}}v_{r}$ for some $\alpha_{i,j}\in\mathbb{Z}_p$, so:\\

\noindent $w([x,v_i])=\underset{j=1,...,r}{\min}\{v_p(\alpha_{i,j})+t_j+a_j\}=\underset{j=1,....,r}{\min}\{v_p(\alpha_{i,j})\}+t_i+a_i\geq a_i+t_i\geq a_i+\epsilon=w(v_i)+w(x)$.\end{proof}

\noindent\textbf{\underline{Remark:}} This result does not hold in general if $G$ is not uniform. For example, if $p>2$ and $\mathfrak{g}=$ Span$_{\mathbb{Z}_p}\{x,y,z\}$ with $[y,z]=0$, $[x,y]=py$, $[x,z]=y+pz$, then $\mathfrak{g}$ is not powerful, and there is no valuation $w$ on $\mathfrak{g}$ that equates the values of basis elements for $[\mathfrak{g},\mathfrak{g}]$.

\vspace{0.2in}

\noindent Now suppose that $G$ is any non-abelian, $p$-valuable, abelian-by-procyclic group with split centre, principal subgroup $H$.

\begin{lemma}\label{transport}

For any $h\in H$, $g\in G$, set $v=\log(h)$, $u=\log(g)$, then $(g,h)=\exp(\underset{n\geq 1}{\sum}{\frac{1}{n!}(ad(u))^n(v)})$

\end{lemma}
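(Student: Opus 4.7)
The plan is to reduce the identity to two standard Lie-theoretic facts inside the $\mathbb{Z}_p$-Lie algebra $\mathfrak{g} := \log(\mathrm{Sat}(G))$: the adjoint exponential formula $\mathrm{Ad}(\exp u) = \exp(\mathrm{ad}\,u)$, and the Baker--Campbell--Hausdorff identity in the commuting case. Since $G \subseteq \mathrm{Sat}(G)$, both $u = \log(g)$ and $v = \log(h)$ are well-defined elements of $\mathfrak{g}$, and $\mathfrak{h} := \log(H)$ is an abelian Lie subalgebra because $H$ is abelian and closed, isolated normal in $G$.

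The first step is to rewrite conjugation via the adjoint action. Using $\mathrm{Ad}(\exp u) = \exp(\mathrm{ad}\,u)$ (cf.\ \cite[\rom{4}.3.2]{Lazard}, the same machinery the paper already invokes to derive the formula $z(\varphi^{p^m})(h) = \exp([p^m\log(X),\log(h)])$ in Section 2.1), I would write
\[
ghg^{-1} \;=\; \exp(\mathrm{Ad}(g)(v)) \;=\; \exp\!\Big(\sum_{n \geq 0}\tfrac{1}{n!}(\mathrm{ad}\,u)^n(v)\Big) \;=\; \exp(v + w),
\]
where $w := \sum_{n \geq 1} \tfrac{1}{n!}(\mathrm{ad}\,u)^n(v)$ is exactly the element appearing in the claim. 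Consequently $(g,h) = ghg^{-1}h^{-1} = \exp(v+w)\exp(-v)$, and it remains to show that the right-hand side equals $\exp(w)$.

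The crucial observation is that both exponents lie in the abelian subalgebra $\mathfrak{h}$: normality of $H$ gives $ghg^{-1} \in H$, hence $v + w = \log(ghg^{-1}) \in \mathfrak{h}$, while $v = \log(h) \in \mathfrak{h}$ by construction. Since $\mathfrak{h}$ is abelian, $[v+w,\,-v] = 0$, so every nontrivial iterated bracket in the BCH expansion of $\log(\exp(v+w)\exp(-v))$ vanishes, yielding $\exp(v+w)\exp(-v) = \exp((v+w) + (-v)) = \exp(w)$, as desired.

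The argument is essentially a routine Lie-theoretic computation, and there is no substantial obstacle. The only points requiring care are the correct invocation of Lazard's identity $\mathrm{Ad}\circ\exp = \exp\circ\,\mathrm{ad}$ in the $p$-saturable setting, and the observation that we only need the commuting case of BCH, so convergence and termination questions are trivial once both exponents are placed inside $\mathfrak{h}$.
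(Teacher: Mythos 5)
Your argument is correct and is essentially the same as the paper's: both derive $ghg^{-1}=\exp\bigl(\sum_{n\geq 0}\tfrac{1}{n!}(\mathrm{ad}\,u)^n(v)\bigr)$ from the identity $\mathrm{Ad}(\exp u)=\exp(\mathrm{ad}\,u)$ (the paper unwinds this via left/right multiplication operators, you cite it directly), and both then conclude by using that $h$ and $ghg^{-1}$ lie in the abelian group $H$, so the logarithms add. Your invocation of commuting-case BCH is just a restatement of the paper's $\log((ghg^{-1})h^{-1})=\log(ghg^{-1})-\log(h)$.
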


\begin{proof}

$ghg^{-1}=g\exp(v)g^{-1}=\underset{n\geq 0}{\sum}{\frac{1}{n!}(gvg^{-1})^n}=\exp(gvg^{-1})$.\\

\noindent Let $l_x,r_x$ be left and right multiplication by $x$, then note that $l_{\exp(x)}=\exp(l_x)$, same for $r_x$.\\

\noindent Then $gvg^{-1}=\exp(u)v\exp(u)^{-1}=\exp(u)v\exp(-u)=(l_{\exp(u)}r_{\exp(-u)})(v)$\\

\noindent $=(\exp(l_u)\exp(r_{-u}))(v)=\exp(l_u-r_u)(v)=\exp(ad(u))(v)=\sum_{n\geq 0}{\frac{1}{n!}(ad(u))^n(v)}$.\\

\noindent Therefore $ghg^{-1}=\exp(gvg^{-1})=\exp(\underset{n\geq 0}{\sum}{\frac{1}{n!}(ad(u))^n(v)})$.\\

\noindent Finally, $\log((g,h))=\log((ghg^{-1})h^{-1})=\log(ghg^{-1})-\log(h)$ since $h$ and $ghg^{-1}$ commute. Clearly this is equal to $\underset{n\geq 1}{\sum}{\frac{1}{n!}(ad(u))^n(v)}$ as required.\end{proof}

\noindent We know that $G=H\rtimes\langle X\rangle$, so for each $m\in\mathbb{N}$, define $G_m:=H\rtimes\langle X^{p^{m}}\rangle$, and it is immediate that $G_m$ is an open, normal subgroup of $G$, and that it is non-abelian, $p$-valuable, abelian-by-procyclic with principal subgroup $H$, procyclic element $X^{p^{m}}$ and split centre.

\begin{lemma}\label{uniform}

There exists $m\in\mathbb{N}$ such that $G_m$ is a uniform group.

\end{lemma}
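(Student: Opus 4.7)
\textbf{Proof plan for Lemma \ref{uniform}.}

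The plan is to verify the powerfulness condition $(G_m,G_m)\subseteq G_m^{p^\epsilon}$ directly for $m$ sufficiently large; torsion-freeness and finite generation of $G_m$ are inherited from $G$, so powerfulness is all that remains for uniformity.

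First I would pass to the $p$-saturable group $Sat(G)$ and work inside the free $\mathbb{Z}_p$-Lie algebra $\mathfrak{g}:=\log(Sat(G))$. Because $H$ is a closed, isolated, normal, abelian subgroup of $G$, the subalgebra $\mathfrak{h}:=\log(Sat(H))$ is an abelian ideal of $\mathfrak{g}$, and since $H$ is open in $Sat(H)$ there exists $s\geq 0$ with $p^s\mathfrak{h}\subseteq\log(H)$. The key observation is that the BCH sum in Lemma \ref{transport} truncates dramatically: for $u=\log h\in\mathfrak{h}$ and $v=p^m\alpha\log X$, the iterated bracket $(\mathrm{ad}\,u)^n(v)$ already lies in $[\mathfrak{h},\mathfrak{h}]=0$ for $n\geq 2$, so the formula collapses to
$$(h,X^{p^m\alpha})=\exp\bigl(p^m\alpha[\log h,\log X]\bigr).$$

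Next, by compactness of $H\times\mathbb{Z}_p$ and continuity of $(h,\alpha)\mapsto\alpha[\log h,\log X]$, the collection of such elements is bounded in $\mathfrak{h}$, say contained in $p^{-N}\mathfrak{h}$. Provided we choose $m\geq N+s+\epsilon$, the element $p^{m-\epsilon}\alpha[\log h,\log X]$ lies in $p^s\mathfrak{h}\subseteq\log H$, and since $Sat(H)$ is abelian we can extract a $p^\epsilon$-th root:
$$(h,X^{p^m\alpha})=\bigl(\exp(p^{m-\epsilon}\alpha[\log h,\log X])\bigr)^{p^\epsilon}\in H^{p^\epsilon}\subseteq G_m^{p^\epsilon}.$$
Because $H$ is abelian and $G_m=H\rtimes\langle X^{p^m}\rangle$, the closed subgroup $(G_m,G_m)$ is topologically generated by elements of this form, so $(G_m,G_m)\subseteq G_m^{p^\epsilon}$, as required.

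The main technical obstacle is the careful bookkeeping of the Lazard-theoretic identifications: verifying that $\mathfrak{h}$ is genuinely an ideal of $\mathfrak{g}$ (using $H\trianglelefteq_c^i G$ together with the $\log/\exp$ equivalence between $p$-saturated groups and their Lie algebras), and making precise both the integer $s$ with $p^s\mathfrak{h}\subseteq\log H$ and the uniform bound $N$, each of which relies on compactness of $H$ and the relation $Sat(G)^{p^t}\subseteq G$ from \cite[\rom{3}.3.3.1]{Lazard}.
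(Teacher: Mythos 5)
There is a genuine gap: the claimed truncation of the BCH series is false, and it comes from a misapplication of Lemma \ref{transport}. That lemma computes $(g,h)=\exp\bigl(\sum_{n\geq 1}\tfrac{1}{n!}(\mathrm{ad}\,u)^n(v)\bigr)$ under the hypothesis $u=\log g$, $v=\log h$ with $h\in H$; the proof hinges on the fact that $ghg^{-1}$ and $h$ commute, which uses that both lie in the abelian subgroup $H$. You plugged in $u=\log h\in\mathfrak{h}$ and $v=p^m\alpha\log X$, which would correspond to the commutator $(h,X^{p^m\alpha})$ — but here the ``$h$'' slot of the lemma is occupied by $X^{p^m\alpha}\notin H$, so the lemma does not apply in that form. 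What is actually true (via $(h,g)=(g,h)^{-1}$ and the genuine Lemma \ref{transport}) is $(h,X^{p^m\alpha})=\exp\bigl(-\sum_{n\geq 1}\tfrac{(p^m\alpha)^n}{n!}(\mathrm{ad}\,x)^n(\log h)\bigr)$ with $x=\log X$, which does \emph{not} truncate. Concretely, take $H=\mathbb{Z}_p$, $G=H\rtimes\mathbb{Z}_p$ with $XhX^{-1}=h^{1+p}$, $p>2$: then $(h,X)=h^{-p}$, whereas your formula would give $h^{-\log(1+p)}$, and $\log(1+p)\neq p$.

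With the series not truncating, the compactness bound on the single term $[\log h,\log X]$ is not enough; you have to control all the terms $\tfrac{1}{n!}(\mathrm{ad}\,p^m x)^n(\log h)$ simultaneously, and the factors $\tfrac{1}{n!}$ are the whole difficulty. This is exactly what the paper's proof does: it fixes $t$ with $Sat(G)^{p^t}\subseteq G$, sets $m=t+\epsilon$, and shows each term lies in $p^m\log(Sat(G))$ using the saturated valuation on $\log(Sat(G))$ together with the estimate $v_p(n!)=\tfrac{n-s(n)}{p-1}\leq\tfrac{n-1}{p-1}$. So the paper works with the commutator in the ``other'' order, $(X^{p^m},h)$ with $h\in H$, keeping the series non-truncated but with $\mathrm{ad}(p^m x)$ in control, and pays the price of a $v_p(n!)$ estimate. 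Your argument would go through if the series truncated, but since it doesn't, the approach as written fails; to repair it you would essentially be forced back to the paper's quantitative estimate.
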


\begin{proof}

Recall that $G$ is an open subgroup of the $p$-saturated group $Sat(G)$, i.e there exists $t\in\mathbb{N}$ with $Sat(G)^{p^t}\subseteq G$. Choose any such $t$ and let $m:=t+\epsilon$.\\

\noindent Given $h\in H$, by Lemma \ref{transport} we have that $(X^{p^{m}},h)=\exp(\underset{n\geq 1}{\sum}{\frac{1}{n!}(ad(p^{m}x)^n(v)})$ where $x=\log(X)$ and $v=\log(h)$ lie in $\log(Sat(G))$.\\

\noindent We want to prove that $(X^{p^m},h)\in H^{p^{\epsilon}}=G'^{p^{\epsilon}}\cap H$, so since $Sat(G)^{p^m}=Sat(G)^{p^{t+\epsilon}}\subseteq G^{p^{\epsilon}}$, it suffices to prove that $\frac{1}{n!}ad(p^{m}x)^n(v)\in p^{m}\log(Sat(G))$ for all $n\geq 1$.\\

\noindent Clearly, for each $n$, $ad(p^{m}x)^n(v)=[p^{m}x,ad(p^{m}x)^{n-1}(u)]$, so we only need to prove that $ad(p^{m}x)^{n-1}(u)\in p^{v_p(n!)}\log(Sat(G))$, in which case:\\

\noindent $\frac{1}{n!}ad(p^{m}x)^n(v)=\frac{p^{m}}{n!}[x,ad(p^{m}x)^{n-1}(u)]\in \frac{p^{v_p(n!)+m}}{n!}\log(Sat(G))\subseteq p^{m}\log(Sat(G))$.\\

\noindent Let $w$ be a saturated valuation on $\log(Sat(G))$, i.e. if $w(x)>\frac{1}{p-1}+1$ then $x=py$ for some $y\in\log(Sat(G))$.\\ 

\noindent Then since $w(ad(p^{m}x)^{n-1}(u))\geq (n-1)w(p^{m}x)+w(u)>\frac{n-1}{p-1}+\frac{1}{p-1}$, it follows that $ad(p^{m}x)^{n-1}(u)=p^kv$ for some $v\in\log(Sat(G))$, $k\geq\frac{n-1}{p-1}$.\\

\noindent We will show that $k\geq v_p(n!)$, and it will follow that $ad(p^{m}x)^{n-1}(u)=p^kv\in p^{v_p(n!)}\log(Sat(G))$.\\

\noindent If $n=a_0+a_1p+\cdots+a_rp^r$ for some $0\leq a_i<p$, then let $s(n)=a_0+a_1+\cdots+a_r$. We know from \cite[\rom{1} 2.2.3]{Lazard} that $v_p(n!)=\frac{n-s(n)}{p-1}$.

Suppose that $v_p(n!)>\frac{n-1}{p-1}$, i.e. $\frac{n-s(n)}{p-1}>\frac{n-1}{p-1}$, and hence $s(n)<1$. This means that $s(n)=0$ and hence $n=0$ -- contradiction.\\

\noindent Therefore $k\geq \frac{n-1}{p-1}\geq v_p(n!)$ as required.\end{proof}

\subsection{Extension of Filtration}

From now on, fix $c\in\mathbb{N}$ minimal such that $G_c$ is uniform, we know that this exists by Lemma \ref{uniform}. Let $\mathfrak{g}:=\log(G_c)$ -- a powerful $\mathbb{Z}_p$-subalgebra of $\log(Sat(G))$.

\begin{definition}\label{c(G)}

Define the \emph{extended commutator subgroup} of $G$ to be 

\begin{center}
$c(G):=\left(Z(G)\times\exp([\mathfrak{g},\mathfrak{g}])\right)\rtimes\langle X^{p^c}\rangle\subseteq G_c$.
\end{center}

\end{definition}

\begin{proposition}\label{extended-commutator}

If $G$ is any $p$-valuable, abelian-by-procyclic group with split centre, then:\\

 i. $c(G)$ is an open normal subgroup of $G$.\\

 ii. There exists a basis $\{k_1,k_2,\cdots,k_{d}\}$ for $H$ such that $\{k_{r+1},\cdots,k_d\}$ is a basis for $Z(G)$ and 

$\{u_{c}(k_1),u_{c}(k_2),\cdots,u_{c}(k_{r}),k_{r+1},\cdots,k_d,X^{p^c}\}$ is a basis for $c(G)$.\\

 iii. We may choose this basis $\{k_1,\cdots,k_d\}$ such that for each $i\leq r$, there exist $\alpha_{i,j}\in\mathbb{Z}_p$ with $p\mid\alpha_{i,j}$ for 
 
 $j<i$ and $\alpha_{i,i}=1$, such that $Xu_{c}(k_i)X^{-1}=u_{c}(k_1)^{\alpha_{i,1}}\cdots u_{c}(k_d)^{\alpha_{i,d}}$.\\

 iv. There is an abelian $p$-valuation $\omega$ on $c(G)$ such that $\{u_{c}(k_1),\cdots,u_{c}(k_r),k_{r+1},\cdots,k_d,X^{p^c}\}$ is an ordered 
 
 basis for $(c(G),\omega)$ and $\omega(u_{c}(k_i))=\omega(u_{c}(k_j))>\omega(X^{p^c})$ for all $i,j\leq r$.

\end{proposition}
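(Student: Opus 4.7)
The plan is to handle the four claims in turn, with (iii) concentrating most of the technical work and (iv) reducing to Proposition \ref{p-valuation} applied to $c(G)$ itself.

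For (i), I verify normality and openness directly. $Z(G)$ is central in $G$; $V=\exp([\mathfrak{g},\mathfrak{g}])$ is the commutator subgroup of the normal subgroup $G_c$, hence characteristic in $G_c$ and normal in $G$. Conjugation by $h\in H$ moves $X^{p^c}$ by $(h,X^{p^c})\in V$, keeping the result in $V\cdot\langle X^{p^c}\rangle\subseteq c(G)$, while conjugation by $X$ fixes $X^{p^c}$ and preserves both $Z(G)$ and $V$. Openness follows because Proposition \ref{p-valuation} applied to the uniform group $G_c$ supplies a basis of $H$ witnessing $Z(G)\times V$ as having full rank $d$ in $H$, while $\langle X^{p^c}\rangle$ has finite index in $\langle X\rangle$.

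For (ii), I set $\mathfrak{g}=\log G_c=\mathfrak{h}\oplus\mathbb{Z}_p y$ with $y=p^c\log X$, and use split-centre to write $\mathfrak{h}=Z(\mathfrak{g})\oplus\mathfrak{h}'$. The key observation is that the map $T:\mathfrak{h}'\to[\mathfrak{g},\mathfrak{g}]$, $v\mapsto[y,v]$, is a $\mathbb{Z}_p$-linear isomorphism: surjectivity from $[\mathfrak{g},\mathfrak{g}]=[y,\mathfrak{h}]=[y,\mathfrak{h}']$ (since $[y,Z(\mathfrak{g})]=0$), and injectivity because $\ker([y,-]|_{\mathfrak{h}})=Z(\mathfrak{g})$ meets $\mathfrak{h}'$ trivially. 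Picking $\mathbb{Z}_p$-bases $\{v_1,\dots,v_r\}$ of $\mathfrak{h}'$ and $\{v_{r+1},\dots,v_d\}$ of $Z(\mathfrak{g})=\log Z(G)$, and setting $k_i:=\exp(v_i)$, makes $\{k_1,\dots,k_d\}$ a basis of $H$ with $\{k_{r+1},\dots,k_d\}$ a basis of $Z(G)$, while $u_c(k_i)=\exp([y,v_i])=\exp(T(v_i))$ for $i\leq r$ is a basis of $V$ via the isomorphism $T$. Concatenating with $X^{p^c}$ yields (ii).

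For (iii), I must refine $\{v_1,\dots,v_r\}$ so that conjugation by $X$ on the $u_c(k_i)$ takes the prescribed lower unitriangular form with $p$-divisible subdiagonal. Letting $A:=\mathrm{Ad}(X)$ acting on $\mathfrak{g}$, the identity $A(y)=y$ gives $Xu_c(k_i)X^{-1}=\exp([y,A(v_i)])$; transporting via $T$ reduces the problem to producing a basis of $\mathfrak{h}'$ in which the induced endomorphism $\bar A:=\pi\circ A|_{\mathfrak{h}'}:\mathfrak{h}'\to\mathfrak{h}'$ (with $\pi:\mathfrak{h}\to\mathfrak{h}'$ the projection along $Z(\mathfrak{g})$) has the required form. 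The central step, which I expect to be the main obstacle, is to show $\bar A\equiv 1\pmod p$: Lemma \ref{transport} expands $A(v)-v=\sum_{n\geq 1}\frac{1}{n!}\mathrm{ad}(\log X)^n(v)$, and one uses the powerfulness of $\mathfrak{g}$ (yielding $[y,\mathfrak{h}]\subseteq p^{\epsilon}\mathfrak{h}$, hence $\mathrm{ad}(y)^n(\mathfrak{h})\subseteq p^{n\epsilon}\mathfrak{h}$) together with the relation $\log X=p^{-c}y$ and careful bookkeeping on the factors $\frac{1}{n!}$ to conclude $\pi u|_{\mathfrak{h}'}\subseteq p\mathfrak{h}'$. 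With this congruence in hand, a standard flag construction on $\mathfrak{h}'$ (inducting on the dimension, successively splitting off a $\bar A$-invariant direction modulo each power of $p$) adjusts the basis to the required lower unitriangular form.

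For (iv), I apply Proposition \ref{p-valuation} to $c(G)$ itself. First, $c(G)$ is uniform because its Lie algebra $\mathfrak{c}:=\log c(G)=Z(\mathfrak{g})\oplus[\mathfrak{g},\mathfrak{g}]\oplus\mathbb{Z}_p y$ satisfies $[\mathfrak{c},\mathfrak{c}]=[y,[\mathfrak{g},\mathfrak{g}]]\subseteq p^{\epsilon}[\mathfrak{g},\mathfrak{g}]\subseteq p^{\epsilon}\mathfrak{c}$ by powerfulness of $\mathfrak{g}$. Split-centre for $c(G)$ is inherited from that of $G$ via a direct computation, using the decomposition $G=Z(G)\times G_0$ and the observation that $V\subseteq G_0$ (commutators lie in the $Z(G)$-free factor), so that $c(G)=Z(G)\times(V\rtimes\langle X^{p^c}\rangle)$ with the centre of the second factor controlled by the $\bar A$-analysis of (iii). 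Proposition \ref{p-valuation} then supplies an abelian $p$-valuation $\omega$ on $c(G)$ and an ordered basis of the principal subgroup $Z(G)\times V$ adjoined to $X^{p^c}$, in which the basis elements of the ``commutator'' factor share a common $\omega$-value strictly exceeding $\omega(X^{p^c})$; since $\{u_c(k_1),\dots,u_c(k_r)\}$ is already a $\mathbb{Z}_p$-basis of $V$, we may match it with the output of Proposition \ref{p-valuation} up to $\mathbb{Z}_p$-change of basis within $V$, which preserves the required value conditions, yielding (iv).
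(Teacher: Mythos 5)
Your parts (i) and (ii) are sound and broadly in line with the paper's argument. The gap is in (iii): the claimed congruence $\bar A\equiv 1\pmod p$ is false, and the proposed derivation cannot be repaired. Writing $\log X=p^{-c}y$, the $n$-th term of the expansion from Lemma~\ref{transport} is $\tfrac{1}{n!}p^{-nc}\mathrm{ad}(y)^n(v)$, and powerfulness of $\mathfrak g$ only gives $\mathrm{ad}(y)^n(v)\in p^{n\epsilon}\mathfrak h$; once $c\geq\epsilon$ the factor $p^{-nc}$ absorbs the entire gain, and the $\tfrac{1}{n!}$ only pushes things further out of $p\mathfrak h$, so one cannot conclude $A(v)-v\in p\mathfrak h$. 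Concretely, the Remark at the end of Section~5 describes a group with $p>2$ and $c=1=\epsilon$ in which, in the resulting $k$-basis, $\bar XT_1\bar X^{-1}=T_1+T_2$; that is $D_1\neq0$, which is exactly a witness that $\bar A\not\equiv1\pmod p$. If your congruence were true, every $D_i$ would vanish and the whole analysis of Section~6 would be vacuous.

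What the paper actually proves — and what makes your flag construction go through — is the weaker statement that $\bar A\bmod p$ is \emph{unipotent}. This is not obtained from powerfulness applied to $\mathrm{ad}(\log X)$; instead one observes $\bar A^{p^c}=\pi\circ\mathrm{Ad}(X^{p^c})|_{\mathfrak h'}$, where $X^{p^c}\in G_c$ and $G_c$ is uniform, so $\mathrm{Ad}(X^{p^c})\equiv 1$ on $\mathfrak h/p\mathfrak h$ (the paper phrases this as $\psi^{p^c}=\mathrm{id}$ on $V/V^p$). In characteristic $p$ this forces $\bar A\bmod p$ to be unipotent, and then one puts it in triangular form exactly as you describe. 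You should replace the $\bar A\equiv1$ claim with this unipotence argument; the rest of (iii) is then fine. For (iv) you take a genuinely different route, trying to prove $c(G)$ is itself uniform and applying Proposition~\ref{p-valuation} to it directly; the paper instead applies Proposition~\ref{p-valuation} to the already-uniform $G_c$ and restricts the resulting $\omega$ to the open subgroup $c(G)$, noting that the (ii) basis is a $p$-power rescaling of an ordered basis of $(G_c,\omega)$, hence an ordered basis of $(c(G),\omega)$. Your route requires additionally verifying that $c(G)$ is $p$-saturable (so that powerfulness of $\log c(G)$ yields uniformity) and that $c(G)$ inherits split centre — both avoidable by the paper's shorter argument.
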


\begin{proof}

\noindent Let $V=\exp([\mathfrak{g},\mathfrak{g}])$, and let $x=\log(X)\in\log(Sat(G))$.\\

\noindent If $h\in V$ then $h=\exp([p^c x,u])$ for some $u\in\log(H)$, i.e. $u=\log(k)$, and so:

\begin{center}
$h=\exp([\log(X^{p^c}),\log(k)])=\underset{n\rightarrow\infty}{\lim}{(X^{p^{n+c}}k^{p^n}X^{-p^{n+c}}k^{-p^n})^{p^{-2n}}}$ by \cite[\rom{4}. 3.2.3]{Lazard}.
\end{center}

\noindent Thus $XhX^{-1}=\underset{n\rightarrow\infty}{\lim}{(X^{p^{n+c}}(XkX^{-1})^{p^n}X^{-p^{n+c}}(XkX^{-1})^{-p^n})^{p^{-2n}}}=\exp([\log(X^{p^c}),\log(XkX^{-1})])$.\\

\noindent Clearly this lies in $V$, and hence $V$ is normal in $G$.\\

\noindent Using Lemma \ref{transport}, it is straightforward to show that for all $h\in H$, $(X^{p^{c}},h)\in V$, therefore:

\begin{center}
$hX^{p^{c}}h^{-1}=(X^{p^{c}},h)^{-1}X^{p^{c}}\in V\rtimes\langle X^{p^c}\rangle$
\end{center}

\noindent It follows that $c(G)=Z(G)\times V\rtimes\big\langle X^{p^c}\big\rangle$ is normal in $G$.\\

\noindent Using Proposition \ref{p-valuation}, we may choose a basis $\{h_1,\cdots,h_d\}$ for $H$ such that $\{h_{r+1},\cdots,h_d\}$ is a basis for $Z(G)$ and $\{h_1^{p^{t_1}},\cdots,h_r^{p^{t_r}}\}$ is a basis for $V$.

Therefore $c(G)$ has basis $\{h_1^{p^{t_1}},\cdots,h_r^{p^{t_r}},h_{r+1},\cdots,h_d,X^{p^{c}}\}$, and hence it is open in $G$ as required.\\

\noindent Now, for each $i=1,\cdots,d$, let $u_i=\log(h_i)$, then $\{u_1,\cdots,u_d\}$ is a $\mathbb{Z}_p$-basis for $\log(H)$, and $\{p^{t_1}u_1,\cdots,p^{t_r}u_r\}$ is a basis for $[\mathfrak{g},\mathfrak{g}]$.\\

\noindent Therefore, for each $i$, $p^{t_i}u_i=[p^c x,v_i]$ for some $v_i\in\log(H)$, in fact we may assume that $v_i\in$ Span$_{\mathbb{Z}_p}\{u_1,\cdots,u_r\}$, and it follows that $\{v_1,\cdots,v_r\}$ forms a basis for Span$_{\mathbb{Z}_p}\{u_1,\cdots,u_r\}$.\\

\noindent Let $k_i:=\exp(v_i)$ for each $i=1,\cdots r$, and for $i>r$ set $k_i=h_i$. Then we know that 

\begin{center}
$u_{c}(k_i)=\exp([p^{c}\log(X),\log(k_i)])=\exp([p^{c}x,v_i])=\exp(p^{t_i}u_i)=h_i^{p^{t_i}}$ for each $i\leq r$
\end{center}

\noindent It follows that $\{u_{c}(k_1),\cdots,u_{c}(k_r),k_{r+1},\cdots,k_d,X^{p^c}\}$ is a basis for $c(G)$, thus giving part $ii$.\\

\noindent Now, $V$ is normal in $G$, and clearly $V^p$ is also normal, so consider the action $\psi$ of $X$ on the $r$-dimensional $\mathbb{F}_p$-vector space $\frac{V}{V^p}$, i.e. $\psi(hV^p)=(XhX^{-1})V^p$. It is clear that this action $\psi$ is $\mathbb{F}_p$-linear.\\

\noindent Furthermore, since $G_c$ is uniform and $X^{p^c}\in G_c$, we have that $\psi^{p^c}=id$, i.e. $(\psi-id)^{p^c}=0$. Therefore $\psi$ has a $1$-eigenvector in $\frac{V}{V^p}$.

It follows that we may choose a basis for $\frac{V}{V^p}$ such that $\psi$ is represented by an upper-triangular matrix, with $1$'s on the diagonal.\\

\noindent This basis is obtained by transforming $\{u_{c}(k_1),\cdots,u_{c}(k_r)\}=\{h_1^{p^{t_1}},\cdots,h_r^{p^{t_r}}\}$ by an invertible matrix over $\mathbb{Z}_p$. The new basis will also have the same form $\{u_{c}(k_1'),\cdots,u_{c}(k_r')\}=\{h_1'^{p^{t_1}},\cdots,h_r'^{p^{t_r}}\}$ as described by $ii$, and it will satisfy $iii$ as required.\\

\noindent Finally, using Proposition \ref{p-valuation}, we see that there is an abelian $p$-valuation $\omega$ on the uniform group $G_c$ such that $\omega(u_{c}(k_i))=\omega(u_{c}(k_j))>\omega(X^{p^c})$ for all $i,j\leq r$, and of course $\omega$ restricts to $c(G)$, which gives us part $iv$.\end{proof}

\noindent We call a basis $\{k_1,\cdots,k_d\}$ for $H$ satisfying these conditions a \emph{$k$-basis} for $H$.\\

\noindent This result gives us a $p$-valuation that equates the values of $u(k_i)$ and $u(k_j)$ for each $i,j$. Unfortunately, with the standard Lazard filtration, this does not mean that $w(u(k_i)-1)=w(u(k_j)-1)$.

\begin{theorem}\label{equalising}

Let $G$ be a non-abelian, $p$-valuable, abelian-by-procyclic group with split centre. Let $c(G)$ be the extended commutator subgroup, and let $\{k_1,\cdots,k_{d}\}$ be a $k$-basis for $H$.

Then there exists a complete, Zariskian filtration $w:kG\to\mathbb{Z}\cup\{\infty\}$ such that:\\ 

\noindent i. For all $i,j=1,\cdots,r$, $w(u_{c}(k_i)-1)=w(u_{c}(k_j)-1)=\theta$ for some integer $\theta>0$.\\

\noindent ii. The associated graded $gr$ $kG\cong k[T_1,\cdots,T_{d+1}]\ast \frac{G}{c(G)}$, where $T_i=$ gr$(u_{c}(k_i)-1)$ for $i\leq r$, $T_i=$ gr$(k_i-1)$ for $r+1\leq i\leq d$ and $T_{d+1}=$ gr$(X^{p^{c}}-1)$. Each $T_i$ has positive degree, and $deg(T_i)=\theta$ for $i=1,\cdots,r$.\\

\noindent iii. Set $\bar{X}:=$ gr$(X)$. Then $T_r$ is central, and for each $i< r$, $\bar{X}T_i\bar{X}^{-1}=T_i+D_i$ for some $D_i\in$ Span$_{\mathbb{F}_p}\{T_{i+1},\cdots,T_r\}$.\\

\noindent Let $A:=(k[T_1,\cdots,T_{d+1}])^{\frac{G}{c(G)}}$ be the ring of invariants, then $A$ is Noetherian, central in gr $kG$, and $k[T_1,\cdots,T_{d+1}]$ is finitely generated over $A$.

\end{theorem}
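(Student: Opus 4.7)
The plan is to build $w$ in two stages: first construct a Lazard filtration on $kc(G)$ from the abelian $p$-valuation $\omega$ supplied by Proposition \ref{extended-commutator}$(iv)$, then extend to $kG$ via the crossed product decomposition $kG \cong kc(G)\ast F$, where $F := G/c(G)$ is finite since $c(G)$ is open. Let $g_1,\ldots,g_{d+1}$ denote the ordered basis $\{u_c(k_1),\ldots,u_c(k_r),k_{r+1},\ldots,k_d,X^{p^c}\}$ of $c(G)$. The associated Lazard filtration $w'$ on $kc(G)$ (Example 4 of Section 2.2) is complete, positive, and Zariskian, with $\mathrm{gr}_{w'}\,kc(G)\cong k[T_1,\ldots,T_{d+1}]$ a polynomial ring, where $T_i = \mathrm{gr}(g_i-1)$ has $\deg T_i = e\,\omega(g_i)$ for the appropriate integer $e$. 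By Proposition \ref{extended-commutator}$(iv)$, $\deg T_i$ takes a common positive value $\theta$ for all $i\leq r$.

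To invoke Proposition \ref{crossed product} I must show that the conjugation action of each lift in $G$ of an element of $F$ preserves $w'$ on $kc(G)$, equivalently that $\omega$ is invariant under $G$-conjugation on $c(G)$. Any group automorphism $\tau$ of $c(G)$ satisfying $\omega(\tau(g_i)) = \omega(g_i)$ on the ordered basis automatically satisfies $\omega\circ\tau = \omega$: the inequality $\omega(\tau(h))\geq\omega(h)$ follows from the $p$-valuation axioms applied to $h = g_1^{\alpha_1}\cdots g_{d+1}^{\alpha_{d+1}}$, and applying the same to $\tau^{-1}$ gives equality. For $X$-conjugation: Proposition \ref{extended-commutator}$(iii)$ gives $Xu_c(k_i)X^{-1} = u_c(k_1)^{\alpha_{i,1}}\cdots u_c(k_r)^{\alpha_{i,r}}$ (since $u_c(k_j)=1$ for $j>r$), and because $\alpha_{i,i}=1$ while all $\omega(u_c(k_j))$ are equal for $j\leq r$, we obtain $\omega(Xu_c(k_i)X^{-1}) = \omega(u_c(k_i))$; also $X$ fixes $k_j$ for $j>r$ and fixes $X^{p^c}$. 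For conjugation by $h\in H$: $h$ commutes with the abelian elements of the basis, and $hX^{p^c}h^{-1}$ differs from $X^{p^c}$ by a commutator in $V$ of strictly larger $\omega$-value, leaving $\omega(X^{p^c})$ unchanged. Proposition \ref{crossed product} then produces a Zariskian filtration $w$ on $kG$ with $\mathrm{gr}_w\,kG \cong k[T_1,\ldots,T_{d+1}]\ast F$, giving $(i)$ and $(ii)$.

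For $(iii)$, I expand $Xu_c(k_i)X^{-1}-1 = \prod_{j=1}^r (1+c_j)^{\alpha_{i,j}}-1$ in the complete commutative ring $kc(G)$, where $c_j := u_c(k_j)-1$. For $j<i$, $p\mid\alpha_{i,j}$ forces every binomial coefficient $\binom{\alpha_{i,j}}{n}$ with $n<p$ to vanish modulo $p$, so $(1+c_j)^{\alpha_{i,j}}-1$ has filtration at least $p\theta$; for $j=i$ the factor contributes $c_i$ at filtration $\theta$; for $j>i$ it contributes $\bar{\alpha}_{i,j}c_j$ at filtration $\theta$ (plus higher-order remainder); and all cross products of distinct factors lie in $F_{2\theta}\,kc(G)$. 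Since $p\theta, 2\theta > \theta$, passing to $\mathrm{gr}_w$ yields $\bar{X}T_i\bar{X}^{-1} = T_i + D_i$ with $D_i := \sum_{j=i+1}^{r}\bar{\alpha}_{i,j}T_j\in\mathrm{Span}_{\mathbb{F}_p}\{T_{i+1},\ldots,T_r\}$. Setting $i=r$ the sum is empty, so $\bar{X}$ fixes $T_r$; every other representative of $F$ has the form $hX^j$ with $h\in H$, and since $h$ commutes with $u_c(k_r)$ while $X^j$ fixes $T_r$ by the same computation, $T_r$ is fixed by all of $F$. Together with commutativity of $k[T_1,\ldots,T_{d+1}]$, $T_r$ is central in $\mathrm{gr}\,kG$.

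For the final claim, since $F$ is a finite group acting by automorphisms on the polynomial ring $k[T_1,\ldots,T_{d+1}]$, the classical Hilbert--Noether theorem shows $A = k[T_1,\ldots,T_{d+1}]^F$ is a finitely generated $k$-algebra (hence Noetherian) and $k[T_1,\ldots,T_{d+1}]$ is a finitely generated $A$-module. Centrality of $A$ in $\mathrm{gr}\,kG = k[T_1,\ldots,T_{d+1}]\ast F$ is immediate: $A$ lies inside the commutative polynomial ring, and for $a\in A$, $g\in F$, $\bar{g}\cdot a\cdot \bar{g}^{-1} = \sigma(g)(a) = a$. The main obstacle in the whole argument is the $G$-invariance check in stage two, which depends crucially on the structure constants $\alpha_{i,j}$ provided by Proposition \ref{extended-commutator}$(iii)$; once that is established, the remaining steps reduce to explicit manipulations of the ordered basis and of the crossed product.
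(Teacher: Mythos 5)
Your proposal takes essentially the same route as the paper: build the Lazard filtration on $kc(G)$ from the abelian $p$-valuation of Proposition \ref{extended-commutator}$(iv)$, extend to $kG\cong kc(G)\ast\frac{G}{c(G)}$ via Proposition \ref{crossed product} after checking conjugation-invariance on the ordered basis, read off $D_i$ from the structure constants in Proposition \ref{extended-commutator}$(iii)$, and finish with a finiteness-of-invariants argument. Two remarks. First, there is a small logical slip in your general lemma: from $\omega(\tau(g_i))=\omega(g_i)$ the $p$-valuation axioms give $\omega\circ\tau\geq\omega$, but ``applying the same to $\tau^{-1}$'' requires knowing $\omega(\tau^{-1}(g_i))\geq\omega(g_i)$, and from what you have you only get the \emph{reverse} bound $\omega(\tau^{-1}(g_i))\leq\omega(g_i)$ (by substituting $\tau^{-1}(g_i)$ into $\omega\circ\tau\geq\omega$). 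The conclusion is still true --- either because the conjugations $c_g$ for $g\in G$ all satisfy the hypothesis, including $c_{g^{-1}}=(c_g)^{-1}$, so both inequalities hold (this is in effect what the paper does by checking the conjugation condition for all $g\in G$); or by a Haar-measure/index comparison $\tau(G_{\geq\mu})\subseteq G_{\geq\mu}$ with both sides of the same finite index in $G$ --- but your sketch as written doesn't establish it. Second, you invoke Hilbert--Noether where the paper spells out the integral-extension-plus-Eakin argument; this is the same content, and your observation that centrality of $A$ needs no discussion of the twist is a genuine (if small) simplification of the paper's presentation.
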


\begin{proof}

Set $U=c(G)=\langle u_{c}(k_1),\cdots,u_{c}(k_r),k_{r+1},\cdots,k_d,X^{p^{c}}\rangle$. Then $U$ is an open, normal subgroup of $G$ by Proposition \ref{extended-commutator}($i$), and hence $kG\cong kU\ast\frac{G}{U}$.\\

\noindent Using Proposition \ref{extended-commutator}($iv$), we choose an abelian $p$-valuation $\omega$ on $U$ such that $\frac{1}{e}\theta:=\omega(u_{c}(k_i))=\omega(u_{c}(k_j))>\omega(X^{p^{c}})$ for all $i,j\leq r$, where $\theta>0$ is an integer. Then we can define the Lazard valuation $w$ on $kU$ with respect to $\omega$.

Since $\{u_{c}(k_1),\cdots,u_{c}(k_{r}),k_{r+1},\cdots,k_d,X^{p^{c}}\}$ is an ordered basis for $(U,\omega)$, it follows from the definition of $w$ that:

\begin{center}
$w(u_{c}(k_j)-1)=e\omega(u_{c}(k_j))=e\omega(u_{c}(k_i))=w(u_{c}(k_i)-1)=\theta$ for all $i,j\leq r$.
\end{center}

\noindent Furthermore, we have that if $V=\exp([\mathfrak{g},\mathfrak{g}])\subseteq U$ and $r\in kV$, then $w(r)\geq \theta$.\\

\noindent We want to apply Proposition \ref{crossed product} and extend $w$ to $kG\cong kU\ast\frac{G}{U}$. So we only need to verify that for all $g\in G$, $r\in kU$, $w(grg^{-1})=w(r)$, and it suffices to verify this property for $r=u_{c}(k_1)-1,\cdots,u_{c}(k_{r})-1,$

\noindent $k_{r+1}-1,\cdots,k_d-1,X^{p^{c}}-1$, since they form a topological generating set for $kU$.\\

\noindent Since $k_{r+1},\cdots,k_d\in Z(G)$, it is obvious that $w(g(k_l-1)g^{-1})=w(k_l-1)$ for each $r+1\leq l\leq d$, $g\in G$.\\

\noindent For each $j\leq r$, $gu_{c}(k_j)g^{-1}\in V$, thus:\\

\noindent $w(gu_{c}(k_j)g^{-1}-1)\geq \theta=w(u_{c}(k_j)-1)$ and it follows easily that equality holds.\\

\noindent Finally, $g=hX^{\beta}$ for some $h\in H$, $\beta\in\mathbb{Z}_p$, so

\begin{center}
$gX^{p^{c}}g^{-1}-1=hX^{p^{c}}h^{-1}-1=((h,X^{p^{c}})-1)(X^{p^{c}}-1)+((h,X^{p^{c}})-1)+(X^{p^{c}}-1)$.
\end{center}

\noindent Hence $w(g(X^{p^{c}}-1)g^{-1})\geq\min\{w((h,X^{p^{c}})-1),w(X^{p^{c}}-1)\}$, with equality if $w((h,X^{p^{c}})-1)\neq w(X^{p^{c}}-1)$. But since $(h,X^{p^{c}})\in V$, we have that 

\begin{center}
$w((h,X^{p^{c}})-1)\geq \theta=e\omega(u_{c}(k_i))>e\omega(X^{p^{c}})=w(X^{p^{c}}-1)$
\end{center}

\noindent and hence $w(gX^{p^{c}}g^{-1}-1)=w(X^{p^{c}}-1)$ as required. Note that it is here that we need the fact that $\omega(u_{c}(k_i))>\omega(X^{p^{c}})$.\\

\noindent Therefore we can apply Proposition \ref{crossed product}, and extend $w$ to $kG$ so that gr$_{w}$ $kG\cong ($gr$_w$ $kU)\ast\frac{G}{U}$, and we have that gr$_w$ $kU\cong k[T_1,....,T_{d+1}]$ as usual, where $T_i=$ gr$(u_{c}(k_i)-1)$ has degree $\theta$ for $i\leq r$, $T_i=$ gr$(k_i-1)$ for $r+1\leq i\leq d$ and $T_{d+1}=$ gr$(X^{p^{c}}-1)$ as required.\\

\noindent Using Proposition \ref{extended-commutator}($iii$), we see that $Xu_{c}(k_i)X^{-1}=u_{c}(k_1)^{\alpha_{1,i}}\cdots u_{c}(k_r)^{\alpha_{1,i}}$ where $p\mid\alpha_j$ for $j<i$ and $\alpha_{i,i}=1$.

Hence $\bar{X}T_i\bar{X}^{-1}=\overline{\alpha}_{i,1}T_1+\cdots+\overline{\alpha}_{i,r}T_r=T_i+\overline{\alpha}_{i+1,1}T_{i+1}+\cdots+\overline{\alpha}_{i,r}T_r$ for each $i\leq r$, thus giving part $iii$.\\ 

\noindent Now, every element $u\in $ gr $kU$ is a root of the polynomial $\underset{g\in\frac{G}{c(G)}}{\prod}{(s-gug^{-1})}\in A[s]$, hence gr $kU$ is integral over $A$.\\

\noindent So since $k\subseteq A$ and gr $kU\cong k[T_1,....,T_{d+1}]$ is a finitely generated $k$-algebra, we have that gr $kU$ is a finitely generated $A$-algebra, and hence finitely generated as an $A$-module by the integral property.\\

\noindent So it follows that gr$_{w}$ $kG$ is finitely generated as a right $A$-module. Furthermore, since gr $kU$ is Noetherian and commutative, it follows from \cite[Theorem 2]{Enkin} that $A$ is Noetherian.\\

\noindent Furthermore, it is easy to show that the twist $\frac{G}{U}\times\frac{G}{U}\to ($gr $kU)^{\times}$ of the crossed product is trivial, so it follows that if $r\in $ gr $kU$ is invariant under the action of $\frac{G}{U}$ then it is central. Hence $A$ is central in gr $kG$.\end{proof}

\noindent\textbf{\underline{Note:}} For any $h\in H$, we have that $(u_{c}(h)-1)+F_{\theta+1}kG\in$ Span$_{\mathbb{F}_p}\{T_1,\cdots,T_r\}$.

\vspace{0.2in}

\noindent Now, let $P$ be a faithful prime ideal of $kG$, and let $w$ be a filtration on $kG$ satisfying the conditions of the proposition. Then $w$ induces the quotient filtration $\overline{w}$ of $\frac{kG}{P}$.\\

\noindent By \cite[Ch.\rom{2} Corollary 2.1.5]{LVO}, $P$ is closed in $kG$, and hence $\frac{kG}{P}$ is complete, and gr$_{\overline{w}}$ $\frac{kG}{P}\cong\frac{\text{gr }P}{\text{gr }P}$ is Noetherian. Therefore $\overline{w}$ is Zariskian by \cite[Ch.\rom{2} Theorem 2.1.2]{LVO}, and clearly $\tau:kG\to\frac{kG}{P}$ is continuous.\\

\noindent For convenience, set $\overline{T}:=T+$ gr $P\in$ gr $\frac{kG}{P}$ for all $T\in $ gr $kc(G)=k[T_1,\cdots,T_{d+1}]$.\\

\noindent Let $\overline{A}:=\frac{A+\text{gr }P}{\text{gr }P}$ be the image of $A$ in gr $\frac{kG}{P}$, and let $A':=(k[\overline{T}_1,\cdots,\overline{T}_{d+1}])^{\frac{G}{c(G)}}$ be the ring of $\frac{G}{c(G)}$-invariants in $k[\overline{T}_1,\cdots,\overline{T}_{d+1}]=\frac{k[T_1,\cdots,T_{d+1}]+\text{gr }P}{\text{gr } P}$.\\

\noindent Since $\frac{G}{c(G)}$-invariant elements in gr $kG$ are $\frac{G}{c(G)}$-invariant modulo gr $P$, it is clear that $\overline{A}\subseteq A'\subseteq\frac{k[T_1,\cdots,T_{d+1}]+\text{gr }P}{\text{gr } P}$.\\

\noindent Then since $k[T_1,\cdots,T_{d+1}]$ is finitely generated over $A$ by Theorem \ref{equalising}, it follows that $A'$ is finitely generated over the Noetherian ring $\overline{A}$, hence $A'$ is Noetherian.\\

\noindent Therefore, $\frac{kG}{P}$ is a prime ring with a Zariskian filtration $\overline{w}$ such that gr $\frac{kG}{P}$ is finitely generated over a central, Noetherian subring $A'$. Hence we may apply Theorem \ref{filtration} to produce a non-commutative valuation on $Q(\frac{kG}{P})$.

\subsection{A special case}

In the next section, we will prove Theorem \ref{B} in full generality, but first we need to deal with a special case:\\

\noindent Fix a $k$-basis $\{k_1,\cdots,k_s\}$ for $H$, and a Zariskian filtration $w$ on $kG$ satisfying the conditions of Theorem \ref{equalising}. Then we have that $T_r\in A$ and $\bar{X}T_i\bar{X}^{-1}=T_i+D_i$ for some $D_i\in$ Span$_{\mathbb{F}_p}\{T_{i+1},\cdots,T_{r}\}$ for all $i<r$.\\

\noindent We will now suppose that for each $i<r$, $D_i$ is nilpotent modulo gr $P$.\\

\noindent Then for sufficiently high $m$, $\bar{X}T_i^{p^m}\bar{X}^{-1}\equiv (T_i+D_i)^{p^m}=T_i^{p^m}+D_i^{p^m}\equiv T_i^{p^m}$ (mod gr $P$), i.e. $\overline{T}_i^{p^m}\in A'$\\

\noindent Fix an integer $m_0$ such that $\overline{T}_i^{p^{m_0}}\in A'$ for all $i\leq r$.

\begin{proposition}\label{CSA}

Suppose that for each $i=1,\cdots,r$, $T_i$ is nilpotent modulo gr $P$, i.e. $\overline{T}_i$ is nilpotent. Then $Q(\frac{kG}{P})$ is a central simple algebra.

\end{proposition}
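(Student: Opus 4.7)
The plan is to leverage the hypothesis to show that the reduced graded ring $(\text{gr}\,\frac{kG}{P})_{\text{red}}$ is module-finite over a central commutative subring, then transfer polynomial identity (PI) control through the non-commutative valuation construction in Theorem \ref{filtration} to conclude via Posner's theorem that $Q(\frac{kG}{P})$ is central simple.

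First I would put ourselves in the ``special case'' setup of this subsection: since each $\overline{T}_i$ ($i\leq r$) is nilpotent and $D_i\in\text{Span}_{\mathbb{F}_p}\{T_{i+1},\ldots,T_r\}$, each $D_i$ is nilpotent modulo $\text{gr}\,P$, so $\overline{T}_i^{p^{m_0}}\in A'$ is central and nilpotent for every $i\leq r$. Next I would analyse the $F:=\frac{G}{c(G)}$-action on $\text{gr}\,kc(G)=k[T_1,\ldots,T_{d+1}]$ using Theorem \ref{equalising} and Proposition \ref{extended-commutator}. The $H$-cosets act trivially on every $T_j$: conjugation by $k_i$ fixes each $u_c(k_j)$ and each $k_j$ (for $j\geq r+1$, by centrality), while $(k_i,X^{p^c})\in V$ has strictly higher $w$-value than $X^{p^c}-1$, so $\bar k_i T_{d+1}\bar k_i^{-1}=T_{d+1}$. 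The $X$-coset fixes $T_{r+1},\ldots,T_{d+1}$ (the first batch being central in $G$ and $XX^{p^c}=X^{p^c}X$) and acts unipotently on $T_1,\ldots,T_r$ via $\bar X T_i\bar X^{-1}=T_i+D_i$. The twist $\gamma$ of the crossed product lies in $(\text{gr}\,kc(G))^\times=k^\times$ since the $T_j$ all have positive degree.

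Setting $R:=(\text{gr}\,\frac{kG}{P})_{\text{red}}$, the nilpotent elements $\overline{T}_1,\ldots,\overline{T}_r$ vanish in $R$, so the induced $F$-action on the generators $\overline{T}_{r+1},\ldots,\overline{T}_{d+1}$ is trivial. Hence $R$ is a homomorphic image of $k[\overline{T}_{r+1},\ldots,\overline{T}_{d+1}]\otimes_k k^{\gamma}F$, which is free of rank $|F|$ over the commutative central subring $k[\overline{T}_{r+1},\ldots,\overline{T}_{d+1}]\otimes_k Z(k^{\gamma}F)$. In particular $R$ is module-finite over a commutative central subring, so it embeds into a matrix ring over a commutative ring via the regular representation, and thus satisfies a polynomial identity of bounded degree.

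To finish, I would construct the non-commutative valuation $v=v_{\mathfrak{q}}$ on $Q(\frac{kG}{P})$ from Theorem \ref{filtration}, with $\mathfrak{q}$ a minimal prime of $A'$ with $A'_{>0}\not\subseteq\mathfrak{q}$, and examine the completion $\widehat{Q}\cong M_n(Q(D))$. The residue division ring $k_D:=\mathcal{O}/J(\mathcal{O})$ of the complete non-commutative DVR $D$ appears as the degree-zero piece of $\text{gr}_v\widehat{Q}$, which is obtained from $R$ by the microlocalisation and completion steps of the construction. Since PI is preserved by localisation and by passage to subquotients, $k_D$ inherits a polynomial identity, so Kaplansky's theorem forces $k_D$ to be finite-dimensional over its centre. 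Complete non-commutative DVR theory then lifts this to $Q(D)$, making $\widehat{Q}$ a central simple algebra. Since the dense embedding $Q(\frac{kG}{P})\hookrightarrow\widehat{Q}$ transfers polynomial identities, and $Q(\frac{kG}{P})$ is simple Artinian, Posner's theorem yields that $Q(\frac{kG}{P})$ is a CSA. The main obstacle is the transfer of the PI property through the microlocalisation/completion construction to $k_D$, which requires careful bookkeeping of the central commutative subring of $R$ as it is pushed through the various localisation steps of Theorem \ref{filtration}.
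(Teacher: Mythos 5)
There is a genuine gap, and you have already put your finger on it: the transfer of the PI property ``through the microlocalisation/completion construction to $k_D$'' is not carried out, and it is not clear how to do it. Passing to the reduced ring $R=(\text{gr}\,\frac{kG}{P})_{\text{red}}$ severs the connection with the Zariskian filtration: $R$ is no longer the associated graded of $\frac{kG}{P}$, so none of the standard lifting theorems for filtered rings apply to it, and the maximal order $\mathcal{O}$ (hence $k_D=\mathcal{O}/J(\mathcal{O})$) produced in the non-commutative valuation construction is not a subquotient of $R$ in any obvious way. Moreover ``PI is preserved by localisation'' is only true for central localisations, and the steps in Theorem \ref{filtration} from $\text{gr}\,\frac{kG}{P}$ to $k_D$ involve completion, passage to a simple quotient of $Q'$, and a change of order from $V$ to $\mathcal{O}$, none of which obviously preserves the commutative central subring you have isolated in $R$.

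The paper's proof avoids all of this by staying away from the reduction and from the non-commutative valuation entirely. Nilpotence of $\overline{T}_1,\ldots,\overline{T}_r$ in the honest (non-reduced) graded ring $\frac{\text{gr }kG}{\text{gr }P}$ already implies, by a direct monomial-bounding argument using Theorem \ref{equalising}(ii), that $\frac{\text{gr }kG}{\text{gr }P}$ is a finitely generated module over the image of $k[T_{r+1},\ldots,T_{d+1}]$. The split-centre hypothesis is then used to identify this image with $\text{gr}\,\frac{k(Z(G)\times\langle X^{p^c}\rangle)+P}{P}$, i.e.\ as the associated graded of a genuine closed commutative subring of $\frac{kG}{P}$. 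The standard result on Zariskian filtrations \cite[Ch.\rom{1} Theorem 5.7]{LVO} then lifts module-finiteness from the graded level to the filtered level, giving that $\frac{kG}{P}$ is finite as a module over a commutative subring; this yields PI for $\frac{kG}{P}$ by \cite[Corollary 13.1.14(iii)]{McConnell}, and Posner's theorem finishes. Your observation that the $F=\frac{G}{c(G)}$-action becomes trivial on $\overline{T}_{r+1},\ldots,\overline{T}_{d+1}$ after reduction is correct but unnecessary: the finite-generation argument does not need to track the twist or the action, only the $T$-degree bounds coming from nilpotence.
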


\begin{proof}

Using Theorem \ref{equalising}($ii$), every element of gr $kG$ has the form

\begin{center}
$\underset{g\in\frac{G}{c(G)}}{\sum}(\underset{\alpha\in\mathbb{N}^{d+1}}{\sum}{\lambda_{\alpha}T_1^{\alpha_1}\cdots T_{d+1}^{\alpha_{d+1}}})g$
\end{center} 

\noindent where $\lambda_{\alpha}=0$ for all but finitely many $\alpha$.\\

\noindent Therefore, it follows immediately from nilpotence of $\overline{T}_1,\cdots,\overline{T}_r$ that $\frac{\text{gr }kG}{\text{gr }P}$ is finitely generated over $\frac{k[T_{r+1},\cdots,T_{d+1}]+\text{gr }P}{\text{gr }P}$.\\

\noindent But since $Z(G)=\langle k_{r+1},\cdots,k_d\rangle$ by Proposition \ref{extended-commutator}($ii$), it follows that under the quotient filtration, gr $\frac{k(Z(G)\times \langle X^{p^{c}}\rangle)+P}{P}\cong\frac{k[T_{r+1},\cdots,T_{d+1}]+\text{gr }P}{\text{gr }P}$.\\

\noindent So since gr $\frac{kG}{P}$ is finitely generated over gr $\frac{k(Z(G)\times\langle X^{p^{c}}\rangle) +P}{P}$, and $\frac{k(Z(G)\times\langle X^{p^{c}}\rangle) +P}{P}$ is closed in $\frac{kG}{P}$, it follows from \cite[Ch.\rom{1} Theorem 5.7]{LVO} that $\frac{kG}{P}$ is finitely generated over $\frac{k(Z(G)\times\langle X^{p^{c}}\rangle) +P}{P}$.\\

\noindent But $k(Z(G)\times\langle X^{p^{c}}\rangle)$ is commutative, so $\frac{kG}{P}$ is  finitely generated as a right module over a commutative subring. Therefore, by \cite[Corollary 13.1.14(iii)]{McConnell}, $\frac{kG}{P}$ satisfies a polynomial identity.

So since $\frac{kG}{P}$ is prime, it follows from Posner's theorem \cite[Theorem 13.6.5]{McConnell}, that $Q(\frac{kG}{P})$ is a central simple algebra. \end{proof}

\noindent\textbf{\underline{Note:}} This proof relies on the split centre property, without which we would not be able to argue that $\frac{k[T_{r+1},\cdots,T_{d+1}]+\text{gr }P}{\text{gr }P}$ arises as the associated graded of some commutative subring of $\frac{kG}{P}$.\\

\vspace{0.1in}

\noindent So let us assume that $Q(\frac{kG}{P})$ is not a CSA. Then by the proposition, we know that there exists $s\leq r$ such that $\overline{T}_s$ is not nilpotent.\\

\noindent Since we know that $\overline{T}_s^{p^{m_0}}\in A'$, it follows there exists a minimal prime ideal $\mathfrak{q}$ of $A'$ such that $\overline{T}_s^{p^{m_0}}\notin\mathfrak{q}$. Using Theorem \ref{filtration}, we let $v=v_{\mathfrak{q}}$ be the non-commutative valuation on $Q(\frac{kG}{P})$ corresponding to $\mathfrak{q}$.\\

\noindent So, $\overline{T}_s^{p^{m_0}}=$ gr$\tau(u_{c}(k_s)-1)^{p^{m_0}}\in A'\backslash\mathfrak{q}$, and hence using Theorem \ref{comparison}, we see that $\tau(u_{c}(k_s)-1)^{p^k}$ is $v$-regular for some $k\geq m_0$, and hence $\rho(\tau(u_{c}(k_s)-1)^{p^k})=v(\tau(u_{c}(k_s)-1)^{p^k})$.\\

\noindent Recall that $\lambda=\inf\{\rho(\tau(u(g)-1)):g\in G\}<\infty$ by Lemma \ref{faithful}.

\begin{lemma}\label{minimal}

Let $h\in H$ such that $\rho(\tau(u(h)-1))=\lambda$. Then $\tau(u(h)-1)^{p^m}$ is $v$-regular for sufficiently high $m$.

\end{lemma}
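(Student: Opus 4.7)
The plan is to reduce the statement to one about $q_c := \tau(u_c(h)-1)$ and then invoke Theorem \ref{comparison}. After enlarging $m_1$ if necessary so that $m_1 \ge c$, we have $u(h) = u_c(h)^{p^{m_1-c}}$; since $k$ has characteristic $p$, this yields $\tau(u(h)-1) = q_c^{p^{m_1-c}}$, so $\rho(q_c) = \lambda/p^{m_1-c}$, and it suffices to prove that $q_c^{p^N}$ is $v$-regular for some $N$.

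To identify the relevant graded class in $B := \mathrm{gr}(kG/P)$, I would decompose $h = k_1^{\alpha_1} \cdots k_d^{\alpha_d}$ in the $k$-basis. The map $u_c : H \to H$ is a group homomorphism (since $\log H$ is abelian) with $u_c(k_i) = 1$ for $i > r$, so $u_c(h) = \prod_{i \le r} u_c(k_i)^{\alpha_i}$. The note following Theorem \ref{equalising} then gives $\mathrm{gr}_w(u_c(h)-1) = \sum_{i \le r} \bar\alpha_i T_i$ in gr $kG$, with $\bar\alpha_i \in \mathbb{F}_p$ the reduction of $\alpha_i$ modulo $p$. Taking the $p^{m_0}$-th power in characteristic $p$ and reducing modulo gr $P$,
\[
z := \sum_{i \le r} \bar\alpha_i \overline T_i^{p^{m_0}} \in A',
\]
and provided this is non-zero it coincides with $\mathrm{gr}(q_c^{p^{m_0}})$. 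Granted $z \in A' \setminus \mathfrak{q}$, Theorem \ref{comparison} applied to $q_c^{p^{m_0}}$ immediately produces $N$ with $q_c^{p^{m_0+N}}$ $v$-regular, completing the argument.

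The hard part will be verifying $z \notin \mathfrak{q}$; this is where the hypothesis $\rho(q_c) = \lambda/p^{m_1-c}$ must be exploited. Suppose for contradiction $z \in \mathfrak{q}$, and set $s := q_c^{p^{m_0}}$ and $r := \tau(u_c(k_s)-1)^{p^{m_0}}$. Both have $w$-value $p^{m_0}\theta$, with $\mathrm{gr}(s) = z \in \mathfrak{q}$ and $\mathrm{gr}(r) = \overline T_s^{p^{m_0}} \in A' \setminus \mathfrak{q}$. Inspecting the proof of Theorem \ref{comparison} reveals a growing gap: for some fixed $m$, the inequality $v(s^{p^{m+\ell}}) - v(r^{p^{m+\ell}}) \ge p^\ell$ holds for all $\ell \ge 0$ (using that $r^{p^m}$ is $v$-regular together with subadditivity of $v$). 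Dividing by $p^{m+\ell}$ and letting $\ell \to \infty$ gives $\rho(s) \ge \rho(r) + p^{-m} > \rho(r)$; cancelling $p^{m_0}$ yields $\rho(q_c) > \rho(\tau(u_c(k_s)-1)) \ge \lambda/p^{m_1-c}$, contradicting the hypothesis.

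The remaining technicality is the degenerate case where $z = 0$ (equivalently, $y := \sum_{i \le r} \bar\alpha_i \overline T_i$ is nilpotent in $B$, so $\mathrm{gr}(q_c^{p^{m_0}})$ has $w$-degree strictly exceeding $p^{m_0}\theta$). Since $\rho(q_c) < \infty$ forces $q_c$ to be non-nilpotent by Lemma \ref{growth properties}(iv), the true graded class of $q_c^{p^N}$ is non-zero for every $N$; after further $p^{m_0}$-th powering this class lands in $A'$, and the contradiction argument above applies with $m_0$ replaced by the appropriate exponent.
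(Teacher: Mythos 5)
Your overall strategy mirrors the paper's: replace $\tau(u(h)-1)$ by $q_c := \tau(u_c(h)-1)$, identify a leading term in $\mathrm{gr}(kG/P)$, compare with $\tau(u_c(k_s)-1)$ via Theorem \ref{comparison}, and derive a contradiction with $\rho(\tau(u(h)-1)) = \lambda$. The ``growing gap'' re-derivation in your middle paragraph is correct but unnecessary: the paper just uses the theorem's conclusion $v(s^{p^m}) > v(r^{p^m})$ for $m \gg 0$, combined with $v$-regularity of $r^{p^m}$ and Lemma \ref{growth properties}(iii), to get $\rho(s) \geq v(s^{p^m})/p^m > v(r^{p^m})/p^m = \rho(r)$ directly.

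The genuine gap is in your final paragraph, handling $z = 0$. You assert that ``after further $p^{m_0}$-th powering this class lands in $A'$,'' but when $z = 0$ the element $\mathrm{gr}(q_c^{p^{m_0}})$ lives in degree strictly greater than $p^{m_0}\theta$, and there is no reason for any power of it to be $\frac{G}{c(G)}$-invariant: the integer $m_0$ was fixed so that $\overline{T}_i^{p^{m_0}} \in A'$ for the specific generators $T_i$, not for arbitrary higher-degree elements. The correct handling is much simpler and does not involve locating any graded class of $q_c$ inside $A'$: when $T(h)^{p^{m_0}} + \mathrm{gr}\,P = 0$ one has $\overline{w}(q_c^{p^{m_0}}) > p^{m_0}\theta = \overline{w}(\tau(u_c(k_s)-1)^{p^{m_0}})$, and this is exactly the \emph{first} case of Theorem \ref{comparison} (the case $w(s) > w(r)$), which needs only the $w$-values of $s$ and $r$, not $\mathrm{gr}(s)$. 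The conclusion $v(s^{p^m}) > v(r^{p^m})$ for $m \gg 0$ then follows and feeds the same contradiction. Also note a related small slip earlier: the identity $\mathrm{gr}_w(u_c(h)-1) = \sum_{i\le r}\bar\alpha_i T_i$ holds only when the right-hand side is nonzero; if all $\bar\alpha_i$ vanish, $\mathrm{gr}_w(u_c(h)-1)$ sits in a higher degree and the displayed formula is wrong rather than merely degenerate.
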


\begin{proof}

It is clear that $w(u_c(h)-1)\geq\theta=w(u_c(k_s)-1)$, so let $T(h):=u_c(h)-1+F_{\theta+1}kG\in$ Span$_{\mathbb{F}_p}\{T_1,\cdots,T_r\}$. Then $T(h)=$ gr$(u_c(h)-1)$ if and only if $w(u_c(h)-1)=\theta$, otherwise $T(h)=0$.\\

\noindent We know that gr$(u_{c}(k_s)-1)=T_s\notin$ gr $P$, and hence $\overline{w}(\tau(u_{c}(k_s)-1))=w(u_{c}(k_s)-1)$, giving that $\overline{w}(\tau(u_{c}(h)-1))\geq w(u_{c}(h)-1)\geq w(u_{c}(k_s)-1)=\overline{w}(\tau(u_{c}(k_s)-1))$.\\

\noindent Also, $T(h)^{p^{m_0}}+$ gr $P\in A'$, and if $T(h)^{p^{m_0}}+$ gr $P\notin\mathfrak{q}$ then $T(h)^{p^{m_0}}+$ gr $P=$ gr$_{\overline{w}}(\tau(u_c(h)-1)^{p^{m_0}})\in A'\backslash\mathfrak{q}$, so it follows from Theorem \ref{comparison} that $\tau(u(h)-1)^{p^m}$ is $v$-regular for $m>>0$.

So, suppose for contradiction that $T(h)^{p^{m_0}}+$ gr $P\in\mathfrak{q}$:\\

\noindent If $T(h)^{p^{m_0}}+$ gr $P=0$ then $\overline{w}(\tau(u_c(h)-1)^{p^{m_0}})>p^{m_0}\theta=\overline{w}(\tau(u_c(k_s)-1))$, and if $T(h)^{p^{m_0}}\neq 0$ then $T(h)^{p^{m_0}}+$ gr $P=$ gr$_{\overline{w}}(\tau(u_c(h)-1)^{p^{m_0}})\in\mathfrak{q}$. In either case, using Theorem \ref{comparison}, it follows that for $m$ sufficiently high:

\begin{center}
$v(\tau(u_{c}(h)-1)^{p^m})>v(\tau(u_{c}(k_s)-1)^{p^m})$.
\end{center}

\noindent Therefore, $v(\tau(u(h)-1))>v(\tau(u(k_s)-1))$, so since $\tau(u(k_s)-1)^{p^k}$ is $v$-regular:\\

\noindent $\rho(\tau(u(h)-1))\geq \frac{1}{p^k}v(\tau(u(h)-1)^{p^k})>\frac{1}{p^k}v(\tau(u(k_s)-1)^{p^k})=\rho(\tau(u(k_s)-1))\geq\lambda$ -- contradiction.\end{proof}

\noindent Recall the definition of a growth preserving polynomial (GPP) from Section 2.4, and recall that the identity map is a non-trivial GPP.

\begin{proposition}\label{special-case}

Suppose that $Q(\frac{kG}{P})$ is not a CSA, and that $D_i$ is nilpotent mod gr $P$ for all $i<r$. Then $id:\tau(kH)\to\tau(kH)$ is a special GPP with respect to some non-commutative valuation on $Q(\frac{kG}{P})$.

\end{proposition}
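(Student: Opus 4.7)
The plan is to take $v$ to be the non-commutative valuation $v_{\mathfrak{q}}$ constructed in the paragraph just before Lemma \ref{minimal}. Since $Q(\frac{kG}{P})$ is not a CSA and each $D_i$ is nilpotent modulo $\mathrm{gr}\,P$, the contrapositive of Proposition \ref{CSA} furnishes some index $s \leq r$ with $\overline{T}_s$ non-nilpotent in $\mathrm{gr}\,\frac{kG}{P}$. The nilpotence hypothesis on the $D_i$ ensures $\overline{T}_s^{p^{m_0}} \in A'$, so one can choose a minimal prime $\mathfrak{q}$ of $A'$ with $\overline{T}_s^{p^{m_0}} \notin \mathfrak{q}$ and set $v := v_{\mathfrak{q}}$, as produced by Theorem \ref{filtration}.

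Next I would verify the three clauses of Definition \ref{GPP} for $f(t) = t$, which has $p$-degree $r = 0$. The GPP conditions (i) and (ii) become tautologies when $r = 0$: they reduce to $\rho(q) \geq \lambda \Leftrightarrow \rho(q) \geq p^0 \lambda$ and analogously with strict inequality. For non-triviality, Lemma \ref{faithful} shows that $\lambda$ is attained as $\rho(\tau(u(h)-1))$ for some $h \in H$; for this $h$ we have $\rho(f(\tau(u(h)-1))) = \lambda$, which is not strictly greater than $p^0\lambda = \lambda$, so $f = id$ is not trivial.

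The remaining and only substantive clause is specialness: that for every $h \in H$ with $\rho(\tau(u(h)-1)) = \lambda$, the power $\tau(u(h)-1)^{p^k}$ is $v$-regular for sufficiently large $k$. But this is precisely the content of Lemma \ref{minimal}, which relied on exactly the choice of $\mathfrak{q}$ and $s$ above: the key input was that $\overline{T}_s^{p^{m_0}} \in A' \setminus \mathfrak{q}$ gives $v$-regularity of $\tau(u_c(k_s)-1)^{p^k}$ via Theorem \ref{comparison}, and a comparison of $\overline{w}$-values transfers this property to any $h$ whose Mahler approximation has growth rate $\lambda$. Hence $id$ is a special GPP as required.

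The main conceptual obstacle has already been discharged in Lemma \ref{minimal} and in the construction of $v_{\mathfrak{q}}$; this proposition merely packages those ingredients into the GPP framework so that Theorem \ref{special-GPP} can later conclude the control theorem. In fact, the only reason a separate statement is needed is to record the existence of \emph{some} special GPP (namely the identity) under the hypotheses that $Q(\frac{kG}{P})$ is not a CSA and that the off-diagonal commutator data $D_i$ vanishes in the graded quotient — the remaining case, where some $D_i$ is non-nilpotent modulo $\mathrm{gr}\,P$, is handled separately in the subsequent section.
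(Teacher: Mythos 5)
Your proof is correct and follows essentially the same route as the paper's: the paper simply declares the result ``immediate from Definition \ref{GPP} and Lemma \ref{minimal}'', and you have spelled out why — the GPP clauses are tautological for $f=id$ (which has $p$-degree $0$), non-triviality comes from $\lambda$ being attained (Lemma \ref{faithful}), and specialness is exactly Lemma \ref{minimal} applied with the valuation $v_{\mathfrak{q}}$ fixed in the surrounding discussion.
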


\begin{proof}

This is immediate from Definition \ref{GPP} and Lemma \ref{minimal}.\end{proof}

\noindent Therefore, we will assume from now on that $D_i$ is not nilpotent mod gr $P$ for some $i$.\\

\noindent\textbf{\underline{Remark:}} If $G$ is uniform then $D_i=0$ for all $i$, but in general we cannot assume this. For example, if $p>2$ and $G=\langle X,Y,Z\rangle$ where $Y$ and $Z$ commute, $XYX^{-1}=Y^r$, $XZX^{-1}=(YZ)^r$, $r=e^p\in\mathbb{Z}_p$, then $G$ is non-uniform, $c=1$, and $\{Z,Y^{p-1}Z^p\}$ is a $k$-basis for $H=\langle Y,Z\rangle$. In this case, $\bar{X}T_2\bar{X}^{-1}=T_2$, $\bar{X}T_1\bar{X}^{-1}=T_1+T_2$.

\section{Construction of Growth Preserving Polynomials}

\noindent As in the previous section, we will take $G$ to be a $p$-valuable, abelian-by-procyclic group with split centre, $P$ a faithful prime ideal of $kG$, and $w$ a Zariskian filtration on $kG$ satisfying the conditions of Theorem \ref{equalising}.\\

\noindent We fix a $k$-basis $\{k_1,\cdots,k_d\}$ for $H$, and let $T_i:=$ gr$(u_{c}(k_i)-1)$ for each $i\leq r$, $T_i:=$ gr$(k_i-1)$ for $i>r$. We know that $T_r,\cdots,T_d$ are central, and $D_i:=\bar{X}T_i\bar{X}^{-1}-T_i\in$ Span$_{\mathbb{F}_p}\{T_{i+1},\cdots,T_r\}$ for each $i<r$.\\

\noindent We now assume that not all the $D_i$ are nilpotent modulo gr $P$, so let $s<r$ be maximal such that $D_s$ is not nilpotent, i.e. for all $i>s$, $\overline{T}_i^{p^m}\in A'$ for sufficiently high $m$.

Throughout, we will fix $m_0$ such that $\overline{T}_i^{p^{m_0}}\in A'$ for all $i>s$, and we may assume that $m_1\geq m_0$.\\

\noindent By definition, we know that $D_s\in$ Span$_{\mathbb{F}_p}\{T_{s+1},\cdots,T_r\}$, and hence $\overline{D}_s^{p^{m_0}}\in A'$. So since $D_s$ is not nilpotent mod gr $P$, we can fix a minimal prime ideal $\mathfrak{q}$ of $A'$ such that $\overline{D}_s^{p^{m_0}}\in A'\backslash\mathfrak{q}$.\\

\noindent Let $v=v_{\mathfrak{q}}$ be the corresponding non-commutative valuation given by Theorem \ref{filtration}.

\subsection{The Reduction Coefficients}

\noindent Define a function $L$ of commuting variables $x$ and $y$ by:

\begin{equation}
L(x,y):=x^p-xy^{p-1}
\end{equation}

\noindent Moreover, for commuting variables $x,y_1,y_2,\cdots,y_n$, define the iterated function 

\begin{equation}
L^{(n)}(x,y_1,y_2,\cdots,y_n):=L(L(L(\cdots(L(x,y_1),y_2),\cdots),y_n)
\end{equation}

\noindent For $n=0$, we define $L^{(n)}(x,y_1,\cdots,y_n):=x$.\\

\noindent We can readily see that for any commutative $\mathbb{F}_p$-algebra $S$, $y_1,\cdots,y_n\in S$, $L(-,y_1,\cdots,y_n)$ is $\mathbb{F}_p$-linear.

\begin{lemma}\label{polynomial}

Let $S$ be an $\mathbb{F}_p$-algebra, and let $y_1,\cdots,y_n\in S$ commute. Then there exist $a_0,a_1,\cdots,a_{n-1}\in S$ such that $L^{(n)}(x,y_1,\cdots,y_n)=a_0x+a_1x^p+\cdots+a_{n-1}x^{p^{n-1}}+x^{p^n}$ for all $x$ commuting with $y_1,\cdots,y_n$.

\end{lemma}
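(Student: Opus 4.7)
The plan is to proceed by induction on $n$, exploiting the fact that in characteristic $p$ the Frobenius $x \mapsto x^p$ is additive, so the space of additive (i.e.\ $p$-)polynomials of the shape $a_0 x + a_1 x^p + \cdots + a_{m-1} x^{p^{m-1}} + x^{p^m}$ with coefficients in $S$ and leading coefficient $1$ is stable under the operation $P(x) \mapsto L(P(x), y)$ for any $y \in S$ commuting with everything in sight.

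For the base case $n = 0$ the claim is trivial: $L^{(0)}(x) = x = x^{p^0}$, which is already in the required form with no lower-order terms. For the inductive step, assume $L^{(n)}(x, y_1, \ldots, y_n) = a_0 x + a_1 x^p + \cdots + a_{n-1} x^{p^{n-1}} + x^{p^n}$ for some $a_i \in S$. Then by definition
\[
L^{(n+1)}(x, y_1, \ldots, y_{n+1}) = L\bigl(L^{(n)}(x, y_1, \ldots, y_n),\, y_{n+1}\bigr) = P(x)^p - P(x)\, y_{n+1}^{p-1},
\]
where $P(x) := L^{(n)}(x, y_1, \ldots, y_n)$. Since $S$ is an $\mathbb{F}_p$-algebra and all the coefficients commute with $x$, Frobenius additivity yields
\[
P(x)^p = a_0^p x^p + a_1^p x^{p^2} + \cdots + a_{n-1}^p x^{p^n} + x^{p^{n+1}},
\]
while $P(x)\, y_{n+1}^{p-1}$ is clearly again an additive polynomial in $x$ of degree $p^{n-1} \cdot 1 = p^{n-1}$ with no $x^{p^{n+1}}$ term. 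Subtracting, the $x^{p^{n+1}}$ coefficient remains $1$, and collecting like powers of $x$ produces an expression of the desired shape of $p$-degree $n+1$.

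There is no real obstacle here; the content is essentially the observation that $L$ preserves the class of additive polynomials with leading coefficient $1$, and the induction is routine once Frobenius additivity is invoked. The only mild care needed is to record that the coefficients indeed lie in $S$ (which is automatic, as they are polynomial expressions in the $a_i$ and the $y_j^{p-1}$) and that the commutativity hypothesis on $x, y_1, \ldots, y_n$ is enough to legitimately apply $(u+v)^p = u^p + v^p$ and to rearrange the intermediate expressions.
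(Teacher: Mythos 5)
Your proof is correct and follows essentially the same route as the paper: induction on $n$, using Frobenius additivity in the $\mathbb{F}_p$-algebra to expand $P(x)^p$, and noting that the second term $P(x)\,y_{n+1}^{p-1}$ contributes nothing to the $x^{p^{n+1}}$ coefficient, so the polynomial stays monic of $p$-degree $n+1$. (Minor slip: $P(x)\,y_{n+1}^{p-1}$ has $x$-degree $p^n$, not $p^{n-1}$, but this does not affect the argument since only the absence of the $x^{p^{n+1}}$ term is used.)
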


\begin{proof}

Both statements are trivially true for $n=0$, so assume that they hold for some $n\geq 0$ and proceed by induction on $n$:\\

\noindent So $L^{(n)}(x,y_1,\cdots,y_n)=a_0x+a_1x^p+\cdots+a_{n-1}x^{p^{n-1}}+x^{p^n}$, and:\\

\noindent $L^{(n+1)}(x,y_1,\cdots,y_{n+1})=L^{(n)}(x,y_1,\cdots,y_n)^p-L^{(n)}(x,y_1,\cdots,y_n)y_{n+1}^{p-1}$\\

\noindent $=(a_0x+\cdots+a_{n-1}x^{p^{n-1}}+x^{p^n})^p-(a_0x+\cdots+a_{n-1}x^{p^{n-1}}+x^{p^n})y_{n+1}^{p-1}$\\

\noindent $=(-a_0y_{n+1}^{p-1})x+(a_0^p-a_1y_{n+1}^{p-1})x^p+\cdots+(a_{n-2}^p-a_{n-1}y_{n+1}^{p-1})x^{p^{n-1}}+(a_{n-1}^p-y_{n-1}^{p-1})x^{p^n}+x^{p^{n+1}}$.\\

\noindent So setting $b_0=(-a_0y_{n+1}^{p-1})$, $b_i=(a_{i-1}^p-a_iy_{n+1}^{p-1})$ for $1\leq i\leq n$ (taking $a_n:=1$), we have that $L^{(n+1)}(x,y_1,\cdots,y_{n+1})=b_0x+b_1x^p+\cdots+b_nx^{p^n}+x^{p^{n+1}}$ as required.\end{proof}

\vspace{0.1in}

\noindent Now, let $B_s:=D_s$, and for each $1\leq i<s$, let $B_i:=L^{(s-i)}(D_i,B_s,\cdots,B_{i+1})$.

\begin{lemma}\label{centralising}

For each $i\leq s$, $L^{(s-i+1)}(\overline{T}_i,\overline{B}_s,\cdots,\overline{B}_i)^{p^{m_0}}\in A'$, so in particular $\overline{B}_i^{p^{m_0}}\in A'$.

\end{lemma}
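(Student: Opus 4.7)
The plan is to proceed by descending induction on $i$ from $s$ down to $1$, establishing the main identity together with the auxiliary fact $\overline{B}_i^{p^{m_0}}\in A'$ at each step. The key algebraic observation underlying the definition of $L$ is that if $\bar{X}x\bar{X}^{-1}=x+y$ and $y$ is fixed by $\bar{X}$, then $L(x,y)$ is also fixed: $\bar{X}L(x,y)\bar{X}^{-1} = (x+y)^p - (x+y)y^{p-1} = x^p + y^p - xy^{p-1} - y^p = L(x,y)$. I will also use two identities which follow by straightforward inductions in characteristic $p$: first, $L^{(n)}(z,w_1,\ldots,w_n)^{p^m} = L^{(n)}(z^{p^m},w_1^{p^m},\ldots,w_n^{p^m})$, and second, $L^{(s-i)}(z, B_s, \ldots, B_{i+1}) = L^{(j-i-1)}\bigl(L^{(s-j+1)}(z, B_s, \ldots, B_j), B_{j-1}, \ldots, B_{i+1}\bigr)$ for $i<j\leq s$. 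Finally, since the $H$-part of $G/c(G)$ acts trivially on each generator $T_l$ of gr $kU$ (this uses abelianness of $H$ together with the valuation inequality $w((h,X^{p^c})-1)\geq\theta>w(X^{p^c}-1)$), $A'$ is precisely the ring of $\bar{X}$-invariants in $k[\overline{T}_1,\ldots,\overline{T}_{d+1}]$, so establishing $\bar{X}$-invariance suffices.

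For the base case $i=s$, since $B_s=D_s$ is an $\mathbb{F}_p$-linear combination of $T_{s+1},\ldots,T_r$ and $\overline{T}_j^{p^{m_0}}\in A'$ for $j>s$ by choice of $m_0$, $\mathbb{F}_p$-linearity of the $p^{m_0}$-power map gives $\overline{B}_s^{p^{m_0}}\in A'$, hence it is $\bar{X}$-invariant. Raising $\bar{X}\overline{T}_s\bar{X}^{-1}=\overline{T}_s+\overline{D}_s$ to the $p^{m_0}$ power gives $\bar{X}\overline{T}_s^{p^{m_0}}\bar{X}^{-1} = \overline{T}_s^{p^{m_0}}+\overline{D}_s^{p^{m_0}}$, and the same cancellation as in the general observation above then shows that $L(\overline{T}_s,\overline{B}_s)^{p^{m_0}} = \overline{T}_s^{p^{m_0+1}} - \overline{T}_s^{p^{m_0}}\overline{B}_s^{(p-1)p^{m_0}}$ is $\bar{X}$-invariant, hence in $A'$.

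For the inductive step, assume the result for all $j>i$, so in particular $\overline{B}_j^{p^{m_0}}\in A'$ for $j>i$. First I would establish $\overline{B}_i^{p^{m_0}}\in A'$: writing $D_i=\sum_{j=i+1}^r \alpha_{i,j}T_j$ and using $\mathbb{F}_p$-linearity of $L^{(s-i)}$ in its first argument, $\overline{B}_i^{p^{m_0}}$ becomes an $\mathbb{F}_p$-combination of terms $L^{(s-i)}(\overline{T}_j,\overline{B}_s,\ldots,\overline{B}_{i+1})^{p^{m_0}}$. For $j>s$ this is a polynomial in $\overline{T}_j^{p^{m_0}}\in A'$ and the $\overline{B}_l^{p^{m_0}}\in A'$; for $i<j\leq s$ the decomposition identity rewrites it as a polynomial in $L^{(s-j+1)}(\overline{T}_j,\overline{B}_s,\ldots,\overline{B}_j)^{p^{m_0}}\in A'$ and the $\overline{B}_l^{p^{m_0}}\in A'$, all in $A'$ by the inductive hypothesis. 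With $\overline{B}_i^{p^{m_0}}\in A'$ in hand, set $y := L^{(s-i)}(\overline{T}_i,\overline{B}_s,\ldots,\overline{B}_{i+1})$; then $y^{p^{m_0}} = L^{(s-i)}(\overline{T}_i^{p^{m_0}},\overline{B}_s^{p^{m_0}},\ldots,\overline{B}_{i+1}^{p^{m_0}})$, and the $\bar{X}$-invariance of each $\overline{B}_l^{p^{m_0}}$ for $l>i$ together with $\bar{X}\overline{T}_i^{p^{m_0}}\bar{X}^{-1} = \overline{T}_i^{p^{m_0}}+\overline{D}_i^{p^{m_0}}$ and $\mathbb{F}_p$-linearity in the first slot yield $\bar{X}y^{p^{m_0}}\bar{X}^{-1} = y^{p^{m_0}} + \overline{B}_i^{p^{m_0}}$. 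Since $\overline{B}_i^{p^{m_0}}$ is $\bar{X}$-invariant, the same cancellation as in the base case gives that $L^{(s-i+1)}(\overline{T}_i,\overline{B}_s,\ldots,\overline{B}_i)^{p^{m_0}} = L(y,\overline{B}_i)^{p^{m_0}}$ is $\bar{X}$-invariant, hence in $A'$.

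The main obstacle is tracking the fact that the inductive hypothesis gives $\bar{X}$-invariance only after raising $\overline{B}_j$ to the $p^{m_0}$ power, not for $\overline{B}_j$ itself; the identity $L(y,\overline{B}_i)^{p^{m_0}} = L(y^{p^{m_0}},\overline{B}_i^{p^{m_0}})$ is precisely what allows this $p^{m_0}$-level invariance to cancel the shift produced by $\bar{X}$-conjugation. In effect, $L$ is engineered to absorb a shift of its first argument by its second whenever that second argument is invariant after raising to $p^{m_0}$, and the lemma amounts to $s-i+1$ iterations of this single trick.
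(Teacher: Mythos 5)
Your proof is correct and follows essentially the same route as the paper: downward induction on $i$, characterising membership in $A'$ via $\bar{X}$-invariance, and using the key cancellation property of $L$ to absorb the conjugation shift. You are somewhat more explicit than the paper in stating the compatibility of $L^{(n)}$ with raising to $p^{m_0}$-th powers and the decomposition $L^{(s-i)} = L^{(j-i-1)}\circ L^{(s-j+1)}$, both of which the paper uses implicitly.
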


\begin{proof}

\noindent Note that $C\in k[T_1,\cdots,T_d]$ is central if and only if it is invariant under the action of $\bar{X}$.\\ 

\noindent Also, $L(C,C)=C^p-CC^{p-1}=0$, and if $D$ is $\bar{X}$-invariant, then $\bar{X}L(C,D)\bar{X}^{-1}=L(\bar{X}C\bar{X}^{-1},D)$\\

\noindent\textbf{\underline{Notation:}} Let $Y':=\overline{Y}^{p^{m_0}}$ for any $Y\in$ gr $kG$.\\

\noindent We will proceed by downwards induction on $i$, starting with $i=s$. Clearly $B_s'=D_s'\in$ Span$_{\mathbb{F}_p}\{T_{s+1}',\cdots,T_{r}'\}$ is invariant under the action of $\bar{X}$, so:\\

\noindent $\bar{X}L(T_s',B_s')\bar{X}^{-1}=L(\bar{X}T_s'\bar{X}^{-1},B_s')=L(T_s'+B_s',B_s')=L(T_s',B_s')+L(B_s,B_s)=L(T_s',B_s')$.\\

\noindent Therefore $L(T_s',B_s')$ is $\bar{X}$-invariant and the result holds.\\

\noindent Suppose we have the result for all $s\geq j>i$.\\

\noindent Then $B_i'=L^{(s-i)}(D_i',B_s',\cdots,B_{i+1}')$, and $D_i'\in$ Span$_{\mathbb{F}_p}\{T_{i+1}',\cdots,T_{r}'\}$.\\ 

\noindent Using linearity of $L(-,y)$ we have that 

\begin{center}
$B_i'\in$ Span$_{\mathbb{F}_p}\{L^{(s-i)}(T_{j}',B_s',\cdots,B_{i+1}'):j=i+1,\cdots,r\}$
\end{center} 

\noindent therefore $B_i'$ is $\bar{X}$-invariant by the inductive hypothesis.\\

\noindent Also, since $B_s',\cdots,B_{i+1}'$ are $\bar{X}$-invariant, we have that:\\

\noindent $\bar{X}L^{(s-i)}(T_i',B_s',\cdots,B_{i+1}')\bar{X}^{-1}=L^{(s-i)}(\bar{X}T_i'\bar{X}^{-1},B_s',\cdots,B_{i+1}')$\\

\noindent $=L^{(s-i)}(T_i'+\bar{X}T_i'\bar{X}^{-1}-T_i',B_s',\cdots,B_{i+1}')=L^{(s-i)}(T_i',B_s',\cdots,B_{i+1}')+B_i$\\ 

\noindent The final equality follows from linearity of $L(-,B_s',\cdots,B_{i+1}')$ and the fact that $D_i'=\bar{X}T_i'\bar{X}^{-1}-T_i'$.\\

\noindent Set $C:=L^{(s-i)}(T_i',B_s',\cdots,B_{i+1}')$, so that 

\begin{center}
$L^{(s-i+1)}(T_i',B_s',\cdots,B_i')=L(A,B_i')=C^p-CB_i'^{p-1}$, and $\bar{X}C\bar{X}^{-1}=C+B_i'$
\end{center} 

\noindent Then:\\

\noindent $\bar{X}L(C,B_i')\bar{X}^{-1}=L(\bar{X}C\bar{X}^{-1},B_i')=L(C+B_i',B_i')=L(C,B_i')+L(B_i',B_i')=L(C,B_i')$\\

\noindent Hence $L(C,B_i')=L^{(s-i+1)}(T_i',B_s',\cdots,B_i')$ is $\bar{X}$-invariant as required.\end{proof}

\noindent It follows immediately from this Lemma that $L^{(s)}(\overline{T},\overline{B}_s,\cdots,\overline{B}_1)^{p^{m_0}}\in A'$ for all $T\in$ Span$_{\mathbb{F}_p}\{T_1,\cdots,T_{r}\}$ (i.e. for all $T=u_{c}(h)-1+F_{\theta+1}kG$).\\

\vspace{0.1in}

\noindent Now, for each $i\leq s$, $D_i\in$ Span$_{\mathbb{F}_p}\{T_{i+1},\cdots,T_r\}$, so either $D_i=0$ or $D_i=u_c(f_i)-1+F_{\theta+1}kG$ for some $f_i\in H$ with $w(u_c(f_i)-1)=\theta$.

\begin{definition}\label{reduction}

Define $y_s:=u_{c}(f_s)-1$, and for each $1\leq i<s$, define $y_i\in kH$ inductively by:\\

\noindent $y_i:=\begin{cases} 
       L^{(s-i)}(u_c(f_i)-1,y_s,\cdots,y_{i+1}) & D_i\neq 0 \\
      0 & D_i=0
   \end{cases}
$

\noindent And define $b_i:=\tau(y_i)^{p^{m_1-c}}\in Q(\frac{kG}{P})$, we call these $b_i$ the \emph{reduction coefficients}.

\end{definition}

\noindent For convenience, we will replace $m_1$ by $m_1+c$, so that $\tau(u(g)-1)=\tau(u_c(g)-1)^{p^{m_1}}$ for all $g\in G$, and $b_i=\tau(y_i)^{p^{m_1}}$. Since $B_s\notin$ gr $P$, it is clear that gr$_{\overline{w}}(b_s)=\overline{B}_s^{p^{m_1}}$.\\

\noindent Since gr$_{\overline{w}}(b_s)=\overline{B}_s^{p^{m_1}}\in A'\backslash\mathfrak{q}$, it follows from Theorem \ref{comparison} that $b_s^{p^k}$ is $v$-regular for some $k\in\mathbb{N}$. After replacing $m_1$ by $m_1+k$, we may also assume that $b_s$ is $v$-regular.

\begin{lemma}\label{reduction-value}

For all $h\in H$, $i\leq s$, let $t:=u_c(h)-1$, $T:=t+F_{\theta+1}kG\in$ gr $kG$.\\ 

\noindent Then $w(L^{(s-i)}(t,y_s,\cdots,y_{i+1}))\geq p^{s-i}\theta$, with equality if and only if gr$(L^{(s-i)}(t,y_s,\cdots,y_{i+1}))=L^{(s-i)}(T,B_s,\cdots B_{i+1})$, otherwise $L^{(s-i)}(T,B_s,\cdots B_{i+1})=0$.

In particular, for $y_i\neq 0$, $w(y_i)\geq p^{s-i}\theta$, with equality if and only if $B_i=$ gr$(y_i)$, otherwise $B_i=0$.

\end{lemma}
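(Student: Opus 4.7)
The plan is to prove both claims simultaneously by induction on $n := s - i \geq 0$, with $t = u_c(h) - 1$ and $T = t + F_{\theta+1}kG$. I reformulate the statement more conveniently as: $w(L^{(n)}(t, y_s, \ldots, y_{i+1})) \geq p^n\theta$, and its $(p^n\theta)$-symbol (the class in $F_{p^n\theta}kG/F_{p^n\theta+1}kG$) equals $L^{(n)}(T, B_s, \ldots, B_{i+1})$. The two forms are equivalent, since under the standing convention that a symbol at a fixed grade vanishes exactly when the weight strictly exceeds that grade, the symbol is nonzero iff equality holds in the weight bound, and vanishes iff the RHS $L^{(n)}(T, B_s, \ldots, B_{i+1})$ does. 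The base case $n=0$ is immediate from the note following Theorem \ref{equalising}: $L^{(0)}(t) = t = u_c(h)-1$ has weight $\geq \theta$ with $\theta$-symbol $T$.

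For the inductive step, I expand via the recursion $L^{(n)}(t, y_s, \ldots, y_{i+1}) = L(A, y_{i+1}) = A^p - A y_{i+1}^{p-1}$ where $A := L^{(n-1)}(t, y_s, \ldots, y_{i+2})$, and invoke the inductive hypothesis twice. First, for $A$ with the same $h$ at depth $n-1$: this gives $w(A) \geq p^{n-1}\theta$ with $(p^{n-1}\theta)$-symbol $\widetilde{A} := L^{(n-1)}(T, B_s, \ldots, B_{i+2})$. Second, for $y_{i+1}$: if $D_{i+1} = 0$ then $y_{i+1} = 0$ by definition and $B_{i+1} = 0$ by the $\mathbb{F}_p$-linearity established in Lemma \ref{polynomial}, so the claim is trivial; otherwise, apply the hypothesis with $h = f_{i+1}$ at depth $n-1$, giving $w(y_{i+1}) \geq p^{n-1}\theta$ with $(p^{n-1}\theta)$-symbol $L^{(n-1)}(D_{i+1}, B_s, \ldots, B_{i+2}) = B_{i+1}$. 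Now all operands lie in $kc(G)$, whose associated graded $k[T_1, \ldots, T_{d+1}]$ is a commutative domain, so the restriction of $w$ is a valuation. Consequently $w(A^p) = p\,w(A) \geq p^n\theta$ and $w(A y_{i+1}^{p-1}) = w(A) + (p-1)w(y_{i+1}) \geq p^n\theta$, so $w(L(A,y_{i+1})) \geq p^n\theta$. Using commutativity of gr $kc(G)$ and the multiplicativity of symbols (valid even when some symbols are zero, since then both sides vanish), the $(p^n\theta)$-symbol of $L(A,y_{i+1})$ equals $\widetilde{A}^p - \widetilde{A}\,B_{i+1}^{p-1} = L(\widetilde{A}, B_{i+1}) = L^{(n)}(T, B_s, \ldots, B_{i+1})$ by definition of iterated $L$, completing the induction. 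The ``in particular'' statement follows by specialising $h = f_i$ (when $D_i \neq 0$), since then $y_i = L^{(s-i)}(u_c(f_i)-1, y_s, \ldots, y_{i+1})$ has symbol $L^{(s-i)}(D_i, B_s, \ldots, B_{i+1}) = B_i$.

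The main point requiring care is the bookkeeping of the ``symbol at fixed grade'' convention when $w$ overshoots: one must verify that whenever a symbol vanishes by strict inequality, the corresponding formal expression $L^{(\cdots)}(T, B_s, \ldots)$ also vanishes, for the biconditional in the lemma to hold. This is precisely what the $\mathbb{F}_p$-linearity of each iterated $L$ in its first argument guarantees, so both sides behave compatibly under the induction, and no case split on strict versus equality is needed.
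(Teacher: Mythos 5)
Your proof is correct and follows essentially the same inductive strategy as the paper (upward induction on $n = s-i$ is the paper's downward induction on $i$, and the crucial step in both is that $w$ restricts to a valuation on $kc(G)$ because its associated graded is a domain). The one genuine improvement is your reformulation of the lemma's three-way conclusion (equality, strict inequality, vanishing of $L^{(n)}(T,B_s,\ldots)$) as a single statement about the image of $L^{(n)}(t,y_s,\ldots)$ in $F_{p^n\theta}kG/F_{p^n\theta+1}kG$; this unifies the cases that the paper handles by explicit splitting, since multiplicativity and additivity of that quotient map hold by definition, making the inductive step a one-line computation rather than a paragraph of bookkeeping.
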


\begin{proof}

We will use downwards induction on $i$, with $i=s$ as the base case:\\

\noindent Since $i=s$, $L^{(s-i)}(t,y_s,\cdots,y_{i+1})=t$, and clearly $w(t)\geq\theta=p^{s-s}\theta$, and equality holds if and only if gr$(t)=T$, and otherwise $T=0$ as required.\\

\noindent Now suppose the result holds for some $i\leq s$, so let $c:=L^{(s-i)}(t,y_s,\cdots,y_{i+1})$, $C:=L^{(s-i)}(T,B_s,\cdots,B_{i+1})$. Then by induction, $w(c)\geq p^{s-i}\theta$, with equality if and only if $C=$ gr$(c)$, and $w(y_i)\geq p^{s-i}\theta$.\\

\noindent So $w(L^{(s-i+1)}(t,y_s,\cdots,y_i))=w(c^p-cy_i^{p-1})\geq\min\{w(c^p),w(cy_i^{p-1})\}\geq\min\{pw(c),w(c)+(p-1)w(y_s)\}\geq p^{s-i+1}\theta$ as required.

In particular, this argument shows that if $w(c)>p^{s-i}\theta$ then $w(c^p-cy_i^{p-1})>p^{s-i+1}\theta$.\\

\noindent Therefore, if $w(L^{(s-i+1)}(t,y_s,\cdots,y_i))=p^{s-i+1}\theta$ then $w(c)=p^{s-i}\theta$, and so $C=$ gr$(c)=c+F_{p^{s-i}\theta} kG$ by induction. In this case, gr$(L^{(s-i+1)}(t,y_s,\cdots,y_i))=(c^p-cy_i^{p-1})+F_{p^{s-i+1}\theta} kG$. \\

\noindent Also, since $c,y_i\in kc(G)$ and $w$ is a valuation on $kc(G)$, we have that $w(c^p)=pw(c)=p^{s-i+1}\theta$ and $w(cy_i^{p-1})=w(c)+(p-1)w(y_i)\geq p^{s-i+1}\theta$.\\

\noindent If $w(y_i)>p^{s-i}\theta$ then $c^p-cy_i^{p-1}+F_{\theta+1}kG=c^p+ F_{\theta+1}kG=C^p$. But since $B_i=0$ by induction, this means that $c^p-cy_i^{p-1}+F_{p^{s-i+1}\theta+1}=C^p-CB_i^{p-1}$.

Whereas, if $w(y_i)=p^{s-i}\theta$ then $B_i=$ gr$(y_i)=y_i+F_{p^{s-i}\theta+1}kG$ by assumption, so $c^p-cy_i^{p-1}+F_{p^{s-i+1}\theta+1}kG=C^p-CB_i^{p-1}$ as required.\\

\noindent Finally, if $w(L^{(s-i+1)}(t,y_s,\cdots,y_i))>p^{s-i+1}\theta$ then $w(c^p-cy_i^{p-1})>p^{s-i+1}\theta$. Clearly if $C=0$ then $L^{(s-i+1)}(T,B_s,\cdots,B_i)=C^p-CB_i^{p-1}=0$, so we may assume that $C\neq 0$, and hence $C=$ gr$(c)=c+F_{p^{s-i}\theta+1}kG$.\\

\noindent So since $w(c^p-cy_i^{p-1})>p^{s-i+1}\theta$, it follows that $C^p-CB_i^{p-1}=0$ as required.\end{proof}

\noindent Using this Lemma, we see that $\overline{w}(b_i)\geq p^{s-i}\theta$, with equality if and only if gr$_{\overline{w}}(b_i)=\overline{B}_i^{p^{m_1}}$.\\

\noindent\textbf{\underline{Notation:}} For each $0\leq i\leq s$, $q\in Q(\frac{kG}{P})$ commuting with $b_1,\cdots,b_s$, define $L_i(q):=L^{(s-i)}(q,b_s,\cdots,b_{i+1})$.\\ 

\noindent e.g. $L_s(q)=q$, $L_{s-1}(q)=q^p-qb_s^{p-1}$, $L_{s-2}(q)=q^{p^2}-q^p(b_s^{p^2-p}-b_{s-1}^{p-1})+qb_s^{p-1}b_{s-1}^{p-1}$.

\subsection{Polynomials}

Again, we have $\lambda=\inf\{\rho(\tau(u(g)-1)):g\in G\}<\infty$.

\begin{lemma}\label{inequality}

For each $i\leq s$, $\rho(b_i)\geq p^{s-i}\lambda$, and it follows that $\rho(L_i(\tau(u(h)-1))\geq p^{s-i}\lambda$ for all $h\in H$.

Moreover, if $\rho(b_i)=p^{s-i}\lambda$ then $b_i^{p^m}$ is $v$-regular for $m$ sufficiently high.

\end{lemma}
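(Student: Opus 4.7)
\noindent The plan is to prove both parts of the lemma simultaneously by downward induction on $i$, with $i=s$ as the base case. For $i=s$ we have $L_s(q)=q$ and $b_s=\tau(u(f_s)-1)$, so $\rho(L_s(\tau(u(h)-1)))=\rho(\tau(u(h)-1))\geq\lambda$ by the definition of $\lambda$, while the $v$-regularity of $b_s^{p^m}$ has already been arranged by the choice of $m_1$ preceding the statement.\\

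\noindent For the inductive step I will unfold $L_i(q)=L(L_{i+1}(q),b_{i+1})=L_{i+1}(q)^p-L_{i+1}(q)b_{i+1}^{p-1}$. All elements involved lie in the commutative subring $\tau(kH)\subseteq Q(\frac{kG}{P})$, so Lemma \ref{growth properties}(i)--(ii) apply: setting $q=\tau(u(h)-1)$, the inductive hypothesis gives $\rho(L_{i+1}(q)^p)=p\rho(L_{i+1}(q))\geq p^{s-i}\lambda$ and $\rho(L_{i+1}(q)b_{i+1}^{p-1})\geq\rho(L_{i+1}(q))+(p-1)\rho(b_{i+1})\geq p^{s-i-1}\lambda+(p-1)p^{s-i-1}\lambda=p^{s-i}\lambda$, hence $\rho(L_i(q))\geq p^{s-i}\lambda$. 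Since $L^{(s-i)}$ is built from $\mathbb{F}_p$-linear combinations of $p$-power maps, taking $p^{m_1}$-th powers commutes with it in characteristic $p$, giving $b_i=L_i(\tau(u(f_i)-1))$ whenever $D_i\neq 0$, and $b_i=0$ otherwise; either way $\rho(b_i)\geq p^{s-i}\lambda$.\\

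\noindent For the $v$-regularity claim, assume $\rho(b_i)=p^{s-i}\lambda$, so $b_i\neq 0$ and $D_i\neq 0$. By Theorem \ref{comparison} it suffices to show $\text{gr}_{\overline{w}}(b_i^{p^N})\in A'\setminus\mathfrak{q}$ for some $N\geq 0$. Lemma \ref{reduction-value} gives $\overline{w}(b_i)\geq p^{m_1+s-i}\theta$ with equality forcing $\text{gr}_{\overline{w}}(b_i)=\overline{B}_i^{p^{m_1}}$, and by Lemma \ref{centralising} this latter element, when it is the genuine symbol of $b_i$, lies in $A'$. I will rule out the two obstructing possibilities --- namely $\overline{w}(b_i)>p^{m_1+s-i}\theta$ and $\overline{B}_i^{p^{m_1}}\in\mathfrak{q}$ --- by deriving a contradiction with $\rho(b_i)=p^{s-i}\lambda$: in either case Theorem \ref{comparison} applied against a baseline $r$ of equal $w$-value whose graded symbol lies in $A'\setminus\mathfrak{q}$ yields $v(b_i^{p^m})>v(r^{p^m})$ for large $m$, and this strict gap survives division by $p^m$ in the limit defining $\rho$ provided $r$ is chosen so that $\rho(r)=p^{s-i}\lambda$ exactly.\\

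\noindent The main obstacle is constructing such a sharp baseline $r$. The naive choice $r=b_s^{p^{s-i}}$ only yields $\rho(b_i)\geq p^{s-i}\rho(b_s)$, which is not strict when $\rho(b_s)>\lambda$. I plan to overcome this by taking $r$ to be an $L$-iterate built from an element $h^{*}\in H$ attaining $\rho(\tau(u(h^{*})-1))=\lambda$ (which exists by Lemma \ref{faithful}), chosen so that its reduction modulo $\text{gr }P$ survives in $A'\setminus\mathfrak{q}$ after raising to a $p^{m_0}$-th power; the growth-rate estimate from the inductive step then ensures $\rho(r)=p^{s-i}\lambda$ on the nose. Once this baseline is in hand, both obstructions are eliminated, forcing $\text{gr}_{\overline{w}}(b_i^{p^N})\in A'\setminus\mathfrak{q}$, and Theorem \ref{comparison} then delivers the $v$-regularity of $b_i^{p^m}$ for large $m$.
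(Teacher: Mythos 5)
Your treatment of the first part (base case plus the inductive growth estimate for $L_i$ and $b_i$) matches the paper's argument and is correct.

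The second part contains a genuine gap. You dismiss the baseline $r=b_s^{p^{s-i}}$ on the grounds that it "only yields $\rho(b_i)\geq p^{s-i}\rho(b_s)$, which is not strict", but this is a misreading of what Theorem \ref{comparison} delivers. If Theorem \ref{comparison} produces some $m_0$ with $v(b_i^{p^{m_0}})>v(b_s^{p^{s-i+m_0}})$, then since $b_s$ is $v$-regular (by the normalisation of $m_1$) and $v(b_i^{p^{m_0+l}})\geq p^l v(b_i^{p^{m_0}})$, one has $v(b_i^{p^{m_0+l}})\geq v(b_s^{p^{s-i+m_0+l}})+p^l$ for all $l$; dividing by $p^{m_0+l}$ and passing to the limit gives $\rho(b_i)\geq \rho(b_s^{p^{s-i}})+p^{-m_0}$, i.e.\ a \emph{strict} inequality $\rho(b_i)>p^{s-i}\rho(b_s)\geq p^{s-i}\lambda$, which immediately contradicts $\rho(b_i)=p^{s-i}\lambda$. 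The point is that the gap at level $p^{m_0}$ is amplified by $p^l$, not held constant, so it survives the normalisation in $\rho$. Both obstructions (the value gap $\overline{w}(b_i)>\overline{w}(b_s^{p^{s-i}})$ and the membership $\overline{B}_i^{p^{m_1}}\in\mathfrak{q}$) are handled the same way.

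Your replacement baseline built from an $h^*$ attaining $\rho(\tau(u(h^*)-1))=\lambda$ does not repair anything and introduces two unjustified steps. First, you assert "the growth-rate estimate from the inductive step then ensures $\rho(r)=p^{s-i}\lambda$ on the nose", but the inductive estimate is only a lower bound: nothing forces $\rho(L_i(\tau(u(h^*)-1)))$ to equal $p^{s-i}\lambda$. Establishing such an equality is essentially equivalent to the $v$-regularity you are trying to prove, so the argument as sketched is circular. Second, the requirement that the graded symbol of $\tau(u(h^*)-1)^{p^{m_0}}$ land in $A'\setminus\mathfrak{q}$ is an additional constraint on $h^*$ you have not shown is satisfiable. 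The naive baseline $b_s^{p^{s-i}}$ avoids both problems because its symbol is $\overline{B}_s^{p^{m_1+s-i}}\in A'\setminus\mathfrak{q}$ by the very choice of $\mathfrak{q}$, and no exact growth rate is needed for it --- only $\rho(b_s)\geq\lambda$.
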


\begin{proof}

For $i=s$ the result is clear, because $b_s=\tau(u(f_s)-1)$, so $\rho(b_s)\geq\lambda=p^{s-s}\lambda$ by definition. So we will proceed again by downwards induction on $i$.\\ 

\noindent The inductive hypothesis states that $\rho(b_{i+1})\geq p^{s-i-1}\lambda$, and $\rho(L_{i+1}(\tau(u(h)-1)))\geq p^{s-i-1}\lambda$ for all $h\in H$.\\

\noindent Thus $\rho(L_i(\tau(u(h)-1)))=\rho(L_{i+1}(\tau(u(h)-1))^p-L_{i+1}(\tau(u(h)-1))b_{i+1}^{p-1})$\\

\noindent $\geq\min\{p\cdot p^{s-i-1}\lambda,p^{s-i-1}\lambda+(p-1)p^{s-i-1}\lambda\}=p^{s-i}\lambda$ for all $h$.\\

\noindent By definition, $b_i=L^{(s-i)}(\tau(u(f_i)-1),b_s,\cdots,b_{i+1})=L_i(\tau(u(f_i)-1))$, so 

\begin{center}
$\rho(b_i)=\rho(L_i(\tau(u(f_i)-1)))\geq p^{s-i}\lambda$, and the first statement follows.
\end{center}

\noindent Finally, suppose that $\rho(b_i)=p^{s-i}\lambda$:\\

\noindent Then if $\overline{w}(b_i)>p^{s-i+m_1}\theta=\overline{w}(b_s^{p^{s-i}})$, then $v(b_i^{p^m})>v(b_s^{p^{s-i+m}})$ for $m>>0$ by Theorem \ref{comparison}. So using $v$-regularity of $b_s$, we see that $\rho(b_i)>\rho(b_s^{p^{s-i}})\geq p^{s-i}\lambda$ -- contradiction.\\

\noindent Therefore, by Lemma \ref{reduction-value}, we see that $\overline{w}(b_i)=p^{s-i}\theta$ and gr$_{\overline{w}}(b_i)=\overline{B}_i^{p^{m_1}}$.\\

\noindent We know that $\overline{B}_i^{p^{m_1}}\in A'$, so suppose that $\overline{B}_i^{p^{m_1}}\in\mathfrak{q}$. Then since $\overline{w}(b_i)=p^{s-i}\theta=\overline{w}(b_s^{p^{s-i}})$, it follows again from Theorem \ref{comparison} that $v(b_i^{p^m})>v(b_s^{p^{s-i+m}})$ for $m>>0$, and hence $\rho(b_i)>p^{s-i}\rho(b_s)\geq p^{s-i}\lambda$ -- contradiction.\\

\noindent Hence $\overline{B}_i^{p^{m_0}}=$ gr$_{\overline{w}}(b_i)\in A'\backslash\mathfrak{q}$, so $b_i^{p^k}$ is $v$-regular for some $k\in\mathbb{N}$ by Theorem \ref{comparison}.\end{proof}

\vspace{0.1in}

\noindent Now, using Lemma \ref{polynomial}, we see that $L_i(x)=L^{(s-i)}(x,b_s,\cdots,b_{i+1})=a_0x+a_1x^p+\cdots+a_{s-i-1}x^{p^{s-i-1}}+x^{p^{s-i}}$ for some $a_j\in\tau(kH)$.\\

\begin{proposition}\label{growth-preserving}

For each $i\leq s$, $L_i$ is a growth preserving polynomial of $p$-degree $s-i$, and $L_s$ is not trivial.

\end{proposition}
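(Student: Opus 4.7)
The plan is to prove both parts of the proposition by downward induction on $i$, with base case $i = s$. For $i = s$, the iterated expression $L_s(x) = L^{(0)}(x) = x$ is the identity polynomial, which has $p$-degree $0$ and is the running example of a non-trivial GPP given after Definition \ref{GPP}. The $p$-degree statement for general $i$ is immediate from Lemma \ref{polynomial}, which expresses $L_i(x) = L^{(s-i)}(x,b_s,\cdots,b_{i+1})$ as $a_0 x + a_1 x^p + \cdots + a_{s-i-1}x^{p^{s-i-1}} + x^{p^{s-i}}$ with coefficients $a_j \in \tau(kH)$, so only the GPP conditions require real argument.

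For the inductive step, I will use the recursive identity $L_i(q) = L_{i+1}(q)^p - L_{i+1}(q)\,b_{i+1}^{p-1}$. Since $H$ is abelian, $\tau(kH)$ is commutative, so for any $q \in \tau(kH)$ both terms lie in a commutative subring and Lemma \ref{growth properties}(i)--(ii) apply: we get $\rho(L_{i+1}(q)^p) = p\,\rho(L_{i+1}(q))$ and $\rho(L_{i+1}(q)\,b_{i+1}^{p-1}) \geq \rho(L_{i+1}(q)) + (p-1)\rho(b_{i+1})$. Combined with the bound $\rho(b_{i+1}) \geq p^{s-i-1}\lambda$ from Lemma \ref{inequality} and the inductive hypothesis on $L_{i+1}$, condition (i) of Definition \ref{GPP} follows immediately: if $\rho(q) \geq \lambda$ then $\rho(L_{i+1}(q)) \geq p^{s-i-1}\lambda$, so each of the two terms defining $L_i(q)$ has growth rate at least $p^{s-i}\lambda$, whence so does their difference.

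For condition (ii), I would check that strict inequality propagates: if $\rho(q) > \lambda$ then by induction $\rho(L_{i+1}(q)) > p^{s-i-1}\lambda$, so $\rho(L_{i+1}(q)^p) > p^{s-i}\lambda$ via Lemma \ref{growth properties}(i), while $\rho(L_{i+1}(q)\,b_{i+1}^{p-1}) \geq \rho(L_{i+1}(q)) + (p-1)\rho(b_{i+1}) > p^{s-i-1}\lambda + (p-1)p^{s-i-1}\lambda = p^{s-i}\lambda$. Taking the minimum and applying the additivity-under-commutation again gives $\rho(L_i(q)) > p^{s-i}\lambda$, completing the induction.

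Finally, non-triviality of $L_s$ is essentially automatic: since $L_s = \mathrm{id}$, by Lemma \ref{faithful} the infimum $\lambda$ is attained as $\rho(\tau(u(h)-1))$ for some $h \in H$, so taking $q = \tau(u(h)-1)$ yields $\rho(L_s(q)) = \lambda = p^0\lambda$, which fails the strict inequality in the definition of triviality. There is no real obstacle in this argument; the only point to watch is the commutativity hypothesis needed for Lemma \ref{growth properties}(ii), which is supplied free of charge by abelianness of $H$, and the careful propagation of strict versus non-strict growth-rate inequalities through the recursion.
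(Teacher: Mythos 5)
Your proof is correct and takes essentially the same approach as the paper: downward induction on $i$ via the recursion $L_i(q) = L_{i+1}(q)^p - L_{i+1}(q)b_{i+1}^{p-1}$, using Lemma~\ref{inequality} for $\rho(b_{i+1})\geq p^{s-i-1}\lambda$ and Lemma~\ref{growth properties} for the growth-rate estimates, with the base case $L_s = \mathrm{id}$. You are a bit more careful than the paper in noting where commutativity of $\tau(kH)$ is used and in citing Lemma~\ref{faithful} for non-triviality of $L_s$, but there is no substantive difference.
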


\begin{proof}

Firstly, it is clear that $L_s=id$, and so $L_s$ is a non-trivial GPP.\\ 

\noindent We first want to prove that for all $q\in\tau(kH)$, if $\rho(q)\geq\lambda$ then $\rho(L_i(q))\geq p^{s-i}\lambda$, with strict inequality if $\rho(q)>\lambda$. We know that this holds for $i=s$, so as in the proof of Lemma \ref{inequality}, we will use downwards induction on $i$.\\

\noindent So suppose that $\rho(L_{i+1}(q))\geq p^{s-i-1}\lambda$, with strict inequality if $\rho(q)>\lambda$. Then:\\

\noindent $L_i(q)=L_{i+1}(q)^p-L_{i+1}(q)b_{i+1}^{p-1}$, so $\rho(L_i(q))\geq\min\{\rho(L_{i+1}(q)^p),\rho(L_{i+1}(q)b_{i+1}^{p-1})\}$.\\

\noindent But $\rho(L_{i+1}(q)^p)\geq p\cdot p^{s-i-1}\lambda=p^{s-i}\lambda$, and since $\rho(b_{i+1})\geq p^{s-i}\lambda$ by Lemma \ref{inequality}, $\rho(L_{i+1}(q)b_{i+1}^{p-1})\geq\rho(L_{i+1}(q))+(p-1)\rho(b_{i+1})\geq p^{s-j-1}\lambda+(p-1)p^{s-i-1}\lambda=p^{s-i}\lambda$.\\

\noindent By the inductive hypothesis, both these inequalities are strict if $\rho(q)>\lambda$, and thus $L_i$ is a GPP as required.\end{proof}

\noindent So all that remains is to prove that one of the $L_i$ is special.

\subsection{Trivial growth preserving polynomials}

Let us first suppose that for some $j$, $L_j$ is trivial, i.e. $L_j(\tau(u(h)-1))>p^{s-j}\lambda$ for all $h\in H$. \\

\noindent We know that $L_s$ is not trivial, so we can fix $j\leq s$ such that $L_j$ is non-trivial and $L_{j-1}$ is trivial. We will need the following results:

\begin{lemma}\label{limit}

Let $A$ be a $k$-algebra, with filtration $w$ such that $A$ is complete with respect to $w$. Suppose $a\in A$ and $w(a^p-a)>0$, then $a^{p^m}\rightarrow b\in A$ with $b^p=b$ as $m\rightarrow\infty$.

\end{lemma}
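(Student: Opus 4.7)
The plan is to show that $(a^{p^m})_{m \geq 0}$ is a Cauchy sequence in $A$ with respect to the $w$-adic topology, extract its limit $b$ using completeness, and then derive the fixed-point identity $b^p = b$ from continuity of the Frobenius map.

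The key step is to set $c := a^p - a$, so that $w(c) > 0$ by hypothesis. Since $k$ has characteristic $p$, $A$ is an $\mathbb{F}_p$-algebra, and since $c$ is a polynomial in $a$ we have $ac = ca$; Frobenius is therefore additive on the commutative subalgebra $\mathbb{F}_p\langle a, c\rangle$. Applying this to $a^{p^{m+1}} = (a^p)^{p^m} = (a+c)^{p^m}$, one obtains
$$a^{p^{m+1}} - a^{p^m} = c^{p^m}.$$
The filtration axiom $w(xy) \geq w(x) + w(y)$ then yields $w(c^{p^m}) \geq p^m w(c) \to \infty$, giving the Cauchy property. Completeness of $A$ produces the limit $b \in A$.

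For the final claim $b^p = b$, the ring filtration axioms imply that multiplication, and hence the $p$-th power map, is continuous on $A$. Therefore
$$b^p = \Bigl(\lim_{m \to \infty} a^{p^m}\Bigr)^p = \lim_{m \to \infty} a^{p^{m+1}} = b.$$

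There is no serious obstacle here; this is a standard ``Hensel-style'' fixed-point extraction. The only point requiring minor attention is the use of commutativity of $a$ with $c = a^p - a$ to justify the binomial expansion $(a+c)^{p^m} = a^{p^m} + c^{p^m}$ via Frobenius additivity.
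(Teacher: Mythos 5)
Your proof is correct and follows essentially the same route as the paper: both set $c=a^p-a$, use Frobenius additivity on the commutative subalgebra generated by $a$ (equivalently, $a^{p^{m+1}}-a^{p^m}=c^{p^m}$, which the paper records as $a^{p^{m+1}}=a+\sum_{i\le m}c^{p^i}$), and let $w(c^{p^m})\ge p^m w(c)\to\infty$ force convergence. The only cosmetic difference is that you invoke continuity of the $p$-th power map to get $b^p=b$, whereas the paper substitutes directly into the explicit series — both are fine.
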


\begin{proof}

Let $\varepsilon:=a^p-a$, then $w(\varepsilon)>0$, $a$ commutes with $\varepsilon$, and $a^p=a+\varepsilon$.\\

\noindent Therefore, since $char(k)=p$, $a^{p^2}=a^p+\varepsilon^p=a+\varepsilon+\varepsilon^p$, and it follows from induction that for all $m\in\mathbb{N}$, $a^{p^{m+1}}=a+\varepsilon+\varepsilon^p+\cdots+\varepsilon^{p^m}$.\\

\noindent But $\varepsilon^{p^m}\rightarrow 0$ as $m\rightarrow\infty$ since $w(\varepsilon)>0$, so since $A$ is complete, the sum $\underset{m\geq 0}{\sum}{\varepsilon^{p^m}}$ converges in $A$, and hence $a^{p^m}\rightarrow a+\underset{m\geq 0}{\sum}{\varepsilon^{p^m}}\in A$.\\

\noindent So let $b:=a+\underset{m\geq 0}{\sum}{\varepsilon^{p^m}}$, then $b^p=a^p+(\underset{m\geq 0}{\sum}{\varepsilon^{p^m}})^p=a+\varepsilon+\underset{m\geq 1}{\sum}{\varepsilon^{p^m}}=a+\underset{m\geq 0}{\sum}{\varepsilon^{p^m}}=b$ as required.\end{proof}

\begin{proposition}\label{divided-powers}

Let $Q=\widehat{Q(\frac{kG}{P})}$, and let $\delta_1,\cdots,\delta_r:kG\to kG$ be derivations such that $\tau\delta_i(P)\neq 0$ for all $i$. Set $N:=\{(a_1,\cdots,a_r)\in Q^r:(a_1\tau\delta_1+\cdots+a_r\tau\delta_r)(P)=0\}$.\\

\noindent Then $N$ is a $Q$-bisubmodule of $Q^r$, and either $N=0$ or there exist $\alpha_1,\cdots,\alpha_r\in Z(Q)$, not all zero, such that for all $(a_1,\cdots,a_r)\in N$, $\alpha_1a_1+\cdots+\alpha_ra_r=0$.

\end{proposition}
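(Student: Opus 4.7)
The plan is to handle the bimodule property first, then exploit semisimplicity of $Q^r$ as a $(Q,Q)$-bimodule to produce the central coefficients. Left $Q$-invariance of $N$ is immediate from linearity. For right invariance, I would first show right $\tau(kG)$-invariance using the derivation identity: for $s\in kG$ and $y\in P$, $\delta_i(sy)=s\delta_i(y)+\delta_i(s)y$, and since $\tau(y)=0$ we get $\tau\delta_i(sy)=\tau(s)\tau\delta_i(y)$; thus for $(a_1,\cdots,a_r)\in N$ we have $\sum a_i\tau(s)\tau\delta_i(y)=\sum a_i\tau\delta_i(sy)=0$ since $sy\in P$. As in the proof of Lemma \ref{artinian}, the descending chain condition on right ideals of the artinian ring $Q$ extends this to right $Q(\frac{kG}{P})$-invariance. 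Finally, $N$ is closed in $Q^r$ (the intersection over $y\in P$ of kernels of the continuous maps $(a_i)\mapsto\sum a_i\tau\delta_i(y)$), so density of $Q(\frac{kG}{P})$ in $Q=\widehat{Q(\frac{kG}{P})}$ upgrades right $Q(\frac{kG}{P})$-invariance to right $Q$-invariance.

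Next, the hypothesis $\tau\delta_i(P)\neq 0$ prevents $N$ from equalling $Q^r$: the standard basis vector $e_j$ lying in $N$ would force $\tau\delta_j(P)=0$. The key structural input is that $Q\cong M_n(Q(D))$ is simple as a $(Q,Q)$-bimodule (sub-bimodules are two-sided ideals of the simple ring $Q$) and $\mathrm{End}_{(Q,Q)}(Q)\cong Z(Q)$ by the standard centralizer computation (any bimodule endomorphism is determined by its value at $1$, which must be central). Hence $Q^r$ is a finite direct sum of copies of the simple bimodule $Q$, so it is semisimple as a $(Q,Q)$-bimodule. Every sub-bimodule is a direct summand, and in particular $Q^r/N\cong Q^{r'}$ for some $r'\geq 1$.

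To finish, I would compose the quotient $Q^r\twoheadrightarrow Q^r/N$ with projection onto one summand, yielding a non-zero bimodule homomorphism $\psi:Q^r\to Q$ whose kernel contains $N$. Using $\mathrm{Hom}_{(Q,Q)}(Q^r,Q)\cong Z(Q)^r$, the map $\psi$ has the form $(a_1,\cdots,a_r)\mapsto\sum_i\alpha_i a_i$ for some $(\alpha_1,\cdots,\alpha_r)\in Z(Q)^r\setminus\{0\}$, giving the required annihilation. The main obstacle is the right $Q$-invariance of $N$: the derivation identity and Noetherian/artinian extension to $Q(\frac{kG}{P})$ mimic Lemma \ref{artinian}, but passing to the completion $Q$ requires verifying that $N$ is closed in $Q^r$. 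Fortunately this is immediate from continuity of left multiplication, and once the bimodule structure is in place, the semisimple decomposition argument is entirely formal.
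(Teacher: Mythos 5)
Your proof is correct but takes a genuinely different route from the paper's. The paper proves the bimodule property the same way (citing the derivation identity, the artinian descending-chain argument, and passage to the completion as in Lemma \ref{artinian}), but then establishes the existence of the central coefficients by an elementary induction on $r$: it projects onto the last $r-1$ coordinates to form $N'$, splits into the cases $N'=0$ and $N'\ne 0$, and in each case manipulates explicit elements of $N$ to extract centrality of the $\beta_i$ or $\alpha_i$ via commutator identities such as $(x,x\beta_i),(x,\beta_ix)\in N$. Your argument replaces all of this with a structural observation: $Q^r$ is a semisimple $(Q,Q)$-bimodule, being a finite direct sum of copies of the simple bimodule $Q$ (sub-bimodules of $Q$ are two-sided ideals, and $Q$ is simple); hence $N$ is a direct summand, $Q^r/N\cong Q^{r'}$ with $r'\geq 1$ once you rule out $N=Q^r$ via $\tau\delta_j(P)\neq 0$; and composing the quotient with a projection yields a nonzero bimodule homomorphism $\psi:Q^r\to Q$ vanishing on $N$, necessarily of the form $(a_i)\mapsto\sum\alpha_i a_i$ with $\alpha_i=\psi(e_i)\in Z(Q)$ since $\mathrm{End}_{(Q,Q)}(Q)\cong Z(Q)$. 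This is cleaner: it eliminates the casework and the inductive bookkeeping, and it actually produces the $\alpha_i$ uniformly rather than carving out $N=0$ as a separate branch. The paper's approach is more elementary and self-contained, avoiding appeal to semisimple module theory; yours is more conceptual and shorter once the bimodule structure is in place. Both are valid, and your handling of the right $Q$-invariance (via the derivation identity, the artinian chain, and closedness of $N$ as an intersection of kernels of continuous maps, to pass to the completion) is exactly the right way to make rigorous what the paper delegates to the proof of Lemma \ref{artinian}.
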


\begin{proof}

Since $v$ is a non-commutative valuation, we have that $Q$ is simple and artinian, and the proof that $N$ is a $Q$-bisubmodule of $Q^r$ is similar to the proof of Lemma \ref{artinian}. For the second statement, we will proceed using induction on $r$.\\

\noindent First suppose that $r=1$, then $N$ is a two sided ideal of the simple ring $Q$, so it is either $0$ or $Q$. But if $N=Q$ then $1\in Q$ so $\tau\delta_1(P)=0$ -- contradiction. Hence $N=0$.\\

\noindent Now suppose that the result holds for $r-1$ for some $r>1$. If $N\neq 0$ then there exists $(a_1,\cdots,a_r)\in N$ with $a_i\neq 0$ for some $i$, and we may assume without loss of generality that $i=1$.\\

\noindent So, let $A:=\{a\in Q:(a,a_2,\cdots, a_r)\in N$ for some $a_i\in Q\}$, then clearly $A$ is a two-sided ideal of $Q$, so $A=0$ or $A=Q$. But $A\neq 0$ since $a_1\in A$ and $a_1\neq 0$.

Therefore $A=Q$, and hence we have that for all $b\in Q$, $(b,b_2,\cdots,b_r)\in N$ for some $b_i\in Q$.\\

\noindent Let $N'=\{(a_2,\cdots,a_r)\in Q^{r-1}:(a_2\tau\delta_2+\cdots+a_r\tau\delta_r)(P)=0\}$. Suppose first that $N'=0$.\\

\noindent Then if for some $q\in Q$, $(q,x_2,\cdots,x_r),(q,x_2',\cdots,x_r')\in N$ for $x_i,x_i'\in Q$, we have that $(x_2-x_2',\cdots,x_r-x_r')\in N'=0$, and hence $x_i=x_i'$ for all $i$.

Hence there is a unique $(1,\beta_2,\cdots,\beta_r)\in N$.\\

\noindent Given $x\in Q$, $(x,x\beta_2,\cdots,x\beta_r)$, $(x,\beta_2x,\cdots,\beta_rx)\in N$, and so $([x,\beta_2],\cdots,[x,\beta_r])\in N'$. Hence $[x,\beta_i]=0$ for all $i$, so $\beta_i\in Z(Q)$.\\

\noindent Moreover, if $(a_1,\cdots,a_r)\in N$, then since $(a_1,a_1\beta_2,\cdots,a_1\beta_r)\in N$, it follows that $a_i=\beta_ia_1$ for all $i>1$, and since $\tau\delta_1(P)\neq 0$, it is clear that $\beta_i\neq 0$ for some $i$, thus giving the result.\\

\noindent So from now on, we may assume that $N'\neq 0$, so by the inductive hypothesis, this means that there exist $\alpha_2,\cdots,\alpha_r\in Z(Q)$, not all zero, such that for all $(a_2,\cdots,a_r)\in N'$, $\alpha_2a_2+\cdots+\alpha_ra_r=0$.\\

\noindent Again, suppose we have that $(a,x_2,\cdots,x_r),(a,x_2',\cdots,x_r')\in N$ for some $a,x_i,x_i'\in Q$. Then clearly $(x_2-x_2',\cdots,x_r-x_r')\in N'$, and hence $\alpha_2(x_2-x_2')+\cdots+\alpha_r(x_r-x_r')=0$, i.e. $\alpha_2x_2+\cdots+\alpha_rx_r=\alpha_2x_2'+\cdots+\alpha_rx_r'$.\\

\noindent So, given $q\in Q$, $(1,x_2,\cdots,x_r)\in N$, we have that $(q,qx_2,\cdots,qx_r),(q,x_2q,\cdots,x_rq)\in N$, and hence 

\begin{center}
$\alpha_2qx_2+\cdots+\alpha_rqx_r=\alpha_2x_2q+\cdots+\alpha_rx_rq$
\end{center} 

\noindent i.e $[q,\alpha_2x_2+\cdots+\alpha_rx_r]=0$.\\

\noindent Since this holds for all $q\in Q$, it follows that $\alpha_2x_2+\cdots+\alpha_rx_r\in Z(Q)$, so let $-\alpha_1$ be this value.\\

\noindent In fact, for any such $(1,x_2',\cdots,x_r')\in N$, $\alpha_2x_2'+\cdots+\alpha_rx_r'=\alpha_2x_2+\cdots+\alpha_rx_r=-\alpha_1$, so $-\alpha_1\in Z(Q)$ is unchanged, regardless of our choice of $x_i$.\\

\noindent Finally, suppose that $(a_1,\cdots,a_r),(1,x_2,\cdots,x_r)\in N$, then $(a_1,a_1x_2,\cdots,a_1x_r)\in N$, and hence $(a_2-a_1x_2,\cdots,a_r-a_1x_r)\in N'$. Thus $\alpha_2(a_2-a_1x_2)+\cdots+\alpha_r(a_r-a_1x_r)=0$, i.e. 

\begin{center}
$\alpha_2a_2+\cdots+\alpha_ra_r=a_1(\alpha_2x_2+\cdots+\alpha_rx_r)=-\alpha_1a_1$.
\end{center}

\noindent Therefore $\alpha_1a_1+\alpha_2a_2+\cdots+\alpha_ra_r=0$, and $\alpha_i\in Z(Q)$ as required.\end{proof}

\noindent Recall from Lemma \ref{sub} that for any GPP $f$ of $p$-degree $r$, $K_f:=\{h\in H:\rho(f(\tau(u(h)-1)))>p^r\lambda\}$ is an open subgroup of $H$ containing $H^p$, and that it is proper in $H$ if $f$ is non-trivial. For each $i\leq s$, define $K_i:=K_{L_i}$.\\

\noindent Then since $L_{j-1}$ is trivial and $L_{j}$ is not, we know that $K_{j-1}=H$ and $K_{j}$ is a proper subgroup of $H$.

\begin{lemma}\label{convergence}

There exists $k\in\mathbb{N}$ such that $b_{j}^{p^k}$ is $v$-regular of value $p^{k+s-j}\lambda$, and for any $h\in H\backslash K_{j}$, $(L_{j}(\tau(u(h)-1)b_{j}^{-1})^{p^m}\rightarrow c\in \widehat{Q(\frac{kG}{P})}$ with $c\neq 0$ and $c^p=c$.

\end{lemma}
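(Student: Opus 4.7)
\noindent The plan is to proceed in three stages. First, I will show $\rho(b_j) = p^{s-j}\lambda$; this forces $v$-regularity of $b_j^{p^k}$ for some $k$ by Lemma \ref{inequality}, and Lemma \ref{growth properties}($v$) then gives $v(b_j^{p^k}) = \rho(b_j^{p^k}) = p^{k+s-j}\lambda$. Second, I will verify convergence of $(L_j(\tau(u(h)-1))b_j^{-1})^{p^m}$ by computing $c_1^p - c_1$ for $c_1 := L_j(\tau(u(h)-1))b_j^{-1}$ and invoking Lemma \ref{limit}. Third, I will rule out the limit being zero using the finite growth rate of $L_j(\tau(u(h)-1))$.

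\noindent \emph{Stage 1.} Lemma \ref{inequality} gives $\rho(b_j) \geq p^{s-j}\lambda$. Suppose for contradiction that $\rho(b_j) > p^{s-j}\lambda$. By non-triviality of $L_j$, pick $h \in H\backslash K_j$ and set $q := \tau(u(h)-1)$, so $\rho(L_j(q)) = p^{s-j}\lambda$. Using the recursion $L_{j-1}(q) = L_j(q)^p - L_j(q)b_j^{p-1}$ and commutativity within $\tau(kH)$, Lemma \ref{growth properties}($i$) gives $\rho(L_j(q)^p) = p^{s-j+1}\lambda$, while $\rho(L_j(q)b_j^{p-1}) \geq \rho(L_j(q)) + (p-1)\rho(b_j) > p^{s-j+1}\lambda$. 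Applying Lemma \ref{growth properties}($ii$) symmetrically to both $L_{j-1}(q) = L_j(q)^p + (-L_j(q)b_j^{p-1})$ and $L_j(q)^p = L_{j-1}(q) + L_j(q)b_j^{p-1}$ forces $\rho(L_{j-1}(q)) = p^{s-j+1}\lambda$, contradicting triviality of $L_{j-1}$.

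\noindent \emph{Stages 2 and 3.} Since $b_j^{p^k}$ is $v$-regular in the simple artinian ring $\widehat{Q(\frac{kG}{P})}$, it is a non-zero divisor and hence a unit, whence so is $b_j$. Commutativity in $\tau(kH)$ then gives $c_1^p - c_1 = b_j^{-p}L_{j-1}(q)$, and Frobenius in characteristic $p$ yields $(c_1^p - c_1)^{p^m} = b_j^{-p^{m+1}}L_{j-1}(q)^{p^m}$. Combining $v$-regularity of $b_j^{p^{m+1}}$ (so $v(b_j^{-p^{m+1}}) = -p^{m+s-j+1}\lambda$ for $m \geq k-1$) with $\rho(L_{j-1}(q)) > p^{s-j+1}\lambda$ (triviality of $L_{j-1}$), a direct estimate yields $v((c_1^p - c_1)^{p^m}) \to \infty$. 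Fix $l$ with $v((c_1^p - c_1)^{p^l}) > 0$; since $(c_1^{p^l})^p - c_1^{p^l} = (c_1^p - c_1)^{p^l}$, Lemma \ref{limit} applied to $c_1^{p^l}$ in the complete $k$-algebra $\widehat{Q(\frac{kG}{P})}$ produces $c := \lim_m c_1^{p^{l+m}}$ with $c^p = c$. If $c = 0$ then $\rho(c_1) = \infty$, forcing $c_1$ nilpotent by Lemma \ref{growth properties}($iv$); since $b_j^{-1}$ is a unit this forces $L_j(q)$ nilpotent, contradicting $\rho(L_j(q)) = p^{s-j}\lambda < \infty$.

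\noindent The main obstacle I anticipate is the growth-rate equality in Stage 1: specifically, that for commuting elements with distinct growth rates the sum has growth rate equal to the minimum. This is not stated explicitly in Lemma \ref{growth properties}, but follows from part ($ii$) applied twice as indicated. Once this is in hand, the remaining steps are routine Frobenius manipulations in characteristic $p$ combined with the completeness of $\widehat{Q(\frac{kG}{P})}$.
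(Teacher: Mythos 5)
Your Stages 1 and 2 are correct and follow essentially the same route as the paper; the symmetric application of Lemma \ref{growth properties}($ii$) to pin down $\rho(L_{j-1}(q))$ in Stage 1 is a valid way to get the ``minimum is achieved when growth rates differ'' fact that the paper uses without comment. The gap is in Stage 3. The implication ``if $c=0$ then $\rho(c_1)=\infty$'' is false. Convergence $c_1^{p^n}\to 0$ says only that $v(c_1^{p^n})\to\infty$, and this is perfectly compatible with a finite growth rate: for instance $c_1=t$ in $\mathbb{F}_p((t))$ with the $t$-adic filtration has $c_1^{p^n}\to 0$ but $\rho(c_1)=1$. So you cannot conclude $c_1$ is nilpotent, and the contradiction you reach is spurious.

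The correct argument (which the paper uses, and for which you already have all the ingredients) runs the other way. From $v$-regularity of $b_j^{p^k}$ and Lemma \ref{growth properties}($v$), one has $\rho(c_1^{p^k})=\rho\bigl(L_j(q)^{p^k}b_j^{-p^k}\bigr)=v(b_j^{-p^k})+\rho(L_j(q)^{p^k})=-p^{k+s-j}\lambda+p^{k+s-j}\lambda=0$. Since $n\mapsto v(x^n)$ is superadditive, the proof of \cite[Lemma 1]{Bergmann} gives $\rho(x)=\sup_n v(x^n)/n$; in particular $v(c_1^{p^n})\le p^{n-k}\rho(c_1^{p^k})=0$ for all $n\ge k$. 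A sequence bounded above by $0$ in value cannot tend to $0$, so $c\neq 0$. (Equivalently: Lemma \ref{limit} gives $v(c-c_1^{p^l})\ge v(\varepsilon)>0$, so if $c=0$ then $v(c_1^{p^l})>0$, contradicting $v(c_1^{p^l})\le 0$.) You should replace the ``$c=0\Rightarrow\rho(c_1)=\infty$'' step by this bound.
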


\begin{proof}

Since $L_{j-1}$ is trivial, we know that for each $h\in H$, $\rho(L_{j-1}(\tau(u(h)-1)))>p^{s-j+1}\lambda$.\\ 

\noindent Choose $h\in H\backslash K_{j}$, i.e. $\rho(L_{j}(\tau(u(h)-1))=p^{s-j}\lambda$. Setting $q:=\tau(u(h)-1)$ for convenience, we have:

\begin{center}
$\rho(L_{j-1}(q))=\rho(L_{j}(q)^p-L_{j}(q)b_{j}^{p-1})>p^{s-j+1}\lambda$
\end{center}

\noindent But $\rho(L_{j}(q)^p-L_{j}(q)b_{j}^{p-1})\geq\min\{\rho(L_{j}(q)^p),\rho(L_{j}(q)b_{j}^{p-1})\}$, with equality if $\rho(L_{j}(q)^p)\neq\rho(L_{j}(q)b_{j}^{p-1})$.\\

\noindent So if $\rho(b_{j})> p^{s-j}\lambda$, then we have that: 

\begin{center}
$\rho(L_{j}(q)b_{j}^{p-1})>\rho(L_{j}(q))+(p-1)p^{s-j}\lambda=p^{s-j}\lambda$.
\end{center}

\noindent But $\rho(L_{j}(q)^p)=p\rho(L_{j}(q))=p^{s-j+1}\lambda$, and hence $\rho(L_{j-1}(\tau(u(h)-1)))=\min\{\rho(L_{j}(q)^p),\rho(L_{j}(q)b_{j}^{p-1})\}$

\noindent $=p^{s-j+1}\lambda$ -- contradiction.\\

\noindent Therefore, $\rho(b_{j})\leq p^{s-j}\lambda$, so using Lemma \ref{inequality}, we see that $\rho(b_{j})=p^{s-j}\lambda$, and $b_{j}^{p^k}$ is $v$-regular for some $k$, and thus $v(b_{j}^{p^k})=\rho(b_{j}^{p^k})=p^{k+s-j}\lambda$.\\

\noindent Now, $\rho((L_{j}(q)^{p^k}b_{j}^{-p^k})^{p}-(L_{j}(q)^{p^k}b_{j}^{-p^k}))=\rho(b_{j}^{-p^{k+1}}(L_{j}(q)^p-L_{j}(q)b_{j}^{p-1})^{p^k})$\\

\noindent $=\rho(L_{j-1}(q)^{p^k})-pv(b_{j}^{p^k})>p^{s-j+k+1}\lambda-p^{s-j+k+1}\lambda=0$\\

\noindent This means that $v(((L_{j}(q)b_{j}^{-1})^{p}-(L_{j}(q)b_{j}^{-1}))^{p^m})>0$ for $m>>0$, so it follows from Lemma \ref{limit} that $L_{j}(\tau(u(h)-1))b_{j}^{-1})^{p^m}$ converges to $c\in\widehat{Q(\frac{kG}{P})}$ with $c^p=c$.\\ 

\noindent Finally, since $\rho(L_{j}(\tau(u(h)-1))^{p^k}b_{j}^{-p^k})=0$, it follows that $c\neq 0$.\end{proof}

\begin{theorem}\label{maximal}

If $L_{j-1}$ is trivial and $L_{j}$ is not trivial, then $P$ is controlled by a proper open subgroup of $G$.

\end{theorem}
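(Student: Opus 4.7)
Fix an ordered basis $\{h_1,\ldots,h_d\}$ of $H$ such that $\{h_1^p,\ldots,h_t^p,h_{t+1},\ldots,h_d\}$ is a basis of the proper open subgroup $K_j\subseteq H$ given by Lemma \ref{sub}, and set $q_i:=\tau(u(h_i)-1)$. By definition of $K_j$ we have $\rho(L_j(q_i))=p^{s-j}\lambda$ for $i\leq t$, while $\rho(L_j(q_i))>p^{s-j}\lambda$ for $i>t$; since $L_j$ is a GPP, the remainder term $q$ in (\ref{Mahler}) satisfies $\rho(L_j(q))>p^{s-j}\lambda$ as well. By Proposition \ref{Frattini} it suffices to prove that $\tau\partial_i(P)=0$ for some $i\leq t$.

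The first step is to extract a clean limit identity from the Mahler expansion. By Lemma \ref{convergence}, after absorbing a suitable power $p^k$ into $m_1$ I may assume that $b_j$ is itself $v$-regular of value $p^{s-j}\lambda$, and that $(L_j(\tau(u(h)-1))b_j^{-1})^{p^m}$ converges in $\widehat{Q(kG/P)}$ to a non-zero element $c_h$ satisfying $c_h^p=c_h$ for every $h\in H\setminus K_j$. Since $\tau(kH)$ is commutative, $b_j^{-1}$ commutes with every $L_j(q_i)$, so I multiply the Mahler expansion (\ref{MahlerPol}) on the left by $b_j^{-p^m}$. Passing to the limit $m\to\infty$: the summands indexed by $i>t$ vanish because $\rho(L_j(q_i)b_j^{-1})>0$, and the error term $b_j^{-p^m}O(L_j(q)^{p^m})$ vanishes for the same reason applied to $q$. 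What survives is the relation
\[
c_1\tau\partial_1(y)+\cdots+c_t\tau\partial_t(y)=0\qquad\text{for every }y\in P,
\]
where $c_i:=c_{h_i}\neq 0$ for each $i\leq t$.

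To conclude, I would argue by contradiction. Assume $\tau\partial_i(P)\neq 0$ for every $i\leq t$ and apply Proposition \ref{divided-powers} with $\delta_i=\partial_i$: since the tuple $(c_1,\ldots,c_t)$ lies in the module $N$ and is non-zero, there exist central elements $\alpha_1,\ldots,\alpha_t\in Z(\widehat{Q(kG/P)})$, not all zero, satisfying $\sum_i\alpha_ia_i=0$ for every $(a_1,\ldots,a_t)\in N$, and in particular $\sum_i\alpha_ic_i=0$. The main obstacle is now to promote this single constraint into enough constraints to force every $\alpha_i$ to vanish. The plan is to exploit the identity $c_h^p=c_h$: the $c_h$ commute, so the $e_h:=c_h^{p-1}$ are commuting idempotents in $\widehat{Q(kG/P)}$ fixing $c_h$, and combining them by boolean operations with the original relation produces new tuples in $N$. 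Running such combinations over a carefully chosen $\mathbb{F}_p$-basis of $H/K_j$ (using $\mathbb{F}_p$-linearity of $L_j$ and of the Frobenius in $\tau(kH)$ to relate $c_{hh'}$ to $c_h$ and $c_{h'}$) produces a $Z$-linearly independent system of tuples, against which the common annihilator $(\alpha_1,\ldots,\alpha_t)$ must vanish. This contradicts the non-triviality of $(\alpha_1,\ldots,\alpha_t)$, so $\tau\partial_i(P)=0$ for some $i\leq t$, and Proposition \ref{Frattini} supplies the desired proper open subgroup of $G$ controlling $P$.
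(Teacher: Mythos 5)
The first half of your proposal is correct and matches the paper: the choice of ordered basis adapted to $K_j$, the application of Lemma \ref{convergence} to obtain $v$-regularity of $b_j$ and the limits $c_h$ with $c_h^p = c_h$, and dividing the Mahler expansion by $b_j^{p^m}$ to pass to the limit relation $c_1\tau\partial_1(y)+\cdots+c_t\tau\partial_t(y)=0$ for all $y\in P$ are exactly the paper's steps.

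The second half has a genuine gap. After invoking Proposition \ref{divided-powers} to obtain central $\alpha_1,\ldots,\alpha_t$ with $\sum_i\alpha_i c_i = 0$, your stated plan — generate new tuples in $N$ by Boolean operations on the idempotents $e_h = c_h^{p-1}$, argue they form a $Z(Q)$-linearly independent system, and thereby force $\alpha_i = 0$ — is left entirely programmatic. You do not say what tuples are produced, why they lie in $N$ beyond scalar multiples of $(c_1,\ldots,c_t)$, or why they would be $Z(Q)$-linearly independent; the products $e_{h_i}c_j = c_i^{p-1}c_j$ for $j\neq i$ have no obvious simplification, and nothing in the setup guarantees the needed independence. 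More to the point, the decisive ingredient of the paper's argument never appears in your writeup: from the relation $\sum\alpha_i c_i=0$ one passes to a \emph{minimal} $Z(Q)$-dependent subset, applies Frobenius to get $\sum\alpha_i^p c_i = (\sum\alpha_i c_i)^p = 0$ (using $c_i^p=c_i$ and centrality), subtracts to obtain $\sum(\alpha_i^p-\alpha_i)c_i=0$, and concludes by minimality that each $\alpha_i$ satisfies $\alpha_i^p=\alpha_i$, i.e.\ $\alpha_i\in\mathbb{F}_p$. Once the coefficients are known to lie in $\mathbb{F}_p$, one uses the fact that $\beta_1q_1+\cdots+\beta_tq_t = \tau(u(h_1^{\beta_1}\cdots h_t^{\beta_t})-1)+\varepsilon$ with $\rho(\varepsilon)>\lambda$ and $h_1^{\beta_1}\cdots h_t^{\beta_t}\notin K_j$, so $\rho(L_j(\beta_1q_1+\cdots+\beta_tq_t))=p^{s-j}\lambda$ and the corresponding limit $\sum\beta_i c_i = c_{h_1^{\beta_1}\cdots h_t^{\beta_t}}$ is nonzero by Lemma \ref{convergence} — the contradiction. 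Your appeal to ``$\mathbb{F}_p$-linearity of $L_j$ and of the Frobenius in $\tau(kH)$'' gestures at the relevant structure, but neither the Frobenius–minimality descent to $\mathbb{F}_p$-coefficients nor the final non-vanishing of $c_h$ for $h\notin K_j$ is actually carried out, and without both the contradiction does not close.
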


\begin{proof}

\noindent Since $K_{j}$ is a proper subgroup of $H$ containing $H^p$, we can choose an ordered basis $\{h_1,\cdots,h_d\}$ for $H$ such that $\{h_1^p,\cdots,h_t^p,h_{t+1},\cdots,h_d\}$ is an ordered basis for $K_{j}$.\\

\noindent Consider our Mahler expression (\ref{MahlerPol}), taking $f=L_{j}$, $q_i=\tau(u(h_i)-1)$:

\begin{equation}\label{Mahlerx}
0=L_{j}(q_1)^{p^m}\tau\partial_1(y)+\cdots+L_{j}(q_{d})^{p^m}\tau\partial_{d}(y)+O(L_{j}(q)^{p^m})
\end{equation}

\noindent Where $\rho(q)>\lambda$, and hence $\rho(L_{j}(q))>p^{s-j}\lambda$. Note that we also have:

\begin{center}
$\rho(L_{j}(q_i))=p^{s-j}\lambda$ for all $i\leq t$, and $\rho(L_{j}(q_i))>p^{s-j}\lambda$ for all $i>t$.
\end{center}

\noindent Using Lemma \ref{convergence}, we see that $b_{j}^{p^k}$ is $v$-regular of value $p^{k+s-j}$ for some $k$, and for each $i\leq t$, $(L_{j}(q_i)^{p^k}b_{j}^{-p^k})^{p^m}\rightarrow c_i\neq 0$ as $m\rightarrow\infty$, with $c_i^p=c_i$. Clearly $c_1,\cdots,c_r$ commute.\\

\noindent So, divide out our expression (\ref{Mahlerx}) by $b_{j}^{p^m}$, which is $v$-regular of value $p^{m+s-j}\lambda$ to obtain:

\begin{equation}
0=(b_{j}^{-1}L_{j}(q_1))^{p^m}\tau\partial_1(y)+\cdots+(b_{j}^{-1}L_{j}(q_{d-1}))^{p^m}\tau\partial_{d-1}(y)+O((b_{j}^{-1}L_{j}(q))^{p^m})
\end{equation}

\noindent Take the limit as $m\rightarrow\infty$ and the higher order terms will converge to zero. Hence the expression converges to $c_1\tau\partial_1(y)+\cdots+c_r\tau\partial_r(y)$.

Therefore $(c_1\tau\partial_1+\cdots+c_r\tau\partial_r)(P)=0$.\\

\noindent Now, using Proposition \ref{Frattini}, we know that if $\tau\partial_i(P)=0$ for some $i\leq r$ then $P$ is controlled by a proper open subgroup of $G$. So we will suppose, for contradiction, that $\tau\partial_i(P)\neq 0$ for all $i\leq r$.\\

\noindent Let $N:=\{(q_1,\cdots,q_r)\in\widehat{Q(\frac{kG}{P})}:(q_1\tau\partial_1+\cdots+q_r\tau\partial_r)(P)=0\}$, then $0\neq (c_1,\cdots,c_r)\in N$, so $N\neq 0$.

Therefore, using Proposition \ref{divided-powers}, we see that $c_1,\cdots,c_r$ are $Z(Q)$-linearly dependent.\\

\noindent So, we can find some $1<t\leq r$ such that $c_1,\cdots,c_t$ are $Z(Q)$-linearly dependent, but no proper subset of $\{c_1,\cdots,c_t\}$ is $Z(Q)$-linearly dependent.

It follows that we can find $\alpha_2,\cdots,\alpha_t\in Z(Q)\backslash\{0\}$ such that $c_1+\alpha_2c_2+\cdots+\alpha_tc_t=0$.\\

\noindent Therefore, since $c_i^p=c_i$ for all $i$, we also have that:

\begin{center}
$c_1+\alpha_2^pc_2+\cdots+\alpha_t^pc_t=(c_1+\alpha_2c_2+\cdots+\alpha_tc_t)^p=0$
\end{center} 

\noindent Hence $(\alpha_2^p-\alpha_2)c_2+\cdots+(\alpha_t^p-\alpha_t)c_t=0$.\\

\noindent So using minimality of $\{c_1,\cdots,c_t\}$, this means that $\alpha_i^p=\alpha_i$ for each $i$, and it follows that  $\alpha_i\in\mathbb{F}_p$ for each $i$, i.e. $c_1,\cdots,c_r$ are $\mathbb{F}_p$-linearly dependent.\\

\noindent So, we can find $\beta_1,\cdots,\beta_r\in\mathbb{F}_p$, not all zero, such that $\beta_1c_1+\cdots+\beta_rc_r=0$, or in other words:\\

\begin{center}
$\underset{m\rightarrow\infty}{\lim}{(L_{j}(\beta_1q_1+\cdots+\beta_rq_r)^{p^k}b_{j}^{-p^k})^{p^m}=0}$
\end{center}

\noindent Therefore, $\rho(L_{j}(\beta_1q_1+\cdots+\beta_rq_r)^{p^k}b_{j}^{-p^k})>0$, and hence $\rho(L_{j}(\beta_1q_1+\cdots+\beta_rq_r))>v(b_{j})=p^{s-j}\lambda$.\\

\noindent But $\beta_1q_1+\cdots+\beta_rq_r=\tau(u(h_1^{\beta_1}\cdots h_r^{\beta_r})-1)+\varepsilon$, where $\rho(\varepsilon)>\lambda$, and we know that $\rho(L_{j}(\tau(u(h_1^{\beta_1}\cdots h_r^{\beta_r})-1)))=p^{s-j}\lambda$ by the definition of $K_{j}$, and $\rho(L_{j}(\varepsilon))>p^{s-j}\lambda$ since $L_{j}$ is a GPP.

Hence $\rho(L_{j}(\beta_1q_1+\cdots+\beta_rq_r))=\rho(L_{j}(\tau(u(h_1^{\beta_1}\cdots h_r^{\beta_r})-1)))=p^{s-j}\lambda$ -- contradiction.\\

\noindent Therefore $P$ is controlled by a proper open subgroup of $G$.\end{proof}

\subsection{Control Theorem}

We may now suppose that $L_i$ is not trivial for all $i\leq s$, in particular, $L_0$ is a non-trivial GPP of $p$-degree $s$.

\begin{proposition}\label{L-special}

$L_0$ is a special growth preserving polynomial.

\end{proposition}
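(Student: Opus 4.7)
The plan is to verify the regularity condition in Definition \ref{GPP}; non-triviality of $L_0$ is already part of our standing assumption (every $L_i$ with $i \leq s$ is non-trivial), so the task reduces to showing that whenever $h \in H$ satisfies $\rho(L_0(\tau(u(h)-1))) = p^s \lambda$, the element $L_0(\tau(u(h)-1))^{p^k}$ is $v$-regular for some $k \in \mathbb{N}$.

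Write $q := \tau(u(h)-1)$, $t := u_c(h)-1 \in kH$, and $T := t + F_{\theta+1}kG$. The first step is the elementary Frobenius identity $L(x^p, y^p) = L(x,y)^p$ in characteristic $p$, which iterates on commuting arguments to $L^{(s)}(x_0^{p^n}, x_1^{p^n}, \ldots, x_s^{p^n}) = L^{(s)}(x_0, \ldots, x_s)^{p^n}$. Since $q = \tau(t)^{p^{m_1}}$ and each $b_i = \tau(y_i)^{p^{m_1}}$, this yields $L_0(q) = \tau(t')^{p^{m_1}}$ where $t' := L^{(s)}(t, y_s, \ldots, y_1) \in kH$. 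Applying Lemma \ref{reduction-value} with $i = 0$ gives $w(t') \geq p^s \theta$, with equality precisely when $\mathrm{gr}(t') = L^{(s)}(T, B_s, \ldots, B_1)$, and Lemma \ref{centralising} places $L^{(s)}(\overline{T}, \overline{B}_s, \ldots, \overline{B}_1)^{p^{m_0}}$ inside $A'$.

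Setting $\sigma := \tau(t')^{p^{m_0}} = \tau(t'^{p^{m_0}})$, the target is $\mathrm{gr}_{\overline{w}}(\sigma) \in A' \setminus \mathfrak{q}$; granted this, Theorem \ref{comparison} furnishes some $m$ with $\sigma^{p^m}$ $v$-regular, whence $L_0(q)^{p^{m_0+m-m_1}}$ is $v$-regular. The comparison element will be $r := \tau(y_s)^{p^{s+m_0}}$, whose graded piece $\overline{B}_s^{p^{s+m_0}}$ lies in $A' \setminus \mathfrak{q}$, since $\overline{B}_s^{p^{m_0}} \notin \mathfrak{q}$ and $\mathfrak{q}$ is prime. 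Suppose for contradiction that the target fails, i.e.\ either $\overline{w}(\sigma) > p^{s+m_0}\theta = \overline{w}(r)$, or $\overline{w}(\sigma) = \overline{w}(r)$ with $\mathrm{gr}_{\overline{w}}(\sigma) \in \mathfrak{q}$. In both cases Theorem \ref{comparison} forces $v(\sigma^{p^m}) > v(r^{p^m})$ for all large $m$, and the quantitative strengthening buried in its proof upgrades this to the strict inequality $\rho(\sigma) > \rho(r)$. Dividing by $p^{m_0}$ gives $\rho(\tau(t')) > p^s \rho(\tau(y_s))$, and multiplying by $p^{m_1}$ yields $\rho(L_0(q)) > p^s \rho(b_s) \geq p^s \lambda$, contradicting our hypothesis on $h$.

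The main technical hurdle will be the bookkeeping across the three filtrations in play -- the Lazard filtration $w$ on $kG$, the quotient filtration $\overline{w}$ on $\frac{kG}{P}$, and the non-commutative valuation $v$ on $Q(\frac{kG}{P})$ -- together with the need to raise everything to $p^{m_0}$-th powers so that Lemma \ref{centralising} delivers elements of $A'$ that can be fed into Theorem \ref{comparison}.
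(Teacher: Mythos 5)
Your proof is correct and follows essentially the same route as the paper's: pull $L_0(\tau(u(h)-1))$ back to $kH$ via the Frobenius identity, control $w(t')$ and $\mathrm{gr}(t')$ with Lemma \ref{reduction-value}, place the graded piece in $A'$ via Lemma \ref{centralising}, and compare against a power of $\tau(y_s)$ through Theorem \ref{comparison} to contradict $\rho(L_0(\tau(u(h)-1)))=p^s\lambda$. Your explicit two-case dichotomy ($\overline{w}(\sigma)>\overline{w}(r)$ versus equality with $\mathrm{gr}_{\overline{w}}(\sigma)\in\mathfrak{q}$) and your use of $p^{m_0}$-th powers in place of the paper's $p^{m_1}$-th powers are cosmetic variants of the same argument.
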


\begin{proof}

Given $h\in H$ such that $\rho(L_0(\tau(u(h)-1)))=p^s\lambda$, we want to prove that $L_0(\tau(u(h)-1))^{p^k}$ is $v$-regular for some $k$. Let $T=u_{c}(h)-1+F_{\theta+1}kG\in$ Span$_{\mathbb{F}_p}\{T_1,\cdots,T_r\}$.\\

\noindent We know that $L^{(s)}(\overline{T},\overline{B}_s,\cdots,\overline{B}_1)^{p^{m_1}}$ lies in $A'$ by Lemma \ref{centralising}. If $L^{(s)}(\overline{T},\overline{B}_s,\cdots,\overline{B}_1)^{p^{m_0}}\in\mathfrak{q}$ then we may assume that $L^{(s)}(\overline{T},\overline{B}_s,\cdots,\overline{B}_1)^{p^{m_1}}=0$, so using Lemma \ref{reduction-value} we see that $\overline{w}(L^{(s)}(\tau(u(h)-1),b_s,\cdots,b_1))>p^{s+m_1}\theta=\overline{w}(b_s^{p^s})$.\\

\noindent So again, since gr$_{\overline{w}}(b_s)=\overline{B}_s\in A'\backslash\mathfrak{q}$, it follows from Theorem \ref{comparison} that 

\begin{center}
$v(L^{(s)}(\tau(u(h)-1),b_s,\cdots,b_1)^{p^m})>v(b_s^{p^{m+s}})$ for $m>>0$
\end{center} 

\noindent Hence $\rho(L_0(\tau(u(h)-1)))=\rho(L^{(s)}(\tau(u(h)-1),b_s,\cdots,b_1))>\rho(b_s^{p^s})\geq p^s\lambda$ -- contradiction.\\

\noindent Therefore, we have that $L^{(s)}(\overline{T},\overline{B}_s,\cdots,\overline{B}_1)^{p^{m_1}}\in A'\backslash{q}$, and hence it is equal to 

\noindent gr$_{\overline{w}}(L^{(s)}(\tau(u(h)-1),b_s,\cdots,b_1))$ by Lemma \ref{reduction-value}.\\

\noindent It follows from Theorem \ref{comparison} that for $m>>0$, $L_0(\tau(u(h)-1))^{p^m}=L^{(s)}(\tau(u(h)-1),b_s,\cdots,b_1)^{p^m}$ is $v$-regular as required.\end{proof}

\noindent Now we can finally prove our main control theorem in all cases. But we first need the following technical result.

\begin{lemma}\label{split-centre}

Let $G=H\rtimes\langle X\rangle$ be an abelian-by-procyclic group. Then $G$ has split centre if and only if $(G,G)\cap Z(G)=1$.

\end{lemma}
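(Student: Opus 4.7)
The proof splits into two directions; throughout, $H$ abelian makes conjugation by $X$ a $\mathbb{Z}_p$-linear automorphism $\sigma \in \mathrm{Aut}(H)$, and I set $\psi := \sigma - \mathrm{id}_H \in \mathrm{End}_{\mathbb{Z}_p}(H)$. The case $G$ abelian is vacuous (both sides hold trivially), so assume $G$ non-abelian, whence $Z(G) \subseteq H$; since $X$ has infinite order one checks at once that $Z(G) = \ker(\psi)$. A direct computation in the semidirect product, using $H$ abelian together with the identity $\sigma^{\alpha}-1 = \psi \sum_{k=0}^{\alpha-1}\sigma^{k}$ extended continuously to $\alpha \in \mathbb{Z}_p$, identifies the closed commutator subgroup $(G,G)$ with $\mathrm{Im}(\psi)$, which is automatically closed as a finitely generated submodule of $H \cong \mathbb{Z}_p^d$.

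For the forward direction, suppose $G = Z(G) \times K$ for a closed subgroup $K$. Then $K$ is normal in $G$, and because $Z(G)$ is central the commutator formula gives $(z_1k_1, z_2k_2) = (k_1,k_2) \in K$, so $(G,G) \subseteq K$; intersecting with $Z(G)$ yields $(G,G) \cap Z(G) \subseteq K \cap Z(G) = 1$.

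For the reverse direction, assume $\ker(\psi) \cap \mathrm{Im}(\psi) = 0$. It suffices to produce a closed, $\sigma$-stable subgroup $H' \leq H$ with $H = \ker(\psi) \oplus H'$, for then $K := H' \rtimes \langle X \rangle$ complements $Z(G)$ in $G$, realising the splitting. The natural candidate is $H' := \mathrm{Im}(\psi)$, which is $\sigma$-stable (as $\sigma$ commutes with $\psi$) and meets $\ker(\psi)$ trivially by hypothesis. The hypothesis also forces $\ker(\psi^n) = \ker(\psi)$ for all $n \geq 1$, since any $v \in \ker(\psi^2) \setminus \ker(\psi)$ would give $\psi(v) \in \ker(\psi) \cap \mathrm{Im}(\psi) \setminus \{0\}$; hence Fitting's lemma applied to $V := H \otimes_{\mathbb{Z}_p} \mathbb{Q}_p$ delivers a rational decomposition $V = \ker(\psi)_{\mathbb{Q}_p} \oplus \mathrm{Im}(\psi)_{\mathbb{Q}_p}$ into $\sigma$-stable pieces.

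The main obstacle is descending this rational decomposition to $\mathbb{Z}_p$, i.e.~showing $H = \ker(\psi) + \mathrm{Im}(\psi)$ holds in $H$ itself rather than merely in $V$. My intended approach exploits the $p$-valuation $\omega$ on $G$: the commutator axiom $\omega((X,h)) \geq \omega(X) + \omega(h)$ reads $\omega(\psi(h)) \geq \omega(X) + \omega(h)$ on $H$, so $\psi$ strictly raises $\omega$-values, enabling a Hensel-type iteration that successively corrects the rational projection of a given $h \in H$ onto $\ker(\psi)_{\mathbb{Q}_p}$ by adding integral elements of $\ker(\psi)$ and $\mathrm{Im}(\psi)$ of increasing $\omega$-value, converging in the $\omega$-complete group $H$. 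Ensuring that the correction terms remain inside $H$ (and not merely in $V$) at each stage is the delicate step, and should rest on the isolatedness of $\ker(\psi) = Z(G)$ in $H$ together with the ordered-basis structure of $H$ underlying $\omega$.
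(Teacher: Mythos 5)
There is a genuine gap: your reverse direction hinges on showing $H = \ker(\psi) \oplus \mathrm{Im}(\psi)$, and this is simply false in general, so no Hensel-type iteration can rescue it. Take $H = \mathbb{Z}_p^2$ and $\sigma$ acting by $\operatorname{diag}(1,\,1+p)$ (with $p>2$, say), so $\psi = \operatorname{diag}(0,p)$. Then $\ker(\psi) = \mathbb{Z}_p \times 0$, $\mathrm{Im}(\psi) = 0 \times p\mathbb{Z}_p$, the hypothesis $\ker(\psi) \cap \mathrm{Im}(\psi) = 0$ holds, and $G$ does have split centre (take $K = (0\times\mathbb{Z}_p)\rtimes\langle X\rangle$). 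Yet $\ker(\psi) + \mathrm{Im}(\psi) = \mathbb{Z}_p \times p\mathbb{Z}_p \neq H$: the element $(0,1)$ has no expression as $k + \psi(h)$ with $k \in \ker(\psi)$, $h \in H$. Your proposed iteration stalls immediately on such an element, since every correction term you can add from $\ker(\psi)$ or $\mathrm{Im}(\psi)$ leaves a residual of the same $\omega$-value.

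The missing idea is to \emph{not} use $\mathrm{Im}(\psi)$ itself as the complement but rather its isolator $C := i_G(\mathrm{Im}(\psi))$ in $H$ (equivalently, $\mathrm{Im}(\psi)_{\mathbb{Q}_p} \cap H$). Since $\ker(\psi)$ is isolated and $\ker(\psi)\cap\mathrm{Im}(\psi)=1$, one still has $\ker(\psi)\cap C = 1$ (any $c\in C$ with $c^{p^n}\in\mathrm{Im}(\psi)$ and $c$ central forces $c^{p^n}=1$, hence $c=1$ by torsion-freeness). Then $\ker(\psi)\oplus C$ is an isolated $\mathbb{Z}_p$-submodule of $H$, and since it contains $\ker(\psi)\oplus\mathrm{Im}(\psi)$, which already has full rank because the module map $h\mapsto\psi(h)$ induces $\mathrm{Im}(\psi)\cong H/\ker(\psi)$, the submodule $\ker(\psi)\oplus C$ is open; an isolated open submodule must be all of $H$. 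This is precisely the route the paper takes, and it sidesteps the descent problem entirely: rather than trying to write each $h\in H$ in terms of $\ker(\psi)$ and $\mathrm{Im}(\psi)$, one enlarges the complement to its saturation and uses a rank-plus-isolatedness argument. The remaining checks (that $C$ is $\sigma$-stable and closed, so that $C\rtimes\langle X\rangle$ is a genuine complement to $Z(G)$ in $G$) follow from $\mathrm{Im}(\psi)$ being $\sigma$-stable and from the cited facts about isolators.
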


\begin{proof}

It is clear that if $G$ has split centre then $(G,G)\cap Z(G)=1$. Conversely, suppose that $(G,G)\cap Z(G)=1$, and consider the $\mathbb{Z}_p$-module homomorphism $H\to H,h\mapsto (X,h)$.\\

\noindent The kernel of this map is precisely $Z(G)$, therefore $(X,H)\cong\frac{H}{Z(G)}$. So since $Z(G)\cap (X,H)=1$, it follows that $Z(G)\times (X,H)$ has the same rank as $H$, hence it is open in $H$.\\

\noindent Recall from \cite[Definition 1.6]{Billy} the definition of the \emph{isolator} $i_G(N)$ of a closed, normal subgroup $N$ of $G$, and recall from \cite[Proposition 1.7, Lemma 1.8]{Billy} that it is a closed, isolated normal subgroup of $G$, and that $N$ is open in $i_G(N)$.\\ 

\noindent Let $C=i_G((X,H))\leq H$, then it is clear that $Z(G)\cap C=1$ and that $Z(G)\times C$ is isolated.

Therefore, since $Z(G)\times (X,H)$ is open in $H$, it follows that $Z(G)\times C=H$, and hence $G=Z(G)\times C\rtimes\langle X\rangle$, and $G$ has split centre.\end{proof}

\vspace{0.3in}

\noindent\emph{Proof of Theorem \ref{B}.} Let $Z_1(G):=\{g\in G:(g,G)\subseteq Z(G)\}$, this is a closed subgroup of $G$ containing $Z(G)$ and contained in $H$. Suppose first that $Z_1(G)\neq Z(G)$.\\

\noindent Then choose $h\in Z_1(G)\backslash Z(G)$, then $(h,G)\subseteq Z(G)$, so if we take $\psi\in Inn(G)$ to be conjugation by $h$, then $\psi$ is trivial mod centre, and clearly $\psi(P)=P$. So it follows that $P$ is controlled by a proper, open subgroup of $G$ by \cite[Theorem B]{nilpotent}.\\

\noindent So from now on, we may assume that $Z_1(G)=Z(G)$.\\

\noindent Suppose that $(X,h)\in Z(G)$ for some $h\in H$, then clearly $(h,G)\subseteq Z(G)$, so $h\in Z_1(G)=Z(G)$, giving that $(X,h)=1$. It follows that $Z(G)\cap (G,G)=1$, and hence $G$ has split centre by Lemma \ref{split-centre}.\\

\noindent Therefore, using Theorem \ref{equalising}, we can choose a $k$-basis $\{k_1,\cdots,k_d\}$ for $H$ and a filtration $w$ on $kG$ such that gr$_w$ $kG\cong k[T_1,\cdots,T_{d+1}]\ast\frac{G}{c(G)}$, where $T_i=$ gr$(u_{c}(k_i)-1)$ for $i\leq r$, $T_i=$ gr$(k_i-1)$ for $i>r$, $T_r,\cdots,T_{d}$ are central and $\bar{X}T_i\bar{X}^{-1}=T_i+D_i$ for some $D_i\in$ Span$_{\mathbb{F}_p}\{T_{i+1},\cdots,T_r\}$ for all $i<r$.\\

\noindent If $Q(\frac{kG}{P})$ is a CSA, then the result follows from Corollary \ref{C}, so we may assume that $Q(\frac{kG}{P})$ is not a CSA.\\ 

\noindent Hence if each $D_i$ is nilpotent mod gr $P$, then $id:\tau(kH)\to\tau(kH)$ is a special GPP with respect to some non-commutative valuation by Proposition \ref{special-case}.

Therefore, by Theorem \ref{special-GPP}, $P$ is controlled by a proper open subgroup of $G$ as required.\\

\noindent If $D_s$ is not nilpotent mod gr $P$ for some $s<r$, then we can construct GPP's $L_s,\cdots,L_0$ with respect to some non-commutative valuation using Proposition \ref{growth-preserving}, and $L_s$ is non-trivial.\\

\noindent If $L_{j-1}$ is trivial and $L_{j}$ is non-trivial for some $0< j\leq s$, then the result follows from Theorem \ref{maximal}. Whereas if all the $L_i$ are non-trivial, then $L_0$ is a special GPP by Proposition \ref{L-special}, and the result follows again from Theorem \ref{special-GPP}.\qed

\bibliographystyle{abbrv}

\end{document}